\newcommand{\norm}[2]{\left\lVert #1\right\rVert_{#2}}
\newcommand{\Lip}[1]{\mathrm{Lip}(#1)}
\newcommand{\Ntol}{\textup{\texttt{tol}}}
\newcommand{\Id}{\mathrm{Id}}
\newcommand{\diam}{\mathrm{diam}}
\newcommand{\LL}{\mathcal{L}}
\newcommand\N{\mathbb{N}}
\newcommand\R{\mathbb{R}}
\definecolor{darkorange}{rgb}{8.,.4,0.}
\theoremstyle{thmstyleone} 
\newtheorem{theorem}{Theorem}[section]% meant for sectionwise numbers
\newtheorem{lemma}[theorem]{Lemma}% 
\newtheorem{corollary}[theorem]{Corollary}% 
\newtheorem{assumption}[theorem]{Assumption}% 
\newtheorem{remark}[theorem]{Remark}% 
\newtheorem{definition}[theorem]{Definition}
\crefname{assumption}{Assumption}{Assumptions}
\begin{document}

%%%%%%%%%% FIX Proof Environment %%%%%%%%

\makeatletter
\renewenvironment{proof}[1][\proofname]{\par
  \vspace{-0.00cm}% remove the space after the theorem
  \pushQED{\qed}%
  \normalfont
  \topsep0pt \partopsep0pt % no space before
  \trivlist
  \item[\hskip\labelsep
        \itshape
    #1\@addpunct{.}]\ignorespaces
}{%
  \popQED\endtrivlist\@endpefalse
  \addvspace{6pt plus 6pt} % some space after
}
\makeatother

%%%%%%%%%%%%%%%%%%%%%%%%%%

\title[Article Title]{%
A Globalized Inexact Semismooth Newton Method for Nonsmooth Fixed-point Equations involving Variational Inequalities
}

%%=============================================================%%
%% Prefix	-> \pfx{Dr}
%% GivenName	-> \fnm{Joergen W.}
%% Particle	-> \spfx{van der} -> surname prefix
%% FamilyName	-> \sur{Ploeg}
%% Suffix	-> \sfx{IV}
%% NatureName	-> \tanm{Poet Laureate} -> Title after name
%% Degrees	-> \dgr{MSc, PhD}
%% \author*[1,2]{\pfx{Dr} \fnm{Joergen W.} \spfx{van der} \sur{Ploeg} \sfx{IV} \tanm{Poet Laureate} 
%%                 \dgr{MSc, PhD}}\email{iauthor@gmail.com}
%%=============================================================%%

\author*[1]{\fnm{Amal} \sur{Alphonse}}\email{alphonse@wias-berlin.de}
\author*[2]{\fnm{Constantin} \sur{Christof}}\email{christof@cit.tum.de}
\author[3]{\fnm{Michael} \sur{Hinterm{\"u}ller}}\email{hintermueller@wias-berlin.de}
\author[1]{\fnm{Ioannis} \sur{P.~A.~Papadopoulos}}\email{papadopoulos@wias-berlin.de}

\affil*[1]{\orgname{Weierstrass Institute}, \orgaddress{\street{Mohrenstra{\ss}e 39}, \city{Berlin}, \postcode{10117}, \country{Germany}}}

\affil[2]{\orgdiv{CIT, Department of Mathematics}, 
\orgname{Technical University of Munich}, \orgaddress{\street{Boltzmannstraße 3}, 
\city{Garching b.\ {M\"u}nchen}, \postcode{85748}, \country{Germany}}}

\affil[3]{\orgname{Weierstrass Institute}, \orgaddress{\street{Mohrenstra{\ss}e 39}, \city{Berlin}, \postcode{10117}, \country{Germany}}; \orgname{Humboldt-Universit\"at zu Berlin}, \orgaddress{\street{Unter den Linden 6}, \city{Berlin}, \postcode{10117}, \country{Germany}}}

%%==================================%%
%% sample for unstructured abstract %%
%%==================================%%
\abstract{
We develop a semismooth Newton framework for the 
numerical solution of fixed-point equations
that are posed in Banach spaces. 
The framework is motivated by applications
in the field of obstacle-type quasi-variational 
inequalities and implicit obstacle problems. 
It is discussed in 
a general functional analytic setting and  
allows for inexact function evaluations and 
Newton steps. Moreover, if a certain contraction assumption holds, we show that it is possible to globalize the algorithm 
by means of the Banach fixed-point theorem and to 
ensure $q$-superlinear convergence to the problem solution 
for arbitrary starting values. By means of a localization technique, our 
Newton method can also be used to determine
solutions of fixed-point equations that are only locally contractive and 
not uniquely solvable. We apply our algorithm
to a quasi-variational inequality which arises
in thermoforming 
and which not only involves the obstacle problem as a 
source of nonsmoothness but
also
a semilinear PDE
containing a nondifferentiable Nemytskii operator. 
Our analysis is accompanied by numerical experiments 
that illustrate the mesh-independence and 
$q$-superlinear convergence of the developed solution algorithm.  
}

\keywords{semismooth Newton method, 
quasi-variational inequality,
thermoforming, 
nonsmooth analysis,
obstacle problem, 
Newton differentiability,
semismoothness,
superlinear convergence
}

\pacs[MSC Classification]{35J86,47J20,49J40,49J52,49M15}

\maketitle
% \vspace{0.7cm}
% \setcounter{tocdepth}{1}
%\tableofcontents
\section{Introduction}
\label{sec:1}

This paper is concerned with the design, analysis, and numerical realization of 
semismooth Newton methods
 for fixed-point equations
of the type
\begin{equation}
\label{eq:general_FP_H_intro}
\tag{F}
\text{Find } \bar x \in X \text{ such that } \bar x = H(\bar x)
\end{equation}
that are posed in a real Banach space $X$ and involve
a Newton differentiable
 operator $H\colon X \to X$.
A main focus of our work is on the case where the map $H$ can be written as the composition
\begin{equation}\label{eq:HasSPhi}
    H = S \circ \Phi
\end{equation}
of two Newton differentiable functions $S$ and $\Phi$, with a special emphasis on 
the situation where $S$ is the solution map of an elliptic variational inequality (VI) with pointwise constraints.
Our main motivation for considering this kind of structure 
is that it arises naturally when studying elliptic quasi-variational inequalities (QVIs) of obstacle type, i.e., 
variational problems of the form 
\begin{equation}
\label{eq:QVI}  
\tag{Q}
\begin{gathered}
\text{Find $u \in K(\Phi(u))$ such that } 
~\langle -\Delta u - f, v - u \rangle_{H^{-1}(\Omega),H_0^1(\Omega)} \geq 0~
\forall v \in K(\Phi(u)),
\\
\text{ with }
K(\Phi(u)) := 
\{
        v \in H_0^1(\Omega) \mid v 
        \leq \Phi(u) + \Phi_0 \text{ a.e.\ in }\Omega
 \},
\end{gathered}
\end{equation}
where $\Omega \subset \R^d$ for $d \in \N$ denotes a nonempty open bounded set,
the Sobolev space 
$H_0^1(\Omega)$ is defined as in \cite[\S 5.1]{Attouch2006},  
$\Phi\colon H_0^1(\Omega) \to H_0^1(\Omega )$ is a given operator,
$f$ and $\Phi_0$ are given functions,
 and the symbols  $\Delta$ and $\smash{\langle \cdot, \cdot \rangle_{H^{-1}(\Omega),H_0^1(\Omega)}}$
 denote the distributional Laplacian and the dual pairing in $H_0^1(\Omega)$, respectively (see \cref{sec:obstacle-type-QVIs} for the precise setting). Indeed, 
if we define $S$ to be the solution operator $S\colon H_0^1(\Omega) \to H_0^1(\Omega)$,
$\phi \mapsto u$, of the classical obstacle problem 
 \begin{equation}
\label{eq:upper_obst_prob_again}  
\text{Find $u \in K(\phi)$ such that  }
~\langle -\Delta u - f, v - u \rangle_{H^{-1}(\Omega),H_0^1(\Omega)} \geq 0~
\forall v \in K(\phi),
\end{equation}
then \eqref{eq:QVI} can clearly be recast as $u =H(u)$ 
with $H := S \circ \Phi\colon H_0^1(\Omega) \to H_0^1(\Omega)$,
and thus \eqref{eq:QVI} can immediately be seen as a fixed-point equation of the form 
\eqref{eq:general_FP_H_intro}. 

Note that the salient feature of QVIs of the type 
\eqref{eq:QVI} is that the solution $u$ 
enters the problem not only via the Laplacian 
but also via the obstacle $\Phi(u) + \Phi_0$
defining the pointwise constraint in $K(\Phi(u))$. 
This creates a variational structure 
in which the set of admissible test functions 
depends implicitly on the problem solution 
and which distinguishes 
\eqref{eq:QVI} quite drastically
from standard partial differential equations (PDEs) and VIs. 
In applications, the dependence of the admissible set $K(\Phi(u))$ of \eqref{eq:QVI}
on  
$u$ allows to incorporate feedback effects into the 
problem formulation that reflect, for instance, how the obstacle 
$\Phi(u) + \Phi_0$ interacts with $u$
in zones of contact. 
Because of the ability to capture 
such an 
interplay between the problem solution and the 
constraints, 
QVIs have proved to be important instruments for modeling 
processes in various areas of physics and 
economics, e.g., thermoforming \cite{AHR}, 
sandpile growth 
\cite{Barrett2013,Barret2013, Prigozhin1996-1}, 
impulse control \cite{Bensoussan1975,Bensoussan1975-2,LionsBensoussan,Lions1986}, 
and 
superconductivity \cite{Prigozhin1996-2,Barret2010,Rodrigues2000}. 
Compare also with the classical works 
\cite{LionsBensoussan, MR556865, Mosco, MR745619} in this context.

%As a concrete application example,
%we mention in particular the following 
%instance of \eqref{eq:QVI} that 
%arises in thermoforming 
%(i.e., the study of the deformation of plastic sheets
%in the presence of both mechanical and 
%thermal excitations, see \cite[\S 6]{AHR}) and that serves as a model 
%problem for our numerical experiments:
%\begin{equation}
%\label{eq:QVI_thermo}  
%\tag{QT}
%\begin{gathered}
%\text{Find $u \in H_0^1(\Omega)$ such that  } 
%u \in K(\Phi(u)),
%~\langle -\Delta u - f, v - u \rangle_{H^{-1}(\Omega),H_0^1(\Omega)} \geq 0~
%\forall v \in K(\Phi(u)),
%\\
%\text{with $K(\Phi(u))$ defined as in \eqref{eq:QVI} and 
%$\Phi(u)$ given by $\Phi(u) := \varphi T$ with $T$ satisfying}
%\\
%kT-\Delta T = g(\Phi_0 + \varphi T -u) \text{ in $\Omega$},
%\qquad 
%\partial_\nu T = 0 \text{ on $\partial\Omega$.}
%\end{gathered}
%\end{equation}
%Here, $k >0$ is a given number, 
%$g\colon \R \to \R$ is a (potentially nonsmooth) function 
%acting as a Nemytskii operator, 
%$\varphi \in C^2(\bar \Omega) \cap C_0(\Omega)$
%is given, $\partial_\nu$ denotes the normal derivative, 
%and $\partial \Omega$ is the boundary of $\Omega$. The system \eqref{eq:QVI_thermo} models the situation where an elastic membrane is heated to an easy-to-deform state and forced into contact with a metallic 
%mould shape 
%which is to be reproduced and which, upon contact with the hot membrane,
%deforms;
% hence creating the quasi-variational nature of the problem. 
% See \cref{subsec:solution_ops_as_obstacle} for further details and the precise setting. 

While the feedback effects in a QVI like \eqref{eq:QVI} 
are very desirable from the application and modeling point of view, 
mathematically, 
they often pose serious challenges. 
Establishing the
Hadamard well-posedness of problems of the type \eqref{eq:QVI}, 
for example, is typically a hard task.
In fact,
in many situations, it is
 possible  
that a QVI possesses
multiple solutions---indeed a whole 
continuum of solutions---or no solutions at all. 
Compare, e.g., with the results 
on the Lipschitz and differential stability 
of QVI-solutions in 
\cite{AHR,Alphonse2022,ChristofWachsmuthQVI2022,AHRW,Alphonse2022-2,WachsmuthQVIs,Alphonse2020} 
in this context, 
and in particular with the examples in 
\cite[\S 6.1]{ChristofWachsmuthQVI2022} and \cref{subsec:7.3}.
The implicit relationship between 
the problem solution and the set of test functions 
also causes the numerical solution of QVIs to be a very delicate topic.
As far as problems posed in infinite-dimensional spaces are concerned, 
the algorithmic approaches
that are currently used for this 
purpose in the literature 
are primarily based on 
fixed-point arguments 
or regularization/penalization techniques---and, as a consequence, 
 are either slow or inexact.
See, for example, 
\cite[Chapter IV]{Bensoussan1982},
\cite[\S 2.1]{Alphonse2022},
\cite[\S 6.4]{AHR},
\cite[\S 5]{Kanzow2019},
\cite{Zeng1998},
and the references therein
for particular instances of such algorithms.

A main goal of the present paper is to show that  
recent advances in the field of generalized differentiability 
properties of solution maps of obstacle-type VIs make it possible to 
set up and analyze 
semismooth Newton methods 
for the numerical solution of QVIs of the type \eqref{eq:QVI}
and, thus, 
to solve obstacle-type quasi-variational inequalities 
 in function space with superlinear convergence speed. 
More generally, we develop a semismooth Newton framework 
for fixed-point equations of the type \eqref{eq:general_FP_H_intro}
that is tailored to applications
in the field of obstacle-type QVIs. A main feature of our semismooth Newton method for \eqref{eq:general_FP_H_intro}  
is that it is provably locally $q$-superlinearly convergent, 
robust with respect to inexactness,
and 
mesh-independent. Moreover, if a certain contractivity assumption holds, the algorithm can be made globally convergent;
see \cref{th:convergence,th:obst_QVI}. 
Note that the inclusion of inexactness is of special importance in 
the context of obstacle-type QVIs as 
evaluations of the outer map $S$
in \eqref{eq:HasSPhi}
arising in the context of  \eqref{eq:QVI}
are typically subject to numerical 
errors due to a discretization of \eqref{eq:upper_obst_prob_again}
or the introduction of an easy-to-evaluate surrogate model $S_\epsilon$;
see \cref{rem:inexactness}. 

To the best of our 
knowledge, this paper is the first to 
develop a semismooth Newton method
that is suitable for the solution of unregularized obstacle-type QVIs
in the infinite-dimensional setting and provably 
$q$-superlinearly convergent.
For related approaches in finite dimensions,
we refer the reader to  
\cite{Facchinei2015,MandlmayrPhd2018,Ni2014,Izmailov2012} and the references therein.
We remark that the example that we construct in 
\cref{subsec:7.3} to obtain an instance of a (generalized) thermoforming QVI
with multiple solutions
is also of independent interest as it provides 
an important benchmark problem for numerical solution 
algorithms. 

As we conduct our convergence analysis 
in a general Banach space setting and 
for the abstract fixed-point equation \eqref{eq:general_FP_H_intro}, 
the results that we prove in this paper 
are, of course, not only applicable to QVIs 
but also to other problems with comparable 
continuity and contractivity properties. We mention exemplarily 
VIs and PDEs with semilinearities, implicit VIs, and
operator equations that arise as optimality conditions 
of optimal control problems with
$H_0^1(\Omega)$-controls; see \cite[\S 5]{ChristofWachsmuthUniObstacle}. 

\subsection{Main results}
The main results of this paper are concerned with 
the convergence of semismooth Newton methods
for the solution of fixed-point equations of the type 
\eqref{eq:general_FP_H_intro} and the applicability 
of the developed abstract theory to 
QVIs of the form \eqref{eq:QVI}.
For the highlights, we refer the reader to:
\smallskip
\begin{itemize}
  \setlength\itemsep{0.2cm}
    \item \cref{th:convergence}, which establishes
    the global $q$-superlinear convergence/finite convergence 
    to a given tolerance of an inexact globalized 
    semismooth Newton method (\cref{algo:semiNewtonAbstract}) 
    for the solution of \eqref{eq:general_FP_H_intro}.
    This result relies on the assumption that $H\colon X \to X$
    is Newton differentiable and 
    globally contractive, i.e., $\gamma$-Lipschitz for some $\gamma \in [0,1)$; 
    see \cref{ass:standing_general_H}. 

	\item \cref{th:loc_conv}, which localizes \cref{algo:semiNewtonAbstract}
	by means of a projection 
	onto a
  nonempty closed convex set
 $B \subset X$. This result makes it possible to apply our convergence analysis 
	to equations \eqref{eq:general_FP_H_intro} that satisfy 
	a contraction assumption only locally and have multiple 
	solutions, a structure that is prevalent
	in many QVI-applications; cf.\  \cite{AHR,WachsmuthQVIs,Alphonse2020,AHRW,Alphonse2022-2,Alphonse2022}.
	For the precise setting for this result, see \cref{ass:standing_general_proj_H}.

\item \cref{th:obst_QVI,th_T_probs,th_T_probs_1d_local},
which demonstrate that obstacle-type QVIs
of the form \eqref{eq:QVI} are indeed covered 
by our general abstract semismooth Newton framework provided 
the involved quantities are sufficiently well behaved
(see \cref{ass:standing_obstacle_map,ass:SemilinearObstacleMap}). 
\end{itemize}
\medskip

We remark that the numerical realization of semismooth Newton methods 
for obstacle-type QVIs is also an interesting field on its own, 
in particular as the residue function $R(x) := x - H(x)$ arising 
in the context of problems like \eqref{eq:QVI}   
involves the solution map $S$ of the variational inequality 
\eqref{eq:upper_obst_prob_again}  and since the 
Newton derivatives that appear depend on the active, inactive,
and strictly active set of the current Newton iterate. 
For a detailed discussion of how we tackle these challenging problems 
in our numerical implementation, we refer the reader to \cref{sec:7}.

\subsection{Notation and basic concepts}
\label{sec:2}

Throughout this paper, 
we denote by $\|\cdot\|_X$ the norm of a (real) normed vector space $X$. For the closed 
ball of radius $r>0$ centered at a point $c \in X$,
we write $B_r^X(c)$. A sequence $\{x_n\} \subset X$ is said to converge 
$q$-superlinearly to $x \in X$ if $x_n \to x$ and 
$\|x_{n+1} - x\|_X \leq o(1)\|x_n - x\|_X$ 
hold for $n \to \infty$,
where
the Landau notation $o(1)$ 
represents a term that vanishes in the limit. 
Given two normed spaces $X$ and $Y$,  
we use the symbol $\LL(X,Y)$ to denote the 
space of linear and continuous functions from $X$ to $Y$.
We write 
$X^* := \LL(X, \R)$ for the topological dual space 
of $X$. 
The evaluation of an element $x^* \in X^*$
at $x \in X$ is denoted by the dual pairing 
$\langle x^*, x\rangle_{X^*,X}$. 
For the identity map, 
we use the symbol $\Id$. 
If $X$ is Hilbert, then 
$(\cdot,\cdot)_X$ denotes
the inner product of $X$,
$V^\perp$ stands for the orthogonal 
complement of a closed subspace $V$ of $X$,
and $P_B(x) := \mathrm{argmin}_{z \in B} \|x - z\|_X$ is
 the metric projection 
onto a closed convex nonempty set $B \subset X$.

Given mappings $F\colon X \to Y$ and $G\colon Y \to Z$
between normed spaces $X$, $Y$, and $Z$,
the composition 
of $G$ and $F$
is denoted by $G\circ F\colon X \to Z$. In the case of linear 
operators, the symbol $\circ$ is often dropped. 
The image of a set $D\subset X$
under $F$ is denoted by $F(D)$.
Recall that a function $F\colon D \to Y$
defined on a nonempty subset $D$ of a normed space $X$
with values in 
a normed space $Y$ is 
called \emph{Newton differentiable} 
with \emph{(Newton) derivative}
$G_F\colon D \to \LL(X,Y)$ if\vspace{-0.1cm}
\begin{equation}
\label{eq:NewtonDiffDef}
\lim_{\substack{0 <  \|h\|_{X} \to 0,\\x + h \in D}}
\frac{\|F(x + h) - F(x) - G_F(x+h)h\|_{Y}}{\|h\|_{X}} = 0\qquad \forall x \in D.\vspace{-0.1cm}
\end{equation}
We remark that Newton derivatives 
are often defined 
as set-valued mappings in the literature;
see, e.g., \cite[Definition 2.11]{ChristofWachsmuthUniObstacle}---we make a slight abuse of notation and assume $G_F$ is the realization of one of the elements in the set.

Given a nonempty open set $\Omega \subset \R^d$,
$d \in \N$, we denote by 
$C(\bar \Omega)$, 
$C_c^\infty(\Omega)$, and 
$C^m(\Omega)$ for $m \in \N$
the usual subspaces of the space 
$C(\Omega)$ of real-valued continuous functions on $\Omega$.
For the boundary, 
Lebesgue measure, and diameter
of $\Omega$, we use the symbols $\partial \Omega$,
$|\Omega|$, and $\diam(\Omega)$, respectively.
The real Lebesgue and Sobolev spaces 
on $\Omega$ are denoted as usual 
by $L^p(\Omega)$, $W^{m,p}(\Omega)$, and $H^m(\Omega)$ for 
$m \in \N$, $p \in [1,\infty]$.
If $\Omega$ is bounded, 
then we
define $H_0^1(\Omega)$
to be the closure of 
$C_c^\infty(\Omega)$ in 
\smash{$(H^1(\Omega), \norm{\cdot}{H^1(\Omega)})$} and endow it
with the norm
$ \|v\|_{H_0^1(\Omega)} := 
\| |\nabla v |\|_{L^2(\Omega)}$.
Here, 
$\nabla$ denotes the weak gradient and
$|\cdot|$ the 
 Euclidean norm.
 We write $\smash{H^{-1}(\Omega)}$ for the dual space of $H_0^1(\Omega)$. 
 The (distributional)
 Laplacian and the (weak) normal derivative
 are denoted by $\Delta$ and $\partial_\nu$, respectively. 
 For $d=1$, derivatives are denoted by a prime. We use $C_P(\Omega)$ to denote the constant in the Poincaré--Friedrichs inequality $\|v\|_{L^2(\Omega)} \leq C_P(\Omega) \| v \|_{H_0^1(\Omega)}$ for $v \in H^1_0(\Omega)$.

 Given a locally
 Lipschitz continuous function $g\colon \R \to \R$,
 we  define 
 $\mathrm{Lip}(g, [a,b])
 := \min\{c \geq 0 \colon$ $|g(s_1) - g(s_2)| \leq 
 c|s_1 - s_2|~\forall s_1, s_2 \in [a,b]\}$
 to be the Lipschitz constant of $g$ on $[a,b]$,
 $a < b$. 
 In this case, we further write 
 $\partial_c g(x) \subset \R$
 for Clarke's generalized differential of $g$
 at $x$ in the sense of \cite[\S 2.1]{Clarke:1990}.
 If $g$ is globally Lipschitz,
 then $\Lip{g}$ denotes
 the Lipschitz constant of $g$ on 
$\R$. 

\section{Semismooth Newton methods for fixed-point problems}
\label{sec:framework_fp_H}
In this section, 
we develop an inexact 
semismooth Newton framework for the obstacle-type QVI \labelcref{eq:QVI} by addressing the more general 
fixed-point equation \labelcref{eq:general_FP_H_intro}.

\subsection{Vanilla inexact semismooth Newton method}\label{sec:vanillaSSN}
We begin with \cref{algo:semiNewtonAbstractGeneral} which constitutes a 
standard inexact semismooth Newton method for solving \labelcref{eq:general_FP_H_intro} and its convergence properties are stated in \cref{th:convergenceGeneral}. Throughout this paper, we use the symbol $R$ to denote the residue function
\begin{align*}
R\colon X \to X,\qquad R(x) := x - H(x),
\end{align*}
of the equation \eqref{eq:general_FP_H_intro}.
\vspace{-0.25cm}

\begin{algorithm}[H]
\caption{Vanilla inexact semismooth Newton method for the solution of \eqref{eq:general_FP_H_intro}}
\label{algo:semiNewtonAbstractGeneral}
\begin{algorithmic}[1]
 \State {\bf Input:} Initial guess $x_0 \in X$, 
                tolerance $\Ntol \geq 0$,
               and sequence  $\{\rho_i\} \subset [0,\infty)$.
\State {\bf Output:} $x^* \in X$ satisfying $\|R(x^*)\|_X \leq \Ntol$, where $R(x) := x - H(x)$.
    \For{$i = 0,1,2,3,\ldots$}
        \If{$\|R(x_i)\|_{X} \leq \Ntol$}\label{algo1:term_general}
            \State Set $x^* := x_i$
             and stop the iteration (convergence reached). 
        \Else
        \State\label{algo1:x_N_general}
            Compute $z_i \in X$ that satisfies 
            $R(x_i) + G_R(x_i)z_i \approx 0$ in the following sense:
            \begin{align*}
                \|R(x_i) + G_R(x_i)z_i\|_X \leq \rho_i \|R(x_i)\|_X.
            \end{align*}
        \State\label{algo1:update_formula}
        Set $x_{i+1} := x_i + z_i$.
        \EndIf  
    \EndFor
\end{algorithmic}
\end{algorithm}
\vspace{-0.4cm}

\begin{theorem}[{Local convergence of 
\cref{algo:semiNewtonAbstractGeneral}}]
\label{th:convergenceGeneral}
Consider
the fixed-point equation 
\eqref{eq:general_FP_H_intro} involving 
 a Banach space $X$
and a map $H\colon X \to X$. 
Let $R\colon X \to X$, $R(x) := x - H(x)$, denote the residue function of \eqref{eq:general_FP_H_intro}. 
Suppose that the following holds:
\begin{enumerate}[label=\roman*)]
\item $B \subset X$ is an open set 
containing a solution $\bar x$ of \eqref{eq:general_FP_H_intro}, i.e., $\bar x=H(\bar x)$;
\item $R\colon X \to X$ is Newton differentiable 
on $B$ with Newton derivatives
$G_R(x)$ for $x \in B$;
\item\label{ass:general_inexact:ii} 
$R\colon X \to X$ is $L$-Lipschitz on $B$ for some 
 $L \in [0,\infty)$, i.e., 
\begin{equation}
\label{eq:Phi_L_Lip_inexact}
\| R(x_1) - R(x_2)\|_X
\leq
L \|x_1 - x_2\|_X
\quad
\forall x_1, x_2 \in B;
\end{equation}
\item $G_R(x)$ is invertible for all $x \in B$  and there exists a number $M \in [0,\infty)$ such that 
\[\norm{G_R(x)^{-1}}{\LL(X,X)} \leq M \quad \forall x \in B;\]
\item $\{\rho_i\}$ satisfies $\{\rho_i\} \subset [0, \rho^*]$ for some 
$\rho^* \in [0,\infty)$ with $ML\rho^* < 1$.
\end{enumerate}\pagebreak
Then there exists $r >0$ such that
the standard semismooth Newton method, i.e.,    \cref{algo:semiNewtonAbstractGeneral}, 
satisfies the following for all $x_0 \in B_r^X(\bar x)$:%
\begin{enumerate}[label=\roman*)]
\item\label{vanilla:assertion:i}
If $\Ntol > 0$ holds, then  \cref{algo:semiNewtonAbstractGeneral} 
terminates after finitely many steps.
\item\label{vanilla:assertion:ii} If $\Ntol = 0$ holds, then \cref{algo:semiNewtonAbstractGeneral}  
produces  iterates 
that converge finitely or q-linearly to $\bar x$.
\item\label{vanilla:assertion:iii} 
If $\Ntol = 0$ holds and $\rho_i \to 0$ for $i \to \infty$, 
 then \cref{algo:semiNewtonAbstractGeneral}  
produces a sequence of iterates $\{x_i\}$
that converges finitely or $q$-superlinearly to $\bar x$.
\end{enumerate}
\end{theorem}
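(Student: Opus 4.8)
The plan is to reduce the whole analysis to one contraction estimate for a single (inexact) Newton step and then propagate it inductively. First I would localize: since $B$ is open, fix $r_0 > 0$ with $B_{r_0}^X(\bar x) \subset B$; then, invoking the Newton differentiability \eqref{eq:NewtonDiffDef} of $R$ at the base point $x = \bar x$ — so that the Newton derivative $G_R$ is evaluated at the perturbed point $\bar x + h$, which will play the role of the current iterate — pick $\varepsilon > 0$ small enough that $q := M(\varepsilon + L\rho^*) < 1$ (possible because $ML\rho^* < 1$; note this also forces $M\varepsilon < 1$), and then pick $r \in (0, r_0)$ such that $\|R(x) - G_R(x)(x - \bar x)\|_X \le \varepsilon\|x - \bar x\|_X$ for all $x \in B_r^X(\bar x)\setminus\{\bar x\}$ (using $R(\bar x) = 0$). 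The degenerate case $M = 0$ forces $X = \{0\}$ and can be dismissed at once.

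The core is the one-step estimate. For an iterate $x_i \in B_r^X(\bar x)$ with $x_i \neq \bar x$, write the update as $x_{i+1} = x_i + z_i$ where $s_i := R(x_i) + G_R(x_i) z_i$ satisfies $\|s_i\|_X \le \rho_i\|R(x_i)\|_X$; since $G_R(x_i)$ is invertible with $\|G_R(x_i)^{-1}\|_{\LL(X,X)} \le M$, such a $z_i$ always exists (e.g.\ the exact Newton step). Solving for $z_i$ and using $R(\bar x) = 0$ gives $x_{i+1} - \bar x = G_R(x_i)^{-1}\bigl[\,(G_R(x_i)(x_i - \bar x) - R(x_i)) + s_i\,\bigr]$; bounding the bracketed difference by Newton differentiability and using $\|s_i\|_X \le \rho_i\|R(x_i)\|_X = \rho_i\|R(x_i) - R(\bar x)\|_X \le \rho_i L\|x_i - \bar x\|_X$ (Lipschitz continuity of $R$ together with $R(\bar x) = 0$) yields $\|x_{i+1} - \bar x\|_X \le M(\varepsilon + \rho_i L)\|x_i - \bar x\|_X \le q\|x_i - \bar x\|_X < \|x_i - \bar x\|_X \le r$. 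If instead $x_i = \bar x$, then $R(x_i) = 0$ forces $z_i = 0$ by invertibility, so $x_{i+1} = \bar x$. By induction, every iterate produced by \cref{algo:semiNewtonAbstractGeneral} from $x_0 \in B_r^X(\bar x)$ stays in $B_r^X(\bar x)$, and either $x_i = \bar x$ for some finite $i$ or $\|x_{i+1} - \bar x\|_X \le q\|x_i - \bar x\|_X$ for all $i$. I would also record the local uniqueness of $\bar x$: if $x \in B_r^X(\bar x)$ and $R(x) = 0$, then $\|G_R(x)(x - \bar x)\|_X = \|R(x) - G_R(x)(x - \bar x)\|_X \le \varepsilon\|x - \bar x\|_X$, so $\|x - \bar x\|_X \le M\varepsilon\|x - \bar x\|_X$ and hence $x = \bar x$ since $M\varepsilon < 1$.

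The three assertions then follow. For \ref{vanilla:assertion:i} ($\Ntol > 0$): if the algorithm never terminated, it would generate an infinite sequence of iterates, all in $B_r^X(\bar x)$, with $\|R(x_i)\|_X > \Ntol$; then $x_i \neq \bar x$ and $\|x_i - \bar x\|_X \le q^i\|x_0 - \bar x\|_X \to 0$, so $\|R(x_i)\|_X = \|R(x_i) - R(\bar x)\|_X \le L\|x_i - \bar x\|_X \to 0$, contradicting $\|R(x_i)\|_X > \Ntol > 0$. For \ref{vanilla:assertion:ii} ($\Ntol = 0$): termination at step $i$ means $R(x_i) = 0$, hence $x_i = \bar x$ by local uniqueness — finite convergence; otherwise $x_i \neq \bar x$ for all $i$ and the one-step estimate gives $\|x_{i+1} - \bar x\|_X \le q\|x_i - \bar x\|_X$ with $x_i \to \bar x$, i.e.\ $q$-linear convergence. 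For \ref{vanilla:assertion:iii}: in the non-finite case, rerun the one-step estimate with the fixed $\varepsilon$ replaced by $a_i := \|R(x_i) - G_R(x_i)(x_i - \bar x)\|_X/\|x_i - \bar x\|_X$, which tends to $0$ by Newton differentiability at $\bar x$ (since $x_i \to \bar x$ and $x_i \neq \bar x$), while $\rho_i L \to 0$ by hypothesis; this gives $\|x_{i+1} - \bar x\|_X \le M(a_i + \rho_i L)\|x_i - \bar x\|_X$ with coefficient tending to $0$, i.e.\ $q$-superlinear convergence.

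I expect the main obstacle to be one of careful bookkeeping rather than depth: one must coordinate the three radii/tolerances ($r_0$ from the openness of $B$, the $\delta$ implicit in Newton differentiability, and $\varepsilon$ from the requirement $q < 1$) with the inexactness budget $\rho^*$; apply the Newton differentiability condition at the \emph{solution} $\bar x$ with the derivative evaluated at the current iterate; and cleanly dispatch the edge cases $x_i = \bar x$, $R(x_i) = 0$ with $x_i$ not a priori equal to $\bar x$, and degenerate $M$ (and $L$).
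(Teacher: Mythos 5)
Your proof is correct and follows essentially the same route as the paper's: localize to a ball $B_r^X(\bar x) \subset B$ via openness and Newton differentiability, derive the one-step contraction $\|x_{i+1}-\bar x\|_X \le M(\varepsilon + L\rho_i)\|x_i-\bar x\|_X$ from $x_{i+1}-\bar x = G_R(x_i)^{-1}\bigl[(G_R(x_i)(x_i-\bar x) - R(x_i)) + s_i\bigr]$, and read off (i)--(iii) by iterating it, with (iii) obtained by replacing the fixed $\varepsilon$ with the decaying Newton-differentiability ratio. Your explicit local-uniqueness argument (used to turn termination at $R(x_i)=0$ into $x_i=\bar x$) and your dispatch of the degenerate cases $x_i=\bar x$ and $M=0$ make explicit some bookkeeping that the paper leaves implicit, but they do not change the argument.
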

Since the proof is relatively standard, we give it in \cref{sec:proofVanillaSSN}.  A few remarks:
\smallskip
\begin{itemize}%[label=\roman*)]
\setlength\itemsep{0.1cm}
\item \textbf{Selection principle.} We assume that a selection principle has been applied for the Newton derivative and hence $G_R(x_i)$ is not set-valued.
\item \textbf{Implementation of step \ref{algo1:x_N_general} of \cref{algo:semiNewtonAbstractGeneral}.} The realization of step \ref{algo1:x_N_general} (evaluating $R(x_i)$ and computing the action of $G_R(x_i)$ or $G_R(x_i)^{-1}$, respectively) is dependent on the precise form of $R$, $G_R$, and $X$. 
In \cref{subsec:7.1}, we detail how to implement step \ref{algo1:x_N_general} for a piecewise (bi)linear finite element discretization for a particular instance
 of the obstacle-type QVI \eqref{eq:QVI}.
\item \textbf{Measure of inexactness.}
Expressing the accuracy 
requirements on the update steps in terms of the norm of the residue is common practice in the 
analysis of inexact Newton methods;
cf.\ \cite{Dembo1982} and \cite[\S 3.2.4]{Ulbrich2011}.
We remark that, in the context of obstacle-type QVIs, 
ensuring that the iterates are sufficiently accurate can be a 
delicate matter. For details on this topic, we refer to \cref{sec:7}.
\item \textbf{Choice of the forcing sequence $\boldsymbol{\{\rho_i\}}$.}
Some choices for the forcing sequence $\{\rho_i\}$ are
\[
\rho_i = \norm{x_i-x_{i-1}}{X}
\qquad
\text{or}
\qquad
\rho_i = \min(\norm{x_i-x_{i-1}}{X}, \alpha_i),
\]
where $\{\alpha_i\} \subset (0,\infty)$ 
is a sequence satisfying $\alpha_i \to 0$ used for safeguarding.

Note that the forcing sequence $\{\rho_i\}$ must be chosen carefully to avoid oversolving. If it decays too quickly, then the convergence rate does not justify the computational cost whereas decaying too slowly will impair the $q$-superlinear convergence.
\item \textbf{Globalization.}
\cref{algo:semiNewtonAbstractGeneral} only guarantees convergence if the initial guess is sufficiently close to the solution. Globalizing semismooth Newton methods is a delicate topic and there is often a tradeoff between the assumptions a particular globalization technique requires and its computational cost. A globalization will likely require many more evaluations of $H$ which might be prohibitively expensive. For example, in the context of obstacle-type QVIs \labelcref{eq:QVI}, each evaluation of $H$ requires the solve of an obstacle problem. In the next subsection, we explore a cheap globalization technique that requires a contraction assumption. We also consider a 
globalization based on a merit function and an Armijo linesearch \cite{nocedal2000, gerdts2017} in \cref{sec:7}. 
\end{itemize}

%Note that the main difference between 
%the convergence result of the vanilla semismooth Newton method in \cref{th:convergenceGeneral}
%and the ones in \cref{th:convergence,th:loc_conv}
%is that \cref{th:convergenceGeneral} 
%\emph{does not} require a contractivity property
%similar to those in 
% \cref{ass:standing_general_H}\ref{ass:standing_general_proj_H:iv} and  \cref{ass:standing_general_proj_H}\ref{ass:standing_general_proj_H:iv}.
%The price that one pays for this increased generality is that 
%\cref{th:convergenceGeneral} only implies convergence for starting values 
% $x_0$ in an unknown, potentially small 
%open neighborhood of the  unknown problem solution $\bar x$;
%in contrast to 
% \cref{th:convergence,th:loc_conv}
%which provide convergence guarantees for \emph{all} 
%starting values $x_0 \in X$.  

\subsection{Global \texorpdfstring{$q$-superlinear~}~convergence for contractive equations}
\label{subsec:global_contraction}

In this subsection, we show that if  \eqref{eq:general_FP_H_intro} satisfies a contraction condition, then a small modification of \cref{algo:semiNewtonAbstractGeneral} will guarantee global convergence. Consider the following assumptions:

\begin{assumption}[Global contraction assumptions]
\label{ass:standing_general_H}~
\begin{enumerate}[label=\roman*)]
\item
X is a Banach space;
\item\label{ass:standing_general:Hii} 
 $H\colon X \to X$ is a Newton differentiable
function with Newton derivative
$G_H\colon X \to \LL(X, X)$
and the residue function $R\colon X \to X$ is 
endowed with the Newton derivative $G_R := \Id - G_H$;
\item\label{ass:standing_general:Hiii} 
there exists $\gamma \in [0,1)$ such that 
$H \colon X \to X$ is globally $\gamma$-Lipschitz, i.e., 
\begin{equation}
\label{eq:H_gamma_Lip}
\| H(x_1) - H(x_2)\|_X
\leq
\gamma \|x_1 - x_2\|_X
\quad
\forall x_1, x_2 \in X,
\end{equation}
and
\begin{equation}
\label{eq:GH_bound-2}
\sup_{x \in X} \left \|G_H(x)\right\|_{\LL(X,X)} \leq \gamma.
\end{equation}
\end{enumerate}
\end{assumption}
Note that the contraction conditions in point \ref{ass:standing_general:Hiii} 
of \cref{ass:standing_general_H} are restrictive in general applications.
However, in the field of QVIs,
they 
are assumed anyway in many situations 
to guarantee the Hadamard well-posedness of the problem; 
see \cite{AHR,WachsmuthQVIs,Alphonse2020,AHRW,Alphonse2022-2,Alphonse2022}.
Regarding 
\eqref{eq:GH_bound-2}, it should be noted that,
in practice, the uniform $\gamma$-bound on 
$G_H(x)$ is often a 
direct consequence of the Lipschitz estimate \eqref{eq:H_gamma_Lip}. 
In \cref{sec:framework_fp_H_multiple_solns}, we discuss techniques for localizing the
contractivity assumption \eqref{eq:H_gamma_Lip}.
 
We begin our analysis by noting 
that, in the situation of \cref{ass:standing_general_H},
the existence and uniqueness of solutions 
of \eqref{eq:general_FP_H_intro} are immediate consequences 
of the Banach fixed-point theorem.

\begin{lemma}[Unique solvability of \eqref{eq:general_FP_H_intro}]
\label{lem:uniqueF_H}
Suppose that \cref{ass:standing_general_H} holds. Then the problem \eqref{eq:general_FP_H_intro} has a unique solution $\bar x \in X$.
\end{lemma}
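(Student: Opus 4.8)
The plan is to invoke the Banach fixed-point theorem directly, since Assumption~\ref{ass:standing_general_H} hands us exactly the two hypotheses it requires: the ambient space $X$ is a Banach space by point i), and the map $H\colon X \to X$ is a contraction by the estimate \eqref{eq:H_gamma_Lip} in point iii), with contraction modulus $\gamma \in [0,1)$. Note that only the Lipschitz bound \eqref{eq:H_gamma_Lip} is needed here; the uniform bound \eqref{eq:GH_bound-2} on the Newton derivative and the Newton differentiability in point ii) play no role in this particular statement and can be ignored.

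The key steps are as follows. First I would observe that $H$ maps $X$ into $X$ and is globally defined, so the Picard iteration $x_{n+1} := H(x_n)$ starting from an arbitrary $x_0 \in X$ is well-defined. Then, applying \eqref{eq:H_gamma_Lip} inductively gives $\|x_{n+1} - x_n\|_X \leq \gamma^n \|x_1 - x_0\|_X$, and a standard telescoping/geometric-series estimate shows $\{x_n\}$ is Cauchy in $X$; completeness of $X$ yields a limit $\bar x \in X$. Continuity of $H$ (immediate from \eqref{eq:H_gamma_Lip}) then passes to the limit in $x_{n+1} = H(x_n)$ to give $\bar x = H(\bar x)$, establishing existence. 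For uniqueness, if $\bar x$ and $\bar y$ both solve \eqref{eq:general_FP_H_intro}, then $\|\bar x - \bar y\|_X = \|H(\bar x) - H(\bar y)\|_X \leq \gamma \|\bar x - \bar y\|_X$, which forces $(1-\gamma)\|\bar x - \bar y\|_X \leq 0$ and hence $\bar x = \bar y$ since $\gamma < 1$.

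Honestly, there is no real obstacle here: this is a textbook application of the contraction mapping principle, and the only thing to be careful about is to cite it cleanly and to state explicitly which parts of Assumption~\ref{ass:standing_general_H} are invoked. The authors will almost certainly write a one- or two-sentence proof that simply says "apply the Banach fixed-point theorem, using point~i) and \eqref{eq:H_gamma_Lip} of \cref{ass:standing_general_H}." I would do the same, perhaps spelling out the uniqueness inequality for completeness, since it is a single line.
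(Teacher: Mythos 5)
Your proposal is correct and matches the paper's approach exactly: the authors state immediately before the lemma that existence and uniqueness follow from the Banach fixed-point theorem under Assumption~\ref{ass:standing_general_H}, and give no further proof. Your spelled-out Picard iteration and uniqueness inequality are a fine (optional) expansion of that one-line citation.
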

 
The globalized semismooth inexact Newton method
that we propose for the solution of 
\eqref{eq:general_FP_H_intro}, under the assumptions \eqref{eq:H_gamma_Lip} and \eqref{eq:GH_bound-2}, is stated in
\cref{algo:semiNewtonAbstract}.
\vspace{-0.2cm}

\begin{algorithm}[H]
\caption{Globally convergent inexact semismooth Newton method for the solution of \eqref{eq:general_FP_H_intro}}
\label{algo:semiNewtonAbstract}
\begin{algorithmic}[1]
 \State {\bf Input:} Initial guess $x_0 \in X$, arbitrary constant $\tau^* \in [0,\infty]$, 
                tolerance $\Ntol \geq 0$,
               and sequences $\{\tau_i\} \subset [0,\infty) \cap [0,\tau^*], \{\rho_i\} \subset [0,\infty)$
               satisfying $\tau_i \to 0, \rho_i \to 0$.
\State {\bf Output:} $x^* \in X$ satisfying $\|R(x^*)\|_X \leq \Ntol$.
    \For{$i = 0,1,2,3,\ldots$}
        \If{$\|R(x_i)\|_{X} \leq \Ntol$}\label{algo1:term}
            \State Set $x^* := x_i$
            and stop the iteration (convergence reached). 
        \Else
        \State\label{algo1:x_B}
            Compute $x_B \in X$ that satisfies 
             $x_B \approx H(x_i)$
            in the following sense:
            \begin{equation}
            \label{eq:xB_update}
                \|x_B - H(x_i) \|_X \leq 
                \tau_i\|R(x_i)\|_X.
            \end{equation}
        \State\label{algo1:x_N}
        	Compute $x_N \in X$ that satisfies 
             $R(x_i) + G_R(x_i)(x_N -x_i) \approx 0$  in the following sense:
            \begin{equation}
            \label{eq:xN_update}
                \|R(x_i) + G_R(x_i)(x_N -x_i)\|_X \leq \rho_i \|R(x_i)\|_X.
            \end{equation}
              \If{$\|R(x_N)\|_{X} \leq \|R(x_B)\|_{X}$}\label{algo1:decision}
              \State Set $x_{i+1} := x_N$.
        \Else
              \State Set $x_{i+1} := x_B$.
        \EndIf
        \EndIf
    \EndFor
\end{algorithmic}
\end{algorithm}
\vspace{-0.2cm}

Before we state the convergence properties of \cref{algo:semiNewtonAbstract}, a few remarks:
\smallskip
\begin{itemize}%[label=\roman*)]
\setlength\itemsep{0.1cm}
\item\textbf{Vanilla semismooth Newton.}  
By choosing a very large constant $\tau^*$,
a sequence $\{\tau_i\}$ that goes to zero very slowly, 
and trial iterates $x_B$ with large residues $\|R(x_B)\|_{X}$,
one can essentially switch off the safeguarding by 
means of the fixed-point iterates in \cref{algo:semiNewtonAbstract}. 
If run in such a configuration, 
\cref{algo:semiNewtonAbstract}
effectively behaves like a vanilla semismooth Newton method
in numerical experiments, i.e., like \cref{algo:semiNewtonAbstractGeneral} in \cref{sec:vanillaSSN}. 
%\item \textbf{Fixed point vs. Newton steps.} 
%One might expect that, after a sufficient number of steps, 
%\cref{algo:semiNewtonAbstract} always picks the Newton update (due to
% the decision rule in line \ref{algo1:decision}). This does not need to be the case though
%as fixed-point iterations can, in certain situations, exhibit quadratic convergence. This quadratic behavior can even be observed in one of our experiments (see \cref{tab:test2-convergence} and the experimental orders of convergence for the fixed point iteration, there labeled as (C1)). However, if the fixed-point iterates converge only 
%linearly (which is the generic behavior), 
%then \cref{algo:semiNewtonAbstract} eventually picks only Newton steps;
%cf.\ the estimate \eqref{eq:long_conv_estimate} in the proof of 
%\cref{th:convergence} below. The main idea of \cref{algo:semiNewtonAbstract}
%is to produce a sequence $\{x_i\}$ that combines the 
%global linear convergence of fixed-point iterates
%with the local $q$-superlinear convergence that 
%Newton iterates are guaranteed to possess.
 \item \textbf{Choice of the sequence $\boldsymbol{\{\tau_i\}}$.}
 The proof of \cref{th:convergence} on the convergence of \cref{algo:semiNewtonAbstract} below
 hinges on the fact that, either for all or for all sufficiently large $i$,
 one has 
 $ \tau_i + \gamma + \gamma\tau_i < 1$, where $\gamma$ is the contraction factor.
 If, when solving a particular problem, the value of $\gamma$ is known, then one can simply choose the constant sequence $\tau_i  = (\lambda-\gamma)\slash(1+\gamma)$ for an arbitrary $\lambda \in (\gamma, 1)$ to ensure this inequality and then the property $\tau_i \to 0$ is not needed. 
 %This would be the optimal situation as one can avail of the upper bound on the number of iterations given in \eqref{eq:iter_bound}. 
%Since we may wish to apply our Newton scheme in applications where the precise value of $\gamma$ is not known, we decided to make the algorithm independent of all problem-specific parameters, hence the assumption that $\{\tau_i\}$ goes to zero. 

\item \textbf{Mesh-independence.}
Note that the bound on the iteration index in \eqref{eq:iter_bound} below
is independent of discretization quantities. This is a strong indicator for
mesh-independence.
\end{itemize}

\begin{theorem}[{Finite and global 
$q$-superlinear convergence of 
\cref{algo:semiNewtonAbstract}}]
\label{th:convergence}
Suppose that \cref{ass:standing_general_H} holds. Let $x_0 \in X$ and $\tau^* \in [0,\infty]$ be arbitrary. Let $\{\tau_i\} \subset [0,\infty) \cap [0, \tau^*]$ and $\{\rho_i\} \subset [0,\infty)$
               satisfy $\tau_i \to 0$ and $\rho_i \to 0$. 

\begin{enumerate}[label=\roman*)]
\item\label{th:convergence:i}
If  $\Ntol > 0$ holds
and $\tau^*$ satisfies $\tau^* \leq (\lambda-\gamma)/(1+\gamma)$
for some $\lambda \in (\gamma, 1)$, then 
the termination criterion in line \ref{algo1:term}
of 
\cref{algo:semiNewtonAbstract} 
is triggered
for an iteration index $i \in \mathbb{N}_0$
satisfying
\begin{equation}
\label{eq:iter_bound}
0 \leq i \leq 
\begin{cases}
0 & \text{ if } \|R(x_0)\|_{X} \leq \Ntol,
\\
 \left \lceil\frac{\ln(\Ntol)- \ln(\|R(x_0)\|_X)}{\ln(\lambda) } \right \rceil 
& \text{ if } \|R(x_0)\|_{X} > \Ntol,
\end{cases}
\end{equation}
and the last produced iterate $x^* = x_i$ satisfies
\[
\|R(x^*)\|_{X} \leq \Ntol
\qquad
\text{and}
\qquad
\|x^* - \bar x\|_X \leq 
\smash{\frac{\Ntol}{1-\gamma}}.
\]
Here, $\lceil \cdot \rceil\colon (0, \infty)\to \mathbb{N}$ denotes the operation of rounding up
to the nearest larger integer. 
\item\label{th:convergence:ii} If $\Ntol = 0$ holds, then \cref{algo:semiNewtonAbstract} either terminates 
after finitely many steps with the solution $x^* = \bar x$ of \eqref{eq:general_FP_H_intro}
or produces a sequence of iterates $\{x_i\}$ that satisfies
\[
x_i \to \bar x \text{ $q$-superlinearly in $X$}
\qquad
\text{and}
\qquad
R(x_i) \to 0 \text{ $q$-superlinearly in $X$}.
\]
\end{enumerate}
\end{theorem}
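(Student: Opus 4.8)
The plan is to control the residue norm $\|R(x_i)\|_X$ and the error $\|x_i-\bar x\|_X$ simultaneously, where $\bar x$ is the unique solution provided by \cref{lem:uniqueF_H}. The basic tool is the two-sided estimate $(1-\gamma)\|x-\bar x\|_X \le \|R(x)\|_X \le (1+\gamma)\|x-\bar x\|_X$, valid for all $x\in X$, which follows by writing $R(x)=(x-\bar x)-(H(x)-H(\bar x))$ and invoking \eqref{eq:H_gamma_Lip} together with the forward and reverse triangle inequalities. Taking $x=x^*$ in the lower bound already yields the error estimate $\|x^*-\bar x\|_X\le\|R(x^*)\|_X/(1-\gamma)\le\Ntol/(1-\gamma)$ claimed in part~i) as soon as the test $\|R(x^*)\|_X\le\Ntol$ has fired. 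I would also note at the outset that, by the Neumann series, \eqref{eq:GH_bound-2} makes $G_R(x)=\Id-G_H(x)$ boundedly invertible for every $x$ with $\|G_R(x)^{-1}\|_{\LL(X,X)}\le(1-\gamma)^{-1}$, and that $R$ is globally $(1+\gamma)$-Lipschitz; both facts are used freely below.

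The next step is to estimate the residue produced by the fixed-point trial step. From \eqref{eq:xB_update} and the triangle inequality one has $\|x_i-x_B\|_X\le(1+\tau_i)\|R(x_i)\|_X$ (going through $H(x_i)$), hence, using $R(x_B)=x_B-H(x_B)$, the $\gamma$-Lipschitz property of $H$, and \eqref{eq:xB_update} once more,
\[
\|R(x_B)\|_X \le \tau_i\|R(x_i)\|_X + \gamma\|x_i-x_B\|_X \le \bigl(\gamma + \tau_i(1+\gamma)\bigr)\|R(x_i)\|_X .
\]
Since \cref{algo:semiNewtonAbstract} always sets $x_{i+1}$ to whichever of $x_N,x_B$ has the smaller residue, this gives $\|R(x_{i+1})\|_X \le \bigl(\gamma+\tau_i(1+\gamma)\bigr)\|R(x_i)\|_X$ for every non-terminating iteration. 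Under the hypothesis of part~i) the bracket is bounded by $\lambda<1$ for \emph{all} $i$, so $\|R(x_i)\|_X\le\lambda^i\|R(x_0)\|_X$ until termination, and solving $\lambda^i\|R(x_0)\|_X\le\Ntol$ for $i$ produces exactly the index bound \eqref{eq:iter_bound}; this settles part~i). For part~ii) with $\Ntol=0$: if the algorithm terminates, then $\|R(x_i)\|_X\le 0$ forces $R(x_i)=0$, i.e.\ $x_i=\bar x$; otherwise $\tau_i\to0$ makes the bracket tend to $\gamma<1$, giving eventual $q$-linear decay of $\|R(x_i)\|_X$, hence $R(x_i)\to0$ and, by the two-sided estimate, $x_i\to\bar x$.

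It remains to upgrade this to a $q$-superlinear rate, and this is where Newton differentiability enters. Setting $r_i:=R(x_i)+G_R(x_i)(x_N-x_i)$, so that $\|r_i\|_X\le\rho_i\|R(x_i)\|_X$ by \eqref{eq:xN_update}, and using $R(\bar x)=0$, one rearranges
\[
x_N-\bar x = G_R(x_i)^{-1}\Bigl[\bigl(G_R(x_i)(x_i-\bar x)-R(x_i)+R(\bar x)\bigr)+r_i\Bigr].
\]
Applying the Newton-differentiability condition \eqref{eq:NewtonDiffDef} at the point $\bar x$ with increment $h=x_i-\bar x\to0$ shows the first bracketed term is $o(\|x_i-\bar x\|_X)$, while $\|r_i\|_X\le\rho_i\|R(x_i)\|_X\le\rho_i(1+\gamma)\|x_i-\bar x\|_X$ with $\rho_i\to0$; together with the uniform bound on $G_R(x_i)^{-1}$ this yields $\|x_N-\bar x\|_X=o(1)\|x_i-\bar x\|_X$, and then $\|R(x_N)\|_X=o(1)\|R(x_i)\|_X$ via the two-sided estimate. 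Because $\|R(x_{i+1})\|_X\le\|R(x_N)\|_X$, it follows that $\|R(x_{i+1})\|_X=o(1)\|R(x_i)\|_X$ and $\|x_{i+1}-\bar x\|_X=o(1)\|x_i-\bar x\|_X$, so both $R(x_i)$ and $x_i$ converge $q$-superlinearly. The main subtlety — more bookkeeping than genuine obstacle — is getting the two halves in the right order: the $x_B$-estimate must first be used to drive $x_i\to\bar x$ from an arbitrary start, since only then are the $o(1)$-terms in the Newton estimate (which live in the limit $x_i\to\bar x$) meaningful; the min-selection in \cref{algo:semiNewtonAbstract} is exactly what lets a single iteration inherit both the global contraction from $x_B$ and the local superlinear rate from $x_N$.
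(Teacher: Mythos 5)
Your proof is correct and follows essentially the same route as the paper's: the fixed-point trial step $x_B$ is used to force $\|R(x_{i+1})\|_X\le(\tau_i+\gamma+\gamma\tau_i)\|R(x_i)\|_X$ and hence global ($q$-linear) convergence and the iteration bound of part i), and the Newton trial step $x_N$ together with the acceptance rule $\|R(x_{i+1})\|_X\le\|R(x_N)\|_X$ delivers the $q$-superlinear upgrade once $x_i\to\bar x$. The only cosmetic difference is that the paper packages the two-sided comparison $(1-\gamma)\|x-\bar x\|_X\le\|R(x)\|_X\le(1+\gamma)\|x-\bar x\|_X$ and the bound $\|G_R(x)^{-1}\|_{\LL(X,X)}\le(1-\gamma)^{-1}$ into a preliminary lemma on $R$, $R^{-1}$, and $G_R$ (via a Banach fixed-point argument for bijectivity and the Neumann series), whereas you derive the needed special cases at $\bar x$ directly; the paper also carries the superlinear estimate as one chain of inequalities on $\|R(x_{i+1})\|_X$ rather than first isolating $\|x_N-\bar x\|_X=o(1)\|x_i-\bar x\|_X$, but the decomposition of $G_R(x_i)(x_N-\bar x)$ into the Newton-differentiability remainder at $\bar x$ plus the inexactness term $r_i$ is exactly the same.
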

The proof of \cref{th:convergence} is a careful intertwining of the convergence proofs of a 
fixed-point iteration and a semismooth Newton method. We defer it to \cref{sec:proofVanillaSSN}.

\subsection{Localization of the contraction assumptions and multiple solutions}\label{sec:framework_fp_H_multiple_solns}

Next, we discuss possibilities to localize the contraction conditions 
in \cref{ass:standing_general_H}\ref{ass:standing_general:Hiii} whilst still retaining global convergence. 
The assumptions of this subsection are
 motivated by results on QVIs with multiple,
locally stable solutions (see \cite{AHR,WachsmuthQVIs,Alphonse2020,AHRW,Alphonse2022-2,Alphonse2022})
and they make it possible to apply our Newton framework also 
to fixed-point equations \eqref{eq:general_FP_H_intro} 
that are not uniquely solvable. 
We consider the 
 following setting.
 
\begin{assumption}[Local contraction assumptions]
\label{ass:standing_general_proj_H}
~
\begin{enumerate}[label=\roman*)]
\item\label{ass:standing_general_proj_H:ii} $X$ is a Hilbert space;
\item $H\colon X \to X$ is a Newton differentiable
function with Newton derivative
$G_H\colon X \to \LL(X, X)$;
\item\label{ass:standing_general_proj_H:iv}
there exist a nonempty closed convex set 
$B \subset X$ and a number 
$\gamma \in [0,1)$ such that
\begin{equation}
\label{eq:H_gamma_Lip_loc}
\| H(x_1) - H(x_2)\|_X
\leq
\gamma \|x_1 - x_2\|_X
\quad
\forall x_1, x_2 \in B
\end{equation}
and 
\begin{equation}
\label{eq:GH_bound-2_loc}
\sup_{x \in B} \left \|G_H (x)\right\|_{\LL(X,X)} \leq \gamma;
\end{equation}
\item\label{ass:standing_general_proj_H:v} the metric projection $P_B\colon X \to B$
in $(X, \|\cdot\|_X)$ onto $B$ is Newton differentiable 
with Newton derivative 
$G_{P_B}\colon X \to \mathcal{L}(X,X)$
and it holds
$
\norm{G_{P_B}(x)}{\LL(X,X)} \leq 1
$
for all
$x \in X$.
 \end{enumerate}
\end{assumption}
We will show in \cref{lem:semismooth_balls} below that \cref{ass:standing_general_proj_H}\ref{ass:standing_general_proj_H:v} holds in particular if $B$ is a closed ball in $X$. 
\begin{remark}
    It is possible to drop the requirement
    that $X$ is Hilbert in 
    \cref{ass:standing_general_proj_H}.
    If this is done, however, one needs 
    additional assumptions on $X$ and $B$ to ensure that the projection $P_B$ is well defined,
    single-valued, and Lipschitz continuous; see 
    \cite{Alber1196} and the references therein.
    (Note that projections are 
    typically not one-Lipschitz in the Banach space setting; cf.\  
    \cite[Example 6.1]{Goebel2018}.)
    We focus on the Hilbert space case in this subsection because it simplifies
    the presentation, covers almost all 
    practical applications, and yields
    easier-to-track estimates due to 
    the non-expansiveness of $P_B$.
\end{remark}

The main idea of the following analysis 
is to resort to the global situation 
studied in \cref{subsec:global_contraction} by 
composing the function $H$
in \eqref{eq:general_FP_H_intro}
with the projection $P_B$. That is, 
instead of \eqref{eq:general_FP_H_intro}, we consider the 
fixed-point equation
\begin{equation}
\tag{F$_{loc}$}
\label{eq:fp_problem_proj_H}
\text{Find } \hat x \in X 
\text{ such that } \hat x = H_B(\hat x),
\end{equation}
where
$ H_B \colon X \to X$ 
is defined by  $ H_B := H \circ P_B$.
Note that this approach only works because
our algorithm is able to handle nonsmooth functions.
As we will see below,  
by applying \cref{algo:semiNewtonAbstract}
to the modified equation \eqref{eq:fp_problem_proj_H}, 
we obtain a numerical method 
that is able 
to determine precisely the 
intersection of the solution set 
$\{x \in X \mid x = H(x)\}$
of the fixed-point equation \eqref{eq:general_FP_H_intro}
with the set $B$.
In practical applications, 
the set $B$ in \eqref{eq:fp_problem_proj_H} 
could, for example, 
be a closed ball  
in which the estimates
in \eqref{eq:H_gamma_Lip_loc} 
and  \eqref{eq:GH_bound-2_loc}
can be proven to hold;
see \cref{subsec:7.3}.
If one is interested in an abstract 
local convergence result
similar to the classical one discussed in \cref{sec:vanillaSSN}, 
then one 
can also assume that $B$ is a closed ball $B_\varepsilon^X(\bar x)$ 
that is centered at an isolated solution $\bar x$ of 
\eqref{eq:general_FP_H_intro}. 
In the latter case, 
the conditions \eqref{eq:H_gamma_Lip_loc} 
and  \eqref{eq:GH_bound-2_loc}
take a form that is also often
encountered in the sensitivity analysis 
of obstacle-type QVIs; cf.\ \cite{AHR, WachsmuthQVIs, AHRW}. 
That \cref{ass:standing_general_H} holds for the composition  $ H_B = H \circ P_B$
is proven in the following lemma.

\begin{lemma}[Properties of $H_B$]
\label{lem:H_c_props}
Suppose that \cref{ass:standing_general_proj_H} holds. Then:
\begin{enumerate}[label=\roman*)]
\item\label{lem:H_c_props:i}
$ H_B$ is Newton differentiable 
with Newton derivative  
$G_{ H_B}(x) := G_{H}(P_B(x))G_{P_B}(x)$.
% \item\label{lem:Phi_c_props:ii}
%  For every $x \in X$, there exist constants  
% $ C, \varepsilon > 0$ satisfying
% \begin{equation}
% \label{eq:Phi_c_loc_Lip}
% \|   \Phi_B(x + h)  -  \Phi_B(x) \|_Y
% \leq
% C\|h\|_X
% \quad
% \forall h \in B_\varepsilon^X(0);
% \end{equation}
\item\label{lem:H_c_props:iii}
 It holds 
\begin{equation}
\label{eq:loc_contrac_H}
\| H_B(x_1) - H_B(x_2)\|_X
\leq
\gamma \|x_1 - x_2\|_X \qquad \forall x_1, x_2 \in X
\end{equation}
and
\begin{equation}
\label{eq:loc_contrac_2_H}
\sup_{x \in X}\norm{G_{H_B}(x)}{\LL(X,X)}  \leq \gamma.
\end{equation}
\end{enumerate}
\end{lemma}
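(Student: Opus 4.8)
The plan is to obtain part \ref{lem:H_c_props:i} from a chain rule for Newton differentiable maps, and then to read off part \ref{lem:H_c_props:iii} by elementary norm estimates. For the chain rule, I would fix $z \in X$ and $h \in X \setminus \{0\}$, abbreviate $x_1 := P_B(z)$, $x_2 := P_B(z+h)$, and $k := x_2 - x_1 = P_B(z+h) - P_B(z)$, and then use the algebraic decomposition
\[
H_B(z+h) - H_B(z) - G_{H_B}(z+h)h
= \big[H(x_1 + k) - H(x_1) - G_H(x_1 + k)k\big] + G_H(x_2)\big[k - G_{P_B}(z+h)h\big],
\]
which is valid because $G_{H_B}(z+h) = G_H(P_B(z+h))G_{P_B}(z+h) = G_H(x_2)G_{P_B}(z+h)$ by definition and $x_1 + k = x_2$. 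The first bracket is $o(\|k\|_X)$ by the Newton differentiability of $H$ at $x_1$, and since the metric projection onto a nonempty closed convex set in a Hilbert space is non-expansive, one has $\|k\|_X \le \|h\|_X$, so this term is in fact $o(\|h\|_X)$; the harmless degenerate case $k = 0$ for $h \neq 0$ is absorbed here as well, since then the first bracket vanishes identically. The second bracket is bounded using $\|G_H(x_2)\|_{\LL(X,X)} \le \gamma$ (valid because $x_2 = P_B(z+h) \in B$, by \eqref{eq:GH_bound-2_loc}) together with the Newton differentiability of $P_B$ at $z$, which gives $\|k - G_{P_B}(z+h)h\|_X = \|P_B(z+h) - P_B(z) - G_{P_B}(z+h)h\|_X = o(\|h\|_X)$; hence the whole second term is $\gamma \cdot o(\|h\|_X) = o(\|h\|_X)$. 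Dividing by $\|h\|_X$ and passing to the limit $\|h\|_X \to 0$ then gives exactly \eqref{eq:NewtonDiffDef} for $H_B$ with the claimed Newton derivative.

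For part \ref{lem:H_c_props:iii}, given $x_1, x_2 \in X$, I would invoke $P_B(x_1), P_B(x_2) \in B$, the local Lipschitz estimate \eqref{eq:H_gamma_Lip_loc}, and the non-expansiveness of $P_B$ to get
\[
\|H_B(x_1) - H_B(x_2)\|_X = \|H(P_B(x_1)) - H(P_B(x_2))\|_X \le \gamma\|P_B(x_1) - P_B(x_2)\|_X \le \gamma\|x_1 - x_2\|_X,
\]
which is \eqref{eq:loc_contrac_H}. For \eqref{eq:loc_contrac_2_H}, I would combine submultiplicativity of the operator norm with \eqref{eq:GH_bound-2_loc} (again using $P_B(x) \in B$) and the bound $\|G_{P_B}(x)\|_{\LL(X,X)} \le 1$ from \cref{ass:standing_general_proj_H}\ref{ass:standing_general_proj_H:v}, obtaining
\[
\|G_{H_B}(x)\|_{\LL(X,X)} = \|G_H(P_B(x))\,G_{P_B}(x)\|_{\LL(X,X)} \le \|G_H(P_B(x))\|_{\LL(X,X)}\,\|G_{P_B}(x)\|_{\LL(X,X)} \le \gamma
\]
for all $x \in X$.

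The only genuinely delicate step is the chain rule in part \ref{lem:H_c_props:i}: one must ensure both error terms are correctly absorbed into $o(\|h\|_X)$. This works precisely because of two structural features of the present setting — that $P_B$ is globally non-expansive, so the inner increment $k$ is dominated by $h$ and the $o(\|k\|_X)$-remainder of $H$ becomes $o(\|h\|_X)$; and that the outer Newton derivative $G_H$ is uniformly bounded (by $\gamma$) on the range $B$ of $P_B$, so it sends the $o(\|h\|_X)$-remainder of $P_B$ to another $o(\|h\|_X)$-term. Once these two observations are in place, the rest is routine norm bookkeeping.
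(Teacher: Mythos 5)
Your proof is correct and follows essentially the same route as the paper: the paper simply cites its chain rule for Newton derivatives (\cref{lem:chain_rule}), whose proof in the appendix uses exactly the two-term decomposition you write out explicitly (the $o(\|k\|_X)$-remainder of $H$ combined with non-expansiveness of $P_B$, plus the uniformly bounded $G_H$ applied to the $o(\|h\|_X)$-remainder of $P_B$), and part~\ref{lem:H_c_props:iii} is proved identically.
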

\begin{proof}
Assertion \ref{lem:H_c_props:i}
follows from 
the conditions in \cref{ass:standing_general_proj_H}, 
the chain rule for Newton derivatives (see \cref{lem:chain_rule}),
and the global one-Lipschitz continuity of 
$P_B$. 
To prove 
\ref{lem:H_c_props:iii},
we first note that 
\eqref{eq:H_gamma_Lip_loc}, (again) 
the one-Lipschitz continuity of $P_B$,
and the definition of $H_B$ imply
\[
\| H_B(x_1) - H_B(x_2)\|_X
\leq
\gamma
\|  P_B(x_1) - P_B(x_2)\|_X
\leq
\gamma
\|x_1 - x_2\|_X\quad \forall x_1, x_2 \in X.
\]
This establishes \eqref{eq:loc_contrac_H}.
Similarly, we obtain from assertion \ref{lem:H_c_props:i} of this lemma, 
\eqref{eq:GH_bound-2_loc},
and
the bound $\norm{G_{P_B}(x)}{\LL(X,X)} \leq 1$
that
\begin{align*}
\norm{G_{H_B}(x)}{\LL(X,X)}  &= \norm{G_H(P_B(x))G_{P_B}(x)}{\LL(X,X)}
\leq \norm{G_H(P_B(x))}{\LL(X,X)}\norm{G_{P_B}(x)}{\LL(X,X)}
\leq \gamma
\end{align*}
holds for all $x \in X$.
This implies 
\eqref{eq:loc_contrac_2_H}
and completes the proof. 
\end{proof}
 \Cref{lem:H_c_props} shows that 
 \eqref{eq:fp_problem_proj_H} is 
 covered by the semismooth Newton framework 
 of \cref{subsec:global_contraction}. 
 This allows us to deduce the following from 
 \cref{th:convergence}.
 
\begin{theorem}[Convergence in the localized setting]
\label{th:loc_conv}
Suppose that \cref{ass:standing_general_proj_H} holds. Let $x_0 \in X$ and $\tau^* \in [0,\infty]$ be arbitrary. Let $\{\tau_i\} \subset [0,\infty) \cap [0, \tau^*]$ and $\{\rho_i\} \subset [0,\infty)$
               satisfy $\tau_i \to 0$ and $\rho_i \to 0$.  
 Then \cref{algo:semiNewtonAbstract},
 applied to the fixed-point problem  \eqref{eq:fp_problem_proj_H}
 with parameters 
 $x_0$, $\Ntol = 0$, $\tau^*$, $\{\tau_i\}$, and $\{\rho_i\}$, 
 converges finitely or $q$-superlinearly 
 to a point $\hat x \in X$ and the following is true:
\begin{enumerate}[label=\roman*)]
\item\label{th:loc_conv:i}
If $\hat x \in B$ holds, 
then \eqref{eq:general_FP_H_intro} possesses 
precisely one solution $\bar x$ in $B$ and
it holds $\bar x = \hat x$.
\item\label{th:loc_conv:ii}
If $\hat x \not\in B$ holds, 
then \eqref{eq:general_FP_H_intro} does not possess 
a solution in $B$.
\end{enumerate}
If, in addition,  
$H$ satisfies 
$H(B) \subset B$,
then only  case 
\ref{th:loc_conv:i} occurs.
\end{theorem}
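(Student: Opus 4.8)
The plan is to reduce the whole statement to the already-established global theory of \cref{subsec:global_contraction}, applied to the auxiliary map $H_B = H \circ P_B$. First I would invoke \cref{lem:H_c_props}: it shows that $H_B$ is Newton differentiable with Newton derivative $G_{H_B}(x) = G_H(P_B(x))G_{P_B}(x)$ and satisfies the global bounds \eqref{eq:loc_contrac_H}--\eqref{eq:loc_contrac_2_H}. Endowing the residue $R_B := \Id - H_B$ with the Newton derivative $\Id - G_{H_B}$, this means that \cref{ass:standing_general_H} holds for $H_B$ in place of $H$. Consequently \cref{lem:uniqueF_H} guarantees that \eqref{eq:fp_problem_proj_H} has a unique solution in $X$, and part \ref{th:convergence:ii} of \cref{th:convergence} (with $\Ntol = 0$) yields that \cref{algo:semiNewtonAbstract}, applied to \eqref{eq:fp_problem_proj_H} with the stated parameters $x_0$, $\tau^*$, $\{\tau_i\}$, $\{\rho_i\}$, either terminates after finitely many steps with that solution or produces a sequence of iterates converging $q$-superlinearly to it. I would denote this unique fixed point of $H_B$ by $\hat x$; this is the point appearing in the theorem, and the convergence claim is precisely what was just quoted.

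Next I would record the two elementary facts about the metric projection that drive the case analysis: for every $x \in X$ one has $P_B(x) \in B$, and for every $x \in B$ one has $P_B(x) = x$ (the latter because $B$ is nonempty, closed, and convex, so the nearest point of $B$ to a point of $B$ is that point itself). From these I obtain the identification of fixed points: if $x \in B$ solves \eqref{eq:general_FP_H_intro}, then $x = H(x) = H(P_B(x)) = H_B(x)$, so $x$ is a fixed point of $H_B$ and hence, by the global uniqueness established above, $x = \hat x$; conversely, if $\hat x \in B$, then $P_B(\hat x) = \hat x$ and therefore $\hat x = H_B(\hat x) = H(P_B(\hat x)) = H(\hat x)$, so $\hat x$ solves \eqref{eq:general_FP_H_intro}.

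With this equivalence in hand, the two alternatives are immediate. If $\hat x \in B$, then $\hat x$ is a solution of \eqref{eq:general_FP_H_intro} lying in $B$, and any further solution $\bar x \in B$ would again be a fixed point of $H_B$, forcing $\bar x = \hat x$; hence \eqref{eq:general_FP_H_intro} has exactly one solution in $B$, namely $\hat x$, which is assertion \ref{th:loc_conv:i}. If instead $\hat x \notin B$ and \eqref{eq:general_FP_H_intro} had a solution $\bar x \in B$, then $\bar x$ would be a fixed point of $H_B$, whence $\bar x = \hat x \notin B$, a contradiction; so \eqref{eq:general_FP_H_intro} has no solution in $B$, which is assertion \ref{th:loc_conv:ii}. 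Finally, if $H(B) \subset B$, then since $P_B(\hat x) \in B$ we get $\hat x = H(P_B(\hat x)) \in H(B) \subset B$, so case \ref{th:loc_conv:i} always occurs.

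I do not anticipate a genuine obstacle here: essentially all the work is contained in \cref{lem:H_c_props} and \cref{th:convergence}, and what remains is bookkeeping around the projection $P_B$. The one point that wants care is verifying \emph{literally} that the hypotheses of \cref{ass:standing_general_H}/\cref{th:convergence} are met by $H_B$ — in particular that the residue of \eqref{eq:fp_problem_proj_H} is equipped with the Newton derivative $\Id - G_{H_B}$ — which is exactly what \cref{lem:H_c_props} supplies.
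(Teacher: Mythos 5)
Your proposal is correct and follows essentially the same route as the paper's proof: reduce to the global theory of \cref{subsec:global_contraction} via \cref{lem:H_c_props,lem:uniqueF_H,th:convergence} applied to $H_B = H\circ P_B$, and then use the two elementary properties of the metric projection ($P_B(x)=x$ for $x\in B$, $P_B(x)\in B$ always) to translate between fixed points of $H_B$ and solutions of \eqref{eq:general_FP_H_intro} lying in $B$. You have simply spelled out in a bit more detail the steps that the paper states tersely, including the final argument that $H(B)\subset B$ forces $\hat x = H(P_B(\hat x))\in H(B)\subset B$.
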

\begin{proof}
As the results of \cref{subsec:global_contraction}
are applicable to \eqref{eq:fp_problem_proj_H} 
by \cref{ass:standing_general_proj_H} and 
\cref{lem:H_c_props},
we obtain from 
\cref{lem:uniqueF_H,th:convergence}
that 
there is a unique $\hat x \in X$
satisfying $\hat x = H_B(\hat x)$
and that \cref{algo:semiNewtonAbstract}
with $\Ntol = 0$
converges finitely or 
$q$-superlinearly to $\hat x$ when 
applied to \eqref{eq:fp_problem_proj_H}. 
Suppose now that $\hat x \in B$ holds. 
Then we have $\hat x = P_B(\hat x)$
and $\hat x$ is also a solution of 
\eqref{eq:general_FP_H_intro}. Moreover,
\eqref{eq:general_FP_H_intro} cannot possess 
any further solutions $\bar x \neq \hat x$
in $B$ as those would also 
solve \eqref{eq:fp_problem_proj_H} 
which is uniquely solvable by  
\cref{lem:uniqueF_H}.
This proves \ref{th:loc_conv:i}.
The assertion in \ref{th:loc_conv:ii}
is obtained along the same lines. 
That only case \ref{th:loc_conv:i} occurs
if $H(B) \subset B$ holds 
follows from the 
structure of \eqref{eq:fp_problem_proj_H}.
\end{proof}

Note that \cref{th:loc_conv} expresses that, 
if a sufficiently nice set $B\subset X$ (e.g., a ball)
satisfying 
\eqref{eq:H_gamma_Lip_loc} and
\eqref{eq:GH_bound-2_loc}
is given, 
then 
\cref{algo:semiNewtonAbstract}---applied to \eqref{eq:fp_problem_proj_H}---is able to determine precisely
whether \eqref{eq:general_FP_H_intro} possesses 
a solution in $B$ and, 
in the case of its existence, 
identify the unique solution 
of \eqref{eq:general_FP_H_intro} 
in $B$ with superlinear convergence speed.
In applications
in which it is important to
decide whether solutions  
are present in certain sets 
or to determine distinguished solutions of \eqref{eq:general_FP_H_intro} 
(e.g., maximal and minimal solutions),
this type of localization of 
the framework in 
\cref{subsec:global_contraction}
offers an attractive 
alternative to classical 
localization approaches; see 
\cite[Proof of Theorem~3.4]{ChenNashedQi2000}
and \cite[Proof of Theorem~3.13]{Ulbrich2011},
and compare also with
\cref{th_T_probs_1d_local}
and the experiments 
in \cref{subsec:7.3}.

We conclude this subsection by 
establishing that balls 
$B_r^X(c)$, $r>0$, $c \in X$, 
in $X$
indeed satisfy the 
conditions in \cref{ass:standing_general_proj_H}\ref{ass:standing_general_proj_H:v}. 
The proof of the following result relies heavily
on the chain and product rule for Newton derivatives.
We recall these calculus rules in \cref{sec:appendix} of this paper for the 
convenience of the reader. 

\begin{lemma}[Projections onto closed balls]
\label{lem:semismooth_balls}
Let $c \in X$ and $r>0$ be given. 
Define $B:=B_r^X(c)$. Then the projection 
$P_B\colon X \to X$ is Newton differentiable with Newton derivative 
\[
G_{P_B}(x)h := 
\begin{cases}
\displaystyle
h &\text{ if } \norm{x-c}{X} \leq r,\\
\frac{r}{\norm{x-c}{X}}
\left [h  -  
\left (\frac{x-c}{\norm{x-c}{X}}, h \right )_X \frac{x-c}{\norm{x-c}{X}} \right ] &\text{ if } \norm{x-c}{X} > r,
\end{cases}
\]
and it holds 
\begin{equation}
\label{eq:GPB_norm_bound}
\norm{G_{P_B}(x)}{\LL(X,X)} 
\leq \min \left (1,
\frac{r}{\norm{x - c}{X}}
\right ) \qquad \forall x \in X.
\end{equation}
\end{lemma}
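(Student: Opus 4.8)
The plan is to compute the projection $P_B$ onto the ball $B = B_r^X(c)$ explicitly and then verify Newton differentiability directly from the definition \eqref{eq:NewtonDiffDef}, splitting into three regimes according to the position of $x$ relative to the sphere $\partial B$. Recall that in a Hilbert space the metric projection onto $B_r^X(c)$ has the closed form $P_B(x) = x$ if $\|x-c\|_X \le r$ and $P_B(x) = c + r(x-c)/\|x-c\|_X$ if $\|x-c\|_X > r$. First I would dispose of the easy interior case: if $\|x-c\|_X < r$, then for all sufficiently small $h$ we have $x+h$ still in the interior, so $P_B(x+h)-P_B(x)-G_{P_B}(x+h)h = (x+h)-x-h = 0$, and the difference quotient in \eqref{eq:NewtonDiffDef} vanishes identically. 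Likewise, on the open exterior $\{\|x-c\|_X > r\}$, the map $x \mapsto c + r(x-c)/\|x-c\|_X$ is Fréchet (indeed $C^\infty$ away from $c$) with derivative exactly the linear operator $h \mapsto \tfrac{r}{\|x-c\|_X}\bigl[h - \bigl(\tfrac{x-c}{\|x-c\|_X},h\bigr)_X \tfrac{x-c}{\|x-c\|_X}\bigr]$; Fréchet differentiability with continuous derivative implies Newton differentiability with $G_{P_B}$ equal to that derivative (evaluated at $x+h$, but continuity of the derivative lets one pass from $x+h$ to $x$ in the limit). So the content is concentrated on the sphere $\|x-c\|_X = r$.

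The main obstacle is the boundary case $\|x-c\|_X = r$, where $P_B$ transitions between the identity and the radial retraction and is not Fréchet differentiable. Here one must show that the prescribed Newton derivative — which, from the stated formula, is $G_{P_B}(x+h)$ evaluated at the perturbed point $x+h$, and thus equals $h \mapsto h$ when $x+h \in B$ and the radial-projection derivative when $x+h \notin B$ — still makes the difference quotient vanish. The plan is to split the admissible increments $h$ into those with $\|x+h-c\|_X \le r$ and those with $\|x+h-c\|_X > r$. For the first subset, $P_B(x+h) = x+h$, $P_B(x) = x$ (since $x \in \partial B \subset B$), and $G_{P_B}(x+h)h = h$, so the numerator is again exactly zero. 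For the second subset, $P_B(x) = x$ still, $P_B(x+h) = c + r(x+h-c)/\|x+h-c\|_X$, and $G_{P_B}(x+h)h$ is the radial derivative at $x+h$ applied to $h$; one then estimates $\bigl\| c + r\tfrac{x+h-c}{\|x+h-c\|_X} - x - G_{P_B}(x+h)h \bigr\|_X$. Writing $y = x - c$ (so $\|y\|_X = r$) and $z = x+h-c = y+h$, this is $\bigl\| r\tfrac{z}{\|z\|_X} - y - \tfrac{r}{\|z\|_X}\bigl(h - (\tfrac{z}{\|z\|_X},h)_X\tfrac{z}{\|z\|_X}\bigr) \bigr\|_X$, which one simplifies using $z = y + h$ and $\|z\|_X > r = \|y\|_X$; a short computation reduces the bracket to something of order $\|h\|_X^2/\|z\|_X \le \|h\|_X^2/r$ plus a term controlled by $\|z\|_X - r$, and since $\|z\|_X - r \le \|h\|_X$ the whole expression is $O(\|h\|_X^2)$. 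Divided by $\|h\|_X$ this tends to zero, which is exactly \eqref{eq:NewtonDiffDef}.

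It remains to verify the operator-norm bound \eqref{eq:GPB_norm_bound}. For $\|x-c\|_X \le r$ the operator is the identity, whose norm is $1 = \min(1, r/\|x-c\|_X)$ when $\|x-c\|_X \le r$ (the min is attained at $1$). For $\|x-c\|_X > r$, set $e := (x-c)/\|x-c\|_X$, a unit vector; the operator is $\tfrac{r}{\|x-c\|_X}(\Id - e\otimes e)$, i.e.\ $\tfrac{r}{\|x-c\|_X}$ times the orthogonal projection onto $e^\perp$. An orthogonal projection in a Hilbert space has norm at most $1$, so $\|G_{P_B}(x)\|_{\LL(X,X)} \le r/\|x-c\|_X = \min(1, r/\|x-c\|_X)$ in this regime. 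Combining the two cases gives \eqref{eq:GPB_norm_bound} for all $x \in X$. The only genuinely delicate point is the Landau estimate in the mixed boundary subcase; everything else is bookkeeping, so in the write-up I would present the interior and exterior cases tersely and devote the bulk of the argument to the sphere, keeping careful track of whether $G_{P_B}$ is evaluated at $x$ or at $x+h$ since the stated formula is evaluated at the perturbed argument.
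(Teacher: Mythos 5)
Your proof is correct, but it takes a genuinely different route from the paper. You verify the defining limit \eqref{eq:NewtonDiffDef} directly, splitting into the interior regime $\|x-c\|_X < r$ (where the difference quotient is identically zero for small $h$), the exterior regime $\|x-c\|_X > r$ (where $P_B$ is $C^1$ away from $c$, and continuous Fr\'echet differentiability upgrades to Newton differentiability with derivative evaluated at $x+h$), and the boundary regime $\|x-c\|_X = r$, which you attack by sub-splitting increments $h$ according to whether $x+h$ lands inside or outside $B$; the only nontrivial estimate is the mixed sub-case, which you correctly reduce to showing $\bigl\|\bigl(\tfrac{r}{\|z\|_X}-1\bigr)y + \tfrac{r}{\|z\|_X^2}(y+h,h)_X\hat z\bigr\|_X = O(\|h\|_X^2)$ with $y=x-c$, $z=y+h$, and the leading $(y,h)_X$-contributions cancel precisely because $\|y\|_X = r$. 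The paper instead factors $P_B(x) = g(\|x\|_X^2)\,x$ for the scalar function $g(s) = r\max(r,s^{1/2})^{-1}$, obtains Newton differentiability of $g$ from the piecewise-$C^1$ criterion of Pang, and then assembles the derivative via the chain rule (\cref{lem:chain_rule}) and product rule (\cref{lem:product_rule}). The paper's route is shorter and reuses the calculus toolbox they have already set up in the appendix, hiding the $O(\|h\|_X^2)$ bookkeeping inside those lemmas; your route is self-contained, avoids invoking the product rule and the auxiliary scalar function, and makes the geometric cancellation at the sphere explicit. Both arguments close; the norm bound \eqref{eq:GPB_norm_bound} is proved the same way in each (identity on the ball, scaled orthogonal projection onto $e^\perp$ outside). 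The one thing to tighten in a final write-up is that you leave the $O(\|h\|_X^2)$ estimate as a sketch; it does hold, but the write-up should display the cancellation of the first-order $(y,h)_X$ terms, since that is the crux and the rest is, as you say, bookkeeping.
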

\begin{proof}
Due to the chain rule in \cref{lem:chain_rule}, 
it suffices to prove the claim for 
$c=0$, i.e., $B = B_r^X(0)$. For such a set $B$, we have
\begin{equation}
\label{eq:radial_projection}
P_{B}(x) = 
\begin{cases}
x & \text{ if } \|x\|_X \leq r,\\
r\frac{x}{\norm{x}{X}} & \text{ if } \|x\|_X > r,
\end{cases}
\end{equation}
and, thus, $P_B(x) = g(\norm{x}{X}^2) x$
with 
$g\colon [0, \infty) \to \R$, 
$g(s) := r  \max(r, s^{1/2})^{-1}$. 
Note that $g$
is piecewise $C^1$ with a single kink at $s=r^2$.
This implies that 
$g\colon [0, \infty) \to \R$
is Newton differentiable with 
Newton derivative 
\[
G_g(s) := 
\begin{cases}
0  & \text{ if } 0 \leq s^{1/2} \leq r,\\
-\frac{r}{2s^{3/2}} &\text{ if } s^{1/2} > r;
\end{cases}
\]
see \cite[Proposition 2.1]{Pang1995}.
(This Newton differentiability can also be checked directly by a simple calculation.) As the 
function $X\ni x \mapsto \|x\|_X^2 \in [0,\infty)$
is smooth and locally Lipschitz continuous, 
it follows from the chain rule of \cref{lem:chain_rule} that 
$f\colon X \to \R$, $ x \mapsto g(\|x\|_X^2)$, 
is Newton differentiable with derivative 
\[
G_f(x)h := 
\begin{cases}
0  & \text{ if } 0 \leq \norm{x}{X} \leq r,\\
-\frac{r}{\norm{x}{X}^3}\left (x,h \right )_X &\text{ if } \norm{x}{X} > r.
\end{cases}
\]
As $f$ is continuous, the product rule of  \cref{lem:product_rule}
(with 
$U = X$,
$V = \R$,
$W = X$, 
$Z = X$,
$P = f$,
$Q = \Id$,
and $a$ as the multiplication with a scalar in $X$)
now yields that the map $P_B(x) = f(x)x$ is Newton 
differentiable with Newton derivative 
\begin{equation}
\label{eq:GP_again}
G_{P_B}(x)h := 
\begin{cases}
h  & \text{ if } 0 \leq \norm{x}{X} \leq r,\\
-\frac{r\left (x,h \right )_X }{\norm{x}{X}^3}x
+ \frac{rh}{\|x\|_X}
=
\frac{r}{\norm{x}{X}}
\left [
h - \left (\frac{x}{\|x\|_X}, h \right )_X \frac{x}{\|x\|_X}
\right ]
&\text{ if } \norm{x}{X} > r.
\end{cases}
\end{equation}
% \begin{align}
% \label{eq:GP_again}
% G_{P_B}(x)h &:= 
% \begin{cases}
% h  & \text{ if } 0 \leq \norm{x}{X} \leq r,\\
% -\frac{r\left (x,h \right )_X }{\norm{x}{X}^3}x
% + \frac{rh}{\|x\|_X}
% &\text{ if } \norm{x}{X} > r.
% \end{cases}\\
% &= \begin{cases}
% h  & \text{ if } 0 \leq \norm{x}{X} \leq r,\\
% \frac{r}{\norm{x}{X}}
% \left [
% h - \left (\frac{x}{\|x\|_X}, h \right )_X \frac{x}{\|x\|_X}
% \right ]
% &\text{ if } \norm{x}{X} > r.
% \end{cases}
% \end{align}
This proves the assertion on the Newton differentiability of $P_B$ and the formula for $G_{P_B}$.
To obtain \eqref{eq:GPB_norm_bound},
it suffices to note that the expression 
in the square brackets on the right-hand side of 
\eqref{eq:GP_again} is the 
projection of $h$ onto the orthogonal complement of the line $\R x \subset X$ 
and, thus, bounded in the $X$-norm by $\|h\|_X$. 
This completes the proof. 
\end{proof}

\begin{remark}
    If $X$ is merely Banach (and not necessarily Hilbert),
    then one can still proceed along the lines 
    of the proof of \cref{lem:semismooth_balls} 
    to establish that the radial projection 
    given by the formula 
    \eqref{eq:radial_projection} 
    is Newton differentiable, provided 
    the norm of $X$ is $C^1$ away from 
    the origin. If this is done, however, then
    one cannot bound the 
    $\LL(X,X)$-norm of the Newton derivatives $G_{P_B}(x)$
    by the right-hand side of \eqref{eq:GPB_norm_bound}
    but only by a worse constant;
    cf.\ the Lipschitz estimate proven in 
    \cite[\S 6.1]{Goebel2018}. In combination with the 
    lost non-expansiveness of $P_B$,
    this makes it necessary to impose more restrictive 
    assumptions on the number $\gamma$ in 
    \eqref{eq:loc_contrac_H} and
    \eqref{eq:loc_contrac_2_H} 
     if 
     a Banach space setting is considered. 
\end{remark}

\subsection{Composite fixed-point equations}
\label{sec:framework_applied_to_SPhi}

With the general convergence theory of 
\cref{sec:vanillaSSN,subsec:global_contraction,sec:framework_fp_H_multiple_solns} at hand, 
we can turn our attention to the special case
that the function $H$ in \eqref{eq:general_FP_H_intro}
is of the type $S\circ \Phi$, i.e., 
that the considered fixed-point equation has the form
\begin{equation}
\label{eq:SPhi_FP}
\tag{F$_c$}
\text{Find } \bar x \in X \text{ such that } \bar x = S(\Phi(\bar x)).
\end{equation}
As discussed before, this problem formulation is motivated by 
the structure of the QVI \eqref{eq:QVI} which 
can be recast as an equation
of the type \eqref{eq:SPhi_FP}
by means of the solution operator $S$
of the variational inequality 
\eqref{eq:upper_obst_prob_again} and the inner obstacle map $\Phi$;
see the concrete examples in \cref{sec:obstacle-type-QVIs,sec:7}
and the discussion in \cref{sec:1}.  
The next three corollaries make precise under 
which assumptions on the functions $S$ and $\Phi$ the problem 
\eqref{eq:SPhi_FP} is covered by the convergence results 
in \cref{th:convergenceGeneral,th:convergence,th:loc_conv}. Their proofs 
boil down to applications of the chain rule for Newton derivatives (\cref{lem:chain_rule})
and elementary estimates and are thus omitted. 

\begin{corollary}[Local convergence of \cref{algo:semiNewtonAbstractGeneral} for \eqref{eq:SPhi_FP}]
\label{cor:composite_classical}
Suppose that the following assumptions are satisfied:
\begin{enumerate}[label=\roman*)]
\item\label{ass:standing_general:i}  
$X$ and $Y$ are real Banach spaces, and $D \subset Y$ is a nonempty set;
\item\label{ass:standing_general:iv}  $\Phi\colon X \to D$ is 
a Newton differentiable function 
with Newton derivative $G_\Phi \colon X \to \LL(X,Y)$
and, for every $x \in X$, there exist constants  $ C, \varepsilon > 0$ satisfying
\begin{equation*}
\|  \Phi(x + h)  - \Phi(x) \|_Y
\leq
C\|h\|_X
\quad
\forall h \in B_\varepsilon^X(0);
\end{equation*}
\item\label{ass:standing_general:iii}  
$S\colon D \to X$ is a Newton differentiable
function with Newton derivative
$G_S\colon D \to \LL(Y, X)$
and, for every $y \in D$,  there exist constants  $ C, \varepsilon > 0$ satisfying
\begin{equation*}
	\sup_{w \in D \cap B_\varepsilon^Y(y)} \left \| G_S(w) \right \|_{\LL(Y, X)} \leq C;
\end{equation*}
\item there exist a nonempty open set $B \subset X$ and $\bar x \in B$ such that $\bar x=S(\Phi(\bar x))$;
\item\label{ass:general_inexact:ii_top} 
there exists a number $L \in [0,\infty)$ such that 
$R = \Id - S \circ \Phi\colon X \to X$ is $L$-Lipschitz on $B$, i.e., 
\begin{equation*}
\| R(x_1) - R(x_2)\|_X
\leq
L \|x_1 - x_2\|_X
\quad
\forall x_1, x_2 \in B;
\end{equation*}
\item $G_R(x) := \Id - G_S(\Phi(x))G_\Phi(x) \in \LL(X,X)$ is invertible for all $x \in B$  and there exists a number $M \in [0,\infty)$ with
\[\norm{G_R(x)^{-1}}{\LL(X,X)} \leq M \quad \forall x \in B;\]
\item $\{\rho_i\}$ satisfies $\{\rho_i\} \subset [0, \rho^*]$ for some 
$\rho^* \in [0,\infty)$ with $ML\rho^* < 1$.
\end{enumerate}
Then the convergence result in \cref{th:convergenceGeneral} 
applies to \eqref{eq:SPhi_FP}. In particular, the sequence of iterates $\{x_i\}$ 
produced by \cref{algo:semiNewtonAbstractGeneral} 
converges finitely/$q$-superlinearly to $\bar x$ 
if $x_0$ is sufficiently close to $\bar x$,
$\Ntol$ is chosen as zero,
and the forcing sequence $\{\rho_i\}$ 
satisfies $\rho_i \to 0$.
\end{corollary}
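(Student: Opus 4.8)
The plan is to verify the five hypotheses i)--v) of \cref{th:convergenceGeneral} for the map $H := S \circ \Phi$ and its residue $R = \Id - H$, and then simply invoke that theorem. Hypotheses i), iii), iv), and v) of \cref{th:convergenceGeneral} are literally assumptions iv), v), vi), and vii) of the corollary, so the only point requiring an argument is hypothesis ii): the Newton differentiability of $R$ on $B$ together with the identification $G_R(x) = \Id - G_S(\Phi(x))G_\Phi(x)$ of its Newton derivative.

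First I would treat the composition $H = S \circ \Phi$. By assumption ii) the inner map $\Phi\colon X \to D \subset Y$ is Newton differentiable with derivative $G_\Phi$ and locally Lipschitz at every point of $X$; by assumption iii) the outer map $S\colon D \to X$ is Newton differentiable with derivative $G_S$ that is locally bounded around every point of $D$; and $\Phi$ takes values in the domain $D$ of $S$. These are exactly the ingredients required by the chain rule for Newton derivatives (\cref{lem:chain_rule}), which therefore yields that $H\colon X \to X$ is Newton differentiable with $G_H(x) = G_S(\Phi(x))G_\Phi(x) \in \LL(X,X)$ for all $x \in X$, and in particular on $B$.

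Next I would pass from $H$ to $R = \Id - H$. The identity map is affine and hence Newton differentiable with Newton derivative $\Id$, and the difference of two Newton differentiable maps is Newton differentiable with Newton derivative the difference of the two Newton derivatives --- this is immediate from the defining limit \eqref{eq:NewtonDiffDef} and the triangle inequality. Consequently $R$ is Newton differentiable on $B$ with $G_R(x) = \Id - G_S(\Phi(x))G_\Phi(x)$, which is precisely the operator appearing in assumptions vi) and vii) of the corollary; this establishes hypothesis ii) of \cref{th:convergenceGeneral}. With all five hypotheses in place, \cref{th:convergenceGeneral} applies to \eqref{eq:SPhi_FP}, and the final ``in particular'' claim is assertion iii) of \cref{th:convergenceGeneral} specialized to $\Ntol = 0$ and $\rho_i \to 0$.

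The only real obstacle is the bookkeeping around the chain rule: one must check that the local Lipschitz hypothesis the chain rule places on the inner function is indeed furnished by assumption ii), that the local-boundedness hypothesis on the Newton derivative of the outer function is furnished by assumption iii), and that $\Phi(x) \in D$ so that $S$ is evaluated on its domain. Everything else follows from the structure of \cref{th:convergenceGeneral} and the elementary stability of Newton differentiability under forming differences with the identity.
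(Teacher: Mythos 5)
Your proposal is correct and takes exactly the approach the paper intends: the paper explicitly states that the proofs of \cref{cor:composite_classical,cor:composite_global,cor:composite_local} ``boil down to applications of the chain rule for Newton derivatives (\cref{lem:chain_rule}) and elementary estimates and are thus omitted,'' which is precisely your argument. Your matching of the corollary's hypotheses ii) and iii) to the local-Lipschitz and local-boundedness conditions of \cref{lem:chain_rule}, followed by the observation that $R = \Id - H$ inherits Newton differentiability by the trivial difference rule, is the intended reduction to \cref{th:convergenceGeneral}.
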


\begin{corollary}[Global convergence of \cref{algo:semiNewtonAbstract} for \eqref{eq:SPhi_FP} with global contraction]
\label{cor:composite_global}
Suppose that the following assumptions are satisfied:
\begin{enumerate}[label=\roman*)]
\item $X$, $Y$, $D$, $S$, and $\Phi$
satisfy the conditions 
in points 
\ref{ass:standing_general:i} to 
\ref{ass:standing_general:iii}
of \cref{cor:composite_classical};
\item\label{ass:standing_general:v} 
there exists $\gamma \in [0,1)$ such that 
$S\circ \Phi \colon X \to X$ is globally $\gamma$-Lipschitz, i.e., 
\begin{equation}
\label{eq:Phi_gamma_Lip}
\| S(\Phi(x_1)) - S(\Phi(x_2))\|_X
\leq
\gamma \|x_1 - x_2\|_X
\quad
\forall x_1, x_2 \in X,
\end{equation}
and it holds
\begin{equation}
\label{eq:GPhi_bound-2}
\sup_{x \in X} \left \|G_S (\Phi(x))G_\Phi(x)\right\|_{\LL(X,X)} \leq \gamma.
\end{equation}
\end{enumerate}
Then all of the results in \cref{subsec:global_contraction} apply to 
\eqref{eq:SPhi_FP} with $H :=S \circ \Phi.$ In particular, \cref{algo:semiNewtonAbstract} applied to \eqref{eq:SPhi_FP} satisfies the finite and global $q$-superlinear convergence result of \cref{th:convergence}.
\end{corollary}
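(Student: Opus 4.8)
The plan is to reduce the statement entirely to \cref{ass:standing_general_H}, after which \cref{lem:uniqueF_H} and \cref{th:convergence}---and indeed every result of \cref{subsec:global_contraction}---apply verbatim to \eqref{eq:SPhi_FP} since this equation is nothing but \eqref{eq:general_FP_H_intro} with $H := S \circ \Phi$. So the only thing I would actually check is that this composite $H$ satisfies the three points of \cref{ass:standing_general_H}. Point i) is immediate from \ref{ass:standing_general:i} of \cref{cor:composite_classical}. For point ii) I would invoke the chain rule for Newton derivatives, \cref{lem:chain_rule}: by \ref{ass:standing_general:iv} of \cref{cor:composite_classical}, $\Phi$ is Newton differentiable and Lipschitz continuous on a neighborhood of every point of $X$; by \ref{ass:standing_general:iii}, $S$ is Newton differentiable with Newton derivative that is bounded on a neighborhood of every point of $D$; and since $\Phi(X) \subset D$, \cref{lem:chain_rule} applies at every $x \in X$ and yields that $H = S \circ \Phi$ is Newton differentiable with $G_H(x) = G_S(\Phi(x)) G_\Phi(x)$. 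That $G_R := \Id - G_H$ is then a Newton derivative of $R = \Id - H$ follows at once from
\[
R(x+h) - R(x) - G_R(x+h)h = -\bigl(H(x+h) - H(x) - G_H(x+h)h\bigr),
\]
which shows that the two Newton-differentiability difference quotients coincide.

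For point iii) of \cref{ass:standing_general_H}, the global $\gamma$-Lipschitz estimate \eqref{eq:H_gamma_Lip} on $H$ is exactly hypothesis \eqref{eq:Phi_gamma_Lip}, and substituting the chain-rule formula for $G_H$ into \eqref{eq:GPhi_bound-2} gives
\[
\sup_{x \in X} \norm{G_H(x)}{\LL(X,X)} = \sup_{x \in X} \norm{G_S(\Phi(x)) G_\Phi(x)}{\LL(X,X)} \leq \gamma,
\]
which is \eqref{eq:GH_bound-2}. Hence \cref{ass:standing_general_H} holds for $H = S \circ \Phi$ with the same $\gamma \in [0,1)$, so \cref{lem:uniqueF_H} yields the unique solvability of \eqref{eq:SPhi_FP} and \cref{th:convergence} yields the finite and global $q$-superlinear convergence of \cref{algo:semiNewtonAbstract} applied to \eqref{eq:SPhi_FP}, together with the iteration bound and error estimate stated there.

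I do not expect any serious obstacle: the only step that needs a moment's care is matching the local-regularity hypotheses \ref{ass:standing_general:iv} and \ref{ass:standing_general:iii} of \cref{cor:composite_classical} to the precise assumptions (pointwise-local Lipschitz continuity of the inner map, pointwise-local boundedness of the outer Newton derivative) under which \cref{lem:chain_rule} is formulated. Once that bookkeeping is done, everything else is a direct substitution into the already-established theory of \cref{subsec:global_contraction}, which is presumably why the authors relegate the proof to a one-line remark.
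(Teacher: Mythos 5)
Your proposal is correct and matches the approach the paper itself indicates (it states that the proofs of the three corollaries ``boil down to applications of the chain rule for Newton derivatives (\cref{lem:chain_rule}) and elementary estimates and are thus omitted''). You correctly verify that the local-Lipschitz hypothesis on $\Phi$ and the local-boundedness hypothesis on $G_S$ from \cref{cor:composite_classical} are precisely the hypotheses of \cref{lem:chain_rule}, that the resulting $G_H(x)=G_S(\Phi(x))G_\Phi(x)\in\LL(X,X)$ gives \cref{ass:standing_general_H}\ref{ass:standing_general:Hii} via the sum rule for $R=\Id-H$, and that \eqref{eq:Phi_gamma_Lip}--\eqref{eq:GPhi_bound-2} translate verbatim into \eqref{eq:H_gamma_Lip}--\eqref{eq:GH_bound-2}, so that \cref{lem:uniqueF_H} and \cref{th:convergence} apply.
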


\begin{corollary}[Global convergence of \cref{algo:semiNewtonAbstract} for \eqref{eq:SPhi_FP} with local contraction]
\label{cor:composite_local}
Suppose that the following assumptions are satisfied:
\begin{enumerate}[label=\roman*)]
\item\label{ass:standing_general_proj:i}
$X$, $Y$, $D$, $S$, and $\Phi$
satisfy the conditions 
in points 
\ref{ass:standing_general:i} to 
\ref{ass:standing_general:iii}
of \cref{cor:composite_classical};
\item\label{ass:standing_general_proj:ii} $X$ is additionally a Hilbert space;
\item\label{ass:standing_general_proj:iv}
there exist a nonempty closed convex set 
$B \subset X$ and a number 
$\gamma \in [0,1)$ satisfying
\begin{equation}
\label{eq:Phi_gamma_Lip_loc}
\| S(\Phi(x_1)) - S(\Phi(x_2))\|_X
\leq
\gamma \|x_1 - x_2\|_X
\quad
\forall x_1, x_2 \in B
\end{equation}
and 
\begin{equation}
\label{eq:GPhi_bound-2_loc}
\sup_{x \in B} \left \|G_S (\Phi(x))G_\Phi(x)\right\|_{\LL(X,X)} \leq \gamma;
\end{equation}
\item\label{ass:standing_general_proj:v} the metric projection $P_B\colon X \to B$
in $(X, \|\cdot\|_X)$ onto $B$ is Newton differentiable 
with Newton derivative 
$G_{P_B}\colon X \to \mathcal{L}(X,X)$
and it holds
$
\norm{G_{P_B}(x)}{\LL(X,X)} \leq 1
$
for all $x \in X$. 
\end{enumerate}
Then the results of \cref{sec:framework_fp_H_multiple_solns} apply to \eqref{eq:SPhi_FP}. In particular,
\cref{algo:semiNewtonAbstract},
 applied to the fixed-point problem  \eqref{eq:fp_problem_proj_H}
 with $H = S\circ \Phi$,  satisfies the
    convergence result in \cref{th:loc_conv}.
\end{corollary}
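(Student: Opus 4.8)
The plan is to derive \cref{cor:composite_local} from \cref{th:loc_conv} by checking that, under the stated hypotheses, the composite map $H := S \circ \Phi$ satisfies every point of \cref{ass:standing_general_proj_H}; once this is done, \cref{th:loc_conv} (and, through it, \cref{lem:H_c_props} and \cref{th:convergence}) applies verbatim to \eqref{eq:fp_problem_proj_H} with this $H$, which is exactly the assertion. Two of the four items of \cref{ass:standing_general_proj_H} are immediate: that $X$ is a Hilbert space (item \ref{ass:standing_general_proj_H:ii}) is assumption \ref{ass:standing_general_proj:ii} of the corollary, and the Newton differentiability of $P_B$ with $\norm{G_{P_B}(x)}{\LL(X,X)} \le 1$ for all $x \in X$ (item \ref{ass:standing_general_proj_H:v}) is assumption \ref{ass:standing_general_proj:v}; nothing is to be done there.

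The only step with real content is the Newton differentiability of $H$ (the second, unlabeled item of \cref{ass:standing_general_proj_H}). Here I would invoke the chain rule for Newton derivatives, \cref{lem:chain_rule}. Its hypotheses hold because, by assumption \ref{ass:standing_general_proj:i} of the corollary — which imports items \ref{ass:standing_general:iv} and \ref{ass:standing_general:iii} of \cref{cor:composite_classical} — the inner map $\Phi \colon X \to D$ is Newton differentiable and locally Lipschitz at every point of $X$, while the outer map $S \colon D \to X$ is Newton differentiable with $G_S$ locally bounded near every point of $D$. These are precisely the ingredients the Newton-differentiability chain rule requires, so $H = S \circ \Phi$ is Newton differentiable with Newton derivative $G_H(x) := G_S(\Phi(x))\,G_\Phi(x)$.

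With this formula for $G_H$, the remaining item \ref{ass:standing_general_proj_H:iv} of \cref{ass:standing_general_proj_H} is nothing but assumption \ref{ass:standing_general_proj:iv} of the corollary: the estimate \eqref{eq:Phi_gamma_Lip_loc} is \eqref{eq:H_gamma_Lip_loc} written out for $H = S \circ \Phi$, and \eqref{eq:GPhi_bound-2_loc} is \eqref{eq:GH_bound-2_loc} after substituting $G_H(x) = G_S(\Phi(x))G_\Phi(x)$. This completes the verification of \cref{ass:standing_general_proj_H} for $H = S \circ \Phi$, so \cref{th:loc_conv} applies and yields the finite/$q$-superlinear convergence of \cref{algo:semiNewtonAbstract} on \eqref{eq:fp_problem_proj_H}, the dichotomy describing when the limit point solves \eqref{eq:SPhi_FP} inside $B$, and the statement that only the ``solution in $B$'' alternative occurs when $S(\Phi(B)) \subset B$.

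The only place where care is genuinely needed — and the natural spot for a slip — is the chain-rule step: unlike in the Fréchet calculus, Newton differentiability does not compose for free, and neither the local Lipschitz continuity of $\Phi$ nor the local boundedness of $G_S$ can be omitted. These are exactly the auxiliary clauses carried along by items \ref{ass:standing_general:iv} and \ref{ass:standing_general:iii} of \cref{cor:composite_classical}, and hence by item \ref{ass:standing_general_proj:i} of \cref{cor:composite_local}, so the check is routine but not vacuous; everything else is bookkeeping — matching the hypotheses of the corollary with those of \cref{ass:standing_general_proj_H} and then citing \cref{th:loc_conv}.
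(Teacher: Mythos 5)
Your proof is correct and follows exactly the route the paper intends: the authors omit the proof with the remark that it ``boils down to applications of the chain rule for Newton derivatives (\cref{lem:chain_rule}) and elementary estimates,'' which is precisely what you carry out — verifying the four items of \cref{ass:standing_general_proj_H} for $H = S\circ\Phi$ and then invoking \cref{th:loc_conv}. You also correctly flag the one non-vacuous step, namely that the local Lipschitz continuity of $\Phi$ and local boundedness of $G_S$ from \cref{cor:composite_classical}\ref{ass:standing_general:iv}--\ref{ass:standing_general:iii} are needed for the chain rule to apply.
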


We conclude this section with a remark on 
inexact function evaluations in the context of \eqref{eq:SPhi_FP}.   

\begin{remark}\label{rem:inexactness} 
In practice, one typically cannot evaluate the composition
$H(x_i) = S(\Phi(x_i))$ (and, as a consequence, the residue $R(x_i) = x_i - S(\Phi(x_i))$) exactly, 
as required, e.g.,
in steps \ref{algo1:term_general} and \ref{algo1:x_N_general}
of \cref{algo:semiNewtonAbstractGeneral}. Instead, one only has access
to approximations $H_\epsilon  = S_\epsilon \circ \Phi_\epsilon\colon X \to X$, $\epsilon > 0$,
of the function $H = S\circ \Phi\colon X \to X$. If, for example, 
$S$ is the solution map of the obstacle problem 
\eqref{eq:upper_obst_prob_again} (as in the case of the concrete application  \eqref{eq:QVI}), then $S_\epsilon$ might be the solution map of a PDE-approximation of \eqref{eq:upper_obst_prob_again} obtained
via penalization; see \cite{HintKopacka, WachsmuthSchiela, WachsmuthKunisch}.
In the presence of such an inexact oracle $H_\epsilon$, 
one can still easily ensure the accuracy requirements in \cref{algo:semiNewtonAbstractGeneral,algo:semiNewtonAbstract}, provided 
the error $H - H_\epsilon$  is controllable by means of an \emph{a-priori} estimate. 
If we assume, for example, that  
$\|H(x_i) - H_\epsilon(x_i) \|_X \leq C \epsilon$  holds for $x_i$
with a known constant $C>0$, then the triangle inequality implies that
 the following holds
for the condition in \eqref{eq:xB_update}:
\begin{equation*}
\begin{aligned}
 &\|x_B - H_\epsilon(x_i) \|_X \leq  
 \tau_i\|x_i  - H_\epsilon(x_i)\|_X - (1+\tau_i)C\epsilon
 \\
 &\qquad \Rightarrow 
   \|x_B - H(x_i) \|_X - C\epsilon  \leq  
 \tau_i\|x_i  - H(x_i)\|_X+   \tau_i C \epsilon - (1+\tau_i)C\epsilon
   \\
   & \qquad \Rightarrow  \|x_B - H(x_i) \|_X \leq  \tau_i\|R(x_i)\|_X.
\end{aligned}
\end{equation*}
This shows that, by determining $x_B$ and $\epsilon$ with 
$\|x_B - H_\epsilon(x_i) \|_X \leq  
 \tau_i\|x_i  - H_\epsilon(x_i)\|_X - (1+\tau_i)C\epsilon$, one can calculate 
 a trial iterate $x_B$ satisfying  \eqref{eq:xB_update} without having 
 precise access to $H$ and $R$.
\end{remark}

\section{Elliptic obstacle-type QVIs}
\label{sec:obstacle-type-QVIs}

We are now in a position to prove the convergence of \cref{algo:semiNewtonAbstractGeneral,algo:semiNewtonAbstract} applied to the obstacle-type
QVIs \labelcref{eq:QVI}. Recall that
elliptic QVIs of obstacle type 
correspond to fixed-point 
equations of the form \eqref{eq:SPhi_FP} that involve as the map $S$
the 
solution operator 
$S\colon H_0^1(\Omega) \to H_0^1(\Omega)$,
$\phi \mapsto u$,
of the unilateral obstacle problem 
\begin{equation}
\label{eq:upper_obst_prob}  
\text{Find $u \in H_0^1(\Omega)$ such that  } 
u \in K(\phi),
~\langle -\Delta u - f, v - u \rangle_{H^{-1}(\Omega),H_0^1(\Omega)} \geq 0~
\forall v \in K(\phi),
\end{equation}
where
$
    K(\phi) := 
    \{
        v \in H_0^1(\Omega) \mid v 
        \leq \phi + \Phi_0 \text{ a.e.\ in }\Omega
    \}.$
In \cref{subsec:obstacle_solution_map} below, 
we show that the solution operator $S$ of \eqref{eq:upper_obst_prob}  
satisfies the Newton differentiability and Lipschitz continuity requirements 
of \cref{cor:composite_global,cor:composite_local,cor:composite_classical}.
Here, we also specify 
the properties that 
the obstacle function
$\Phi$ has to possess in the context 
of \eqref{eq:QVI}  so that 
\cref{algo:semiNewtonAbstractGeneral,algo:semiNewtonAbstract} can 
be applied; see \cref{th:obst_QVI}.
In \cref{subsec:solution_ops_as_obstacle}, we then 
establish that obstacle mappings $\Phi$ arising from 
semilinear elliptic PDEs satisfy the requirements on the inner function 
$\Phi$ in  \cref{cor:composite_global,cor:composite_local,cor:composite_classical}. Finally, in
\Cref{sec:application_nonlinearVI}, we demonstrate
that our results can also be employed in the context of 
nonlinear and implicit obstacle-type VIs. 

\subsection{The solution operator of the obstacle problem}
\label{subsec:obstacle_solution_map}

Throughout this subsection, we consider the following situation.

\begin{assumption}[QVI-assumptions]
\label{ass:standing_obstacle_map}
~
\begin{enumerate}[label=\roman*)]
\item $\Omega \subset \R^d$, $d \in \N$, is a nonempty open bounded set;
\item $p$ is a given exponent satisfying $ \max(1,2d/(d+2)) < p \leq \infty$;
\item $f \in L^p(\Omega)$ is a given function;
\item\label{ass:standing_obstacle_map:iv} $\Phi_0 \colon \Omega \to (-\infty, \infty]$ is a quasi-lower semicontinuous, 
Borel-measurable function
such that
there exists $w \in H_0^1(\Omega)$ satisfying $w \leq \Phi_0$ a.e.\ in $\Omega$ and such that, for every $v \in H_0^1(\Omega)$,
it holds $v \leq \Phi_0$ a.e.\ in $\Omega$ if and only 
if $v \leq \Phi_0$ quasi-everywhere (q.e.) in $\Omega$;
\item $Y_p$ is the space defined 
by $Y_p:= \{ v \in H_0^1(\Omega) \mid \Delta v \in L^p(\Omega)\}$ and equipped with the norm 
\begin{equation}
\label{eq:Y_norm_def}
	\|v\|_{Y_p} :=  \|v\|_{H_0^1(\Omega)} + \|\Delta v\|_{L^p(\Omega)}.
\end{equation}
\end{enumerate}
\end{assumption}

Note that \smash{$(Y_p, \|\cdot\|_{Y_p})$} is a Banach space in the above situation (as one may easily check). For the precise definitions of 
the terms ``quasi-lower semicontinuous''
and ``quasi-everywhere''
and details on the related notion of 
$H_0^1(\Omega)$-capacity, 
we refer to \cite[\S 6.4.3]{BonnansShapiro}.
We remark that \cref{ass:standing_obstacle_map}\ref{ass:standing_obstacle_map:iv}
is automatically satisfied if $\Omega$ is a 
bounded Lipschitz domain and 
$\Phi_0$ an element of $C(\bar \Omega) \cap H^1(\Omega)$ with a 
nonnegative trace. 

\begin{lemma}[Well-definedness of $S$]
\label{lem:S_obs_well_def}
Suppose that \cref{ass:standing_obstacle_map} holds. Then \eqref{eq:upper_obst_prob}  has a unique 
solution $S(\phi) := u$ for all $\phi \in H_0^1(\Omega)$.
The associated solution map $S\colon H_0^1(\Omega) \to H_0^1(\Omega)$,
$\phi \mapsto u$, satisfies
\begin{equation}
\label{eq:phi_Lipschitz_est}
	\| S(\phi_1) - S(\phi_2)\|_{H_0^1(\Omega)}
	\leq
	\| \phi_1 - \phi_2\|_{H_0^1(\Omega)}
	\qquad \forall \phi_1, \phi_2 \in H_0^1(\Omega).
\end{equation}
\end{lemma}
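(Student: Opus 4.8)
The plan is to recast the obstacle problem \eqref{eq:upper_obst_prob} as a metric projection in the Hilbert space $H_0^1(\Omega)$ and then to exploit the fact that, for a metric projection $P$ onto a nonempty closed convex set in a Hilbert space, \emph{both} $P$ and $\Id - P$ are non-expansive.

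As a preliminary step I would note that, since $\phi \in H_0^1(\Omega)$, the admissible set decomposes as $K(\phi) = \phi + \widetilde K$ with $\widetilde K := \{w \in H_0^1(\Omega) \mid w \le \Phi_0 \text{ a.e.\ in }\Omega\}$ independent of $\phi$; the set $\widetilde K$ is convex, nonempty (it contains the function $w$ from \cref{ass:standing_obstacle_map}\ref{ass:standing_obstacle_map:iv}), and closed in $H_0^1(\Omega)$ (an $H_0^1$-convergent sequence in $\widetilde K$ has an a.e.-convergent subsequence whose limit still satisfies the pointwise bound), so $K(\phi)$ is nonempty, closed, and convex for every $\phi$. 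Moreover, the range condition $\max(1,2d/(d+2)) < p \le \infty$ ensures, via the Sobolev embedding $H_0^1(\Omega) \hookrightarrow L^{p'}(\Omega)$ and Hölder's inequality, that $f$ induces a bounded linear functional on $H_0^1(\Omega)$, so there is a unique $u_f \in H_0^1(\Omega)$ with $\langle -\Delta u_f - f, v\rangle_{H^{-1}(\Omega),H_0^1(\Omega)} = 0$ for all $v \in H_0^1(\Omega)$.

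With these facts in hand, I would observe that, because $(\nabla\cdot,\nabla\cdot)_{L^2(\Omega)}$ is exactly the inner product inducing $\|\cdot\|_{H_0^1(\Omega)}$, the variational inequality \eqref{eq:upper_obst_prob} is equivalent to the condition $(\nabla(u_f - u), \nabla(v - u))_{L^2(\Omega)} \le 0$ for all $v \in K(\phi)$, i.e., to $u$ being the metric projection of $u_f$ onto $K(\phi)$ in $H_0^1(\Omega)$. Since $K(\phi)$ is nonempty, closed, and convex, this projection exists and is unique, which proves that \eqref{eq:upper_obst_prob} has a unique solution $S(\phi) := u = P_{K(\phi)}(u_f)$; here and below $P_C$ denotes the metric projection in $H_0^1(\Omega)$ onto a closed convex set $C$.

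For the Lipschitz estimate \eqref{eq:phi_Lipschitz_est}, the decisive idea is to route the entire $\phi$-dependence of $S$ through a single non-expansive map. Using translation-equivariance of the projection, $S(\phi) = P_{\phi + \widetilde K}(u_f) = \phi + P_{\widetilde K}(u_f - \phi)$, and therefore, setting $Q := \Id - P_{\widetilde K}$, one obtains the identity $S(\phi) = u_f - Q(u_f - \phi)$. As $P_{\widetilde K}$ is a metric projection onto a convex set in a Hilbert space it is firmly non-expansive, i.e.\ $\|P_{\widetilde K}x - P_{\widetilde K}y\|_{H_0^1(\Omega)}^2 \le (P_{\widetilde K}x - P_{\widetilde K}y,\, x-y)_{H_0^1(\Omega)}$, whence not only $P_{\widetilde K}$ but also $Q = \Id - P_{\widetilde K}$ is $1$-Lipschitz for $\|\cdot\|_{H_0^1(\Omega)}$; consequently
\[
\|S(\phi_1) - S(\phi_2)\|_{H_0^1(\Omega)} = \|Q(u_f - \phi_1) - Q(u_f - \phi_2)\|_{H_0^1(\Omega)} \le \|\phi_1 - \phi_2\|_{H_0^1(\Omega)}
\]
for all $\phi_1, \phi_2 \in H_0^1(\Omega)$, which is \eqref{eq:phi_Lipschitz_est}. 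I expect the only genuinely non-routine point to be the use of the non-expansiveness of $\Id - P_{\widetilde K}$ rather than merely of $P_{\widetilde K}$: estimating $S(\phi) = \phi + P_{\widetilde K}(u_f - \phi)$ summand by summand, or arguing directly with the two variational inequalities via truncation/$\min$-type test functions, only yields the Lipschitz constant $2$, and obtaining the sharp constant $1$ relies on the cancellation made explicit by the identity $S(\phi) = u_f - Q(u_f - \phi)$.
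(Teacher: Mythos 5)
Your proof is correct, but it takes a genuinely different route from the paper's. The paper establishes unique solvability by citing the standard existence theorem for obstacle problems \cite[Theorem 4:3.1]{Rodrigues} and then proves \eqref{eq:phi_Lipschitz_est} by a direct variational-inequality argument: with $u_j := S(\phi_j)$, the translated functions $u_2 - \phi_2 + \phi_1$ and $u_1 - \phi_1 + \phi_2$ are admissible test functions in the VIs for $u_1$ and $u_2$ respectively (the $f$-contributions cancel on adding), and summing the two inequalities yields $0 \le -\|u_1-u_2\|_{H_0^1(\Omega)}^2 + \|u_1-u_2\|_{H_0^1(\Omega)}\|\phi_1-\phi_2\|_{H_0^1(\Omega)}$ and hence the constant $1$ directly. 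You instead identify $S(\phi)$ with the $H_0^1$-metric projection of $u_f$ onto $K(\phi)$, use translation equivariance to write $S(\phi) = u_f - Q(u_f-\phi)$ with $Q = \Id - P_{\widetilde K}$, and invoke firm non-expansiveness to deduce that $Q$ is $1$-Lipschitz. Both arguments are sound and yield the sharp constant $1$; the projection viewpoint buys a cleaner structural explanation (and sets up the later interpretation of $G_S(\phi)$ as a projection in \cref{lem_GS_bound_obstacle}), while the paper's test-function argument is more self-contained and avoids invoking firm non-expansiveness. One small inaccuracy in your closing remark: you claim that ``arguing directly with the two variational inequalities \ldots only yields the Lipschitz constant $2$'', but that is not the case for the paper's specific choice of translated test functions, which already gives $1$; it is only the naive summand-by-summand estimate of $S(\phi) = \phi + P_{\widetilde K}(u_f-\phi)$ (or truncation-type tests) that would lose a factor.
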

\begin{proof}
We know that $\Phi_0 + \phi \geq w + \phi \in H_0^1(\Omega)$ holds for all 
$\phi \in H_0^1(\Omega)$, where $w \in H_0^1(\Omega)$ is the function from 
\cref{ass:standing_obstacle_map}\ref{ass:standing_obstacle_map:iv}. 
Thus, 
$K(\phi) \neq \emptyset$ for all $\phi \in H_0^1(\Omega)$. 
The unique solvability of \eqref{eq:upper_obst_prob} for all $\phi \in H_0^1(\Omega)$
now follows 
immediately from \cite[Theorem 4:3.1]{Rodrigues}. 
Let us now assume that $\phi_1, \phi_2 \in H_0^1(\Omega)$ are given.
Define $u_j := S(\phi_j)$, $j=1,2$. Then 
it holds
$u_1 - \phi_1 + \phi_2 \leq \phi_2 + \Phi_0$
and
$u_2 - \phi_2 + \phi_1 \leq \phi_1 + \Phi_0$
a.e.\ in $\Omega$
and we obtain from the VIs satisfied by 
$u_1$ and $u_2$ that
\begin{equation*}
\begin{aligned}
0 &\leq 
\langle -\Delta u_1 - f, (u_2 - \phi_2 + \phi_1) - u_1 \rangle_{H^{-1}(\Omega),H_0^1(\Omega)} 
+
\langle -\Delta u_2 - f, (u_1 - \phi_1 + \phi_2) - u_2 \rangle_{H^{-1}(\Omega),H_0^1(\Omega)} 
\\
&
\leq
- \|u_1 - u_2\|_{H_0^1(\Omega)}^2
+
 \|u_1 - u_2\|_{H_0^1(\Omega)}  \|\phi_1 - \phi_2\|_{H_0^1(\Omega)}.
\end{aligned}
\end{equation*}
This yields \eqref{eq:phi_Lipschitz_est} and completes the proof. 
\end{proof} 

Note that, via a simple variable transformation, we
can shift the function $\phi$
in $K(\phi)$
into the source term of \eqref{eq:upper_obst_prob} 
and, thus, rewrite $S$
in terms of the solution map 
\mbox{$S_0\colon H^{-1}(\Omega) \to H_0^1(\Omega)$,}
$w \mapsto u$,
of the variational inequality 
\begin{align}
\label{eq:upper_obst_prob_rhs}  
\text{Find $u \in H_0^1(\Omega)$ such that  } 
u \in K(0),~~\langle -\Delta u - w, v - u \rangle_{H^{-1}(\Omega),H_0^1(\Omega)} \geq 0 \quad \forall v \in K(0).
\end{align}
Indeed, we have:

\begin{lemma}
\label{lem:shift_id}
Suppose that \cref{ass:standing_obstacle_map} holds. Then 
\begin{equation}
\label{eq:shift_id}
	S(\phi) = \phi +S_0(f + \Delta \phi)\qquad \forall \phi \in H_0^1(\Omega).
\end{equation}
\end{lemma}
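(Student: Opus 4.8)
The plan is to exploit the affine substitution $u \mapsto u - \phi$, which transforms the obstacle constraint $K(\phi)$ into $K(0)$ while absorbing the term $\phi$ into the right-hand side of the variational inequality. First I would fix $\phi \in H_0^1(\Omega)$, put $u := S(\phi) \in K(\phi)$ and $z := u - \phi \in H_0^1(\Omega)$, and observe that $z = u - \phi \leq \Phi_0$ a.e.\ in $\Omega$, i.e., $z \in K(0)$ (so in particular $K(0) \neq \emptyset$). The key elementary observation is that $v \mapsto v + \phi$ is a bijection from $K(0)$ onto $K(\phi)$, since $v \leq \Phi_0$ a.e.\ in $\Omega$ is equivalent to $v + \phi \leq \phi + \Phi_0$ a.e.\ in $\Omega$.

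Next I would take an arbitrary $v \in K(0)$ and test the variational inequality satisfied by $u = S(\phi)$ with the admissible function $v + \phi \in K(\phi)$. Using $(v + \phi) - u = v - z$ together with $-\Delta u = -\Delta z - \Delta \phi$ in $H^{-1}(\Omega)$ — which makes sense because $\phi \in H_0^1(\Omega)$ gives $\Delta \phi \in H^{-1}(\Omega)$ and, by the assumption $\max(1, 2d/(d+2)) < p \leq \infty$ in \cref{ass:standing_obstacle_map}, $f \in L^p(\Omega) \hookrightarrow H^{-1}(\Omega)$ — this yields $\langle -\Delta z - (f + \Delta \phi), v - z \rangle_{H^{-1}(\Omega), H_0^1(\Omega)} \geq 0$ for every $v \in K(0)$. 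In other words, $z$ solves the variational inequality \eqref{eq:upper_obst_prob_rhs} with datum $w = f + \Delta \phi \in H^{-1}(\Omega)$, and $z \in K(0)$.

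Finally, the unique solvability of \eqref{eq:upper_obst_prob_rhs} (again a consequence of \cite[Theorem 4:3.1]{Rodrigues}, exactly as invoked in the proof of \cref{lem:S_obs_well_def}) forces $z = S_0(f + \Delta \phi)$, i.e.\ $u - \phi = S_0(f + \Delta \phi)$, which rearranges to \eqref{eq:shift_id}. Note that only this one implication is needed: uniqueness of the solution to \eqref{eq:upper_obst_prob_rhs} identifies $z$ without having to argue the converse. I do not anticipate any serious difficulty here; the only points requiring a little care are verifying that $f + \Delta \phi$ genuinely lies in $H^{-1}(\Omega)$ so that $S_0$ is applicable, and that the shift $v \mapsto v + \phi$ is an exact bijection between the two admissible sets — both of which follow immediately from the standing assumptions.
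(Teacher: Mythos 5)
Your proposal is correct and follows essentially the same route as the paper's proof: substitute $z := S(\phi) - \phi \in K(0)$, test the variational inequality for $S(\phi)$ against shifted test functions $v + \phi$ with $v \in K(0)$, rewrite to identify $z$ as a solution of \eqref{eq:upper_obst_prob_rhs} with datum $f + \Delta\phi$, and conclude by uniqueness. The extra checks you flag (that $f + \Delta\phi \in H^{-1}(\Omega)$ and that the shift is a bijection between admissible sets) are implicit in the paper's argument and are handled correctly.
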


\begin{proof}
Let $\phi \in H_0^1(\Omega)$ be given. Define $u := S(\phi)$. 
Then $\tilde u := u - \phi$ satisfies 
$\tilde u \in K(0)$
and, by \eqref{eq:upper_obst_prob}, 
\[
	\langle -\Delta \tilde u - f - \Delta \phi, v - \tilde u \rangle_{H^{-1}(\Omega),H_0^1(\Omega)}
	=
	\langle -\Delta u - f , v + \phi - u \rangle_{H^{-1}(\Omega),H_0^1(\Omega)} 
	\geq 0
	 \quad \forall v \in K(0).
\]
As \eqref{eq:upper_obst_prob_rhs}   
is uniquely solvable by \cite[Theorem 4:3.1]{Rodrigues},
this yields $S_0(f + \Delta \phi) = \tilde u = S(\phi) - \phi$.
\end{proof}

Due to \eqref{eq:shift_id}, the function $S$ inherits  various important properties from 
$S_0$. To formulate these properties, we require some notation.
First, we define the \emph{active set} associated with
\eqref{eq:upper_obst_prob} via
\begin{equation}
\label{eq:active_set_def}
A(\phi) := \{
x\in \Omega 
\mid
S(\phi)(x) = \phi(x) + \Phi_0(x) \}.
\end{equation}
(This set is also known as the \emph{coincidence set}.)
Following the lines of 
\cite[\S 1]{MR1163430} and \cite[\S 2.2]{RaulsWachsmuth},
we interpret the identity \eqref{eq:active_set_def} in the sense of 
capacity theory, i.e., we define $A(\phi)$
up to polar sets
and w.r.t.\ quasi-(lower semi)continuous representatives.
Note that, due to the quasi-lower semicontinuity
of $\Phi_0$ and 
the inequality $S(\phi) \leq \phi + \Phi_0$, this implies that 
$A(\phi)$ is a quasi-closed set. Its complement,
the quasi-open set $\Omega \setminus A(\phi)$,
is called the \emph{inactive set} 
and denoted by $I(\phi)$ in the following.
The quasi-openness of $I(\phi)$
makes it possible to sensibly define 
the space $H_0^1(I(\phi)) \subset H_0^1(\Omega)$
of all elements of $H_0^1(\Omega)$ that vanish 
in $A(\phi)$; see again \cite[\S 2.1]{RaulsWachsmuth} or \cite[\S 2]{MR1163430}. If 
$I(\phi)$ is open in the classical sense, then $H_0^1(I(\phi))$ 
coincides with the closure of $C_c^\infty(I(\phi))$
in $H_0^1(\Omega)$; see 
\cite[Theorem 9.1.3]{AdamsHedberg1996}.
By means of $I(\phi)$, we can introduce:

\begin{definition}
\label{def:GS}
We denote by 
 $Z_S\colon H_0^1(\Omega) \to \LL(H^{-1}(\Omega), H_0^1(\Omega))$,
 $\phi \mapsto Z_S(\phi)$, 
 the map defined by 
\begin{equation*}
Z_S(\phi)g  = z \qquad \text{ if and only if}
\qquad z \in H^1_0(I(\phi)),~~\langle 
-\Delta z - g, v \rangle_{H^{-1}(\Omega),H_0^1(\Omega)} = 0~\forall v \in H_0^1(I(\phi))
\end{equation*}
for given $g \in H^{-1}(\Omega)$  and $\phi \in H_0^1(\Omega)$.
We further define 
$\tilde{G}_S\colon H_0^1(\Omega) \to \LL(H_0^1(\Omega), H_0^1(\Omega))$
via
\begin{equation}\label{eq:def_GS}
\tilde{G}_S(\phi)h := h + Z_S(\phi)\Delta h \qquad \forall h, \phi \in H_0^1(\Omega).  
\end{equation}
\end{definition}
 
Informally speaking and modulo an extension 
by zero to the whole of $\Omega$, the operator 
$Z_S(\phi)$ defined above can be interpreted as the solution map of the 
Poisson problem with homogeneous Dirichlet 
boundary conditions on the inactive set 
$I(\phi)$, i.e., $z=Z_S(\phi)g$ can be characterized as  the solution of the
boundary value problem 
\begin{equation*}
        -\Delta z = g\text{ in } 
        I(\phi),
        \qquad 
        z = 0 \text{ on }\partial(I(\phi)).
\end{equation*}
As the notation suggests, the function $\tilde{G}_S$
provides a Newton derivative $G_S$
for $S$. Indeed, we have:

\begin{lemma}[Newton differentiability of $S$]
\label{lem:S_obs_Newton}
Suppose that \cref{ass:standing_obstacle_map} holds. Then the operator $S$
is Newton differentiable as a function from 
$Y_p$ to $H_0^1(\Omega)$ 
when endowed with the derivative $G_S :=\tilde{G}_S$,
$G_S\colon Y_p \to \LL(H_0^1(\Omega), H_0^1(\Omega))$. 
\end{lemma}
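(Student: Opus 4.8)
\textbf{Proof strategy for \cref{lem:S_obs_Newton}.}
The plan is to reduce the Newton differentiability of $S\colon Y_p \to H_0^1(\Omega)$ to a known Newton differentiability result for the solution map $S_0$ of the fixed-obstacle problem \eqref{eq:upper_obst_prob_rhs}, exploiting the shift identity \eqref{eq:shift_id}. First I would recall that, by \cref{lem:shift_id}, one has $S(\phi) = \phi + S_0(f + \Delta\phi)$ for all $\phi \in H_0^1(\Omega)$. The map $\phi \mapsto f + \Delta\phi$ is affine and continuous from $Y_p$ into $L^p(\Omega)$, which by the subcritical choice of $p$ (namely $p > \max(1,2d/(d+2))$, so that $L^p(\Omega) \hookrightarrow H^{-1}(\Omega)$) embeds continuously into $H^{-1}(\Omega)$. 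Hence, if $S_0$ is Newton differentiable as a map from $L^p(\Omega)$ (or from $H^{-1}(\Omega)$ restricted along this embedding) into $H_0^1(\Omega)$ with some Newton derivative, then the chain rule for Newton derivatives (\cref{lem:chain_rule}), applied to the composition with the bounded affine map $\phi \mapsto f+\Delta\phi$, together with the trivially Newton differentiable identity term $\phi \mapsto \phi$, yields that $S$ is Newton differentiable from $Y_p$ to $H_0^1(\Omega)$.

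The key input is therefore the Newton differentiability of $S_0$. This is exactly the kind of result established in the recent literature on generalized differentiability of obstacle-problem solution maps (e.g., the works of Rauls--Wachsmuth and Christof--Wachsmuth cited in the paper): for $g$ ranging in $L^p(\Omega)$ with $p$ in the stated range, $S_0$ is Newton differentiable into $H_0^1(\Omega)$, and a valid Newton derivative at $\phi$ (with the active/inactive sets determined, as in \cref{def:GS}, by the quasi-continuous representative of the solution) is the operator $g \mapsto Z_{S}(\phi)g$, i.e.\ the solution operator of the Poisson problem with homogeneous Dirichlet data on the inactive set $I(\phi)$. Substituting this into the shift identity, the Newton derivative of $S$ at $\phi$ acting on $h \in H_0^1(\Omega)$ becomes $h + Z_S(\phi)(\Delta h)$, which is precisely the operator $\tilde G_S(\phi)h$ from \eqref{eq:def_GS}; one also checks that $\tilde G_S(\phi) \in \LL(H_0^1(\Omega), H_0^1(\Omega))$ since $\Delta$ maps $H_0^1(\Omega)$ boundedly into $H^{-1}(\Omega)$ and $Z_S(\phi)$ is bounded from $H^{-1}(\Omega)$ into $H_0^1(\Omega)$ (it is a Galerkin solution map on the closed subspace $H_0^1(I(\phi))$, hence norm-nonexpansive up to constants by C\'ea/Lax--Milgram).

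The main obstacle is verifying the defining limit \eqref{eq:NewtonDiffDef} for $S_0$, i.e.\ that
\begin{equation*}
\frac{\|S_0(g+k) - S_0(g) - Z_S(\phi_{g+k})\,k\|_{H_0^1(\Omega)}}{\|k\|_{L^p(\Omega)}} \to 0
\quad\text{as } \|k\|_{L^p(\Omega)} \to 0,
\end{equation*}
where I have written $\phi_{g+k}$ for the argument at which the Newton derivative is evaluated. The delicate point is that the relevant active and inactive sets are \emph{capacitary} objects (defined up to polar sets, via quasi-continuous representatives), so one must argue with $H_0^1(\Omega)$-capacity rather than Lebesgue measure; this is where \cref{ass:standing_obstacle_map}\ref{ass:standing_obstacle_map:iv} (the equivalence of a.e.\ and q.e.\ inequalities against $\Phi_0$, and the quasi-lower semicontinuity of $\Phi_0$) is essential to make $A(\phi)$ quasi-closed and $I(\phi)$ quasi-open so that $H_0^1(I(\phi))$ is well defined. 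Rather than redo this analysis, I would invoke the corresponding theorem from the cited works directly, and spend the remainder of the proof only on the routine bookkeeping: confirming the $L^p \hookrightarrow H^{-1}$ embedding, applying the chain rule of \cref{lem:chain_rule} to the affine composition, and identifying the resulting Newton derivative with $\tilde G_S$ via \eqref{eq:shift_id} and \eqref{eq:def_GS}.
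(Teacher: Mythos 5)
Your approach matches the paper's: both reduce $S$ to $S_0$ via the shift identity of \cref{lem:shift_id}, invoke the known Newton differentiability of $S_0$ from $L^p(\Omega)$ to $H_0^1(\Omega)$ established in the Rauls--Wachsmuth and Christof--Wachsmuth references, and read off the derivative $h\mapsto h+Z_S(\phi)\Delta h$. The only presentational difference is that the paper writes the limit \eqref{eq:NewtonDiffDef} directly and handles the selection issue by observing that $Z_S(\phi+h)\in\partial_B^{ss}S_0(f+\Delta\phi+\Delta h)$ and taking a supremum over all such elements, rather than packaging the reduction as an application of \cref{lem:chain_rule}; the mathematical content is the same.
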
 

\begin{proof}
Let $\phi \in Y_p$ be given. Then it holds 
\begin{equation}
\label{eq:randomeq3636}
\begin{aligned}
0&\leq \limsup_{0 <  \|h\|_{Y_p} \to 0}
\frac{\|S(\phi + h) - S(\phi) - G_S(\phi+h)h\|_{H_0^1(\Omega)}}{\|h\|_{Y_p}}
\\
&=
\limsup_{0 <  \|h\|_{Y_p} \to 0}
\frac{\| S_0(f + \Delta \phi + \Delta h) 
  - S_0(f + \Delta \phi)
  - Z_S(\phi + h)\Delta h
  \|_{H_0^1(\Omega)}}{\|h\|_{Y_p}}
\\
&\leq
\limsup_{0 <  \|\Delta h\|_{L^p(\Omega)} \to 0, h \in Y_p}
\frac{\| S_0(f + \Delta \phi + \Delta h) 
  - S_0(f + \Delta \phi)
  - Z_S(\phi + h)\Delta h
  \|_{H_0^1(\Omega)}}{\|\Delta h\|_{L^p(\Omega)}}
  \\
  &\leq
 \limsup_{0 <  \|g\|_{L^p(\Omega)} \to 0}
 \sup_{H \in \partial_B^{ss}S_0(f + \Delta \phi + g )}
\frac{\| S_0(f + \Delta \phi + g) 
  - S_0(f + \Delta \phi)
  - H g
  \|_{H_0^1(\Omega)}}{\|g\|_{L^p(\Omega)}}.
\end{aligned}
\end{equation}
Here, we have used \cref{lem:shift_id},
the definition of $\|\cdot\|_{Y_p}$,
and 
the fact that 
$Z_S(\phi + h)$
is an element of the so-called
\emph{strong-strong Bouligand differential}
$\partial_B^{ss}S_0(f + \Delta \phi + \Delta h )$
of $S_0$ at $f + \Delta \phi + \Delta h \in L^p(\Omega)$
by \cite[Theorem 4.3]{RaulsWachsmuth} 
(and a trivial direct argument in the case $d=1$). 
See \cite[Definition 2.10]{RaulsWachsmuth} for the 
precise definition of $\partial_B^{ss}S_0$.
From the Newton differentiability properties of $S_0$ proven in 
\cite[Theorem 4.4]{ChristofWachsmuthBiObstacle}
and the inclusions in 
\cite[Proposition 2.11]{RaulsWachsmuth},
we obtain that the right-hand side of \eqref{eq:randomeq3636}
is equal to zero. 
This completes the proof.
\end{proof}

For convenience, we drop the tilde in $\tilde{G}_S$ everywhere in the following.
From \eqref{eq:def_GS}, we obtain: 

\begin{lemma}[Boundedness of $G_S$]\label{lem_GS_bound_obstacle}
Suppose that \cref{ass:standing_obstacle_map} holds. Then, for every $\phi \in H_0^1(\Omega)$, it holds 
$\|G_S(\phi)\|_{\LL(H_0^1(\Omega), H_0^1(\Omega))} \leq 1$.
\end{lemma}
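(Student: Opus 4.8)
The plan is to unwind \cref{def:GS} and \eqref{eq:def_GS} and to recognize $G_S(\phi)$ as the complement of an orthogonal projection in the Hilbert space $(H_0^1(\Omega), (\cdot,\cdot)_{H_0^1(\Omega)})$, which is automatically non-expansive. Throughout I would use that $(w,v)_{H_0^1(\Omega)} = \langle -\Delta w, v\rangle_{H^{-1}(\Omega),H_0^1(\Omega)}$ for all $w,v \in H_0^1(\Omega)$. So fix $\phi \in H_0^1(\Omega)$ and an arbitrary $h \in H_0^1(\Omega)$, and set $z := Z_S(\phi)\Delta h$. Since $\Delta h \in H^{-1}(\Omega)$ and $H_0^1(I(\phi))$ is a closed subspace of $H_0^1(\Omega)$, the variational problem defining $z$ in \cref{def:GS} is uniquely solvable (Lax--Milgram/Riesz on $H_0^1(I(\phi))$), so $z \in H_0^1(I(\phi))$ is well defined and, by \eqref{eq:def_GS}, $G_S(\phi)h = h + z$.

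Next I would rewrite the defining identity of $z$ in inner-product form: for all $v \in H_0^1(I(\phi))$,
\[
0 = \langle -\Delta z - \Delta h, v\rangle_{H^{-1}(\Omega),H_0^1(\Omega)} = (z + h, v)_{H_0^1(\Omega)} = (G_S(\phi)h, v)_{H_0^1(\Omega)}.
\]
Thus $G_S(\phi)h$ is $H_0^1(\Omega)$-orthogonal to $H_0^1(I(\phi))$, while $G_S(\phi)h - h = z \in H_0^1(I(\phi))$; equivalently, $G_S(\phi)h$ is precisely the orthogonal projection of $h$ onto $H_0^1(I(\phi))^{\perp}$, i.e., $G_S(\phi) = \Id - P$ with $P$ the $H_0^1(\Omega)$-orthogonal projection onto $H_0^1(I(\phi))$.

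The norm bound is then immediate from the Pythagorean theorem: choosing the admissible test function $v = z \in H_0^1(I(\phi))$ in the identity above gives $(h,z)_{H_0^1(\Omega)} = -\norm{z}{H_0^1(\Omega)}^2$, and therefore
\[
\norm{G_S(\phi)h}{H_0^1(\Omega)}^2 = \norm{h}{H_0^1(\Omega)}^2 + 2(h,z)_{H_0^1(\Omega)} + \norm{z}{H_0^1(\Omega)}^2 = \norm{h}{H_0^1(\Omega)}^2 - \norm{z}{H_0^1(\Omega)}^2 \leq \norm{h}{H_0^1(\Omega)}^2.
\]
Since $h \in H_0^1(\Omega)$ was arbitrary, this yields $\norm{G_S(\phi)}{\LL(H_0^1(\Omega),H_0^1(\Omega))} \leq 1$.

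There is no genuine obstacle here: the heart of the matter is a one-line orthogonal-projection/Pythagoras computation. The only points that require (routine) care are the well-posedness of the Poisson-type problem on the quasi-open inactive set $I(\phi)$ used to define $z = Z_S(\phi)\Delta h$, which follows from $H_0^1(I(\phi))$ being a closed subspace of $H_0^1(\Omega)$, and the admissibility of $z$ itself as a test function, which is built into the definition of $Z_S(\phi)$ in \cref{def:GS}.
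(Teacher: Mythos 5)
Your proposal is correct and takes essentially the same approach as the paper: both identify $-Z_S(\phi)\Delta$ as the $H_0^1(\Omega)$-orthogonal projection onto $H_0^1(I(\phi))$, recognize $G_S(\phi)=\Id - P$ as the projection onto the complement $H_0^1(I(\phi))^\perp$, and conclude by non-expansiveness of Hilbert-space projections. Your version merely spells out the variational characterization and the Pythagorean computation that the paper leaves implicit.
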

\begin{proof}
Let $\phi \in H_0^1(\Omega)$ be given. 
Due to its definition,  $-Z_S(\phi)\Delta\colon H_0^1(\Omega) \to H_0^1(\Omega)$ 
is precisely the 
$H_0^1(\Omega)$-orthogonal projection onto 
$H_0^1(I(\phi))$. 
This implies that $G_S(\phi)h = h + Z_S(\phi)\Delta h$ is 
the $H_0^1(\Omega)$-orthogonal projection onto 
$H_0^1(I(\phi))^\perp$.
The assertion now follows immediately from the fact that projections in Hilbert spaces
are non-expansive. 
\end{proof}

If we combine \cref{lem:S_obs_Newton,lem:S_obs_well_def,lem_GS_bound_obstacle}
and compare 
with the requirements 
of the convergence results in 
 \cref{cor:composite_classical,cor:composite_global,cor:composite_local},
then we arrive at the following main theorem.

\begin{theorem}[Convergence of \cref{algo:semiNewtonAbstractGeneral,algo:semiNewtonAbstract} for \labelcref{eq:QVI}]
\label{th:obst_QVI}
Suppose that \cref{ass:standing_obstacle_map} holds and $\Phi\colon H_0^1(\Omega) \to Y_p$,
$\gamma \in [0,1)$, and $B \subset H_0^1(\Omega)$ 
are given such that 
the following is true:
\begin{enumerate}[label=\roman*)]
\item\label{th:obst_QVI:i} $\Phi$ is 
Newton differentiable from $H_0^1(\Omega)$ to $Y_p$ with derivative 
$G_\Phi \colon H_0^1(\Omega) \to \LL(H_0^1(\Omega),Y_p)$;
\item\label{th:obst_QVI:ii} $\Phi$ is locally Lipschitz continuous from $H_0^1(\Omega)$ to $Y_p$;
\item $B$ is a closed ball of radius $r>0$ in $H_0^1(\Omega)$ 
(not necessarily centered at zero)
or $B = H_0^1(\Omega)$;
\item\label{th:obst_QVI:iii} $\Phi$ 
satisfies
$\|\Phi(v_1) - \Phi(v_2)\|_{H_0^1(\Omega)} \leq \gamma \|v_1 - v_2\|_{H_0^1(\Omega)}$
 for all $v_1, v_2 \in B$;
\item\label{th:obst_QVI:iv} $G_\Phi$ satisfies $\|G_\Phi(v)\|_{\LL(H_0^1(\Omega), H_0^1(\Omega))} \leq \gamma$
for all $v \in B$.
\end{enumerate}
Then the QVI 
\begin{equation}
\tag{Q}
\begin{gathered}
\smash{\text{Find $u \in H_0^1(\Omega)$ such that  } 
u \in K(\Phi(u)),
~\langle -\Delta u - f, v - u \rangle_{H^{-1}(\Omega),H_0^1(\Omega)} \geq 0~
\forall v \in K(\Phi(u)),}
\\
\smash{\text{ with }
K(\Phi(u)) := 
\{
        v \in H_0^1(\Omega) \mid v 
        \leq \Phi(u) + \Phi_0 \text{ a.e.\ in }\Omega
 \},} 
\end{gathered}
\end{equation} 
is equivalent to the fixed-point equation $u = S(\Phi(u))$ and the following is true:
\begin{enumerate}[label=\Roman*)]
\item\label{th:obst_QVI:I} If $B = H_0^1(\Omega)$ holds, then the assumptions of 
\cref{cor:composite_classical,cor:composite_global} are satisfied
with $X = H_0^1(\Omega)$,
$Y = D = Y_p$,
and $G_S$ as in \eqref{eq:def_GS}.  
In particular,  the results of \cref{subsec:global_contraction}
apply to \eqref{eq:QVI},
 \eqref{eq:QVI} possesses a unique solution 
 $\bar u \in H_0^1(\Omega)$,
and $\bar u$ can be identified by 
means of \cref{algo:semiNewtonAbstractGeneral,algo:semiNewtonAbstract}, 
with the convergence guarantees in \cref{th:convergenceGeneral,th:convergence}, respectively.
\item If $B$ has finite radius, then the assumptions of 
\cref{cor:composite_local} are satisfied
(with the same $X$, $Y$, etc.\ as in \ref{th:obst_QVI:I}),
and the results of \cref{sec:framework_fp_H_multiple_solns} 
apply to \eqref{eq:QVI}. 
In particular,
\cref{algo:semiNewtonAbstract},
 applied to the fixed-point problem  $u = S(\Phi(P_B(u)))$
satisfies the convergence result in \cref{th:loc_conv}.
\end{enumerate}
\end{theorem}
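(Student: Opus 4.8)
The plan is to verify that \cref{ass:standing_obstacle_map} together with the hypotheses on $\Phi$, $\gamma$, and $B$ force the assumptions of \cref{cor:composite_classical,cor:composite_global,cor:composite_local} under the identifications $X=H_0^1(\Omega)$, $Y=D=Y_p$, and with $S$, $\Phi$ endowed with the Newton derivatives $G_S=\tilde G_S$ of \eqref{eq:def_GS} and $G_\Phi$; the asserted statements then follow word for word from those corollaries and from \cref{lem:uniqueF_H,th:convergence,th:loc_conv}. First I would settle the equivalence of \eqref{eq:QVI} and $u=S(\Phi(u))$. Since $\Phi$ maps $H_0^1(\Omega)$ into $Y_p\subset H_0^1(\Omega)$, for each $u$ the constraint set $K(\Phi(u))$ is exactly a set of the form $K(\phi)$ with $\phi=\Phi(u)\in H_0^1(\Omega)$, so \cref{lem:S_obs_well_def} (applied with this $\phi$) yields $K(\Phi(u))\neq\emptyset$ and that the obstacle problem \eqref{eq:upper_obst_prob} with $\phi=\Phi(u)$ is uniquely solved by $S(\Phi(u))$. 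Hence $u$ solves \eqref{eq:QVI} if and only if $u=S(\Phi(u))$, i.e., \eqref{eq:QVI} coincides with the composite fixed-point equation \eqref{eq:SPhi_FP} for $H:=S\circ\Phi$.

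Next I would check the structural conditions on $X$, $Y$, $D$, $S$, $\Phi$ common to all three corollaries, i.e., the conditions in points \ref{ass:standing_general:i} to \ref{ass:standing_general:iii} of \cref{cor:composite_classical}. The spaces $X=H_0^1(\Omega)$ and $Y=D=Y_p$ are Banach (the latter as noted after \cref{ass:standing_obstacle_map}); $\Phi\colon H_0^1(\Omega)\to Y_p$ is Newton differentiable with derivative $G_\Phi\colon H_0^1(\Omega)\to\LL(H_0^1(\Omega),Y_p)$ by hypothesis \ref{th:obst_QVI:i} and locally Lipschitz by hypothesis \ref{th:obst_QVI:ii}; and $S\colon Y_p\to H_0^1(\Omega)$ is Newton differentiable with derivative $G_S$ by \cref{lem:S_obs_Newton}, while \cref{lem_GS_bound_obstacle} combined with the elementary bound $\|v\|_{H_0^1(\Omega)}\le\|v\|_{Y_p}$ read off from \eqref{eq:Y_norm_def} gives $\|G_S(w)\|_{\LL(Y_p,H_0^1(\Omega))}\le 1$ for all $w\in Y_p$, which supplies the required local boundedness of $G_S$ with constant $1$.

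It then remains to transfer the contraction estimates to the composition. Because $\Phi(v_1),\Phi(v_2)\in Y_p\subset H_0^1(\Omega)$, the nonexpansiveness \eqref{eq:phi_Lipschitz_est} of $S$ and hypothesis \ref{th:obst_QVI:iii} give $\|S(\Phi(v_1))-S(\Phi(v_2))\|_{H_0^1(\Omega)}\le\|\Phi(v_1)-\Phi(v_2)\|_{H_0^1(\Omega)}\le\gamma\|v_1-v_2\|_{H_0^1(\Omega)}$ for $v_1,v_2\in B$, and \cref{lem_GS_bound_obstacle} together with hypothesis \ref{th:obst_QVI:iv} gives $\|G_S(\Phi(v))G_\Phi(v)\|_{\LL(H_0^1(\Omega),H_0^1(\Omega))}\le\|G_S(\Phi(v))\|_{\LL(H_0^1(\Omega),H_0^1(\Omega))}\|G_\Phi(v)\|_{\LL(H_0^1(\Omega),H_0^1(\Omega))}\le\gamma$ for $v\in B$. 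If $B=H_0^1(\Omega)$, these are precisely \eqref{eq:Phi_gamma_Lip} and \eqref{eq:GPhi_bound-2}, so \cref{cor:composite_global} applies; and the remaining hypotheses of \cref{cor:composite_classical} then hold as well, taking its open set to be $H_0^1(\Omega)$, its $\bar x$ the fixed point supplied by \cref{lem:uniqueF_H}, $L=1+\gamma$, $M=(1-\gamma)^{-1}$ (the bound on $G_R(x)^{-1}=(\Id-G_S(\Phi(x))G_\Phi(x))^{-1}$ via a Neumann series), and $\rho^*>0$ small enough that $ML\rho^*<1$, which is compatible with $\rho_i\to 0$. If instead $B$ is a closed ball of radius $r>0$, then $B$ is nonempty, closed and convex, the estimates just derived are \eqref{eq:Phi_gamma_Lip_loc} and \eqref{eq:GPhi_bound-2_loc}, and \cref{lem:semismooth_balls} provides the Newton differentiability of $P_B$ with $\|G_{P_B}(x)\|_{\LL(H_0^1(\Omega),H_0^1(\Omega))}\le 1$, so \cref{cor:composite_local} applies. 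The conclusions in \ref{th:obst_QVI:I} and the finite-radius case then follow verbatim from these corollaries and the theorems they invoke.

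The only genuinely delicate point is the bookkeeping of spaces and norms: $S$ is Newton differentiable only as a map $Y_p\to H_0^1(\Omega)$, so the chain rule of \cref{lem:chain_rule} must be invoked with $\Phi$ controlled in the stronger $Y_p$-norm, whereas the contraction hypotheses \ref{th:obst_QVI:iii}--\ref{th:obst_QVI:iv} are stated only as $H_0^1(\Omega)$-estimates; the compatibility of these two viewpoints is exactly the norm-one embedding $Y_p\hookrightarrow H_0^1(\Omega)$ coming from \eqref{eq:Y_norm_def}, together with the nonexpansiveness of $S$ and of every $G_S(\cdot)$ in the $H_0^1(\Omega)$-topology (\cref{lem:S_obs_well_def,lem_GS_bound_obstacle}). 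Beyond these lemmas and \cref{lem:semismooth_balls}, no new analysis is needed; the proof is a verification of the corollary hypotheses.
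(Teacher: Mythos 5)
Your proposal is correct and follows exactly the paper's approach: the paper's proof is a one-line appeal to \cref{lem:semismooth_balls,lem:S_obs_well_def,lem_GS_bound_obstacle,lem:S_obs_Newton}, the hypotheses on $\Phi$, and the norm-one embedding $Y_p\hookrightarrow H_0^1(\Omega)$, and you have simply unpacked those references to verify the hypotheses of \cref{cor:composite_classical,cor:composite_global,cor:composite_local} in detail. The extra bookkeeping you supply (the identification $\|G_S(\cdot)\|_{\LL(Y_p,H_0^1)}\le 1$ via \cref{lem_GS_bound_obstacle} and \eqref{eq:Y_norm_def}, the Neumann-series bound $M=(1-\gamma)^{-1}$, and the observation that hypotheses \ref{th:obst_QVI:iii}--\ref{th:obst_QVI:iv} are stated in the weaker $H_0^1$-norm but suffice because $S$ and $G_S$ are nonexpansive there) is exactly what is implicitly being invoked.
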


\begin{proof}
The assertions of the theorem follow
immediately from 
\cref{lem:semismooth_balls,lem:S_obs_well_def,lem_GS_bound_obstacle,lem:S_obs_Newton};
the assumptions on $\Phi$;
and the fact that $\|v\|_{H_0^1(\Omega)} \leq \|v\|_{Y_p}$ holds
for all $v \in Y_p$.
\end{proof}

In the context of the QVI
 \eqref{eq:QVI}, the Newton derivative $G_R$ appearing
 in steps \ref{algo1:x_N_general} and \ref{algo1:x_N} of \cref{algo:semiNewtonAbstractGeneral,algo:semiNewtonAbstract}, respectively, is given by
\[G_R(u)h = (\Id- G_\Phi(u))h - Z_S(\Phi(u))\Delta G_\Phi(u)h
\qquad \forall u, h \in H_0^1(\Omega)
.\]
Details on how this and related objects can be realized numerically are given in \cref{subsec:7.1}. 

Note that \cref{th:obst_QVI} leaves considerable 
freedom regarding the precise form of $\Phi$
and also covers 
cases in which the pointwise-a.e.\ constraint 
in \eqref{eq:QVI} is only imposed 
in certain parts of $\Omega$ as 
\cref{ass:standing_obstacle_map}\ref{ass:standing_obstacle_map:iv} allows to
consider functions $\Phi_0$
that take the value $+\infty$.
In what follows, 
we focus primarily on the case that 
$\Phi$
is the solution map of a (potentially nonsmooth)
PDE.

\subsection{Solution operators of semilinear PDEs as obstacle maps}
\label{subsec:solution_ops_as_obstacle}
Next, we show
that solution operators of certain
semilinear PDEs give rise to
obstacle maps $\Phi$ that satisfy 
the assumptions of \cref{th:obst_QVI} and, thus, 
yield obstacle-type QVIs that are covered 
by the general semismooth Newton framework of \cref{sec:framework_fp_H}. 
The operators $\Phi$ that we focus on in 
this subsection are given by 
\begin{equation}
\label{eq:thermo_obst_mapprob}
\Phi(u) := \varphi T \text{ with } T \text{ as the solution of }\quad
kT-\Delta T   = g(\Psi_0 + \psi T- u) \text{ in $\Omega$},\quad
\partial_\nu T = 0 \text{ on $\partial \Omega$}.
\end{equation}
%Note that, in the case $\psi = \varphi$
%and $\Psi_0 = \Phi_0$, the operator $\Phi$
%defined by \eqref{eq:thermo_obst_mapprob} coincides 
%with the one in the thermoforming problem 
%\eqref{eq:QVI_thermo} discussed in the introduction. Before we delve into our analysis, let us briefly motivate our interest in 
%\eqref{eq:QVI_thermo} and the more general model \eqref{eq:thermo_obst_mapprob}.
 
\begin{assumption}\label{ass:SemilinearObstacleMap}
~
\begin{enumerate}[label=\roman*)]
\item $\Omega \subset \R^d$, $d \in \N$, is a bounded Lipschitz domain;
\item $\varphi \in C^2(\bar\Omega)$ is a given function satisfying 
$\varphi = 0$ on $\partial \Omega$;
\item $k>0$ is a given constant;
\item $\Psi_0$ is a given function satisfying 
$\Psi_0 \in L^{2+\varepsilon}(\Omega)$
for some $\varepsilon > 0$;
\item $\psi \in L^\infty(\Omega)$ is a given 
function satisfying $\psi \geq 0$ a.e.\ in $\Omega$;
\item\label{ass:SemilinearObstacleMap:vi} $g\colon \R \to \R$ is
locally Lipschitz continuous, 
nonincreasing, and 
Newton differentiable with a 
derivative $G_g\colon \R \to \R$
that is Borel-measurable 
and satisfies $G_g(s) \in \partial_c g(s)$ for all $s \in \R$;
\item $Y_2$ is defined by $Y_2 := \{ v \in H_0^1(\Omega) \mid \Delta v \in L^2(\Omega)\}$ and equipped with the 
norm 
$\|\cdot\|_{Y_2}$
from \eqref{eq:Y_norm_def}.  
\end{enumerate}
\end{assumption} 

Note that, as the choice $g(s) = -s$
satisfies \cref{ass:SemilinearObstacleMap}\ref{ass:SemilinearObstacleMap:vi},
\eqref{eq:thermo_obst_mapprob} also 
covers linear elliptic PDEs with homogeneous
Neumann boundary conditions. We remark that 
the arguments that we use 
in the following can be extended easily
to, e.g., 
more general differential operators and 
Dirichlet boundary conditions;
cf.\ the general assumptions in \cref{th:obst_QVI}.
We focus on \eqref{eq:thermo_obst_mapprob} 
because this setting 
covers QVIs such as thermoforming problems \cite[\S 6]{AHR}.
We first consider 
\eqref{eq:thermo_obst_mapprob} for arbitrary $d \in \N$ in the next result. (We later consider $d=1$ as a special case.)

\begin{theorem}[Properties of \eqref{eq:thermo_obst_mapprob}]
\label{th_T_probs}
Assume, in addition to 
\cref{ass:SemilinearObstacleMap}, 
that the function $g\colon \R \to \R$
is globally Lipschitz continuous. 
Then the PDE in \eqref{eq:thermo_obst_mapprob}, i.e.,
\begin{equation}
\label{eq:T_PDE}
kT-\Delta T  = g(\Psi_0 + \psi T- u) \text{ in $\Omega$}, 
\qquad
\partial_\nu T  = 0 \text{ on $\partial \Omega$},
\end{equation}
possesses a unique (weak) solution $T \in H^1(\Omega)$ for all $u \in H_0^1(\Omega)$. 
This solution satisfies $\varphi T \in Y_2$. Furthermore, 
the operator $\Phi\colon H_0^1(\Omega) \to Y_2$, $u \mapsto \varphi T$,
possesses the following properties: 
\begin{enumerate}[label=\roman*)]
\item\label{th_T_probs:i} It holds 
\begin{equation}
\label{eq:Phi_Lip_H1}
\begin{aligned}
	&\|\Phi(u_1) - \Phi(u_2)\|_{H_0^1(\Omega)}
        \\	
 &\quad\leq
	C_P(\Omega)\,\Lip{g} 
\left ( \|\varphi \|_{L^\infty(\Omega)}  k^{-1/2} 
+
\| |\nabla \varphi| \|_{L^\infty(\Omega)} k^{-1}
\right )\|u_1 - u_2\|_{H_0^1(\Omega)}
\quad \forall u_1, u_2 \in H_0^1(\Omega).
\end{aligned}
\end{equation}
\item\label{th_T_probs:ii}  There exists a constant $C>0$ satisfying 
\begin{equation}
\label{eq:Phi_Y_est}
	\|\Phi(u_1) - \Phi(u_2)\|_{Y_2}
	\leq
	C
	\| u_1  -  u_2\|_{H_0^1(\Omega)}\qquad \forall u_1, u_2 \in H_0^1(\Omega). 
\end{equation}
\item\label{th_T_probs:iii}  
Let $G_\Phi\colon H_0^1(\Omega) \to \LL(H_0^1(\Omega), Y_2)$
be defined by  $G_\Phi(u)h = \varphi \xi_h$ with $\xi_h$ as the weak solution of 
	\begin{equation}
	\label{eq:xi_PDE}
k\xi_h -\Delta \xi_h  - G_g(\Psi_0 +  \psi T - u)\psi \xi_h 
=
- G_g(\Psi_0 + \psi T - u) h\text{ in $\Omega$}, 
\qquad
\partial_\nu \xi_h  = 0 \text{ on $\partial \Omega$},
\end{equation}
and $T$ as the solution of \eqref{eq:T_PDE}.
Then $\Phi$ is Newton differentiable as a function from $H_0^1(\Omega)$ to $Y_2$
with derivative $G_\Phi$ and, 
for all $u \in H_0^1(\Omega)$, 
it holds 
\begin{equation}
	\label{eq:G_phi_bound}
 \begin{aligned}
&\|G_\Phi(u)\|_{\LL(H_0^1(\Omega), H_0^1(\Omega))}
\leq
C_P(\Omega)\,
\Lip{g} 
\left ( \|\varphi \|_{L^\infty(\Omega)}  k^{-1/2} 
+
\| |\nabla \varphi| \|_{L^\infty(\Omega)} k^{-1}
\right ).
\end{aligned}
\end{equation}
\end{enumerate}
\end{theorem}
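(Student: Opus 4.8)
\emph{Overview.} The four claims split naturally: existence/uniqueness of $T$ and the inclusion $\varphi T\in Y_2$ come from monotone operator theory plus elliptic regularity; the two Lipschitz bounds \eqref{eq:Phi_Lip_H1} and \eqref{eq:Phi_Y_est} come from a single energy estimate for the difference of two solutions; and the Newton differentiability, together with \eqref{eq:G_phi_bound}, comes from an energy estimate for the ``second difference'' $e:=(\text{solution gap}) - (\text{linearized gap})$, which reduces the whole matter to the Newton differentiability of the scalar superposition operator attached to $g$. The single structural fact driving everything is that $g$ is nonincreasing and $\psi\ge 0$, so that $T\mapsto -g(\Psi_0+\psi T-u)$ is monotone (and $G_g\le 0$ a.e., since Clarke's gradient of a nonincreasing function is nonpositive). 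First I would recast \eqref{eq:T_PDE} weakly as $\mathcal{A}(u)T=0$ in $H^1(\Omega)^*$ with $\langle\mathcal{A}(u)T,v\rangle:=\int_\Omega kTv+\nabla T\cdot\nabla v-g(\Psi_0+\psi T-u)v\,\mathrm{d}x$; because $g$ is globally Lipschitz and $\Psi_0\in L^{2+\varepsilon}(\Omega)\subset L^2(\Omega)$, the nonlinear term lands in $L^2(\Omega)$, and $\mathcal{A}(u)$ is strongly monotone (constant $\min(k,1)$, from the $kT$-term), hemicontinuous, and bounded, so the Browder--Minty theorem gives a unique $T\in H^1(\Omega)$. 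The regularity $\varphi T\in Y_2$ then follows because $\Delta T=kT-g(\Psi_0+\psi T-u)\in L^2(\Omega)$, so that $\varphi T\in H_0^1(\Omega)$ ($\varphi\in C^2(\bar\Omega)$ vanishes on $\partial\Omega$) and the Leibniz rule $\Delta(\varphi T)=\varphi\Delta T+2\nabla\varphi\cdot\nabla T+T\Delta\varphi$ shows $\Delta(\varphi T)\in L^2(\Omega)$.

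\emph{Lipschitz estimates \eqref{eq:Phi_Lip_H1}, \eqref{eq:Phi_Y_est}.} For $u_1,u_2\in H_0^1(\Omega)$ with $T_j:=$ the solution for $u_j$ and $\delta T:=T_1-T_2$, the difference solves $k\delta T-\Delta\delta T=g(\Psi_0+\psi T_1-u_1)-g(\Psi_0+\psi T_2-u_2)$ with homogeneous Neumann data. I would test with $\delta T$ and split the right-hand side into the increment in the $T$-slot (which, integrated against $\delta T$, is $\le 0$ by monotonicity of $g$ and $\psi\ge 0$) and the increment in the $u$-slot (bounded by $\Lip{g}\,\norm{u_1-u_2}{L^2(\Omega)}\norm{\delta T}{L^2(\Omega)}$), obtaining
\[
k\norm{\delta T}{L^2(\Omega)}^2+\norm{\nabla\delta T}{L^2(\Omega)}^2\le \Lip{g}\,\norm{u_1-u_2}{L^2(\Omega)}\,\norm{\delta T}{L^2(\Omega)},
\]
hence $\norm{\delta T}{L^2(\Omega)}\le \Lip{g}\,k^{-1}\norm{u_1-u_2}{L^2(\Omega)}$ and $\norm{\nabla\delta T}{L^2(\Omega)}\le \Lip{g}\,k^{-1/2}\norm{u_1-u_2}{L^2(\Omega)}$. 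Estimating $\nabla(\varphi\delta T)=\varphi\nabla\delta T+\delta T\nabla\varphi$ in $L^2(\Omega)$ and applying the Poincaré--Friedrichs inequality yields \eqref{eq:Phi_Lip_H1}; bounding in addition $\Delta(\varphi\delta T)$ via $\Delta\delta T=k\delta T-[g(\cdot_1)-g(\cdot_2)]$ yields \eqref{eq:Phi_Y_est}.

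\emph{Newton differentiability and \eqref{eq:G_phi_bound}.} The bound \eqref{eq:G_phi_bound} is the same energy estimate applied to \eqref{eq:xi_PDE}: testing with $\xi_h$ and using $G_g\le 0$ a.e., $\psi\ge 0$, and $|G_g|\le\Lip{g}$ a.e.\ discards the term $-\int_\Omega G_g(\cdot)\psi\xi_h^2\,\mathrm{d}x\ge 0$ and gives $k\norm{\xi_h}{L^2(\Omega)}^2+\norm{\nabla\xi_h}{L^2(\Omega)}^2\le\Lip{g}\,\norm{h}{L^2(\Omega)}\norm{\xi_h}{L^2(\Omega)}$, whence the $L^2$- and $H_0^1$-bounds for $\xi_h$ and then \eqref{eq:G_phi_bound} after multiplication by $\varphi$ and Poincaré. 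For the differentiability, fix $u$, set $T_1:=\Phi(u+h)/\varphi$'s temperature, $T_0$ the one for $u$, and let $\xi$ solve \eqref{eq:xi_PDE} with data $u+h$ (so $G_\Phi(u+h)h=\varphi\xi$). A direct computation shows $e:=T_1-T_0-\xi$ solves the linear Neumann problem
\[
ke-\Delta e-G_g(a_1)\psi e=r_g\ \text{ in }\Omega,\qquad \partial_\nu e=0\ \text{ on }\partial\Omega,
\]
where $a_1:=\Psi_0+\psi T_1-(u+h)$, $a_0:=\Psi_0+\psi T_0-u$, $a_1-a_0=\psi(T_1-T_0)-h$, and $r_g:=g(a_1)-g(a_0)-G_g(a_1)(a_1-a_0)$ is exactly the Newton remainder of $g$ at $a_1$. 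Testing with $e$ (again discarding the nonnegative zeroth-order term) gives $\norm{e}{H^1(\Omega)}\le C\norm{r_g}{L^2(\Omega)}$, and then $\Delta e=ke-G_g(a_1)\psi e-r_g$ gives $\norm{\Delta e}{L^2(\Omega)}\le C\norm{r_g}{L^2(\Omega)}$. Since $\Phi(u+h)-\Phi(u)-G_\Phi(u+h)h=\varphi e$ and $\norm{\varphi e}{Y_2}\le C(\norm{e}{H^1(\Omega)}+\norm{\Delta e}{L^2(\Omega)})$ by the Leibniz rule, everything reduces to the claim $\norm{r_g}{L^2(\Omega)}=o(\norm{h}{H_0^1(\Omega)})$ as $h\to 0$.

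\emph{The main obstacle.} The crux is precisely this last claim, and I would settle it by invoking the standard lifting result for Newton-differentiable superposition operators with a norm gap (cf.\ \cite{Ulbrich2011}): since $g$ is globally Lipschitz and scalar Newton differentiable with $G_g\in\partial_c g$ Borel measurable, the Nemytskii operator $N_g\colon L^{2+\varepsilon}(\Omega)\to L^2(\Omega)$ is Newton differentiable with Newton derivative the multiplication by $G_g$ --- and the strict inequality $2+\varepsilon>2$ is essential here. The book-keeping that has to be threaded correctly is the integrability: after shrinking $\varepsilon$ if necessary so that $2+\varepsilon$ does not exceed the Sobolev exponent, one has $a_0\in L^{2+\varepsilon}(\Omega)$ (with $\Psi_0\in L^{2+\varepsilon}(\Omega)$ and $\psi T_0,u\in H^1(\Omega)\hookrightarrow L^{2+\varepsilon}(\Omega)$ on the bounded domain $\Omega$), while $a_1-a_0=\psi(T_1-T_0)-h$ satisfies $\norm{a_1-a_0}{L^{2+\varepsilon}(\Omega)}\le C\norm{h}{H_0^1(\Omega)}\to 0$ using the $H^1$-estimate on $T_1-T_0$ from the second paragraph together with $H_0^1(\Omega)\hookrightarrow L^{q}(\Omega)$ for some $q>2$. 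Hence $\norm{r_g}{L^2(\Omega)}=o(\norm{a_1-a_0}{L^{2+\varepsilon}(\Omega)})=o(\norm{h}{H_0^1(\Omega)})$, which closes the argument. The two delicate points are therefore (a) the self-improving regularity $\Delta T\in L^2(\Omega)$, without which $\varphi T\notin Y_2$ and the whole target space is wrong, and (b) keeping the integrability exponent strictly above $2$ through the Nemytskii step, which is exactly why the hypothesis $\Psi_0\in L^{2+\varepsilon}(\Omega)$ (rather than merely $L^2(\Omega)$) is imposed.
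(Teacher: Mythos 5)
Your proposal is correct and follows essentially the same strategy as the paper's proof: well-posedness via monotonicity of the map $T\mapsto kT-\Delta T-g(\Psi_0+\psi T-u)$ (the paper cites the Lions--Stampacchia/Rodrigues result rather than Browder--Minty, but these are interchangeable here), the energy estimates $\|\delta T\|_{L^2}\le\Lip{g}k^{-1}\|u_1-u_2\|_{L^2}$ and $\|\nabla\delta T\|_{L^2}\le\Lip{g}k^{-1/2}\|u_1-u_2\|_{L^2}$ obtained by testing with $\delta T$ and discarding the nonnegative monotonicity term, and the Newton-differentiability argument via the "second difference" $e=\delta_h$ solving a linearized Neumann problem whose source $r_g$ is the Newton remainder of $g$ at the base point $a_0$ in the direction $a_1-a_0=\psi(T_1-T_0)-h$, handled by the $L^{2+\varepsilon}\to L^2$ Nemytskii lifting of \cite[Theorem 3.49]{Ulbrich2011}. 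Your remark on shrinking $\varepsilon$ so that $H^1(\Omega)\hookrightarrow L^{2+\varepsilon}(\Omega)$ is exactly the bookkeeping the paper also performs implicitly, and your identification of the two delicate points (the regularity $\Delta T\in L^2$ and the strict norm gap) correctly captures where the hypotheses are used.
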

\begin{proof}
From Young's inequality and 
our assumptions on $g$, $\Psi_0$, $\psi$,
and $\Omega$, 
it follows that 
\begin{equation}
\label{eq:L2gestimate}
\int_\Omega |g(\Psi_0 + \psi v - u)|^2 \mathrm{d}x
\leq
2
\int_\Omega 
|g(0)|^2
+ 
 \Lip{g}^2|\Psi_0 + \psi v - u|^2
 \mathrm{d}x
 <
 \infty
 \quad 
 \forall v \in H^1(\Omega)
 \quad \forall u \in H_0^1(\Omega).
\end{equation}
Thus, $g(\Psi_0 + \psi v - u) \in L^2(\Omega)$
for all $v \in H^1(\Omega)$ and $u\in H_0^1(\Omega)$,
and we obtain that 
the operator 
\[
	A_u\colon H^1(\Omega) \to H^1(\Omega)^*,
	\qquad
	\left \langle A_u(v), w \right \rangle_{H^1(\Omega)^*,H^1(\Omega)}
	:=
	\int_{\Omega} kvw + \nabla v \cdot \nabla w - 
	 g(\Psi_0 + \psi v - u)  w  \mathrm{d}x,
\]
is well defined for all $u \in H_0^1(\Omega)$. Due to the 
global Lipschitz continuity and 
monotonicity of $g$ and the 
nonnegativity and essential boundedness of $\psi$,
we further have
\begin{equation*}
\begin{aligned}
&\left \| A_u(v_1) - A_u(v_2) \right \|_{H^1(\Omega)^*}
\\
&\quad\leq
\max(k,1)
\|v_1 - v_2\|_{H^1(\Omega)}
+
\|g(\Psi_0 + \psi v_1 - u) - g(\Psi_0 + \psi v_2 - u)\|_{L^2(\Omega)}
\\
&\quad\leq 
\left (\max(k,1) + \Lip{g} \|\psi\|_{L^\infty(\Omega)} \right )
\|v_1 - v_2\|_{H^1(\Omega)}
\quad \forall v_1, v_2 \in H^1(\Omega)\quad \forall u \in H_0^1(\Omega)
\end{aligned}
\end{equation*}
and 
\begin{equation}
\label{eq:Aucoercive}
\begin{aligned}
&\left \langle A_u(v_1) - A_u(v_2) , v_1 - v_2\right \rangle_{H^1(\Omega)^*,H^1(\Omega)}
\\
&\quad
\geq\int_{\Omega} k(v_1 - v_2)^2 + | \nabla (v_1 - v_2) |^2
- \left (g(\Psi_0 + \psi v_1 - u) - g(\Psi_0 + \psi v_2 - u) \right )(v_1 - v_2)  \mathrm{d}x
\\
&\quad \geq k \|v_1 - v_2\|_{L^2(\Omega)}^2
+
\||\nabla(v_1 - v_2)|\|_{L^2(\Omega)}^2
\quad \forall v_1, v_2 \in H^1(\Omega)\quad \forall u \in H_0^1(\Omega). 
\end{aligned}
\end{equation}
This shows that $A_u\colon H^1(\Omega) \to H^1(\Omega)^*$ 
is globally Lipschitz continuous 
and coercive and, 
by 
\cite[Theorem 4:3.1]{Rodrigues},
that the equation $A_u(T) = 0$
is uniquely solvable for all 
$u \in H_0^1(\Omega)$. 
Due to the definition of $A_u$,
the latter shows that 
 \eqref{eq:T_PDE}
 possesses a unique solution $T \in H^1(\Omega)$
 for all $u \in H_0^1(\Omega)$. 
That this solution 
satisfies $\varphi T \in H_0^1(\Omega)$ and 
$\Delta (\varphi T) \in L^2(\Omega)$ 
for all $u \in H_0^1(\Omega)$
follows immediately from the properties of 
$\varphi$, the definition of $\Delta$, \eqref{eq:T_PDE}, and \eqref{eq:L2gestimate}.
Thus, $\varphi T \in Y_2$ as claimed. 

It remains to prove points \ref{th_T_probs:i}, \ref{th_T_probs:ii}, and \ref{th_T_probs:iii}.
To this end, let us assume that $u_1, u_2 \in H_0^1(\Omega)$ are given 
and that $T_1, T_2 \in H^1(\Omega)$ are the associated solutions 
of \eqref{eq:T_PDE}. Then \eqref{eq:Aucoercive} yields
\begin{equation*}
\begin{aligned}
 k \|T_1 - T_2\|_{L^2(\Omega)}^2
+ \| |\nabla T_1 - \nabla T_2 |\|_{L^2(\Omega)}^2
&\leq
\left \langle A_{u_1}(T_1) - A_{u_1}(T_2), T_1- T_2\right \rangle_{H^1(\Omega)^*,H^1(\Omega)}
\\
&=
\left (
g(\Psi_0 + \psi T_2 - u_1)
-
g(\Psi_0 + \psi T_2 - u_2)
,
T_1 - T_2
\right )_{L^2(\Omega)}
\\
&\leq
\Lip{g}
\|u_1 - u_2\|_{L^2(\Omega)}
\|T_1 - T_2\|_{L^2(\Omega)}.
\end{aligned}
\end{equation*}
This implies
\begin{equation}
\label{eq:estimates:T1T2-L2}
\|T_1 - T_2\|_{L^2(\Omega)}
\leq
 \Lip{g}k^{-1}
\|u_1 - u_2\|_{L^2(\Omega)}
\end{equation}
and
\begin{equation}
\label{eq:estimates:T1T2-H1}
\| |\nabla T_1 - \nabla T_2 |\|_{L^2(\Omega)}
\leq \Lip{g} k^{-1/2} \|u_1 - u_2\|_{L^2(\Omega)}.
\end{equation}
In combination with the definition of $\Phi$, 
we now obtain 
\begin{equation*}
\begin{aligned}
\|\Phi(u_1) - \Phi(u_2)\|_{H_0^1(\Omega)}
&\leq
\|\varphi \|_{L^\infty(\Omega)} \||\nabla T_1 - \nabla  T_2 |\|_{L^2(\Omega)}
+
\| |\nabla \varphi| \|_{L^\infty(\Omega)} \|  T_1 -  T_2 \|_{L^2(\Omega)}
\\
&\leq
\Lip{g}
\left ( \|\varphi \|_{L^\infty(\Omega)}  k^{-1/2} 
+
\| |\nabla \varphi| \|_{L^\infty(\Omega)} k^{-1}
\right )\|u_1 - u_2\|_{L^2(\Omega)}
\\
&\leq
C_P(\Omega)\,\Lip{g} 
\left ( \|\varphi \|_{L^\infty(\Omega)}  k^{-1/2} 
+
\| |\nabla \varphi| \|_{L^\infty(\Omega)} k^{-1}
\right )\|u_1 - u_2\|_{H_0^1(\Omega)}.
\end{aligned}
\end{equation*}
This proves \ref{th_T_probs:i}. To prove \ref{th_T_probs:ii}, we note that 
\eqref{eq:estimates:T1T2-L2}, 
the PDEs satisfied by $T_1$ and $T_2$,
and the global Lipschitz continuity of $g$ imply that 
\begin{equation*}
\begin{aligned}
\|\Delta T_1 - \Delta T_2\|_{L^2(\Omega)}
&=
\|kT_1 - kT_2 + g(\Psi_0 + \psi T_2- u_2) - g(\Psi_0 + \psi T_1- u_1)\|_{L^2(\Omega)}
\\
&\leq
k \|T_1 - T_2\|_{L^2(\Omega)}
+
\Lip{g}\left (\|\psi\|_{L^\infty(\Omega)}  \|T_1 - T_2\|_{L^2(\Omega)} +
 \|u_1 - u_2\|_{L^2(\Omega)}\right )
 \\
 &\leq
 C \|u_1 - u_2\|_{H_0^1(\Omega)}
\end{aligned}
\end{equation*}
holds 
with some constant $C>0$. In combination with the assumptions 
on $\varphi$, \eqref{eq:estimates:T1T2-L2},
and \eqref{eq:estimates:T1T2-H1}, this yields \eqref{eq:Phi_Y_est}
(with a potentially larger constant $C>0$). It remains to prove \ref{th_T_probs:iii}.
To this end, let us denote the solution map of
the PDE \eqref{eq:T_PDE} by $P\colon H_0^1(\Omega) \to H^1(\Omega)$, $u \mapsto T$.
We claim that 
\begin{equation}
\label{eq:P_semismooth_H1}
\lim_{0 <  \|h\|_{H_0^1(\Omega)} \to 0}
\frac{ 
\|P(u + h) - P(u) - G_P(u+h)h\|_{H^1(\Omega)}}{\|h\|_{H_0^1(\Omega)}}
=
0\qquad \forall u \in H_0^1(\Omega)
\end{equation}
and
\begin{equation}
\label{eq:P_semismooth_Delta}
\lim_{0 <  \|h\|_{H_0^1(\Omega)} \to 0}
\frac{\|\Delta P(u + h) - \Delta P(u) - \Delta G_P(u+h)h\|_{L^2(\Omega)} }{\|h\|_{H_0^1(\Omega)}}
=
0\qquad \forall u \in H_0^1(\Omega)
\end{equation}
hold, 
where $G_P\colon H_0^1(\Omega) \to \LL(H_0^1(\Omega), H^1(\Omega))$
is defined by $G_P(u)h := \xi_h$ 
for all $u, h \in H_0^1(\Omega)$
with $\xi_h$ being the solution of \eqref{eq:xi_PDE}.
To establish \eqref{eq:P_semismooth_H1} 
and \eqref{eq:P_semismooth_Delta}, 
let us assume that $u \in H_0^1(\Omega)$
is fixed. 
From 
 the PDEs solved by 
 $P(u +  h)$, $P(u)$,
 and $G_P(u+h )h$, we obtain that the function
 $
 \delta_h :=  P(u + h) -   P(u) - G_P(u + h )h \in H^1(\Omega)
 $
 satisfies 
 \begin{equation}
 \label{eq:deltah_PDE}
 \begin{aligned}
& k \delta_h - \Delta \delta_h
 - G_g(\Psi_0 + \psi P(u + h)- u - h)\psi \delta_h
 \\
 &\quad= 
  g(\Psi_0 + \psi P(u + h)- u - h)
 - 
 g(\Psi_0 + \psi P(u)- u)   
 \\
 &\qquad\qquad
  -
  G_g(\Psi_0 + \psi P(u + h)- u - h)\left ( \psi P(u + h) -   \psi P(u)  - h\right )
  = : \rho_h
 \end{aligned}
 \end{equation}
 in $\Omega$ 
 and $\partial_\nu \delta_h = 0$ on $\partial \Omega$ for all $h \in H_0^1(\Omega)$.
 Due to 
 \eqref{eq:estimates:T1T2-L2}
 and \eqref{eq:estimates:T1T2-H1}
 and 
 our assumptions on 
 $\psi$, $\Psi_0$, and $\Omega$,
 we further know 
 that there exist constants 
 $C, \varepsilon > 0$ such that 
 $\Psi_0 \in L^{2+\varepsilon}(\Omega)$
 holds, such that $H^1(\Omega)$ embeds 
 continuously into $L^{2+\varepsilon}(\Omega)$,
 and such that, for all $h \in H_0^1(\Omega)$,
 we have
 \[
\|\psi P(u + h) -   \psi P(u)  - h\|_{L^{2+\varepsilon}(\Omega)}
\leq
\|\psi\|_{L^\infty(\Omega)}
\| P(u + h) - P(u) \|_{L^{2+\varepsilon}(\Omega)}
+
\|h\|_{L^{2+\varepsilon}(\Omega)}
\leq
C \|h\|_{H_0^1(\Omega)}.
 \]
 In combination with the properties of $g$
 and \cite[Theorem 3.49]{Ulbrich2011}, 
 the above entails that
\begin{equation*}
\begin{aligned}
\limsup_{0 <  \|h\|_{H_0^1(\Omega)} \to 0}
\frac{ \left \| \rho_h
\right \|_{L^2(\Omega)}}{\|h\|_{H_0^1(\Omega)}}
&\leq
C
\limsup_{0 <  \|\tilde h\|_{L^{2+\varepsilon}(\Omega)} \to 0}
\frac{ \left \| 
g(\tilde u + \tilde h)
 - 
 g(\tilde u)   
  -
  G_g(\tilde u + \tilde h)\tilde h
\right \|_{L^2(\Omega)}}{\|\tilde h\|_{L^{2+\varepsilon}(\Omega)}}
= 0
\end{aligned}
\end{equation*}
holds, 
where 
$\tilde u$ is defined by $\tilde u := \Psi_0 + \psi P(u)- u$
and 
$\tilde h$ has been used to replace the perturbation  
$\psi P(u + h) -   \psi P(u)  - h$
appearing in the definition of $\rho_h$.
By choosing $\delta_h$ as the test function in the weak form of 
\eqref{eq:deltah_PDE}---keeping in mind that  
$G_g(s) \in \partial_c g(s) \subset [-\Lip{g}, 0]$
holds for all $s \in \R$ by 
\cref{ass:SemilinearObstacleMap}\ref{ass:SemilinearObstacleMap:vi}
and that $\psi$ is 
nonnegative---we now obtain that 
\begin{equation*}
\frac{k \|\delta_h\|_{L^2(\Omega)}^2 + \| |\nabla \delta_h|\|_{L^2(\Omega)}^2}{\|h\|_{H_0^1(\Omega)}}
\leq
\frac{\|\rho_h\|_{L^2(\Omega)}\|\delta_h\|_{L^2(\Omega)} }{\|h\|_{H_0^1(\Omega)}}
\qquad \forall h \in H_0^1(\Omega)\setminus \{0\}
\end{equation*}
and, after applying Young's inequality, that there exists a constant $C>0$ satisfying
\begin{equation}
\label{eq:random_eq_41_H^1}
 \limsup_{0 <  \|h\|_{H_0^1(\Omega)} \to 0}
\frac{\| \delta_h\|_{H^1(\Omega)}}{\|h\|_{H_0^1(\Omega)}} 
\leq
C
\limsup_{0 <  \|h\|_{H_0^1(\Omega)} \to 0}
\frac{ \left \| \rho_h
\right \|_{L^2(\Omega)}}{\|h\|_{H_0^1(\Omega)}}
=
0
.
\end{equation}
By revisiting \eqref{eq:deltah_PDE} and by exploiting 
that $|G_g|$ is bounded by $\Lip{g}$, it now also follows that 
\begin{equation}
\label{eq:random_eq_41_Delta}
\limsup_{0 <  \|h\|_{H_0^1(\Omega)} \to 0}
\frac{\| \Delta \delta_h\|_{L^2(\Omega)}}{\|h\|_{H_0^1(\Omega)}} 
\leq
\limsup_{0 <  \|h\|_{H_0^1(\Omega)} \to 0}
\frac{\left (k + \Lip{g}\|\psi\|_{L^\infty(\Omega)}\right ) \| \delta_h\|_{L^2(\Omega)} + \|\rho_h\|_{L^2(\Omega)}}{\|h\|_{H_0^1(\Omega)}} 
=
0.
\end{equation}
Due to the definition of $\delta_h$, 
the estimates 
\eqref{eq:random_eq_41_H^1}
and \eqref{eq:random_eq_41_Delta} establish
\eqref{eq:P_semismooth_H1}
and 
\eqref{eq:P_semismooth_Delta}.
Since $\Phi(u) = \varphi P(u)$ holds 
with $\varphi$
satisfying 
$\varphi \in C^2(\bar \Omega)$
and
$\varphi = 0$ on $\partial \Omega$,
this yields 
that $\Phi$ is Newton differentiable as a function from 
$H_0^1(\Omega)$ to $Y_2$ with derivative $G_\Phi(u)h := \varphi G_P(u)h$ as claimed.
It remains to prove the bound in \eqref{eq:G_phi_bound}. To this end, 
we note that, for all $u, h \in H_0^1(\Omega)$,
we can choose  $\xi_h = G_P(u) h$ as the test function in the weak 
form of \eqref{eq:xi_PDE} to obtain 
$
	k \|\xi_h\|_{L^2(\Omega)}^2
	+
	\||\nabla \xi_h |\|_{L^2(\Omega)}^2
	\leq
	\Lip{g} \|\xi_h\|_{L^2(\Omega)} \|h\|_{L^2(\Omega)}.
$
Here, we have again exploited that $G_g$ maps into the interval $[-\Lip{g}, 0]$ and that $\psi$ is nonnegative.
From the exact same arguments as in \eqref{eq:estimates:T1T2-L2} and \eqref{eq:estimates:T1T2-H1}, 
we now obtain
$
\|\xi_h\|_{L^2(\Omega)}
\leq
 \Lip{g}k^{-1}
\|h\|_{L^2(\Omega)}
$
and
$
\| |\nabla \xi_h |\|_{L^2(\Omega)}
\leq \Lip{g} k^{-1/2} \|h\|_{L^2(\Omega)}
$
for all $u, h \in H_0^1(\Omega)$
and, as a consequence,
\begin{equation*}
\begin{aligned}
\|G_\Phi(u)h\|_{H_0^1(\Omega)}
&\leq
\|\varphi \|_{L^\infty(\Omega)} \||\nabla \xi_h |\|_{L^2(\Omega)}
+
\| |\nabla \varphi| \|_{L^\infty(\Omega)} \| \xi_h \|_{L^2(\Omega)}
\\
&\leq
\Lip{g} 
\left ( \|\varphi \|_{L^\infty(\Omega)}  k^{-1/2} 
+
\| |\nabla \varphi| \|_{L^\infty(\Omega)} k^{-1}
\right )\|h\|_{L^2(\Omega)}
\\
&\leq
C_P(\Omega)\,
\Lip{g} 
\left ( \|\varphi \|_{L^\infty(\Omega)}  k^{-1/2} 
+
\| |\nabla \varphi| \|_{L^\infty(\Omega)} k^{-1}
\right )\|h\|_{H_0^1(\Omega)}.
\end{aligned}
\end{equation*}
This establishes \eqref{eq:G_phi_bound} and completes the proof. 
\end{proof}

\begin{remark}
\label{rem:Poincare}
If $\Omega$ is contained in an open cube with sides of length $l$,
then the 
Poincaré constant
satisfies 
$C_P(\Omega) \leq l$;
see \cite[Theorem 1.5]{Braess2001finite}.
For $d=1$, one easily checks that 
$C_P(\Omega) = \diam(\Omega)/\pi$.
\end{remark}

By comparing  \cref{th_T_probs} 
with  \cref{th:obst_QVI},
we arrive at:

\begin{corollary}
\label{cor:gamma_prop}
Suppose that $g$ is globally Lipschitz continuous and that 
\begin{equation*}
\gamma := 	
C_P(\Omega)\,\Lip{g} 
\left ( \|\varphi \|_{L^\infty(\Omega)}  k^{-1/2} 
+
\| |\nabla \varphi| \|_{L^\infty(\Omega)} k^{-1}
\right )
\in [0,1).
\end{equation*}
Then the map $\Phi\colon H_0^1(\Omega) \to Y_2$
in \cref{th_T_probs} satisfies the conditions \ref{th:obst_QVI:i} to \ref{th:obst_QVI:iv}
in \cref{th:obst_QVI} with the  
derivative $G_\Phi\colon H_0^1(\Omega) \to \LL(H_0^1(\Omega), Y_2)$
 in \cref{th_T_probs}\ref{th_T_probs:iii}, $p=2$, 
and $B := H_0^1(\Omega)$.
\end{corollary}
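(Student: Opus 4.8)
The plan is to obtain the corollary as an immediate specialization of the two preceding results: each of the four conditions \ref{th:obst_QVI:i}--\ref{th:obst_QVI:iv} required in \cref{th:obst_QVI} will simply be read off from the conclusions already established in \cref{th_T_probs}, once the free parameters there are fixed to $p = 2$ and $B := H_0^1(\Omega)$. Before doing so I would record that this choice of $p$ is legitimate: $2d/(d+2) < 2$ for every $d \in \N$, so $\max(1, 2d/(d+2)) < 2$ and hence $p = 2$ meets the exponent requirement of \cref{ass:standing_obstacle_map}; moreover, for $p = 2$ the space $Y_p$ appearing in \cref{th:obst_QVI} coincides with the space $Y_2$ of \cref{ass:SemilinearObstacleMap} into which \cref{th_T_probs} maps $\Phi$, and $B = H_0^1(\Omega)$ is one of the two admissible choices for $B$ allowed in \cref{th:obst_QVI}.

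With these identifications in place, condition \ref{th:obst_QVI:i} (Newton differentiability of $\Phi$ from $H_0^1(\Omega)$ to $Y_p$ with a derivative valued in $\LL(H_0^1(\Omega), Y_p)$) is exactly the content of \cref{th_T_probs}\ref{th_T_probs:iii}, which also supplies the map $G_\Phi$; condition \ref{th:obst_QVI:ii} (local Lipschitz continuity of $\Phi$ from $H_0^1(\Omega)$ to $Y_p$) follows from the \emph{global} estimate \eqref{eq:Phi_Y_est} of \cref{th_T_probs}\ref{th_T_probs:ii}. For condition \ref{th:obst_QVI:iii} I would combine the Lipschitz bound \eqref{eq:Phi_Lip_H1} of \cref{th_T_probs}\ref{th_T_probs:i} with the very definition of $\gamma$ in the statement, which yields $\|\Phi(v_1) - \Phi(v_2)\|_{H_0^1(\Omega)} \le \gamma\|v_1 - v_2\|_{H_0^1(\Omega)}$ for all $v_1, v_2 \in B = H_0^1(\Omega)$. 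Analogously, condition \ref{th:obst_QVI:iv} is obtained by inserting the definition of $\gamma$ into the operator-norm bound \eqref{eq:G_phi_bound} of \cref{th_T_probs}\ref{th_T_probs:iii}, giving $\|G_\Phi(v)\|_{\LL(H_0^1(\Omega), H_0^1(\Omega))} \le \gamma$ for every $v \in B$.

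I do not expect any genuine obstacle here; the corollary is pure bookkeeping layered on \cref{th_T_probs} and \cref{th:obst_QVI}. The only points that deserve an explicit word are the mutual compatibility of the two standing hypotheses \cref{ass:standing_obstacle_map} (with $p = 2$) and \cref{ass:SemilinearObstacleMap}---in particular that the global Lipschitz assumption on $g$ imposed in the corollary is consistent with \cref{ass:SemilinearObstacleMap}\ref{ass:SemilinearObstacleMap:vi}---and the remark that the hypothesis $\gamma \in [0,1)$ is imposed precisely so that the contractivity requirements \ref{th:obst_QVI:iii}--\ref{th:obst_QVI:iv} of \cref{th:obst_QVI} hold with a contraction factor strictly below one. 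Once these are noted, the proof is complete.
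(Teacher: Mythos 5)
Your proof is correct and follows precisely the route the paper intends: the paper presents \cref{cor:gamma_prop} with only the remark ``By comparing \cref{th_T_probs} with \cref{th:obst_QVI}, we arrive at,'' and your verification of conditions \ref{th:obst_QVI:i}--\ref{th:obst_QVI:iv} via \cref{th_T_probs}\ref{th_T_probs:i}--\ref{th_T_probs:iii}, together with the observation that $p=2$ is admissible since $\max(1,2d/(d+2))<2$, is exactly that comparison spelled out.
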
 

In the special case $d=1$, we can 
exploit that $H^1(\Omega)$ embeds into $L^\infty(\Omega)$
to obtain the following variant of 
\cref{th_T_probs} that only requires 
$g$ to be locally Lipschitz continuous.

\begin{theorem}[Properties of \eqref{eq:thermo_obst_mapprob} when $d=1$]
\label{th_T_probs_1d_local}
Assume, in addition to 
\cref{ass:SemilinearObstacleMap}, 
that $d=1$  and 
$\Psi_0 \equiv 0$.
Then \eqref{eq:T_PDE}
possesses a unique (weak) solution 
$T \in H^1(\Omega)$ for all $u \in H_0^1(\Omega)$.
This solution satisfies $\varphi T \in Y_2$. 
If, further, we define
$N_R := |\Omega|^{1/2} R /2 = \diam(\Omega)^{1/2}R/2$ and 
\begin{equation}
\label{eq:MRdef}
\begin{aligned}
M_R &:=  
N_R
+
\|\psi\|_{L^\infty(\Omega)}
\left (
|\Omega|^{- 1/2} k^{-1} 
+
|\Omega|^{1/2} k^{-1/2} \right )
\Big(
\mathrm{Lip}(g, [- N_R, N_R])
\frac{2 N_R}{\pi}
+
|g(0)| 
\Big )|\Omega|^{1/2}
\end{aligned}
\end{equation}
for all $R>0$, then the following statements are true for
the operator $\Phi\colon H_0^1(\Omega) \to Y_2$, $u \mapsto \varphi T$:
\begin{enumerate}[label=\roman*)]
\item\label{th_T_probs_1d_local:i} 
For all $R>0$ and 
all $u_1, u_2 \in B_R^{H_0^1(\Omega)}(0)$,
it holds 
\begin{equation}
\label{eq:randomeq_2628}
\begin{aligned}
	&\|\Phi(u_1) - \Phi(u_2)\|_{H_0^1(\Omega)}
        \\	
 &\quad\leq
\mathrm{Lip}(g,[-M_R , M_R])
\left ( \|\varphi \|_{L^\infty(\Omega)}  k^{-1/2} 
+
\| \varphi' \|_{L^\infty(\Omega)} k^{-1}
\right )
\frac{|\Omega|}{\pi}\|u_1 - u_2\|_{H_0^1(\Omega)}.
\end{aligned}
\end{equation} 
\item\label{th_T_probs_1d_local:ii}  For all $R> 0$,
there exists a constant $C_R>0$ satisfying 
\begin{equation*}
	\|\Phi(u_1) - \Phi(u_2)\|_{Y_2}
	\leq
	C_R
	\| u_1  -  u_2\|_{H_0^1(\Omega)}\qquad \forall u_1, \smash{u_2 \in B_R^{H_0^1(\Omega)}(0)}.
\end{equation*}
\item\label{th_T_probs_1d_local:iii}
Let $G_\Phi$
be defined as in 
\cref{th_T_probs}\ref{th_T_probs:iii}.
Then $\Phi$ is Newton differentiable as a function from $H_0^1(\Omega)$ to $Y_2$
with derivative $G_\Phi$ and, for every $R>0$, it holds
\begin{equation}
\label{eq:randomeq_2628-2}
 \begin{aligned}
&\|G_\Phi(u)\|_{\LL(H_0^1(\Omega), H_0^1(\Omega))}
\\
&\quad\leq
\lim_{t \searrow M_R}
\mathrm{Lip}(g,[-t , t])
\left ( \|\varphi \|_{L^\infty(\Omega)}  k^{-1/2} 
+
\| \varphi' \|_{L^\infty(\Omega)} k^{-1}
\right )\frac{|\Omega|}{\pi}\quad \forall u \in \smash{B_R^{H_0^1(\Omega)}(0)}.
\end{aligned}
\end{equation}
\end{enumerate}
\end{theorem}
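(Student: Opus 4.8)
\emph{Overall strategy.} The plan is to reduce the whole statement to the globally Lipschitz case already settled in \cref{th_T_probs} by truncating $g$ at a level dictated by an a priori $L^\infty$-bound that, because $d=1$ and $H^1(\Omega)\hookrightarrow L^\infty(\Omega)$, is available here. Fix $R>0$. First I would establish the a priori bound: for $u\in B_R^{H_0^1(\Omega)}(0)$ and any weak solution $T\in H^1(\Omega)$ of \eqref{eq:T_PDE}, one has $\|u\|_{L^\infty(\Omega)}\le\tfrac12|\Omega|^{1/2}\|u'\|_{L^2(\Omega)}\le N_R$ (as $u$ vanishes at the endpoints) and $\|v\|_{L^\infty(\Omega)}\le|\Omega|^{-1/2}\|v\|_{L^2(\Omega)}+|\Omega|^{1/2}\|v'\|_{L^2(\Omega)}$ for every $v\in H^1(\Omega)$ (split at a point where $v^2$ equals its mean). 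Testing \eqref{eq:T_PDE} with $T$ and using that $g$ is nonincreasing and $\psi\ge0$ — so $(g(\psi T-u)-g(-u))\psi T\le0$ a.e.\ and $g(\psi T-u)=g(-u)$ where $\psi=0$ — gives $k\|T\|_{L^2(\Omega)}^2+\|T'\|_{L^2(\Omega)}^2\le(g(-u),T)_{L^2(\Omega)}$. Bounding $|g(-u)|\le|g(0)|+\mathrm{Lip}(g,[-N_R,N_R])|u|$ pointwise and using the sharp Poincaré constant $C_P(\Omega)=|\Omega|/\pi$ of \cref{rem:Poincare} gives $\|g(-u)\|_{L^2(\Omega)}\le|\Omega|^{1/2}(\mathrm{Lip}(g,[-N_R,N_R])\tfrac{2N_R}{\pi}+|g(0)|)=:\mathcal G$, hence $\|T\|_{L^2(\Omega)}\le k^{-1}\mathcal G$ and $\|T'\|_{L^2(\Omega)}\le k^{-1/2}\mathcal G$; inserting these into the embedding inequality yields exactly $\|\psi T-u\|_{L^\infty(\Omega)}\le M_R$ with $M_R$ as in \eqref{eq:MRdef}.

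\emph{Truncation, well-posedness, $\varphi T\in Y_2$.} For $L>0$ put $m_L(s):=\max(-L,\min(L,s))$ and $\tilde g_L:=g\circ m_L$. Then $\tilde g_L$ is globally Lipschitz with $\mathrm{Lip}(\tilde g_L)\le\mathrm{Lip}(g,[-L,L])$, nonincreasing, and — by the chain rule of \cref{lem:chain_rule} together with the Newton differentiability of piecewise $C^1$ functions (\cite[Proposition 2.1]{Pang1995}) — Newton differentiable with a Borel-measurable derivative satisfying $G_{\tilde g_L}(s)=G_g(s)$ for $|s|<L$ and $G_{\tilde g_L}(s)\in\partial_c\tilde g_L(s)$; so $\tilde g_L$ fulfils \cref{ass:SemilinearObstacleMap} and is globally Lipschitz. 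Given $u\in H_0^1(\Omega)$, set $R:=\|u\|_{H_0^1(\Omega)}$ and apply \cref{th_T_probs} with $g$ replaced by $\tilde g_{M_R}$ to get $\tilde T\in H^1(\Omega)$ with $\varphi\tilde T\in Y_2$ solving the $\tilde g_{M_R}$-PDE; the computation above carries over verbatim to $\tilde g_{M_R}$ (its monotonicity and the estimate for $\tilde g_{M_R}(-u)=g(-u)$ only involve $g$ on $[-N_R,N_R]$), so $\|\psi\tilde T-u\|_{L^\infty(\Omega)}\le M_R$, whence $\tilde g_{M_R}(\psi\tilde T-u)=g(\psi\tilde T-u)$ and $\tilde T$ solves \eqref{eq:T_PDE}. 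Uniqueness follows by testing the difference of two (automatically $L^\infty(\Omega)$) solutions with itself and invoking monotonicity as in \cref{th_T_probs}, and $\varphi T\in Y_2$ because $\Delta T=kT-g(\psi T-u)\in L^2(\Omega)$, $\varphi\in C^2(\bar\Omega)$ with $\varphi=0$ on $\partial\Omega$, and the product rule gives $\Delta(\varphi T)\in L^2(\Omega)$.

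\emph{Parts \ref{th_T_probs_1d_local:i} and \ref{th_T_probs_1d_local:ii}.} For $u_1,u_2\in B_R^{H_0^1(\Omega)}(0)$ the a priori bound gives $\|\psi T(u_j)-u_j\|_{L^\infty(\Omega)}\le M_R$, so each $T(u_j)$ also solves the $\tilde g_{M_R}$-PDE and $\Phi(u_j)=\varphi T(u_j)$ coincides with the operator associated with $\tilde g_{M_R}$. Applying assertions \ref{th_T_probs:i} and \ref{th_T_probs:ii} of \cref{th_T_probs} to $\tilde g_{M_R}$ — with $\mathrm{Lip}(\tilde g_{M_R})\le\mathrm{Lip}(g,[-M_R,M_R])$, $C_P(\Omega)=|\Omega|/\pi$, and $\||\nabla\varphi|\|_{L^\infty(\Omega)}=\|\varphi'\|_{L^\infty(\Omega)}$ — then produces \eqref{eq:randomeq_2628} and the existence of the constant $C_R$ in \ref{th_T_probs_1d_local:ii}.

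\emph{Part \ref{th_T_probs_1d_local:iii} and the main difficulty.} Since Newton differentiability is a local notion, I would fix $u_*\in H_0^1(\Omega)$, set $R:=\|u_*\|_{H_0^1(\Omega)}+1$, pick any $L>M_R$, and note that on the open ball $\{\|v\|_{H_0^1(\Omega)}<R\}$ the operator $\Phi$ coincides with the one attached to $\tilde g_L$ (the a priori bound yields $\|\psi T(v)-v\|_{L^\infty(\Omega)}\le M_R<L$, so the truncation is inactive there), and, $G_{\tilde g_L}$ agreeing with $G_g$ on $[-M_R,M_R]$, the derivative $G_\Phi(v)$ from \cref{th_T_probs}\ref{th_T_probs:iii} agrees there with that of $\tilde g_L$ as well; \cref{th_T_probs}\ref{th_T_probs:iii} applied to $\tilde g_L$ then gives Newton differentiability of $\Phi$ at $u_*$ with derivative $G_\Phi$. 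For the bound, if $u\in B_R^{H_0^1(\Omega)}(0)$ then $G_\Phi(u)$ equals the $\tilde g_L$-derivative at $u$ for every $L>M_R$, so \eqref{eq:G_phi_bound} for $\tilde g_L$ gives $\|G_\Phi(u)\|_{\LL(H_0^1(\Omega),H_0^1(\Omega))}\le\tfrac{|\Omega|}{\pi}\mathrm{Lip}(g,[-L,L])(\|\varphi\|_{L^\infty(\Omega)}k^{-1/2}+\|\varphi'\|_{L^\infty(\Omega)}k^{-1})$; letting $L\searrow M_R$, using that $t\mapsto\mathrm{Lip}(g,[-t,t])$ is nondecreasing, yields \eqref{eq:randomeq_2628-2}. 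The main obstacle is the apparent circularity between the truncation level and $M_R$: it is resolved by the observation that the a priori bound of the first step only sees $g$ on $[-N_R,N_R]$, so $M_R$ is genuinely defined a priori, and the same observation explains why a one-sided limit appears in \eqref{eq:randomeq_2628-2} (open balls, hence truncation at $L>M_R$) but not in \eqref{eq:randomeq_2628} (closed ball, truncation at $L=M_R$).
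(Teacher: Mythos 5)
Your proposal is correct and follows essentially the same strategy as the paper's own proof: establish an a priori $L^\infty$-bound $\|\psi T - u\|_{L^\infty(\Omega)}\le M_R$ via coercivity/monotonicity and the one-dimensional embeddings, truncate $g$ at the level $M_R$ to obtain a globally Lipschitz nonlinearity, and reduce to \cref{th_T_probs}. Your treatment of part~\ref{th_T_probs_1d_local:iii} — truncating at $L>M_R$ on an open ball so that the truncation is strictly inactive and the Newton derivatives of $g$ and its truncation coincide where evaluated, then letting $L\searrow M_R$ — is in fact a slightly more explicit account of why the one-sided limit appears in \eqref{eq:randomeq_2628-2} than the paper's brief parenthetical remark, but the underlying observation is the same.
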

\begin{proof}
Let $R>0$ be arbitrary and 
suppose that $u \in B_R^{H_0^1(\Omega)}(0)$ is given.
Define 
\begin{align*}
\hat g(s)
:=
\begin{cases}
g(-M_R) & \text{ if } s \leq -M_R,
\\
g(s) &\text{ if } s \in (-M_R, M_R),
\\
g(M_R) &\text{ if } s \geq M_R,
\end{cases}
\qquad \forall s \in \R,
\end{align*}
and consider the differential equation
\begin{equation}
\label{eq:T_hat_ODE}
k\hat T- \hat T''  = \hat g(\psi \hat T- u) \text{ in $\Omega$}, 
\qquad
\partial_\nu \hat T  = 0 \text{ on $\partial \Omega$}.
\end{equation}
Then $\hat g$ is globally Lipschitz continuous with Lipschitz constant 
$\Lip{\hat g} = \mathrm{Lip}(g, [-M_R, M_R])$ and we obtain from \cref{th_T_probs} that 
\eqref{eq:T_hat_ODE} has a unique (weak) solution 
$\hat T \in H^1(\Omega)$. Note that,
by proceeding along the lines of \eqref{eq:Aucoercive},
we obtain  
\begin{equation*}
\begin{aligned}
k \|\hat T\|_{L^2(\Omega)}^2
+
\|\hat T'\|_{L^2(\Omega)}^2
&\leq
\langle k\hat T - \hat T'' - \hat g(\psi \hat T- u) + \hat g(- u) , \hat T\rangle_{H^1(\Omega)^*,H^1(\Omega)}
=
  ( 
\hat g(- u), \hat T
  )_{L^2(\Omega)}.
\end{aligned}
\end{equation*}
Analogously to \eqref{eq:estimates:T1T2-L2}
and \eqref{eq:estimates:T1T2-H1}, this yields
\[
\|\hat T\|_{L^2(\Omega)}
\leq 
k^{-1} \|\hat g(- u)\|_{L^2(\Omega)}
\qquad
\text{ and }
\qquad
\|\hat T'\|_{L^2(\Omega)}
\leq 
k^{-1/2} \|\hat g(- u)\|_{L^2(\Omega)}.
\]
From the mean value theorem
and the fact that 
elements of $H^1(\Omega)$
possess a 
$C(\bar \Omega)$-representative for $d=1$, we 
further obtain that there exist 
$\hat x, \bar x \in \bar \Omega$  
satisfying 
\[
\|\hat T\|_{L^\infty(\Omega)}
=
|\hat T(\hat x)|
\qquad
\text{ and }
\qquad
\hat T(\bar x) = \frac{1}{|\Omega|}
\int_\Omega \hat T \mathrm{d}x.
\]
Due to the inequality 
of Cauchy--Schwarz and the fundamental theorem 
of calculus, this yields
\begin{equation*}
\begin{aligned}
\|\hat T\|_{L^\infty(\Omega)}
=
\left |
\hat T(\bar x)
+
\int_{\bar x}^{\hat x} \hat T' \mathrm{d}x
\right |
=
\left |
\frac{1}{|\Omega| }
\int_\Omega \hat T \mathrm{d}x
+
\int_{\bar x}^{\hat x} \hat T' \mathrm{d}x
\right |
\leq
|\Omega|^{- 1/2} \|\hat T\|_{L^2(\Omega)}
+
|\Omega|^{1/2} \|\hat T'\|_{L^2(\Omega)}.
\end{aligned}
\end{equation*}
In combination with
the (sharp) estimate
$\|v\|_{L^\infty(\Omega)} \leq  |\Omega|^{1/2}
\|v\|_{H_0^1(\Omega)}/2$
for all $v \in H_0^1(\Omega)$
(that is easily established by variational calculus),
$\|u\|_{H_0^1(\Omega)}  \leq  R$,
and  \cref{rem:Poincare},
it now follows that
\begin{equation*}
\begin{aligned}
\|\hat T\|_{L^\infty(\Omega)}
&\leq
|\Omega|^{- 1/2} \|\hat T\|_{L^2(\Omega)}
+
|\Omega|^{1/2} \|\hat T'\|_{L^2(\Omega)}
\\
&\leq
\left (
|\Omega|^{- 1/2} k^{-1} 
+
|\Omega|^{1/2} k^{-1/2} \right )\|\hat g(- u)\|_{L^2(\Omega)}
\\
&\leq
\left (
|\Omega|^{- 1/2} k^{-1} 
+
|\Omega|^{1/2} k^{-1/2} \right )
\Big (
\mathrm{Lip}(\hat g, [- \|u\|_{L^\infty(\Omega)}, \|u\|_{L^\infty(\Omega)}])
\|u\|_{L^2(\Omega)}
+
\|\hat g(0)\|_{L^2(\Omega)}
\Big )
\\
&\leq
\left (
|\Omega|^{- 1/2} k^{-1} 
+
|\Omega|^{1/2} k^{-1/2} \right )
\Big(
\mathrm{Lip}(\hat g, [- N_R, N_R])
\frac{2 N_R}{\pi}
+
|g(0)| 
\Big )|\Omega|^{1/2},
\end{aligned}
\end{equation*}
and, due to the identity 
$\mathrm{Lip}(\hat g, [- N_R, N_R]) 
= \mathrm{Lip}(
g, [- N_R, N_R])$, that
\begin{equation*}
\begin{aligned}
\|\psi \hat T- u\|_{L^\infty(\Omega)}
\leq
\|\psi\|_{L^\infty(\Omega)}
\|\hat T\|_{L^\infty(\Omega)}
+
\|u\|_{L^\infty(\Omega)}
\leq
\|\psi\|_{L^\infty(\Omega)}
\|\hat T\|_{L^\infty(\Omega)}
+
N_R
\leq
M_R.
\end{aligned}
\end{equation*}
As $\hat g$ coincides with
$g$ on $[-M_R, M_R]$, the last estimate implies 
that $\hat T$ is also a solution of \eqref{eq:T_PDE}.
Since \eqref{eq:T_PDE} can have at most one solution
(as one may easily check by means of a contradiction argument),
this shows that \eqref{eq:T_PDE} 
is uniquely solvable for all $u \in H_0^1(\Omega)$.
Note that,
if we denote by 
$P\colon H_0^1(\Omega) \to H^1(\Omega)$, $u \mapsto T$,
the solution operator of \eqref{eq:T_PDE},
by $P_R\colon H_0^1(\Omega) \to H^1(\Omega)$,
$u \mapsto \hat T$, the 
solution operator of \eqref{eq:T_hat_ODE},
and by $\Phi, \Phi_R \colon H_0^1(\Omega) \to H_0^1(\Omega)$ the functions 
$\Phi(u) := \varphi P(u)$ and 
$\Phi_R(u) := \varphi P_R(u)$,
respectively, 
then it follows from the above 
considerations that 
$P(u) = P_R(u)$
and
$\Phi(u) = \Phi_R(u)$
hold for all $u \in H_0^1(\Omega)$ with 
$\|u\|_{H_0^1(\Omega)} \leq R$. As 
\cref{th_T_probs} applies to 
$\Phi_R$ for all $R>0$ and
since $\Lip{\hat g} = \mathrm{Lip}(g, [-M_R, M_R])$,
it now follows immediately that 
$\varphi P(u) \in Y_2$ holds for all 
$u \in H_0^1(\Omega)$ and that 
$\Phi$ satisfies 
the assertions in \ref{th_T_probs_1d_local:i},
\ref{th_T_probs_1d_local:ii},
and \ref{th_T_probs_1d_local:iii}. 
(Note that,
in \eqref{eq:randomeq_2628} and 
\eqref{eq:randomeq_2628-2},
we have used 
the identity 
$ C_P(\Omega) =\diam(\Omega)/\pi = |\Omega|/\pi$ 
obtained from \cref{rem:Poincare},
and 
that the limit in \eqref{eq:randomeq_2628-2} 
is necessary since
$\partial_c\hat g(s) = \partial_c g(s)$
is only true for $s \in (-M_R, M_R)$.)
\end{proof}

\begin{remark}
\Cref{th_T_probs_1d_local} can be extended straightforwardly
to the case $\Psi_0 \in L^\infty(\Omega)$.
We have assumed that $\Psi_0 \equiv 0$ holds for the sake of 
simplicity and to avoid additional technicalities. 
\end{remark}

\begin{corollary}
\label{cor:local_thermo_obstacle_map}
Suppose that $d=1$ holds,
that $\Psi_0 \equiv 0$, and that $R>0$ is a number such that 
\begin{equation*}
\gamma_R := 	
\lim_{t \searrow M_R}
\mathrm{Lip}(g,[-t , t])
\left ( \|\varphi \|_{L^\infty(\Omega)}  k^{-1/2} 
+
\| \varphi' \|_{L^\infty(\Omega)} k^{-1}
\right )\frac{|\Omega|}{\pi}
\in [0,1),
\end{equation*}
where $M_R$ is defined as in \eqref{eq:MRdef}.
Then the map $\Phi\colon H_0^1(\Omega) \to Y_2$, $u \mapsto \varphi T$,
in \cref{th_T_probs_1d_local}
satisfies the conditions \ref{th:obst_QVI:i} to \ref{th:obst_QVI:iv}
in \cref{th:obst_QVI} with the  
derivative $G_\Phi\colon H_0^1(\Omega) \to \LL(H_0^1(\Omega), Y_2)$
 in \cref{th_T_probs_1d_local}\ref{th_T_probs:iii}, $p=2$, 
and $B := B_R^{H_0^1(\Omega)}(0)$.
\end{corollary}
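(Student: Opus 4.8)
The plan is to read off the hypotheses \ref{th:obst_QVI:i}--\ref{th:obst_QVI:iv} of \cref{th:obst_QVI} directly from the conclusions of \cref{th_T_probs_1d_local}, for the choices $p=2$ (so that $Y_p = Y_2$), $\gamma = \gamma_R$, $B = B_R^{H_0^1(\Omega)}(0)$, and $G_\Phi$ as in \cref{th_T_probs_1d_local}\ref{th_T_probs_1d_local:iii}; essentially no new analysis is required beyond one elementary monotonicity observation. First I would note that $p=2$ lies in the admissible range $\max(1,2d/(d+2)) < p \le \infty$ of \cref{th:obst_QVI}, since for $d=1$ this range is $p>1$, and that $B := B_R^{H_0^1(\Omega)}(0)$ is a closed ball of radius $R>0$ in $H_0^1(\Omega)$, so the structural requirement on $B$ in \cref{th:obst_QVI} is met.

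Next I would verify the four numbered conditions one by one. Condition \ref{th:obst_QVI:i}, the Newton differentiability of $\Phi\colon H_0^1(\Omega) \to Y_2$ with derivative $G_\Phi$, is exactly the Newton-differentiability statement of \cref{th_T_probs_1d_local}\ref{th_T_probs_1d_local:iii} (note that $G_\Phi$ there already maps into $\LL(H_0^1(\Omega),Y_2)$, as in \cref{th_T_probs}\ref{th_T_probs:iii}). Condition \ref{th:obst_QVI:ii}, local Lipschitz continuity of $\Phi$ from $H_0^1(\Omega)$ to $Y_2$, follows from \cref{th_T_probs_1d_local}\ref{th_T_probs_1d_local:ii}: for any $u_0 \in H_0^1(\Omega)$ pick $R' > \|u_0\|_{H_0^1(\Omega)}$; then an open ball around $u_0$ is contained in $B_{R'}^{H_0^1(\Omega)}(0)$, on which $\Phi$ is $C_{R'}$-Lipschitz into $Y_2$. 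Condition \ref{th:obst_QVI:iv}, the bound $\|G_\Phi(u)\|_{\LL(H_0^1(\Omega),H_0^1(\Omega))} \le \gamma_R$ for $u \in B_R^{H_0^1(\Omega)}(0)$, is precisely \eqref{eq:randomeq_2628-2} of \cref{th_T_probs_1d_local}\ref{th_T_probs_1d_local:iii} specialized to the prescribed $R$.

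The only point needing a short argument is condition \ref{th:obst_QVI:iii}. From \eqref{eq:randomeq_2628} in \cref{th_T_probs_1d_local}\ref{th_T_probs_1d_local:i}, for all $u_1, u_2 \in B_R^{H_0^1(\Omega)}(0)$ one has
\[
\|\Phi(u_1) - \Phi(u_2)\|_{H_0^1(\Omega)} \le \mathrm{Lip}(g,[-M_R,M_R])\left(\|\varphi\|_{L^\infty(\Omega)}k^{-1/2} + \|\varphi'\|_{L^\infty(\Omega)}k^{-1}\right)\frac{|\Omega|}{\pi}\,\|u_1 - u_2\|_{H_0^1(\Omega)}.
\]
Since $t \mapsto [-t,t]$ is increasing, the map $t \mapsto \mathrm{Lip}(g,[-t,t])$ is nondecreasing, so $\mathrm{Lip}(g,[-M_R,M_R]) \le \lim_{t\searrow M_R}\mathrm{Lip}(g,[-t,t])$; inserting this bound into the displayed inequality yields $\|\Phi(u_1) - \Phi(u_2)\|_{H_0^1(\Omega)} \le \gamma_R\,\|u_1-u_2\|_{H_0^1(\Omega)}$ on $B_R^{H_0^1(\Omega)}(0)$, which is \ref{th:obst_QVI:iii} with $\gamma = \gamma_R$. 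Together with the corollary's hypothesis $\gamma_R \in [0,1)$ this establishes \ref{th:obst_QVI:i}--\ref{th:obst_QVI:iv} and hence the claim. I expect no genuine obstacle here: the statement is a bookkeeping corollary matching the conclusions of \cref{th_T_probs_1d_local} against the hypotheses of \cref{th:obst_QVI}, and the only subtlety is that one must use the slightly larger constant $\lim_{t\searrow M_R}\mathrm{Lip}(g,[-t,t])$ uniformly in \ref{th:obst_QVI:iii} and \ref{th:obst_QVI:iv}, which is precisely why $\gamma_R$ is defined via this limit rather than via $\mathrm{Lip}(g,[-M_R,M_R])$.
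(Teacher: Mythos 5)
Your proof is correct and follows the same bookkeeping approach the paper (implicitly) uses: the paper states this corollary without a separate proof, treating it as an immediate comparison of \cref{th_T_probs_1d_local} with the hypotheses of \cref{th:obst_QVI}. Your one added observation, that $\mathrm{Lip}(g,[-M_R,M_R]) \le \lim_{t\searrow M_R}\mathrm{Lip}(g,[-t,t])$ by monotonicity, is precisely the elementary step needed to show that the single constant $\gamma_R$ works simultaneously in conditions \ref{th:obst_QVI:iii} and \ref{th:obst_QVI:iv}, so nothing is missing.
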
 

As we will see in \cref{subsec:7.3},
\Cref{th_T_probs_1d_local,cor:local_thermo_obstacle_map} 
cover examples of obstacle-type QVIs \eqref{eq:QVI}  
that possess several solutions and, as a consequence, can only be studied 
within the localized framework of \cref{sec:framework_fp_H_multiple_solns}.

\subsection{An alternative application: semilinear VIs}
\label{sec:application_nonlinearVI}
We conclude this section by demonstrating that the 
semismooth Newton framework developed in 
\cref{sec:framework_applied_to_SPhi} 
can also be applied to 
variational problems that are not of 
QVI-type. To this end, we consider the 
semilinear variational inequality
\begin{equation}
\label{eq:semilinear_VI}
    \begin{aligned}
\text{Find $u \in H_0^1(\Omega)$ such that  } 
u \in K,
~\langle -\Delta u - b_1(u)b_2(u) - f, v - u \rangle_{H^{-1}(\Omega),H_0^1(\Omega)} \geq 0~
\forall v \in K,
\end{aligned}
\end{equation}
with $K := \{ v \in H^1_0(\Omega) \mid v \leq \Phi_0\text{ a.e.\ in $\Omega$}\}$
and with $b_1(u)b_2(u)$ as the
pointwise-a.e.\ product of two 
Nemytskii operators
induced by (potentially nonsmooth) functions $b_1,b_2\colon \R \to \R$. 

Variational inequalities of the type 
\eqref{eq:semilinear_VI} and their
PDE-counterparts 
$-\Delta u = b_1(u)b_2(u)+f$ in $\Omega$,
$u=0$ on $\partial \Omega$, 
are  prototypical examples of nonlinear variational problems that are widely studied in the literature. 
The existence of solutions to such problems is typically established by proving that 
a linearization of the solution mapping 
is contractive on a
suitably chosen fixed ball. The localization assumptions in \cref{cor:composite_local} ask for precisely this kind of contraction property. Hence, 
working with \cref{cor:composite_local} (and, in particular, with the smallness condition \ref{ass:standing_general_proj:iv} in this corollary) is
very natural when studying  these kinds of nonlinear variational
problems.

Throughout this subsection, we work with the following setting:

\begin{assumption}
\label{ass:standing_semilinear_VI}
~
\begin{enumerate}[label=\roman*)]
\item $\Omega \subset \R^d$, $d \in \N$, $d \leq 5$, is a nonempty open bounded set;
\item $\Phi_0$ 
is as in \cref{ass:standing_obstacle_map}\ref{ass:standing_obstacle_map:iv};
\item\label{ass:standing_semilinear_VI:iii}
 $p$ is an exponent
 satisfying $ \max(1,2d/(d+2)) < p < \infty$
 and, in the case $d \geq 3$, $p < d/(d-2)$;
\item $f \in L^p(\Omega)$ is a given function;
\item $b_1, b_2\colon \R \to \R$ are globally 
Lipschitz continuous and 
Newton differentiable  with derivatives 
$G_{b_i}\colon \R \to \R$, $i=1,2$,
that are Borel-measurable 
and satisfy $G_{b_i}(s) \in \partial_c b_i(s)$ for all $s \in \R$.
\end{enumerate}
\end{assumption}

Note that the condition $d \leq 5$ arises naturally from 
the requirements on $p$ in
\cref{ass:standing_semilinear_VI}\ref{ass:standing_semilinear_VI:iii}
and that the assumptions on $b_1$ and $b_2$ allow to model 
nonsmooth semilinearities in \eqref{eq:semilinear_VI} with linear/quadratic growth
(e.g., $\max(0,u)(u + \cos(u))$ with $b_1(s) := \max(0,s)$, $b_2(s) := s + \cos(s)$). 
Recall further that standard Sobolev embeddings imply 
that $H^1_0(\Omega)$ is continuously 
embedded into $L^{q}(\Omega)$ for all 
\[
1\leq q \begin{cases}
    \leq \infty & \text{if } d=1,\\
    < \infty & \text{if } d=2,\\
    \leq  \frac{2d}{d-2} &\text{if } d \geq 3. 
\end{cases}
\] 
In combination with 
\cref{ass:standing_semilinear_VI}\ref{ass:standing_semilinear_VI:iii}, this yields that 
$H^1_0(\Omega)$ embeds continuously  
into $L^{2p}(\Omega)$,
that $L^p(\Omega)$ embeds continuously   
into $H^{-1}(\Omega)$, and, 
since the Hölder conjugate $p' := p/(p-1)$ of $p$
satisfies $p' < 2d/(d-2)$ for $d\geq 3$, that
$H^1_0(\Omega)$ embeds continuously into 
$L^{p'}(\Omega)$.

To see that \eqref{eq:semilinear_VI} 
is equivalent to a fixed-point problem of the form \eqref{eq:SPhi_FP}
and, thus, covered by the analysis of \cref{sec:framework_applied_to_SPhi},
we have to study the mapping 
$H_0^1(\Omega) \ni u \mapsto b_1(u)b_2(u) + f\in L^p(\Omega)$ which we shall denote by $\Phi$, i.e.,
\begin{equation}
\Phi\colon H_0^1(\Omega) \to L^p(\Omega),\qquad
    \Phi(u) := b_1(u)b_2(u) + f.\label{eq:defn_of_Phi_semilinearVI}
\end{equation}

\begin{lemma}
\label{lem:Phi_semilin}
Suppose that \cref{ass:standing_semilinear_VI} holds. Then the function $\Phi\colon H_0^1(\Omega) \to L^p(\Omega)$ in
\eqref{eq:defn_of_Phi_semilinearVI}
is well defined and Newton differentiable 
with the derivative 
$G_\Phi\colon H_0^1(\Omega) \to \LL(H_0^1(\Omega), L^p(\Omega))$ given by 
\begin{equation}
\label{eq:Phi_der_semi}
G_\Phi(u)h := b_1(u)G_{b_2}(u)h + b_2(u) G_{b_1}(u)h\qquad \forall  
u,h \in H_0^1(\Omega).
\end{equation}
Further, it holds 
\begin{equation}
\label{eq:Phi_der_bound_semi}
\norm{G_\Phi(u)}{\LL(H_0^1(\Omega), L^p(\Omega))}
\leq
\left (
\Lip{b_1} \norm{b_2(u)}{L^{2p}(\Omega)}
+
\Lip{b_2} \norm{b_1(u)}{L^{2p}(\Omega)}
\right )
\|\iota_{2p}\|_{\LL(H_0^1(\Omega), L^{2p}(\Omega))}
\end{equation}
for all $u  \in H_0^1(\Omega)$
and
\begin{equation}
\label{eq:Phi_Lip_semi}
\begin{aligned}
&\norm{\Phi(u_1) - \Phi(u_2)}{ L^p(\Omega)}
\\
&\leq
\left (
\Lip{b_1} \norm{b_2(u_2)}{L^{2p}(\Omega)}
+
\Lip{b_2} \norm{b_1(u_1)}{L^{2p}(\Omega)}
\right )
\|\iota_{2p}\|_{\LL(H_0^1(\Omega), L^{2p}(\Omega))}
\|u_1 - u_2\|_{H_0^1(\Omega)}
\end{aligned}
\end{equation}
for all $u_1, u_2 \in H_0^1(\Omega)$, where 
$\iota_{2p} \in \LL(H_0^1(\Omega), L^{2p}(\Omega))$ denotes the 
embedding of $H_0^1(\Omega)$ into $L^{2p}(\Omega)$.
\end{lemma}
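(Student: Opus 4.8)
The plan is to write $\Phi$ as the sum of the constant $f$ and the pointwise-a.e.\ product of the two Nemytskii operators generated by $b_1$ and $b_2$, and to obtain Newton differentiability of $\Phi$ from the chain and product rules of \cref{lem:chain_rule,lem:product_rule} together with the standard Newton differentiability of superposition operators between Lebesgue spaces; the two norm estimates \eqref{eq:Phi_der_bound_semi} and \eqref{eq:Phi_Lip_semi} will then drop out of Hölder's inequality and the global Lipschitz continuity of $b_1$ and $b_2$. Concretely, I would first dispose of the well-definedness and of \eqref{eq:Phi_Lip_semi}: writing $|b_i(s)| \leq \Lip{b_i}|s| + |b_i(0)|$, using that $H_0^1(\Omega)$ embeds continuously into $L^{2p}(\Omega)$ (which holds under \cref{ass:standing_semilinear_VI}\ref{ass:standing_semilinear_VI:iii}, as recorded above), and invoking Hölder's inequality with exponents $2p, 2p, p$ gives $b_1(u)b_2(u) \in L^p(\Omega)$ and hence $\Phi(u) \in L^p(\Omega)$; the same two ingredients applied to the identity $b_1(u_1)b_2(u_1) - b_1(u_2)b_2(u_2) = b_1(u_1)\bigl(b_2(u_1)-b_2(u_2)\bigr) + b_2(u_2)\bigl(b_1(u_1)-b_1(u_2)\bigr)$ yield \eqref{eq:Phi_Lip_semi} and, in particular, that each map $u \mapsto b_i(u)$ is Lipschitz (hence continuous) from $H_0^1(\Omega)$ to $L^{2p}(\Omega)$.

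The key step is to show that the Nemytskii operator $u \mapsto b_i(u)$ is Newton differentiable as a map from $H_0^1(\Omega)$ into $L^{2p}(\Omega)$ with Newton derivative the multiplication operator $h \mapsto G_{b_i}(u)h$. Here I would fix an exponent $r$ with $2p < r$ such that $H_0^1(\Omega)$ embeds continuously into $L^r(\Omega)$; this is always possible for $d \leq 5$ — taking $r = 2d/(d-2)$ when $d \geq 3$, the constraint $p < d/(d-2)$ from \cref{ass:standing_semilinear_VI}\ref{ass:standing_semilinear_VI:iii} guarantees $2p < r$ strictly, and for $d \leq 2$ any $r \in (2p, \infty)$ works. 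Since $b_i$ is globally Lipschitz and Newton differentiable with Newton derivative $G_{b_i}$, \cite[Theorem~3.49]{Ulbrich2011} shows that the superposition operator induced by $b_i$ is Newton differentiable from $L^r(\Omega)$ to $L^{2p}(\Omega)$ with Newton derivative the multiplication by $G_{b_i}(\cdot)$ — the strict norm gap $r > 2p$ being exactly the hypothesis needed there for a linearly growing generating function. Precomposing with the bounded linear (hence Newton differentiable and Lipschitz) embedding $\iota_r\colon H_0^1(\Omega) \to L^r(\Omega)$ and applying the chain rule of \cref{lem:chain_rule} then yields the claim for $u \mapsto b_i(u)\colon H_0^1(\Omega) \to L^{2p}(\Omega)$.

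Finally, I would apply the product rule of \cref{lem:product_rule} — with $U = H_0^1(\Omega)$, $V = W = L^{2p}(\Omega)$, $Z = L^p(\Omega)$, $P = b_1(\cdot)$, $Q = b_2(\cdot)$, and $a\colon L^{2p}(\Omega) \times L^{2p}(\Omega) \to L^p(\Omega)$ the pointwise-a.e.\ multiplication, which is a bounded bilinear map by Hölder's inequality, the continuity of $P$ into $L^{2p}(\Omega)$ established above supplying the required regularity hypothesis — to conclude that $u \mapsto b_1(u)b_2(u)$, and hence $\Phi$ (the constant $f$ contributes nothing to the Newton derivative), is Newton differentiable with the derivative $G_\Phi$ in \eqref{eq:Phi_der_semi}. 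The estimate \eqref{eq:Phi_der_bound_semi} then follows by bounding \eqref{eq:Phi_der_semi} via Hölder's inequality and the embedding $\iota_{2p}$, together with the pointwise bound $|G_{b_i}(s)| \leq \Lip{b_i}$ for all $s \in \R$, which holds because $G_{b_i}(s) \in \partial_c b_i(s) \subset [-\Lip{b_i}, \Lip{b_i}]$ for a globally $\Lip{b_i}$-Lipschitz function (see \cite[\S 2.1]{Clarke:1990}). I expect the only genuinely delicate point to be the verification that the exponents $d$ and $p$ permit the strictly larger exponent $r$ that is required to invoke \cite[Theorem~3.49]{Ulbrich2011}; everything else is routine bookkeeping with the calculus rules for Newton derivatives and Hölder's inequality.
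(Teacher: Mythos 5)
Your proof is correct and follows essentially the same route as the paper's: obtain Newton differentiability of the Nemytskii operators from \cite[Theorem~3.49]{Ulbrich2011} using a strict exponent gap $r>2p$ (enabled by $p<d/(d-2)$), combine with the chain and product rules for Newton derivatives, and read off the norm bounds from H\"older's inequality and $|G_{b_i}(s)|\leq\Lip{b_i}$. The paper applies the product rule on Lebesgue spaces first (with $U=L^q(\Omega)$, $2p<q<2d/(d-2)$) and then composes with the Sobolev embedding, whereas you compose first and take the product afterwards with $U=H_0^1(\Omega)$, and you use the critical endpoint $r=2d/(d-2)$ rather than a strictly smaller exponent --- both differences are harmless, since the embedding $H_0^1(\Omega)\hookrightarrow L^{2d/(d-2)}(\Omega)$ is continuous.
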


\begin{proof}
From Hölder's inequality, the triangle inequality,
our assumptions on $p$,
the Lipschitz continuity of $b_1$ and $b_2$,
and the Sobolev embeddings,
we obtain that
\begin{equation*}
\begin{aligned}
&\left \| b_1(u) b_2(u) + f \right\|_{L^p(\Omega)}
\\
&\quad\leq
\left \| f \right\|_{L^p(\Omega)}
+
\left \| b_1(u) \right\|_{L^{2p}(\Omega)} 
\left \| b_2(u) \right\|_{L^{2p}(\Omega)} 
\\
&\quad\leq 
\left \| f \right\|_{L^p(\Omega)}
+
\left (
\Lip{b_1}
\left \| u \right\|_{L^{2p}(\Omega)}
+
\left \| b_1(0) \right\|_{L^{2p}(\Omega)}
\right )
\left (
\Lip{b_2}
\left \| u \right\|_{L^{2p}(\Omega)}
+
\left \| b_2(0) \right\|_{L^{2p}(\Omega)}
\right )
\\
&\quad\leq 
\left \| f \right\|_{L^p(\Omega)}
+
\prod_{i=1}^2
\left (
\Lip{b_i}
\|\iota_{2p}\|_{\LL(H_0^1(\Omega), L^{2p}(\Omega))}
\left \| u \right\|_{H_0^1(\Omega)}
+
\left \| b_i(0) \right\|_{L^{2p}(\Omega)}
\right )
\end{aligned}
\end{equation*}
holds for all $u \in H_0^1(\Omega)$.
This shows that 
$\Phi$ is well defined as a function from $H_0^1(\Omega)$
to $L^p(\Omega)$. Consider now an exponent $q > 2p$ that satisfies 
$q < \infty$ in the case $d \leq 2$ and $q < 2d/(d-2)$ in the case $d \geq 3$.
(Such a $q$ exists by 
\cref{ass:standing_semilinear_VI}\ref{ass:standing_semilinear_VI:iii}.)
Then it follows from  \cite[Theorem 3.49]{Ulbrich2011}
that the maps $F_i\colon L^q(\Omega) \to L^{2p}(\Omega)$,
$u \mapsto b_i(u)$, $i=1,2$, are Newton differentiable 
with Newton derivatives $G_{F_i}(u)h = G_{b_i}(u)h$.
In combination with \cref{lem:product_rule}
(applied to $U = L^q(\Omega)$, $V = W = L^{2p}(\Omega)$,
$Z = L^p(\Omega)$, and 
$a\colon L^{2p}(\Omega) \times L^{2p}(\Omega) \to L^p(\Omega)$ 
as the pointwise-a.e.\ multiplication), this yields that 
the function $F\colon L^q(\Omega) \to L^{p}(\Omega)$,
$u \mapsto b_1(u)b_2(u)$, is Newton differentiable with 
Newton 
derivative $G_F(u)h = b_1(u)G_{b_2}(u)h + b_2(u) G_{b_1}(u)h$.
That $\Phi$ is Newton differentiable 
as a function from $H_0^1(\Omega)$ to $L^p(\Omega)$
with the derivative in \eqref{eq:Phi_der_semi}
now follows immediately from the 
Sobolev embeddings and the sum rule 
for Newton derivatives. Note that 
\eqref{eq:Phi_der_bound_semi} is an immediate
consequence 
of the estimate 
\begin{equation*}
\begin{aligned}
&\norm{G_\Phi(u)}{\LL(H_0^1(\Omega), L^p(\Omega))}
\\
&\quad =
\sup_{h \in H_0^1(\Omega), \norm{h}{H_0^1(\Omega)}\leq 1}
\norm{b_1(u)G_{b_2}(u)h + b_2(u) G_{b_1}(u)h}{L^p(\Omega)}
\\
&\quad \leq
\sup_{h \in H_0^1(\Omega), \norm{h}{H_0^1(\Omega)}\leq 1}
\left (
 \norm{b_1(u)}{L^{2p}(\Omega)} 
 \Lip{b_2}
+
 \norm{b_2(u)}{L^{2p}(\Omega)} 
 \Lip{b_1} \right )
\norm{h}{L^{2p}(\Omega)}
\\
&\quad \leq 
\left (
\Lip{b_1} \norm{b_2(u)}{L^{2p}(\Omega)}
+
\Lip{b_2} \norm{b_1(u)}{L^{2p}(\Omega)}
\right )
\|\iota_{2p}\|_{\LL(H_0^1(\Omega), L^{2p}(\Omega))}
\quad \forall u \in H_0^1(\Omega).
\end{aligned}
\end{equation*}
Similarly, we also obtain
\begin{equation*}
\begin{aligned}
&\norm{\Phi(u_1) - \Phi(u_2)}{L^p(\Omega)}
\\
&\quad = 
\norm{b_1(u_1)b_2(u_1) - b_1(u_1)b_2(u_2) + b_1(u_1)b_2(u_2) - b_1(u_2)b_2(u_2)}{L^p(\Omega)}
\\
&\quad \leq
\norm{b_1(u_1)}{L^{2p}(\Omega)}
\norm{b_2(u_1) - b_2(u_2)}{L^{2p}(\Omega)}
+
\norm{b_2(u_2)}{L^{2p}(\Omega)}
\norm{b_1(u_1) - b_1(u_2)}{L^{2p}(\Omega)}
\\
&\quad \leq
\left (
\Lip{b_2}
\norm{b_1(u_1)}{L^{2p}(\Omega)}
+
\Lip{b_1}
\norm{b_2(u_2)}{L^{2p}(\Omega)}
\right )
\norm{u_1 - u_2}{L^{2p}(\Omega)}
\\
&\quad \leq
\left (
\Lip{b_2}
\norm{b_1(u_1)}{L^{2p}(\Omega)}
+
\Lip{b_1}
\norm{b_2(u_2)}{L^{2p}(\Omega)}
\right )
\|\iota_{2p}\|_{\LL(H_0^1(\Omega), L^{2p}(\Omega))}
\norm{u_1 - u_2}{H_0^1(\Omega)}
\end{aligned}
\end{equation*}
for all $u_1, u_2 \in H_0^1(\Omega)$. This establishes 
\eqref{eq:Phi_Lip_semi} and completes the proof. 
\end{proof}

Recall that our assumptions on $p$ imply that 
$L^p(\Omega)$
embeds continuously into 
$H^{-1}(\Omega)$.
In combination with \cref{lem:Phi_semilin},
this allows us to recast the 
semilinear VI \eqref{eq:semilinear_VI} in the form 
\begin{equation}
\label{eq:general_FP_semi}
\text{Find } u \in H_0^1(\Omega) 
\text{ such that }  u = S_0(\Phi(u)),
\end{equation}
where $\Phi\colon H_0^1(\Omega) \to L^p(\Omega)$
is defined as in \eqref{eq:defn_of_Phi_semilinearVI} and 
$S_0\colon H^{-1}(\Omega) \to H_0^1(\Omega)$
again denotes the solution map of \eqref{eq:upper_obst_prob_rhs}, 
i.e., the function that maps a 
source term $w \in H^{-1}(\Omega)$
to the solution $u$ of 
\begin{align}
\label{eq:obstacle_again_42}
\text{Find $u \in H_0^1(\Omega)$ such that  } 
u \in K,~~\langle -\Delta u - w, v - u \rangle_{H^{-1}(\Omega),H_0^1(\Omega)} \geq 0 \quad \forall v \in K.
\end{align}
From
\cite{ChristofWachsmuthBiObstacle,ChristofWachsmuthUniObstacle},
we (again) obtain that $S_0$ is Newton differentiable as a function from 
$L^p(\Omega)$ to $H_0^1(\Omega)$. 
For the convenience of the reader, we 
restate this Newton differentiability property 
in a way that fits to the 
application context of \eqref{eq:semilinear_VI}.

\begin{lemma}
\label{lem:S0_semilin}
Suppose that \cref{ass:standing_semilinear_VI} holds. Then the solution map $S_0\colon H^{-1}(\Omega) \to H_0^1(\Omega)$,
$w \mapsto u$, of the obstacle problem \eqref{eq:obstacle_again_42}
is well defined. Further, $S_0$ is 
Newton differentiable as a function 
from $L^p(\Omega)$ to $H_0^1(\Omega)$
with the Newton derivative 
$G_{S_0}\colon L^p(\Omega) \to \LL(L^p(\Omega), H_0^1(\Omega))$
defined by 
\begin{equation*}
G_{S_0}(w)h  = z \quad \text{ if and only if}
\quad z \in H^1_0(I(w)),~~
\langle -\Delta z - h, v \rangle_{H^{-1}(\Omega),H_0^1(\Omega)} = 0~\forall v \in H_0^1(I(w))
\end{equation*}
for all $w, h \in L^p(\Omega)$.
Here, $I(w) := \Omega \setminus \{ x \in \Omega \mid S_0(w)(x) = \Phi_0(x)\}$ denotes the inactive set associated with $w$,
defined in the same sense as in \eqref{eq:active_set_def}.
Moreover, it holds 
\begin{equation}
\label{eq:S0_der_bound}
\norm{G_{S_0}(w)}{\LL(L^p(\Omega), H_0^1(\Omega))}
\leq
\|\iota_{p'}\|_{\LL(H_0^1(\Omega), L^{p'}(\Omega))}
\qquad 
\forall w \in L^p(\Omega)
\end{equation}
and 
\begin{equation}
\label{eq:S0_Lip}
\norm{S_0(w_1)-S_0(w_2)}{H_0^1(\Omega)}
\leq
\|\iota_{p'}\|_{\LL(H_0^1(\Omega), L^{p'}(\Omega))}
\norm{w_1 - w_2}{L^p(\Omega)}
\qquad 
\forall w_1, w_2 \in L^p(\Omega),
\end{equation}
where $p':= p/(p-1)$ denotes the Hölder conjugate 
of $p$ and $\iota_{p'}$ the embedding 
of $H_0^1(\Omega)$ into $L^{p'}(\Omega)$.
\end{lemma}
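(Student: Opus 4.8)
The plan is to split the proof into three pieces: the well-posedness of $S_0$, its Newton differentiability together with the stated formula for $G_{S_0}$, and the two quantitative estimates \eqref{eq:S0_der_bound} and \eqref{eq:S0_Lip}. For the first piece, I would record that $K = \{v \in H_0^1(\Omega) \mid v \le \Phi_0 \text{ a.e.\ in } \Omega\}$ is a nonempty, closed, convex subset of $H_0^1(\Omega)$, with nonemptiness guaranteed by the hypothesis on $\Phi_0$ inherited through \cref{ass:standing_semilinear_VI} from \cref{ass:standing_obstacle_map}\ref{ass:standing_obstacle_map:iv}. Since the Dirichlet form $(u,v) \mapsto \int_\Omega \nabla u \cdot \nabla v\,\mathrm{d}x$ is bounded and coercive on $H_0^1(\Omega)$ and since $L^p(\Omega)$ embeds continuously into $H^{-1}(\Omega)$ by the lower bound on $p$ in \cref{ass:standing_semilinear_VI}\ref{ass:standing_semilinear_VI:iii}, the Lions--Stampacchia theorem, cf.\ \cite[Theorem 4:3.1]{Rodrigues}, yields that \eqref{eq:obstacle_again_42} possesses a unique solution $u = S_0(w)$ for every $w \in L^p(\Omega)$ (in fact for every $w \in H^{-1}(\Omega)$), so that $S_0$ is well defined.

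For the Newton differentiability I would simply invoke the analysis of \cite{ChristofWachsmuthBiObstacle,ChristofWachsmuthUniObstacle}: the fact that $S_0$ is Newton differentiable from $L^p(\Omega)$ to $H_0^1(\Omega)$ with a Newton derivative given by the solution operator of the homogeneous Dirichlet--Laplace problem on the inactive set $I(w)$ is exactly \cite[Theorem 4.4]{ChristofWachsmuthBiObstacle} combined with \cite[Proposition 2.11]{RaulsWachsmuth} (and a direct elementary argument when $d = 1$), used here in the same way as in the proof of \cref{lem:S_obs_Newton}. The prescribed range of $p$ enters at this point to ensure that the source perturbations $h$ remain in a space admitting the embeddings required for the difference-quotient estimates. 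This step is essentially a citation and requires no new work.

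It then remains to derive the two bounds by elementary testing arguments. For \eqref{eq:S0_der_bound}, set $z := G_{S_0}(w)h$ and test the defining identity for $z$ with $v = z \in H_0^1(I(w))$; this gives $\|z\|_{H_0^1(\Omega)}^2 = \langle h, z\rangle_{H^{-1}(\Omega),H_0^1(\Omega)} = \int_\Omega h z\,\mathrm{d}x \le \|h\|_{L^p(\Omega)}\,\|z\|_{L^{p'}(\Omega)} \le \|\iota_{p'}\|_{\LL(H_0^1(\Omega),L^{p'}(\Omega))}\,\|h\|_{L^p(\Omega)}\,\|z\|_{H_0^1(\Omega)}$ by Hölder's inequality and the embedding $H_0^1(\Omega)\hookrightarrow L^{p'}(\Omega)$ (valid since $p' < 2d/(d-2)$ for $d\ge 3$, equivalently since $p > 2d/(d+2)$); dividing by $\|z\|_{H_0^1(\Omega)}$ and taking the supremum over $h$ produces \eqref{eq:S0_der_bound}. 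For \eqref{eq:S0_Lip}, put $u_j := S_0(w_j)$, test the VI solved by $u_1$ with $v = u_2 \in K$ and the VI solved by $u_2$ with $v = u_1 \in K$, and add the two inequalities; the gradient terms combine to $-\|u_1 - u_2\|_{H_0^1(\Omega)}^2$, so after rearranging one obtains $\|u_1 - u_2\|_{H_0^1(\Omega)}^2 \le \langle w_1 - w_2,\,u_1 - u_2\rangle_{H^{-1}(\Omega),H_0^1(\Omega)} \le \|\iota_{p'}\|_{\LL(H_0^1(\Omega),L^{p'}(\Omega))}\,\|w_1 - w_2\|_{L^p(\Omega)}\,\|u_1 - u_2\|_{H_0^1(\Omega)}$, whence \eqref{eq:S0_Lip}. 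The only mild subtlety --- and the closest thing here to an obstacle --- is the bookkeeping on exponents ensuring that $L^p(\Omega)\hookrightarrow H^{-1}(\Omega)$ and $H_0^1(\Omega)\hookrightarrow L^{p'}(\Omega)$ are legitimate, which is precisely what the range of $p$ in \cref{ass:standing_semilinear_VI}\ref{ass:standing_semilinear_VI:iii} secures, so no genuine difficulty arises.
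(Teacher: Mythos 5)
Your proposal is correct and takes essentially the same route as the paper: well-posedness via \cite[Theorem 4:3.1]{Rodrigues}, Newton differentiability by citing \cite[Theorem 4.4]{ChristofWachsmuthBiObstacle} and \cite[Proposition 2.11]{RaulsWachsmuth} (the paper additionally references \cite[Theorem 4.3]{RaulsWachsmuth}) as in the proof of \cref{lem:S_obs_Newton}, the operator bound \eqref{eq:S0_der_bound} by testing the defining variational equation for $z = G_{S_0}(w)h$ with $z$ itself and applying H\"older plus the embedding $H_0^1(\Omega)\hookrightarrow L^{p'}(\Omega)$, and the Lipschitz estimate \eqref{eq:S0_Lip} by cross-testing the two VIs with each other's solutions and adding.
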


\begin{proof}
That $S_0$ is well defined again follows 
from our assumptions on $\Phi_0$
and
\cite[Theorem 4:3.1]{Rodrigues}.
That $S_0$ is Newton differentiable 
as a function $S_0\colon L^p(\Omega) \to H_0^1(\Omega)$
with derivative $G_{S_0}$
is a consequence of 
\cite[Theorem 4.3]{RaulsWachsmuth},
the inclusions in 
\cite[Proposition 2.11]{RaulsWachsmuth},
and 
\cite[Theorem 4.4]{ChristofWachsmuthBiObstacle};
cf.\ the proof of \cref{lem:S_obs_Newton}.
The estimate \eqref{eq:S0_der_bound} follows 
trivially from 
\begin{equation*}
\begin{aligned}
\|G_{S_0}(w)h\|_{H_0^1(\Omega)}^2 
=
\langle h, G_{S_0}(w)h \rangle_{H^{-1}(\Omega),H_0^1(\Omega)}
&\leq
\|h\|_{L^p(\Omega)}
\|G_{S_0}(w)h \|_{L^{p'}(\Omega)}
\\
&\leq
\|\iota_{p'}\|_{\LL(H_0^1(\Omega), L^{p'}(\Omega))}
\|h\|_{L^p(\Omega)}
\|G_{S_0}(w)h\|_{H_0^1(\Omega)}
\end{aligned}
\end{equation*}
for all $w, h \in L^p(\Omega)$. 
The Lipschitz estimate \eqref{eq:S0_Lip} 
is obtained along the exact same lines
by choosing $S_0(w_1)$ as the test function in the 
VI for $S_0(w_2)$,
by choosing $S_0(w_2)$ as the test function in the 
VI for $S_0(w_1)$,
and by adding the 
resulting inequalities.
\end{proof}

From \cref{lem:Phi_semilin,lem:S0_semilin},
it follows immediately that 
the VI 
\eqref{eq:semilinear_VI}---or, more precisely, its reformulation \eqref{eq:general_FP_semi}---is
covered by the analysis 
of \cref{sec:framework_applied_to_SPhi}.

\begin{corollary}
\label{cor:last_cor}
Suppose that \cref{ass:standing_semilinear_VI} holds and define 
\begin{align*}
\gamma_R 
& :=
\|\iota_{2p}\|_{\LL(H_0^1(\Omega), L^{2p}(\Omega))}
\|\iota_{p'}\|_{\LL(H_0^1(\Omega), L^{p'}(\Omega))}
\\
&\quad~~ \cdot 
\sup_{v_i \in H_0^1(\Omega), \|v_i\|_{H_0^1(\Omega)}\leq R, i=1,2}
\left (
\Lip{b_1} \norm{b_2(v_1)}{L^{2p}(\Omega)}
+
\Lip{b_2} \norm{b_1(v_2)}{L^{2p}(\Omega)}
\right )\quad \forall R \in (0, \infty],
\end{align*}
where
$\iota_{2p} $
and
$\iota_{p'} $
denote the embeddings introduced in \cref{lem:Phi_semilin,lem:S0_semilin}.
Let $R \in (0,\infty)$ be given such that $\gamma_R \in [0,1)$ holds.
Then the maps 
$\Phi\colon H_0^1(\Omega) \to L^p(\Omega)$
and
$S_0\colon L^p(\Omega) \to H_0^1(\Omega)$
from \cref{lem:Phi_semilin,lem:S0_semilin}
satisfy the assumptions of  \cref{cor:composite_local}
with 
\begin{gather}
\label{eq:semilinear_defs_spaces}
X = H_0^1(\Omega),\qquad
Y = D = L^p(\Omega),\qquad
S = S_0,\qquad
B = B_R^{H_0^1(\Omega)}(0),
\quad \text{and}\quad \gamma = \gamma_R.
\end{gather}
Furthermore, if $\gamma_\infty \in [0, 1)$ holds, 
then
$\Phi\colon H_0^1(\Omega) \to L^p(\Omega)$
and
$S_0\colon L^p(\Omega) \to H_0^1(\Omega)$
satisfy the assumptions of  \cref{cor:composite_global}
with $\gamma := \gamma_\infty$ and 
$X$, $Y$, $D$, and $S$ as in \eqref{eq:semilinear_defs_spaces}.
\end{corollary}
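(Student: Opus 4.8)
The plan is to verify that the maps $\Phi$ and $S_0$ meet the hypotheses of \cref{cor:composite_local}, and---when $\gamma_\infty < 1$---those of \cref{cor:composite_global}, with the data as in \eqref{eq:semilinear_defs_spaces}. I take $X = H_0^1(\Omega)$, which is Hilbert hence Banach, and $Y = D = L^p(\Omega)$, a nonempty Banach space; the embedding $L^p(\Omega) \cts H^{-1}(\Omega)$ noted above makes $S_0 \colon L^p(\Omega) \to H_0^1(\Omega)$ from \cref{lem:S0_semilin} well defined and $S_0 \circ \Phi \colon H_0^1(\Omega) \to H_0^1(\Omega)$ meaningful, so that \eqref{eq:semilinear_VI} is equivalent to the composite equation \eqref{eq:general_FP_semi} of the form \eqref{eq:SPhi_FP}.

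First I would dispatch the conditions \ref{ass:standing_general:i}--\ref{ass:standing_general:iii} of \cref{cor:composite_classical} that are common to all three composite corollaries: Newton differentiability of $\Phi$ with derivative \eqref{eq:Phi_der_semi} and the local Lipschitz property \ref{ass:standing_general:iv} follow from \cref{lem:Phi_semilin} (using that $u \mapsto \norm{b_i(u)}{L^{2p}(\Omega)}$ is continuous, hence locally bounded, to control the coefficient in \eqref{eq:Phi_Lip_semi}), while Newton differentiability of $S_0$ with derivative $G_{S_0}$ and the local bound \ref{ass:standing_general:iii} follow from \cref{lem:S0_semilin}, the latter being immediate---and in fact uniform---from \eqref{eq:S0_der_bound}.

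The crux is the contraction estimate. By the chain rule for Newton derivatives (\cref{lem:chain_rule}) the map $H := S_0 \circ \Phi$ has Newton derivative $x \mapsto G_{S_0}(\Phi(x))G_\Phi(x)$; submultiplicativity together with \eqref{eq:S0_der_bound} and \eqref{eq:Phi_der_bound_semi} bounds its operator norm by
\[
\norm{\iota_{p'}}{\LL(H_0^1(\Omega), L^{p'}(\Omega))}\,
\norm{\iota_{2p}}{\LL(H_0^1(\Omega), L^{2p}(\Omega))}\,
\big(\Lip{b_1}\norm{b_2(x)}{L^{2p}(\Omega)} + \Lip{b_2}\norm{b_1(x)}{L^{2p}(\Omega)}\big),
\]
and, chaining \eqref{eq:S0_Lip} with \eqref{eq:Phi_Lip_semi}, the Lipschitz modulus of $H$ on a pair $x_1,x_2$ is bounded by the same product with $\norm{b_2(x)}{L^{2p}(\Omega)}$ replaced by $\norm{b_2(x_2)}{L^{2p}(\Omega)}$ and $\norm{b_1(x)}{L^{2p}(\Omega)}$ by $\norm{b_1(x_1)}{L^{2p}(\Omega)}$. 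Taking the supremum over $x, x_1, x_2 \in B_R^{H_0^1(\Omega)}(0)$ and noting that, since the $b_1$- and $b_2$-terms are evaluated at independent arguments, this supremum coincides with the symmetrized quantity defining $\gamma_R$, one obtains \eqref{eq:Phi_gamma_Lip_loc} and \eqref{eq:GPhi_bound-2_loc} with $B = B_R^{H_0^1(\Omega)}(0)$ and $\gamma = \gamma_R$; the choice $R = \infty$ yields \eqref{eq:Phi_gamma_Lip} and \eqref{eq:GPhi_bound-2} with $\gamma = \gamma_\infty$.

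Finally I would check the projection hypothesis: $X = H_0^1(\Omega)$ is Hilbert (\ref{ass:standing_general_proj:ii}), $B = B_R^{H_0^1(\Omega)}(0)$ is nonempty closed convex (\ref{ass:standing_general_proj:iv}), and \cref{lem:semismooth_balls} gives that $P_B$ is Newton differentiable with $\norm{G_{P_B}(x)}{\LL(X,X)} \leq 1$, i.e.\ \ref{ass:standing_general_proj:v}. This settles \cref{cor:composite_local} when $\gamma_R < 1$, and \ref{ass:standing_general:v} of \cref{cor:composite_global} when $\gamma_\infty < 1$. I expect the only mildly delicate point to be the bookkeeping with the two Sobolev-embedding constants and the observation that the slightly asymmetric coefficients appearing in \eqref{eq:Phi_Lip_semi} and \eqref{eq:Phi_der_bound_semi} are both dominated by the symmetric supremum defining $\gamma_R$; everything else is a direct appeal to \cref{lem:Phi_semilin,lem:S0_semilin,lem:semismooth_balls}.
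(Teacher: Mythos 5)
Your proof is correct and follows essentially the same route as the paper's: the paper simply asserts that the assertions follow from \cref{lem:semismooth_balls,lem:Phi_semilin,lem:S0_semilin} by comparing with the hypotheses of \cref{cor:composite_global,cor:composite_local}, and you fill in exactly that bookkeeping. Your explicit composition of \eqref{eq:Phi_Lip_semi} with \eqref{eq:S0_Lip} and of \eqref{eq:Phi_der_bound_semi} with \eqref{eq:S0_der_bound}, together with the observation that the independent supremum defining $\gamma_R$ dominates both asymmetric coefficients, is precisely the routine verification the paper leaves implicit.
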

\begin{proof}
The assertions of the corollary follow straightforwardly 
from  \cref{lem:semismooth_balls,lem:Phi_semilin,lem:S0_semilin}
by comparing with the 
assumptions of \cref{cor:composite_global,cor:composite_local}.
\end{proof}

From \cref{cor:last_cor},
we obtain that 
\eqref{eq:general_FP_semi} is amenable to \cref{algo:semiNewtonAbstract} provided the 
Lipschitz constants and the growth behavior of the 
functions $b_1$ and $b_2$ are suitable;
analogously to the obstacle-type QVIs 
in \cref{cor:gamma_prop,cor:local_thermo_obstacle_map}.

\section{Numerical experiments for obstacle-type QVIs}
\label{sec:7}

If we combine the results of 
\cref{subsec:obstacle_solution_map,subsec:solution_ops_as_obstacle},
then we obtain that the analysis of 
\cref{sec:framework_applied_to_SPhi} applies to obstacle-type 
QVIs of the form 
\begin{equation}
\label{eq:ModelQVINum}
\begin{aligned}
&\text{Find $u \in H_0^1(\Omega)$  satisfying  } 
\\
&\qquad\qquad\quad u \leq \Phi_0 + \Phi(u),
\quad\langle -\Delta u - f, v - u \rangle_{H^{-1}(\Omega),H_0^1(\Omega)} \geq 0~
\forall v \in H_0^1(\Omega), v \leq \Phi_0 + \Phi(u),
\\
&\text{with $\Phi(u)$ given by $\Phi(u) := \varphi T$ and $T$ as the solution of}
\\
&\qquad\qquad\quad kT-\Delta T = g(\Psi_0 + \psi T -u) \text{ in $\Omega$},
\qquad 
\partial_\nu T = 0 \text{ on $\partial\Omega$,}
\end{aligned}
\end{equation}
provided the quantities 
$\Omega$, $\Phi_0$, $f$, $\varphi$, $k$,
$g$, $\Psi_0$, and $\psi$
in \eqref{eq:ModelQVINum} 
satisfy the conditions 
in \cref{ass:standing_obstacle_map,ass:SemilinearObstacleMap}
for $p=2$ and 
the data is such that  \cref{lem:S_obs_well_def,lem_GS_bound_obstacle,lem:S_obs_Newton,th_T_probs,th_T_probs_1d_local}
allow to establish the 
conditions 
\eqref{eq:Phi_gamma_Lip} and \eqref{eq:GPhi_bound-2}
(or 
\eqref{eq:Phi_gamma_Lip_loc} and \eqref{eq:GPhi_bound-2_loc},
respectively)
for $X = H_0^1(\Omega)$. 
Note that \eqref{eq:ModelQVINum} 
covers the so-called thermoforming problem \cite[\S 6]{AHR}
as a special case.

\begin{remark}[Thermoforming QVI]
\label{rem:thermoforming}
For $d=2$, \eqref{eq:ModelQVINum}, with $\Psi_0 \equiv \Phi_0$ and $\psi \equiv \varphi$,
provides a simple model for the problem 
of determining the displacement $u$ of 
an elastic membrane,
clamped at the boundary $\partial \Omega$,
that has been heated and is 
pushed by means of an external force $f$
into a metallic mould with original shape $\Phi_0$ and deformation $\Phi(u)$. The deformation is due to the mould's temperature field $T$ which varies according to the
membrane's temperature.  
The function $g$ then models how the 
temperature $T$ is affected by the 
distance
between the membrane and the mould. For more details on the thermoforming problem,
we refer to \cite[\S 6]{AHR} and the references therein.
\end{remark}

In the present section, 
we provide several examples of QVIs of the type \eqref{eq:ModelQVINum}
for which all necessary assumptions are satisfied
and present the results that are obtained 
when \cref{algo:semiNewtonAbstractGeneral,algo:semiNewtonAbstract} are applied. 
Our goal is in particular to
demonstrate 
the $q$-superlinear convergence 
and mesh-independence of our semismooth Newton method. 
We begin with a detailed description 
of how we realized
and implemented \cref{algo:semiNewtonAbstractGeneral,algo:semiNewtonAbstract}
in the situation of \eqref{eq:ModelQVINum}.

\subsection{Implementation details}
\label{subsec:7.1}

\textbf{Data availability and packages used:} 
We implement our experiments in the open-source language Julia \cite{Bezanson2017}.
Our implementation makes use of the
\texttt{Gridap} package for the finite element discretization of the variational problems \cite{Badia2020, Verdugo2022} as well as the \texttt{NLsolve} \cite{nlsolve.jl}  and \texttt{LineSearches} \cite{linesearches.jl} packages, for the solution of the (smooth) 
nonlinear equations that arise when
evaluating, e.g., the solution map of
the obstacle problem \eqref{eq:upper_obst_prob}  
by means of a regularization approach. 
For the sake of reproducibility, the scripts used to generate the tables
and plots depicted in the following subsections 
can be found in the \texttt{SemismoothQVIs} package \cite{SemismoothQVIs.jl}. The version of  \texttt{SemismoothQVIs} run in our experiments is archived on Zenodo \cite{QVI-SSN.jl-zenodo}. A Python implementation  of \cref{algo:semiNewtonAbstractGeneral,algo:semiNewtonAbstract} is also available with Firedrake \cite{FiredrakeUserManual} as the finite element backend; cf.~\cite{semismoothqvis_firedrake}.

\medskip

\noindent \textbf{Evaluation of $\boldsymbol{S \circ \Phi}$:} The semismooth Newton methods in \cref{algo:semiNewtonAbstractGeneral,algo:semiNewtonAbstract} 
require the computation of the quantity 
$S(\Phi(u_i))$ in each iteration. 
(Here and in what follows, 
we write $u_i$ etc.\ instead of $x_i$ etc.\
to conform with the notation in 
\eqref{eq:ModelQVINum}.)
The 
computation of 
$S(\Phi(u_i))$
can be split into two steps:
\begin{multicols}{2}
\begin{enumerate}[label=({E}\arabic*),leftmargin=2cm]
\item Given $u_i $, compute $\Phi(u_i)$. \label{item:inner1}
\item Given $\phi$, compute $S(\phi)$. \label{item:inner2}
\end{enumerate}
\end{multicols}
For the QVI \labelcref{eq:ModelQVINum},  
\labelcref{item:inner1} is equivalent to
the solution of 
a (potentially nonsmooth) semi\-linear
PDE and hence \labelcref{item:inner1} can be realized 
by means of a (semismooth) Newton method. 

The evaluation of $S$ in \labelcref{item:inner2} is equivalent to solving 
the obstacle problem \eqref{eq:upper_obst_prob}. As 
this is a classical problem, a myriad of algorithms exist for 
its numerical solution.
In this paper, we opt for a path-following smoothed Moreau--Yosida regularization (PFMY) \cite{adam2019semismooth} followed by a feasibility restoration by means 
of iterations of the primal-dual active set (PDAS) method \cite{HintermullerIto2003}. By combining these algorithms, we find that
the number of iterations needed 
for the evaluation 
\labelcref{item:inner2} does not grow uncontrollably 
as the mesh width goes to zero and the feasibility of $S(\Phi(u_i))$
is guaranteed.
%Note that utilizing solely the PDAS-algorithm 
%would cause the number of iterations to grow uncontrollably 
%with each uniform mesh refinement, thus leading to 
%severe mesh-dependency effects in the inner solver.
%On the other hand, using only PFMY-iterates without any form of feasibility restoration, produces solutions $u_B$
%which depend on the Moreau--Yosida regularization parameter and (slightly) violate the obstacle constraint if the contact region is nonempty. By combining the two algorithms, we resolve both these 
%issues. 

Recall that the PFMY-algorithm for the 
solution of the obstacle problem relies 
on the 
Huber-type function 
$\sigma_\rho\colon \R \to \R$ defined by
\begin{align}
\label{eq:Huber}
\sigma_\rho(u) :=
\begin{cases}
0 & \text{if} \; u \leq 0,\\
 u^2/(2\rho) & \text{if} \; 0 < u < \rho,\\
u - {\rho}/{2} & \text{if} \; u \geq \rho,
\end{cases}
\qquad \text{for } \rho > 0.
\end{align}
A PFMY-algorithm for approximating the solution 
$u = S(\phi)$ of \eqref{eq:upper_obst_prob} for 
given $f$, $\phi$, and  
$\Phi_0$
chooses a sequence of Moreau--Yosida parameters $\rho_0 > \rho_1 > \ldots > \rho_J > 0$ and solves
for each $j$ the subproblem
\begin{align}
\label{eq:MoreauYosida}
u_{\rho_j} \in H^1_0(\Omega),
\quad
( u_{\rho_j},  v )_{H_0^1(\Omega)} + \left( {\rho^{-1}_j} \sigma_{\rho_j}(u_{\rho_j}-\phi - \Phi_0) - f, v \right)_{L^2(\Omega)} =0 \quad \forall  v \in H^1_0(\Omega).
\end{align}
When $u_{\rho_j}$ is computed, it serves as the initial guess for the iterative solution of the subsequent subproblem with parameter
$\rho_{j+1}$. In our implementation, all the subproblems that appear 
were solved by means of a classical Newton algorithm. 
The last PFMY-iterate
$u_{\rho_J}$ is used as the initial guess for the PDAS-method
in our solver which restores 
the feasibility of the solution.

\medskip

\noindent \textbf{Action of 
$\boldsymbol{G_R(u_i)^{-1}}$:} Next, we detail how 
to compute 
the Newton iterate 
$u_N$  in steps \ref{algo1:x_N_general} and \ref{algo1:x_N} of \cref{algo:semiNewtonAbstractGeneral,algo:semiNewtonAbstract}, respectively. 
In view of 
\cref{lem:R:prop}\ref{lem:R:prop:ii},
given the previous iterate $u_i$ and $u_B := S(\Phi(u_i))$,
to determine $u_N$,
one has to 
(approximately)
solve the linear system 
\begin{align}
\label{eq:NewtonSysReformulated}
\left( \mathrm{Id} - G_S(\Phi(u_i))G_\Phi(u_i) \right)\delta u_N = u_B - u_i
\end{align}
for $\delta u_N$
and set 
$u_N := u_i + \delta u_N$. In order to realize the composition $G_S(\Phi(u_i))G_\Phi(u_i)$ in \eqref{eq:NewtonSysReformulated}, we introduce the auxiliary
variables 
$\eta := G_\Phi(u_i)\delta u_N$ and 
$\mu := G_S(\Phi(u_i))\eta - \eta = Z_S(\Phi(u_i))\Delta \eta$, where $Z_S$ is defined as in \cref{def:GS}. As discussed in \cref{th_T_probs}\ref{th_T_probs:iii}, 
$\eta$ then satisfies 
$\eta = \varphi \xi$ 
with $\xi \in H^1(\Omega)$
as the solution of
\eqref{eq:xi_PDE} with 
$h:=\delta u_N$. From
\cref{def:GS}, we 
further obtain 
that 
$\mu$ is the weak solution of 
$-\Delta \mu - \Delta \eta = 0$ in $I_i$,
$\mu = 0$ in $\bar \Omega \setminus I_i$,
where 
$I_i$ and $A_i$ denote the 
inactive and active set associated 
with the iterate $u_i$, respectively, i.e., 
$A_i = A(\Phi(u_{i})) = 
\{
x\in \Omega 
\mid
u_B(x) = \Phi_0(x) + {\Phi}(u_{i})(x) \}$
and $I_i = \Omega \setminus A_i$. 
By rewriting all of 
these PDEs in variational form
and substituting,
it follows
that \eqref{eq:NewtonSysReformulated} 
can be recast as:
\begin{align}
&\text{Find }
(\delta u_N, \xi, \mu)
\in 
H_0^1(\Omega) \times H^1(\Omega) \times H_0^1(I_i) \text{ such that } &&
\notag
\\
&
(\delta u_N - \mu - \varphi \xi, v )_{L^2(\Omega)} -  ( u_B - u_i, v )_{L^2(\Omega)} = 0
&&\forall v \in H^1_0(\Omega),
\label{ex:3a}
\\
&( \nabla \xi, \nabla \zeta )_{L^2(\Omega)} + ( k \xi + G_g(\Psi_0 + \psi T_{i} - u_i) (\delta u_N - \psi \xi), \zeta)_{L^2(\Omega)}  =0 &&\forall \zeta \in H^1(\Omega),
\label{ex:3b}
\\
& (\nabla \mu +  \nabla(\varphi \xi), \nabla q)_{L^2(\Omega)} = 0 
&&\forall q \in H^1_0(I_i),
\label{ex:3c}
\end{align}
where $T_i \in H^1(\Omega)$ denotes the 
solution of \eqref{eq:T_PDE} with $u = u_i$.
Note that the system \labelcref{ex:3a}--\labelcref{ex:3c} is linear in $\delta u_N$, $\xi$, and $\mu$ and, therefore, 
reduces to a linear system solve
after discretization. The act of encoding the inactive set in \labelcref{ex:3c} in the discretized linear system is discretization dependent. How this is achieved for a piecewise (bi)linear finite element discretization is discussed at the end of this subsection.
\medskip

\noindent \textbf{Comparison methods:} In our numerical 
experiments, we compare \cref{algo:semiNewtonAbstractGeneral,algo:semiNewtonAbstract} with
three alternative approaches for the numerical 
solution of \eqref{eq:ModelQVINum}, namely:
\begin{enumerate}[label=({C}\arabic*),leftmargin=1.1cm]
\item\label{item:C1} a pure fixed-point method;\label{com:fixedpoint}
\item\label{item:C2} a Newton method applied to a smoothed
Moreau--Yosida regularization
of the QVI \labelcref{eq:ModelQVINum} with fixed $\rho$; \label{com:smoothedMY}
\item\label{item:C4} \cref{algo:semiNewtonAbstractGeneral} but with a backtracking Armijo linesearch applied to each Newton update. We use the backtracking Armijo linesearch as described in \cite[\S 3.5]{nocedal2000} and implemented in the \texttt{LineSearches} package \cite{linesearches.jl} with the merit function $\|R(u_i)\|_{H^1(\Omega)}$.
\end{enumerate}
The iterates of the 
fixed-point method are defined 
by $u_{i+1} = S(\Phi(u_i))$ for $i=1,2,\ldots$, where $S(\Phi(u_i))$ is computed as described above. If the map $S\circ \Phi$ is contractive, then
this type of algorithm 
can be expected to converge linearly 
to the QVI-solution. 

For a given $\rho>0$, a smoothed Moreau--Yosida regularization of the QVI \labelcref{eq:ModelQVINum} 
corresponds to replacing 
the 
solution operator $S$
of the obstacle problem \eqref{eq:upper_obst_prob}
by 
the solution map $S_\rho$
of the 
mollified problem \eqref{eq:MoreauYosida}
wherever it appears. 
For the inner solver,
this means that 
 $u_B = S(\Phi(u_i))$ 
 is approximately calculated by 
 replacing
 \eqref{eq:upper_obst_prob}
 with its 
 Moreau--Yosida 
 regularization \eqref{eq:MoreauYosida}
 and by subsequently applying 
 Newton's method.
 In \eqref{eq:NewtonSysReformulated}, we then approximate 
 the update formula $u_N := u_i + \delta u_N$
 via $u_{N, \rho} = u_i + \delta u_{N,\rho}$, where
 $\delta u_{N, \rho}$ is obtained by 
 solving the system
\begin{align}
&\text{Find }
(\delta u_{N,\rho}, \xi, w)
\in 
H_0^1(\Omega) \times H^1(\Omega) \times H_0^1(\Omega) \text{ such that } &&
\notag
\\
&(\delta u_{N,\rho} - w, v )_{L^2(\Omega)} -  ( u_B - u_i, v )_{L^2(\Omega)} = 0
\qquad\qquad\quad
&&\forall v \in H^1_0(\Omega),
\label{ex:4a}
\\
&( \nabla \xi, \nabla \zeta )_{L^2(\Omega)} + ( k \xi + G_g(\Psi_0 + \psi T_{i} - u_i) (\delta u_{N,\rho} - \psi \xi), \zeta)_{L^2(\Omega)}  =0 &&\forall \zeta \in H^1(\Omega),
\label{ex:4b}
\\
&( \nabla w , \nabla q )_{L^2(\Omega)} + \rho^{-1}\left( G_{\sigma_\rho}(u_i-\Phi_0 - \varphi T_i)(w-\varphi \xi), q \right)_{L^2(\Omega)} = 0
&&\forall q \in H^1_0(\Omega).
\label{ex:4c}
\end{align}
Here,  $T_i \in H^1(\Omega)$ again denotes the 
solution of \eqref{eq:T_PDE} with $u = u_i$
and $G_{\sigma_\rho}\colon \R \to \R$ the 
derivative of the function $\sigma_\rho$ in \eqref{eq:Huber}.
The key difference between \labelcref{ex:4a}--\labelcref{ex:4c} and \labelcref{ex:3a}--\labelcref{ex:3c} is that \labelcref{ex:4a}--\labelcref{ex:4c} does not contain an active set. In particular,
\labelcref{com:smoothedMY} does not require the semismoothness results that we derived in this paper for the map $S\circ \Phi$. However, we shall see that this regularization is unfavourable. In particular, the approximated QVI-solution is dependent on the Moreau--Yosida parameter $\rho$ and is typically infeasible. Moreover, the convergence of the algorithm is slower than when computing $\delta u_N$ via \labelcref{ex:3a}--\labelcref{ex:3c} directly, and the convergence rate degrades in the limit  $\rho \to 0$ due to ill-conditioning; see 
\cref{fig:tf-solutions}(d) in
\cref{subsec:7.4}.

\medskip

\noindent \textbf{Finite element discretization:}
To discretize the variational problems, 
we consider uniform subdivisions 
of the domain $\Omega$ into intervals and  quadrilateral cells,
respectively, depending on 
whether the dimension is one or two. In one dimension, we discretize with a continuous piecewise linear finite element $\mathcal{P}_1$ and in two dimensions we choose the tensor-product of the one-dimensional basis $\mathcal{P}_1\times \mathcal{P}_1$ and define the mesh size $h$ as the length of the edge of a cell. 
This discretization applies to
$u$, 
$T$, and the
auxiliary functions $\xi$, $\eta$, $\mu$, and $w$. 
\medskip

\noindent \textbf{Discretization of the active set:} The choice of a $\mathcal{P}_1$-,
respectively, $\mathcal{P}_1\times \mathcal{P}_1$-discretization means that the active sets in the PDAS-algorithm for the obstacle problem as well as 
the sets
$A_i$ and $I_i$ in \labelcref{ex:3c} may be found by examining the coefficient vectors of the involved finite element functions
with respect to the nodal basis. 
In particular, enforcing \labelcref{ex:3c} for all $q \in H_0^1(I_i)$ simplifies to deleting the rows and columns in the corresponding finite element matrices and the 
rows in the right-hand side vector that are 
associated with nodal basis functions that 
belong to active nodes. More explicitly, let ${\bf u}_B$, 
$\boldsymbol{\xi}$, $\boldsymbol{\mu}$, $\boldsymbol{{\bf \delta u}}_N$,  ${\bf r}$, and ${\bf \bar{\Theta}} \in \mathbb{R}^M$ denote the coefficient vectors of the finite element functions $u_{B,h}$, $\xi_h$, $\mu_h$, $\delta u_{N,h}$, $u_h - u_{B,h}$, and $\Phi_{0,h} + \varphi_h T_h$, respectively,
that correspond to  the quantities in the system 
\labelcref{ex:3a}--\labelcref{ex:3c}.
Let $\mathfrak{N}_h = \{1,2,\ldots,M\}$ 
be the index set of the set of the degrees of freedom
and denote the discrete active set by 
$\mathfrak{A}_h = \{ i \in \mathfrak{N}_h \mid 
{\bf u}_{B,i} = {\bf \bar{\Theta}}_i \}$. 
Denote further the discrete 
inactive set by  $\mathfrak{I}_h = \mathfrak{N}_h  \setminus 
\mathfrak{A}_h$ and suppose that the finite element linear system (before removing the active set) induced by \labelcref{ex:3a}--\labelcref{ex:3c} is given by 
\begin{align}
\begin{pmatrix}
K_{11} & K_{12} & K_{13}\\
K_{21} & K_{22} & {\bf 0}\\
{\bf 0}& K_{32} & K_{33}\\
\end{pmatrix}
\begin{pmatrix}
\boldsymbol{{\bf \delta u}}_N\\
\boldsymbol{\mathbf{\xi}}\\
{\boldsymbol{\mu}}
\end{pmatrix}
=
\begin{pmatrix}
-{\bf r}\\
{\bf 0}\\
{\bf 0}\end{pmatrix},
\label{ex:5}
\end{align}
where $K_{ij} \in \mathbb{R}^{M \times M}$ for $i,j \in \{1,2,3\}$. Then 
modifying \eqref{ex:5} 
such that it corresponds to a discretization of 
\labelcref{ex:3a}--\labelcref{ex:3c} with $\mu \in H_0^1(I_i)$ and 
test functions $q \in H_0^1(I_i)$
is equivalent to deleting the rows and columns in $K_{33}$, the columns in $K_{13}$, and the rows in $K_{32}$ and 
${\boldsymbol{\mu}}$ whenever the row or column index is an element of $\mathfrak{A}_h$. In other words, \labelcref{ex:5} gets reduced to
\begin{align*}
\begin{pmatrix}
K_{11} & K_{12} & [K_{13}]_{\mathfrak{N}_h,\mathfrak{I}_h} \\
K_{21} & K_{22} & {\bf 0}_{\mathfrak{N}_h,\mathfrak{I}_h} \\
{\bf 0}_{\mathfrak{I}_h,\mathfrak{N}_h} & [K_{32}]_{\mathfrak{I}_h,\mathfrak{N}_h}  & [K_{33}]_{\mathfrak{I}_h,\mathfrak{I}_h} \\
\end{pmatrix}
\begin{pmatrix}
\boldsymbol{{\bf \delta u}}_N\\
\boldsymbol{\mathbf{\xi}}\\
{\boldsymbol{\mu}}_{\mathfrak{I}_h} 
\end{pmatrix}
=
\begin{pmatrix}
-{\bf r}\\
{\bf 0}\\
{\bf 0}_{\mathfrak{I}_h} \end{pmatrix},
\qquad {\boldsymbol{\mu}}_{\mathfrak{A}_h} = {\bf 0}.
\end{align*}
Note that 
the realization of  \labelcref{ex:3c} for higher-order discretizations is a far more delicate topic. We leave the study of such higher-order finite elements for future research.

\subsection{Test 1: a one-dimensional QVI with a known solution}
\label{subsec:7.2}

We are now
in position to present our numerical experiments.
We begin with a simple one-dimensional instance 
of the QVI \eqref{eq:ModelQVINum} which is covered by 
the global framework of \cref{sec:framework_applied_to_SPhi} and 
possesses a unique solution 
that is known in closed form. 
We choose the quantities in the QVI \eqref{eq:ModelQVINum} 
as follows:
\begin{equation}
\label{eq:ex_1_setup}
\begin{gathered}
\Omega  = (0,1),
\qquad
\Phi_0(x) = \max(0, |x - 0.5| - 0.25),
\\
f(x) = \pi^2\sin(\pi x) + 100\max(0, - |x - 0.625| + 0.125),
\qquad
\varphi(x) = \frac{1}{\alpha_1} \sin(\pi x),
\\
k=1,
\qquad
g(s) = 
\alpha_1
+ 
\arctan\left (\frac{1}{\alpha_2} \min\left (\frac{1-s}{2}, 1-s \right )\right ),
\qquad
\Psi_0 \equiv 1,
\qquad 
\psi \equiv \varphi.
\end{gathered}
\end{equation}
Here, $\alpha_1, \alpha_2 > 0$ are parameters that 
will be fixed later. 
Note that the conditions in 
\cref{ass:standing_obstacle_map,ass:SemilinearObstacleMap} 
are trivially satisfied in the situation of 
\eqref{eq:ex_1_setup} (with $p=2$).
Using direct calculations and 
\cref{rem:Poincare},
it is furthermore easy to establish the following lemma.

\begin{lemma}
\label{lem:ex1}
In the situation of \eqref{eq:ex_1_setup},
the QVI \eqref{eq:ModelQVINum} 
possesses the solution
$\bar u(x) = \sin(\pi x)$, $\bar T \equiv \alpha_1$. 
For the solution $(\bar u, \bar T)$, 
the inactive set, the strictly active set, 
and the biactive set are given by 
\begin{gather*}
\{x \in \Omega \mid \bar u(x) < \Phi_0(x) +\Phi(\bar u)(x)\} =
(0, 0.25) \cup (0.75, 1),
\\
\{x \in \Omega \mid \bar u(x) = \Phi_0(x) + \Phi(\bar u)(x), 
-\bar u''(x) - f(x) \neq 0\}
=
(0.5, 0.75),
\end{gather*}
and
\[
\{x \in \Omega \mid \bar u(x) = \Phi_0(x) +\Phi(\bar u)(x), 
-\bar u''(x) - f(x) = 0\}
= [0.25, 0.5] \cup \{0.75\},
\]
respectively. Here, the function evaluations 
are defined w.r.t.\ the continuous 
representatives of $\bar u$
$\Phi_0$, $\Phi(\bar u)$,
and $-\bar u'' - f$.
The function
$\Psi_0 + \psi \bar T -\bar u$ takes values only in the set 
of points of nondifferentiability of $g$.
Further, 
the constant 
appearing in  
\eqref{eq:Phi_Lip_H1}
and
\eqref{eq:G_phi_bound}
satisfies
\begin{equation}
\label{eq:randomestimate-373he}
C_P(\Omega)\,
\Lip{g} 
\left ( \|\varphi \|_{L^\infty(\Omega)}  k^{-1/2} 
+
\| \varphi' \|_{L^\infty(\Omega)} k^{-1}
\right ) 
=
\frac{1 + \pi}{\pi \alpha_1 \alpha_2}.
\end{equation}
\end{lemma}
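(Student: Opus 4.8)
The plan is to verify directly that the pair $(\bar u, \bar T)$ with $\bar u(x) = \sin(\pi x)$ and $\bar T \equiv \alpha_1$ satisfies the two coupled relations in \eqref{eq:ModelQVINum}, and then to read the remaining assertions off the explicit formulas for the data. I would first check the $T$-equation: since $\bar T$ is constant, $-\Delta\bar T = 0$, so with $k=1$ the left-hand side is $\alpha_1$; for the right-hand side, note that $\psi\bar T = \varphi\alpha_1 = \sin(\pi x)$, so the argument of $g$ is $\Psi_0 + \psi\bar T - \bar u \equiv 1 + \sin(\pi x) - \sin(\pi x) \equiv 1$, and a direct evaluation gives $g(1) = \alpha_1 + \arctan(0) = \alpha_1$. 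Hence both sides equal $\alpha_1$, and $\partial_\nu\bar T = 0$ is trivial. This computation also proves the claim that $\Psi_0 + \psi\bar T - \bar u$ takes values only at points of nondifferentiability of $g$, once one observes (see below) that $s=1$ is the only such point.

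Next, the plan is to compute $\Phi(\bar u) = \varphi\bar T = \sin(\pi x)$, so that the obstacle is $\Phi_0 + \Phi(\bar u) = \max(0, |x - 0.5| - 0.25) + \sin(\pi x)$. Feasibility $\bar u \le \Phi_0 + \Phi(\bar u)$ then reduces to $0 \le \max(0, |x - 0.5| - 0.25)$, which is clear, and the active set $A(\Phi(\bar u)) = \{\bar u = \Phi_0 + \Phi(\bar u)\}$ coincides with $\{\Phi_0 = 0\} = \{|x-0.5|\le 0.25\} = [0.25, 0.75]$, whose complement is the claimed inactive set $(0,0.25)\cup(0.75,1)$. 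For the variational inequality I would compute $-\bar u'' - f = \pi^2\sin(\pi x) - f = -100\max(0, 0.125 - |x - 0.625|)$, which is nonpositive and supported on $(0.5, 0.75) \subset [0.25, 0.75]$. Since any $v \in K(\Phi(\bar u))$ satisfies $v \le \Phi_0 + \Phi(\bar u) = \bar u$ on $[0.25, 0.75]$ (where $\Phi_0 = 0$), the integrand $(-\bar u'' - f)(v - \bar u)$ is, on the support, a product of two nonpositive factors, hence nonnegative, so $\langle -\Delta\bar u - f, v - \bar u\rangle_{H^{-1}(\Omega),H_0^1(\Omega)} \ge 0$. This shows $(\bar u, \bar T)$ solves the QVI.

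The identification of the strictly active and biactive sets is then bookkeeping: the strictly active set is $A(\Phi(\bar u)) \cap \{-\bar u'' - f \neq 0\} = [0.25, 0.75]\cap(0.5,0.75) = (0.5,0.75)$, and the biactive set is $A(\Phi(\bar u))$ minus the strictly active set, i.e.\ $[0.25, 0.5]\cup\{0.75\}$; in $d=1$ all set identities may be read pointwise via the continuous representatives, so no capacity subtleties arise. Finally, for \eqref{eq:randomestimate-373he} I would assemble the four factors: by \cref{rem:Poincare}, $C_P(\Omega) = \diam(\Omega)/\pi = 1/\pi$; clearly $\|\varphi\|_{L^\infty(\Omega)} = 1/\alpha_1$ and $\|\varphi'\|_{L^\infty(\Omega)} = \pi/\alpha_1$; and $k=1$.

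The one point requiring a little care — and the main (mild) obstacle — is the exact value $\Lip{g} = 1/\alpha_2$. Writing $g(s) = \alpha_1 + \arctan(m(s)/\alpha_2)$ with $m(s) = \min(\tfrac{1-s}{2}, 1-s)$, one checks that $m$ is nonincreasing, with slope $-1/2$ for $s<1$ and slope $-1$ for $s>1$, so $g$ is nonincreasing (confirming \cref{ass:SemilinearObstacleMap}\ref{ass:SemilinearObstacleMap:vi}) and $|g'(s)| = \frac{|m'(s)|}{\alpha_2\,(1 + (m(s)/\alpha_2)^2)} \le 1/\alpha_2$, the bound being attained in the limit $s \downarrow 1$ along the slope-$(-1)$ branch, where $m(s) \to 0$. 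Multiplying the factors gives $\frac{1}{\pi}\cdot\frac{1}{\alpha_2}\cdot\big(\frac{1}{\alpha_1} + \frac{\pi}{\alpha_1}\big) = \frac{1+\pi}{\pi\alpha_1\alpha_2}$, which is precisely \eqref{eq:randomestimate-373he}, completing the proof.
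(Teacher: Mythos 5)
Your proof is correct and matches the intended argument: the paper does not actually spell out a proof of this lemma (it only says ``using direct calculations and Remark~4.8, it is \ldots\ easy to establish''), and what you have written is precisely the direct verification one would carry out — check the $T$-equation reduces to $g(1)=\alpha_1$, compute $\Phi(\bar u)=\sin(\pi x)$ and identify the coincidence set $\{\Phi_0=0\}=[0.25,0.75]$, observe $-\bar u''-f=-100\max(0,0.125-|x-0.625|)\le 0$ is supported inside that set so the VI sign condition holds, read off the inactive/strictly-active/biactive sets, and assemble $C_P(\Omega)=1/\pi$, $\|\varphi\|_\infty=1/\alpha_1$, $\|\varphi'\|_\infty=\pi/\alpha_1$, $\mathrm{Lip}(g)=1/\alpha_2$. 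Your careful handling of $\mathrm{Lip}(g)=1/\alpha_2$ (the supremum of $|g'|$ attained only in the limit $s\downarrow 1$ on the slope-$(-1)$ branch of $\min(\cdot,\cdot)$) is the one genuinely non-routine step, and you get it right.
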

Recall that the solution 
operator
of the obstacle problem 
is Gâteaux differentiable if and only if 
strict complementarity holds;
see \cite[Lemma~2.6]{RaulsWachsmuth}.
In combination with 
the fact that
the function
$\Psi_0 + \psi \bar T - \bar u$
takes values only in the set 
of nondifferentiable points of $g$, this means 
that the solution $(\bar u, \bar T)$
corresponds to a worst-case example.
From \eqref{eq:randomestimate-373he},
\cref{cor:gamma_prop},
and the global Lipschitz continuity of
$g$,
it follows further that the mapping 
$\Phi\colon H_0^1(\Omega) \to H_0^1(\Omega)$
appearing in \eqref{eq:ModelQVINum} 
in the situation of 
\eqref{eq:ex_1_setup}
satisfies the 
conditions  
\ref{th:obst_QVI:i} to \ref{th:obst_QVI:iv}
in \cref{th:obst_QVI} 
whenever $\alpha_1, \alpha_2 > 0$ are chosen 
such that 
$1+\pi^{-1} < \alpha_1 \alpha_2$ holds
(with $p=2$,
the Newton derivative 
$G_\Phi$
defined in \cref{th_T_probs}\ref{th_T_probs:iii},
and $\gamma := (1 + \pi)(\pi\alpha_1 \alpha_2)^{-1}$). 
In particular, the QVI is uniquely 
solvable for $1+\pi^{-1}  < \alpha_1 \alpha_2$,
$(\bar u, \bar T)$ is its unique solution, 
and the assertions of \cref{th:convergence}
hold when we apply \cref{algo:semiNewtonAbstract}.

The results that are obtained when 
our globalized semismooth Newton method 
is applied to the QVI \eqref{eq:ModelQVINum}
in the situation of \eqref{eq:ex_1_setup}
(or, more precisely, to the operator 
equation $u=S(\Phi(u))$ that the QVI may be recast as)
can be seen in \cref{fig:test1-convergence}.

\begin{figure}[ht!]
\centering 
\subfigure[\Cref{algo:semiNewtonAbstract} and \labelcref{item:C1}
in test configuration (I)]{\includegraphics[width =0.49 \textwidth]{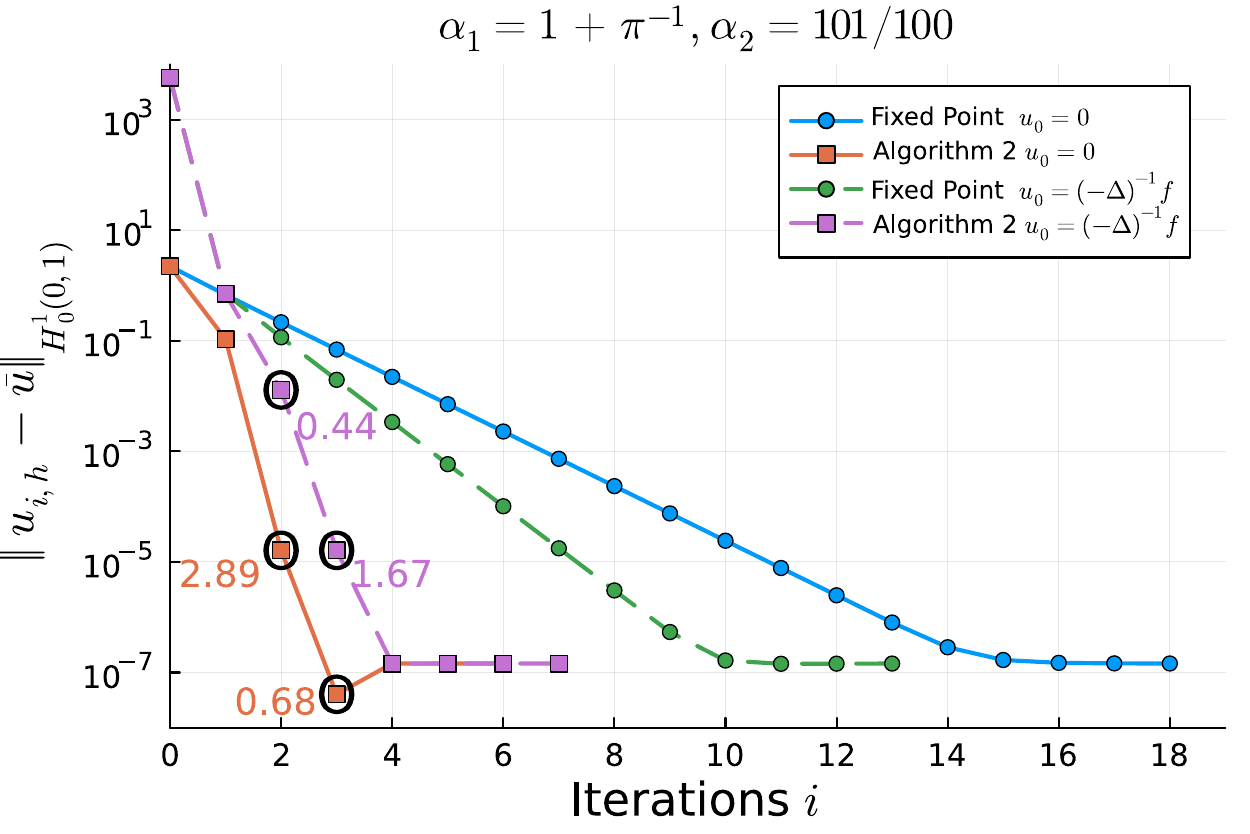}}
\subfigure[\Cref{algo:semiNewtonAbstract} and \labelcref{item:C1}
in test configuration (II)]{\includegraphics[width =0.49 \textwidth]{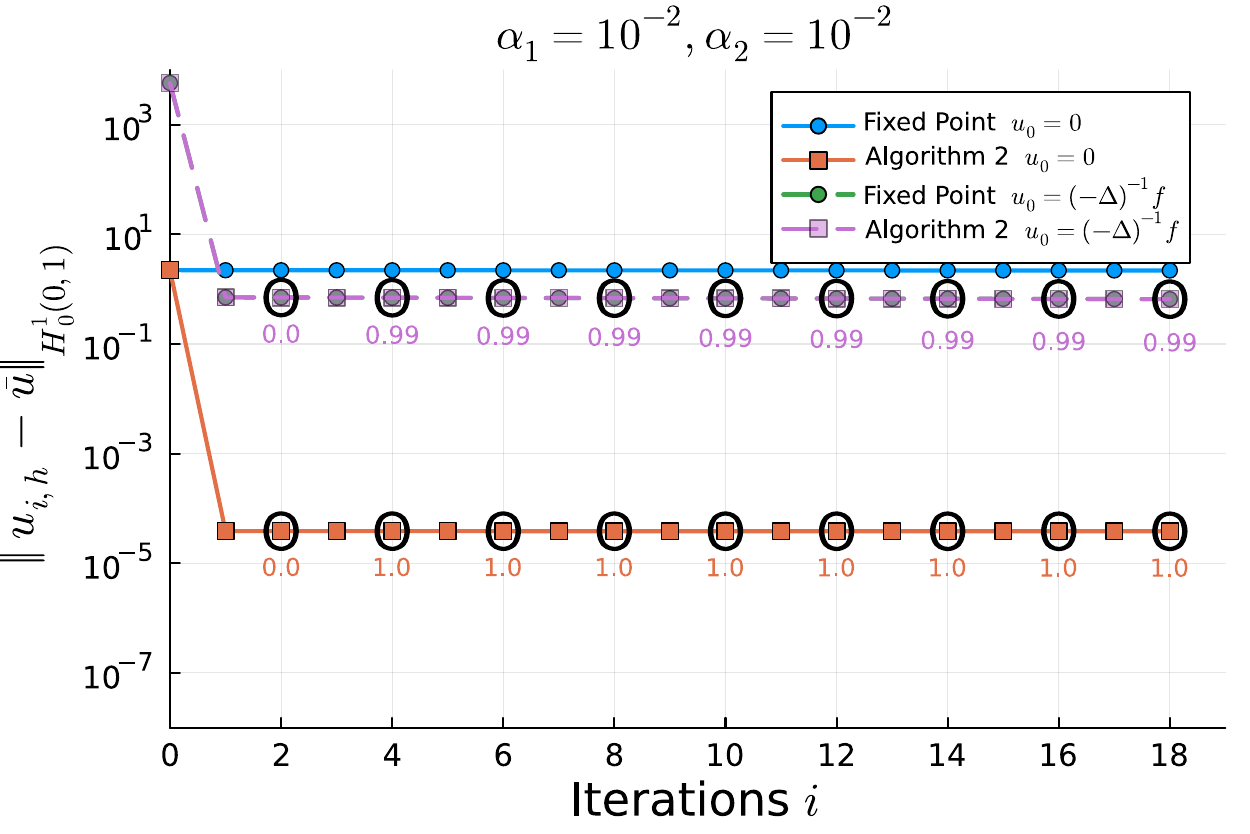}}
\\[0.4cm]
\subfigure[Vanilla \& backtracking SSN iterations 
in test case (II)]{\includegraphics[width =0.49 \textwidth]{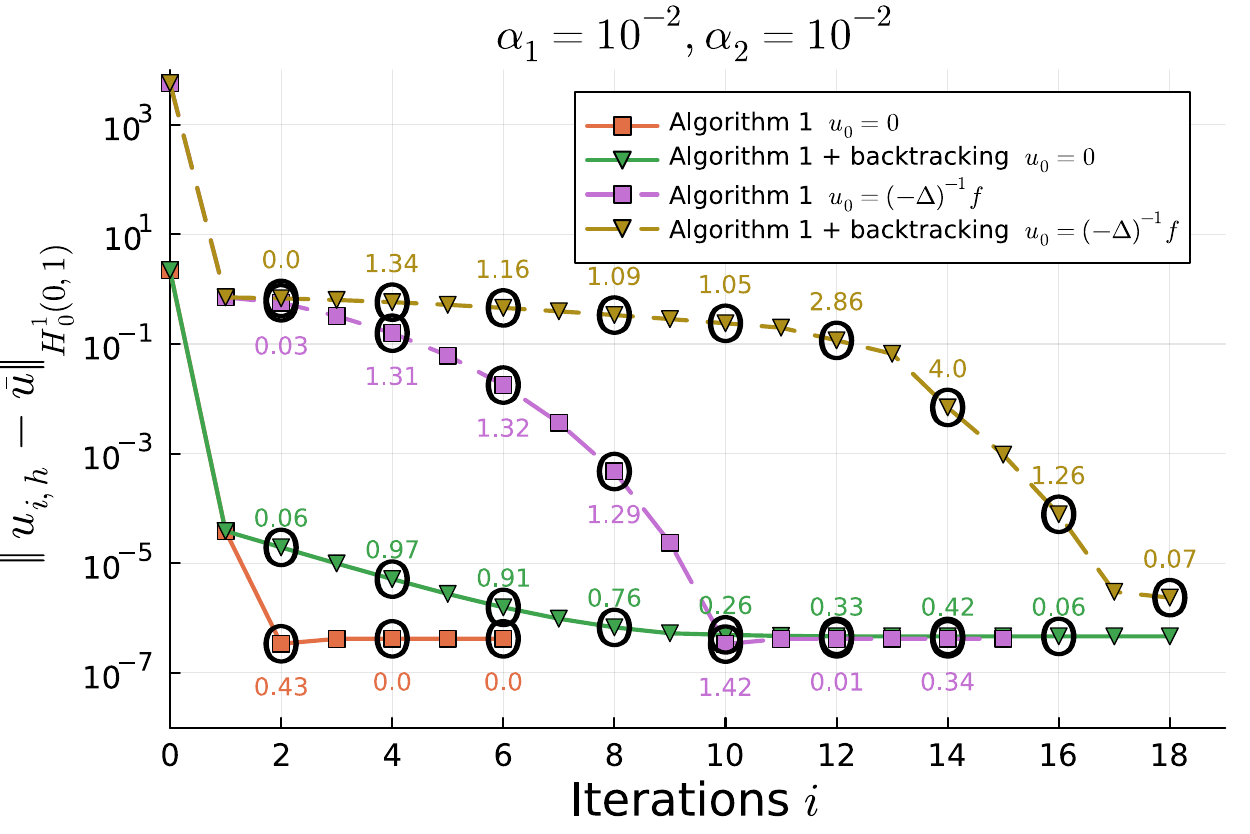}}
\subfigure[The solution $\bar u$
and its mould 
$\Phi_0 + \varphi \bar T$.]{\includegraphics[width =0.49 \textwidth]{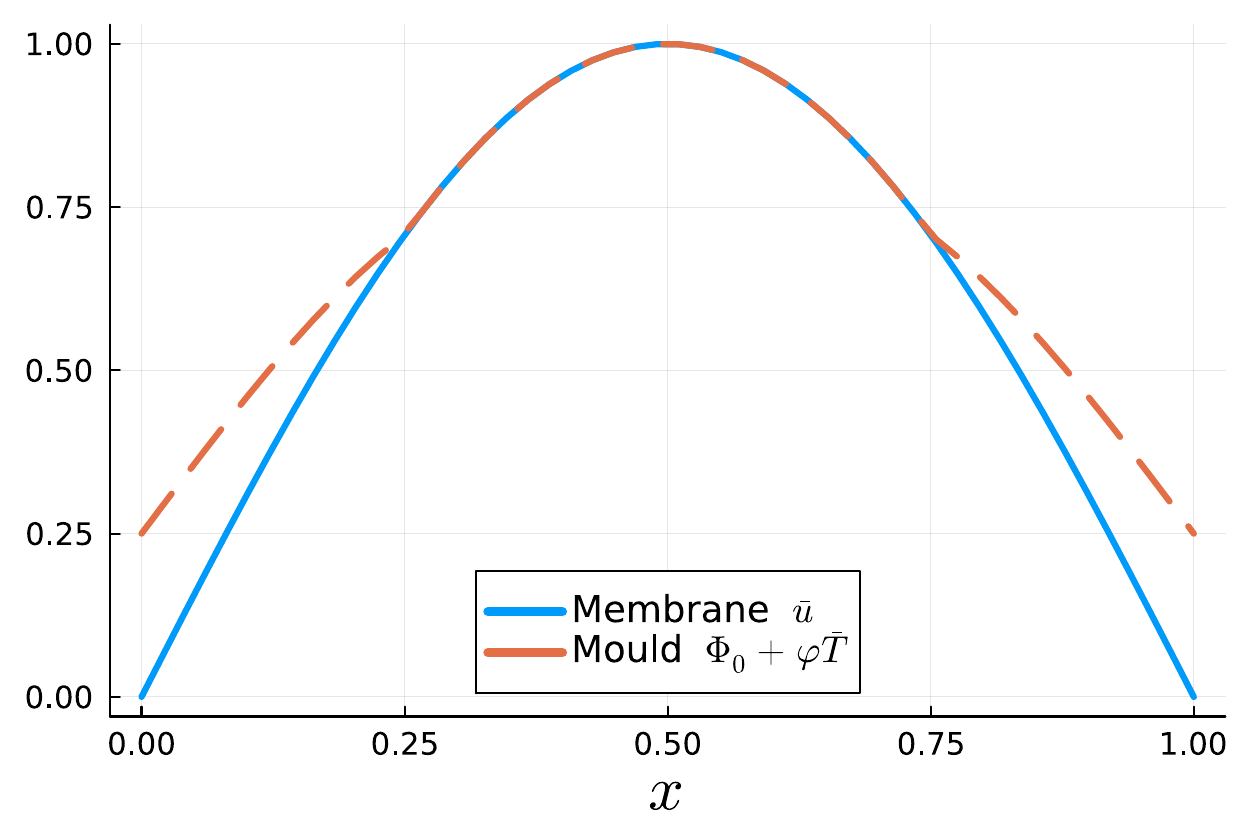}}
\caption{(Test 1) Convergence behavior and results
of the fixed-point method \ref{item:C1} and 
\cref{algo:semiNewtonAbstract}
for the parameter choices (I)
and (II) as well as \cref{algo:semiNewtonAbstractGeneral}
and \ref{item:C4} (\cref{algo:semiNewtonAbstractGeneral} with a backtracking linesearch),  for the parameter choice (II) with the initial guesses
$u_0 = 0$ and $u_0 = (-\Delta)^{-1} f$
in the situation of \eqref{eq:ex_1_setup}. 
The mesh size was chosen as 
$h= 5 \times 10^{-4}$. The numbers next to the graphs are the EOCs in \eqref{eq:EOC}.}
\label{fig:test1-convergence}
\end{figure}

Here, we 
considered two different parameter choices:
(I) 
$(\alpha_1, \alpha_2) = (1+\pi^{-1}, 101/100)$ and 
(II)
$(\alpha_1, \alpha_2) = (10^{-2}, 10^{-2})$.
Note that, for choice (I), we trivially 
have $1+\pi^{-1} < \alpha_1 \alpha_2$, 
so that 
\cref{th:convergence}
is applicable and global $q$-superlinear convergence to the problem solution $\bar u$ is guaranteed. 
For (II), 
we have $1+\pi^{-1} \gg \alpha_1 \alpha_2 = 10^{-4}$. We use this second 
configuration to investigate
how our algorithm behaves 
in situations that are
beyond the scope of our analysis. 
In addition, we also 
considered two different choices
for the initial guess $u_0$,
namely, $u_0 = 0$ and $u_0 = (-\Delta)^{-1}f$.
\Cref{fig:test1-convergence}(a)
shows the behavior 
of \cref{algo:semiNewtonAbstract}
and the pure fixed-point method 
\ref{item:C1}
for the two different starting values 
in configuration (I).
It can be seen that both algorithms 
are able to identify the solution 
$\bar u$ of the QVI up to 
the discretization error 
for both choices of $u_0$. 
The semismooth Newton method requires 
far fewer iterations, however, and exhibits 
$q$-superlinear convergence speed, 
as predicted by \cref{th:convergence}. 
This is also confirmed by the experimental orders of 
convergence (EOCs) given by the formula 
\begin{equation}
\label{eq:EOC}
\textup{EOC}_i := 
\log\left( \frac{\norm{ u_{i}-\bar u }{H_0^1(\Omega)}}{\norm{ u_{i-1}-\bar u }{H_0^1(\Omega)}} \right) 
\log\left( \frac{\norm{ u_{i-1}-\bar u }{H_0^1(\Omega)}}{\norm{ u_{i-2}-\bar u }{H_0^1(\Omega)}} \right)^{-1},
\qquad 
i \geq 2.
\end{equation}
We remark that, in this test case,
\cref{algo:semiNewtonAbstract} always chooses the iterate 
$u_N$ in step \ref{algo1:decision}.

\Cref{fig:test1-convergence}(b) shows the 
convergence behavior of the two algorithms 
for the parameter choice (II). It can be seen here that 
both \cref{algo:semiNewtonAbstract} and 
the fixed-point method stagnate/converge very slowly. 
For this configuration, \cref{algo:semiNewtonAbstract}
always chooses $x_B$ in step \ref{algo1:decision} for $i\geq 2$, 
and the lack of contractivity of the composition 
$S\circ \Phi$ results in a very poor convergence 
performance. \Cref{fig:test1-convergence}(c) shows 
what happens in (II) with \cref{algo:semiNewtonAbstractGeneral} with and without a backtracking linesearch \labelcref{item:C4}.
As can be seen, both methods are able to 
identify $\bar u$ with superlinear convergence speed 
even in the absence of contractivity. Notably, \cref{algo:semiNewtonAbstractGeneral} without a linesearch achieves convergence with the fewest number of iterations and with a significantly smaller computational cost since the backtracking linesearch requires many evaluations of $S$. This highlights that
it is worth to consider our semismooth Newton approach
for the solution of QVIs 
even in those situations where 
a rigorous convergence analysis is not possible.

\subsection{Test 2: a one-dimensional QVI with two known solutions}
\label{subsec:7.3}
Next, we consider the 
QVI \eqref{eq:ModelQVINum} 
in a situation in which there are several solutions 
and in which the localized setting of \cref{sec:framework_fp_H_multiple_solns}
has to be employed. 
We choose the data in \eqref{eq:ModelQVINum}  as follows:
\begin{equation}
\label{eq:ex_2_setup}
\begin{gathered}
\Omega = (0, 1),
\qquad
\Phi_0 \equiv 0,
\qquad
f(x) = \alpha_1 \pi^2 \sin(\pi x),
\qquad
\varphi(x) =  
\alpha_2\frac{10 \pi^2\sin(\pi x)}{5 - \cos(2 \pi x)},
\\
k = \pi^2,
\qquad
g(s)
=
\frac{4}{\alpha_1} \min(0,s)^2,
\qquad
\Psi_0 \equiv 0,
\qquad
\psi(x) =   \frac{5 \pi^2\sin(\pi x)}{5 - \cos(2 \pi x)}.
\end{gathered}
\end{equation}
Here, $\alpha_1 > 0$ and $\alpha_2 \geq 1$ 
are again parameters 
that can be chosen arbitrarily. 
Note that the conditions in 
\cref{ass:standing_obstacle_map,ass:SemilinearObstacleMap} 
are all
satisfied in the situation of \eqref{eq:ex_2_setup}
(with $p=2$)
and that, in contrast to 
the example in \cref{subsec:7.2}, 
the function $g$ in \eqref{eq:ex_2_setup}
is only locally 
Lipschitz continuous (but $C^1$). 
Similarly to 
\cref{lem:ex1}, we obtain:

\begin{lemma}
\label{lem:ex2}
In the situation of \eqref{eq:ex_2_setup},
the QVI \eqref{eq:ModelQVINum} 
possesses (at least)
the two solutions 
\[\bar u_1 \equiv 0, \qquad \bar T_1\equiv 0\]
and 
\[
\bar u_2(x)
:= 
\alpha_1 \sin(\pi x),
\qquad
\bar T_2(x)
:=
\alpha_1 
\frac{5 - \cos(2 \pi x)}{10 \pi^2}.
\]
If $M_R$
is defined as in \cref{th_T_probs_1d_local},
then it holds
\begin{equation}
\label{eq:MRformula}
M_R = \frac{1}{2}R + \frac{10(1+\pi)}{3\pi\alpha_1}R^2
\end{equation}
and 
\begin{equation}
\label{eq:loc_lip_estimate}
\begin{aligned}
&\mathrm{Lip}(g,[-M_R , M_R])
\left ( \|\varphi \|_{L^\infty(\Omega)}  k^{-1/2} 
+
\| \varphi' \|_{L^\infty(\Omega)} k^{-1}
\right )\frac{|\Omega|}{\pi}
\\
&\quad=
\lim_{t \searrow M_R}
\mathrm{Lip}(g,[-t , t])
\left ( \|\varphi \|_{L^\infty(\Omega)}  k^{-1/2} 
+
\| \varphi' \|_{L^\infty(\Omega)} k^{-1}
\right )\frac{|\Omega|}{\pi}
\\
&\quad= \frac{50\alpha_2}{3\alpha_1} \left (
R + \frac{20(1+\pi)}{3\pi\alpha_1}R^2
\right ).
\end{aligned}
\end{equation}
Furthermore, the right-hand side of 
\eqref{eq:loc_lip_estimate} 
is an element of the interval $[0,1)$ if and only if
\begin{equation}
\label{eq:Rbound_locLip}
R < 
R_{\alpha_1,\alpha_2} :=
\frac{3\alpha_1}{10(1+\pi)}
\left (\sqrt{\frac{(5\alpha_2 + 8)\pi^2 + 8\pi}{80\alpha_2}} - \frac{\pi}{4}\right )
.
\end{equation}

\end{lemma}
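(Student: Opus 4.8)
The plan is to establish all four assertions of the lemma by direct computation, proceeding as follows.

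\textbf{The two solutions.} First I would check that the two displayed pairs solve \eqref{eq:ModelQVINum}. For $(\bar u_1,\bar T_1)=(0,0)$, note that $g(0)=\frac{4}{\alpha_1}\min(0,0)^2=0=k\bar T_1-\Delta\bar T_1$, so $\bar T_1\equiv 0$ solves the $T$-equation for $u=0$; hence $\Phi(\bar u_1)=\varphi\bar T_1\equiv 0$, the obstacle is $\Phi_0+\Phi(\bar u_1)\equiv 0$, the admissibility $\bar u_1\in K(\Phi(\bar u_1))$ is clear, and the variational inequality reduces to $\langle -f,v\rangle_{H^{-1}(\Omega),H_0^1(\Omega)}\ge 0$ for all $v\le 0$, which holds since $f(x)=\alpha_1\pi^2\sin(\pi x)\ge 0$ on $\Omega$. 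For $(\bar u_2,\bar T_2)$ I would first compute the pointwise identity $\psi\bar T_2-\bar u_2=-\frac{\alpha_1}{2}\sin(\pi x)\le 0$ on $\Omega$, which gives $g(\psi\bar T_2-\bar u_2)=\alpha_1\sin^2(\pi x)$; then verify $k\bar T_2-\Delta\bar T_2=\alpha_1\sin^2(\pi x)$ with $k=\pi^2$ together with $\partial_\nu\bar T_2=0$, so that $\bar T_2$ solves the $T$-equation. It follows that $\Phi(\bar u_2)=\varphi\bar T_2=\alpha_1\alpha_2\sin(\pi x)\ge\bar u_2(x)$ on $\Omega$ (using $\alpha_2\ge 1$), so $\bar u_2$ is admissible, and since $-\Delta\bar u_2-f=\alpha_1\pi^2\sin(\pi x)-f\equiv 0$, the variational inequality holds trivially. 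As $\alpha_1>0$, the two solutions are distinct.

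\textbf{The formula for $M_R$.} Next I would evaluate the constituents of \eqref{eq:MRdef}. Here $|\Omega|=\diam(\Omega)=1$, so $N_R=R/2$; from $g'(s)=\frac{8}{\alpha_1}\min(0,s)$ one reads off $\mathrm{Lip}(g,[-t,t])=8t/\alpha_1$ for every $t\ge 0$; and $g(0)=0$. The only sup-norm needed is $\|\psi\|_{L^\infty(\Omega)}$, which I would obtain by substituting $s=\sin(\pi x)\in[0,1]$: this turns $\psi$ into $5\pi^2 s/(4+2s^2)$, increasing on $[0,1]$ since its derivative has numerator $4-2s^2>0$, so $\|\psi\|_{L^\infty(\Omega)}=5\pi^2/6$. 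Substituting $N_R=R/2$, $k=\pi^2$, $\mathrm{Lip}(g,[-N_R,N_R])=4R/\alpha_1$, $g(0)=0$, and $\|\psi\|_{L^\infty(\Omega)}=5\pi^2/6$ into \eqref{eq:MRdef} and simplifying yields \eqref{eq:MRformula}.

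\textbf{The Lipschitz/derivative bound.} For \eqref{eq:loc_lip_estimate}, since $g\in C^1$ the map $t\mapsto\mathrm{Lip}(g,[-t,t])=8t/\alpha_1$ is continuous, so the limit equals the value at $t=M_R$. It then remains to compute $\|\varphi\|_{L^\infty(\Omega)}$ and $\|\varphi'\|_{L^\infty(\Omega)}$. Writing $\varphi=2\alpha_2\psi$ gives $\|\varphi\|_{L^\infty(\Omega)}=5\pi^2\alpha_2/3$; for the derivative, the quotient rule gives $\psi'(x)=5\pi^3\cos(\pi x)(3+\cos(2\pi x))/(5-\cos(2\pi x))^2$, and the substitution $c=\cos(\pi x)$ rewrites this as $5\pi^3c(1+c^2)/(2(3-c^2)^2)$; maximizing $|c|(1+c^2)/(3-c^2)^2$ over $[-1,1]$ — its relevant derivative has numerator $3+12c^2+c^4>0$, so the maximum sits at $|c|=1$ with value $1/2$ — yields $\|\varphi'\|_{L^\infty(\Omega)}=5\pi^3\alpha_2/2$. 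Plugging $\mathrm{Lip}(g,[-M_R,M_R])=8M_R/\alpha_1$, these two norms, $k^{-1/2}=\pi^{-1}$, $k^{-1}=\pi^{-2}$, and $|\Omega|=1$ into the left-hand side of \eqref{eq:loc_lip_estimate} collapses it to $(100\alpha_2/(3\alpha_1))M_R$, and inserting \eqref{eq:MRformula} produces the quadratic-in-$R$ expression on the right-hand side of \eqref{eq:loc_lip_estimate}.

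\textbf{The threshold and the main obstacle.} Finally, the right-hand side of \eqref{eq:loc_lip_estimate} equals $aR+abR^2$ with $a=50\alpha_2/(3\alpha_1)>0$ and $b=20(1+\pi)/(3\alpha_1\pi)>0$; being a convex parabola vanishing at $R=0$, it is $<1$ precisely for $R$ below its unique positive root $R_+=(-1+\sqrt{1+4b/a})/(2b)$. Substituting $4b/a=8(1+\pi)/(5\alpha_2\pi)$, rationalizing, and pulling a factor $4/\pi$ out of the radical rewrites $R_+$ exactly as $R_{\alpha_1,\alpha_2}$ in \eqref{eq:Rbound_locLip}; I would also note that $R_{\alpha_1,\alpha_2}>0$ for all admissible parameters, since $\sqrt{((5\alpha_2+8)\pi^2+8\pi)/(80\alpha_2)}>\pi/4$ is equivalent to $8\pi^2+8\pi>0$. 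I expect the main obstacle to be the three sup-norm computations — reducing $\|\psi\|_{L^\infty(\Omega)}$, $\|\varphi\|_{L^\infty(\Omega)}$, and especially $\|\varphi'\|_{L^\infty(\Omega)}$ to single-variable optimization via $s=\sin(\pi x)$ and $c=\cos(\pi x)$ and checking the relevant monotonicities — together with the algebraic bookkeeping needed to match the explicit forms in \eqref{eq:MRformula}, \eqref{eq:loc_lip_estimate}, and \eqref{eq:Rbound_locLip}; everything else is routine substitution into the definitions from \cref{th_T_probs_1d_local}.
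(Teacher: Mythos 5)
Your proof is correct and follows essentially the same route as the paper: verify the two candidate solutions by direct substitution (using the double-angle formula and the sign of $\psi\bar T_2-\bar u_2$), compute $\|\psi\|_{L^\infty}$, $\|\varphi\|_{L^\infty}$, $\|\varphi'\|_{L^\infty}$, and $\mathrm{Lip}(g,[-t,t])=8t/\alpha_1$, plug into \eqref{eq:MRdef}, and solve the resulting quadratic inequality. The only difference is that you spell out the single-variable optimizations behind the sup-norm values, which the paper merely states.
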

\begin{proof}
To see that $(\bar u_1, \bar T_1)$ solves 
\eqref{eq:ModelQVINum} 
in the situation of 
\eqref{eq:ex_2_setup},
it suffices to plug everything 
in \eqref{eq:ex_2_setup} 
and the formulas
$\bar u_1 \equiv 0$, $\bar T_1\equiv 0$
into \eqref{eq:ModelQVINum}.
That $(\bar u_2, \bar T_2)$ is another
solution is proved analogously.
Note that, to see that the differential 
equation in \eqref{eq:ModelQVINum} is 
satisfied for $(\bar u_2, \bar T_2)$,
one has to employ the double-angle-formula 
for the cosine, i.e., 
\begin{equation*}
\begin{aligned}
k \bar T_2- \bar T_2''
=
\pi^2
\left (
\alpha_1 
\frac{5 - \cos(2 \pi x)}{10 \pi^2}
\right )
-
\frac{4\pi^2 \alpha_1 \cos(2 \pi x) }{10 \pi^2} 
&=
\frac{\alpha_1 }{2}(1 - \cos(2 \pi x))
= \alpha_1 \sin(\pi x)^2
\\
&= g\left (  - \frac{\alpha_1}{2} \sin(\pi x)\right )
=
g(\Psi_0 + \psi \bar T_2 -\bar u_2).
\end{aligned}
\end{equation*}
To check that $M_R$ is given by 
\eqref{eq:MRformula} in the situation of 
\eqref{eq:ex_2_setup}, 
it suffices to note that 
$|\Omega|^{1/2} =1$ holds
for $\Omega = (0,1)$, to calculate that 
$\|\psi\|_{L^\infty(\Omega)} = 5\pi^2/6$,
to note that the function $g$ in \eqref{eq:ex_2_setup}
satisfies 
$\mathrm{Lip}(g,[-t, t]) = 8t/\alpha_1$
for all $t > 0$,
and to plug into \eqref{eq:MRdef}.
To obtain \eqref{eq:loc_lip_estimate},
one proceeds analogously, using that 
$\|\varphi\|_{L^\infty(\Omega)} = 5\alpha_2\pi^2/3$
and
$\|\varphi'\|_{L^\infty(\Omega)} = 5\alpha_2\pi^3/2$.
The estimate \eqref{eq:Rbound_locLip} 
finally follows from an application of the 
pq-formula.
\end{proof}

That the QVI \eqref{eq:ModelQVINum} 
possesses two solutions in the situation of \eqref{eq:ex_2_setup}
shows that this test case is beyond the scope 
of the analysis of \cref{subsec:global_contraction} and that 
problems of the type \eqref{eq:ModelQVINum} 
may indeed possess several 
solutions if the conditions 
in \eqref{eq:Phi_gamma_Lip} and \eqref{eq:GPhi_bound-2}
are violated.
To 
see that we may apply the results of 
\cref{sec:framework_fp_H_multiple_solns}, 
we first note that the 
spaces $X=H_0^1(\Omega)$,
$Y_2 = \{ v \in H_0^1(\Omega) \mid v'' \in L^2(\Omega)\}
= H_0^1(\Omega)\cap H^2(\Omega)$,
the set $D=Y_2$,
the 
solution operator 
$S\colon H_0^1(\Omega) \to H_0^1(\Omega)$
of \eqref{eq:upper_obst_prob},  
and the map $\Phi\colon H_0^1(\Omega) \to Y_2$
in \eqref{eq:ModelQVINum}
satisfy the conditions 
in 
points \ref{ass:standing_general_proj:i} and
\ref{ass:standing_general_proj:ii}
of \cref{cor:composite_local}
in the situation of 
\eqref{eq:ModelQVINum} 
by 
\cref{lem:S_obs_Newton,lem_GS_bound_obstacle,th_T_probs_1d_local}.
It remains to find a set $B$ with the properties 
in points \ref{ass:standing_general_proj:iv} and \ref{ass:standing_general_proj:v} of \cref{cor:composite_local}. 
To this end, let us suppose that 
$B$ is a closed ball 
in $H_0^1(\Omega)$ (not necessarily centered 
at zero) that is contained 
in $\{v \in H_0^1(\Omega) \mid 
\norm{v}{H_0^1(\Omega)} < R_{\alpha_1,\alpha_2} \}$
with $R_{\alpha_1,\alpha_2}$ defined 
by the formula on the right-hand side of   \eqref{eq:Rbound_locLip}.
From \cref{lem:semismooth_balls}, 
it follows that the projection 
$P_B\colon H_0^1(\Omega) \to B$ in 
$H_0^1(\Omega)$ onto $B$ satisfies 
all of the conditions in 
point \ref{ass:standing_general_proj:v} of \cref{cor:composite_local}.
From \cref{th_T_probs_1d_local},
the properties of $R_{\alpha_1,\alpha_2}$,
and \cref{lem:S_obs_well_def,lem_GS_bound_obstacle},
we further obtain that 
there exists $\gamma_B \in [0,1)$
such that 
\begin{equation}
\label{eq:ex:2:contra}
\| S(\Phi(u_1)) - S(\Phi(u_2))\|_{H_0^1(\Omega)}
\leq
\| \Phi(u_1) - \Phi(u_2)\|_{H_0^1(\Omega)}
\leq
\gamma_B \| u_1 - u_2 \|_{H_0^1(\Omega)}
\quad \forall u_1, u_2 \in B
\end{equation}
and
\[
\sup_{u \in B} \left \|G_S (\Phi(u))G_\Phi(u)\right\|_{\LL(H_0^1(\Omega),H_0^1(\Omega))}
\leq
\sup_{u \in B} \left \|
G_\Phi(u)\right\|_{\LL(H_0^1(\Omega),H_0^1(\Omega))}
\leq\gamma_B.
\]
Putting everything together, 
it follows that 
the conditions in 
\cref{cor:composite_local}
are all satisfied for $B$ 
in the situation of 
\eqref{eq:ex_2_setup}.
In particular, \cref{th:loc_conv} 
is applicable 
and we may employ 
\cref{algo:semiNewtonAbstract} 
to determine 
the intersection of the 
solution set 
$\{u \in H_0^1(\Omega) \mid u = S(\Phi(u))\}$
of \eqref{eq:ModelQVINum} 
with $B$
by solving the localized 
fixed-point equation 
\begin{equation}\label{eq:fp_problem_proj}
\text{Find } \hat x \in X 
\text{ such that } \hat x = S( \Phi_B(\hat x)),
\end{equation}
where $\Phi_B := \Phi \circ P_B$.
(This  is just \eqref{eq:fp_problem_proj_H} for $H=S\circ \Phi$.)
Observe that since we already know
that $\bar u_1 \equiv 0$ solves 
the QVI \eqref{eq:ModelQVINum},
we obtain from \cref{th:loc_conv} 
that our algorithm should 
converge $q$-superlinearly 
to $\bar u_1$ if $0 \in B$ and to 
a point $\hat u \not \in B$
if $0 \not\in B$ when applied 
to \eqref{eq:fp_problem_proj}.

To put these predictions to the test, 
we have implemented \cref{algo:semiNewtonAbstract} 
for the solution of \eqref{eq:fp_problem_proj}
in the situation of \eqref{eq:ex_2_setup}
for the case that $B$ is a closed ball $B_R(0)$
of radius $R>0$ in $H_0^1(\Omega)$ centered 
at the origin. As it turns out, 
for this configuration, there are 
two distinct regimes: (I) $\alpha_2 = 1$ and (II) $\alpha_2 > 1$. In regime (I), the biactive set of the 
second solution $(\bar u_2, \bar T_2)$ 
is the whole domain, 
i.e., it holds $\bar u_2 = (-\Delta)^{-1} f  = \Phi_0 + \varphi \bar T_2$ in $\Omega$. 
For this case, the solution $(\bar u_2, \bar T_2)$ 
is highly unstable with respect to perturbations of the data. 
Furthermore, the QVIs obtained from the discretization of 
\eqref{eq:ModelQVINum} apparently do not possess 
any discrete solutions that approximate $(\bar u_2, \bar T_2)$. The outcome is that, after a discretization, the second closed-form solution $(\bar u_2, \bar T_2)$ cannot be discovered by any method considered in this work, even if the interpolant $I_h \bar u_2$ of $\bar u_2$ is used as the initial guess
and the mesh width $h$ is very small; see \cref{fig:test2-I}.

\begin{figure}[ht!]
\centering 
\includegraphics[width =0.49 \textwidth]{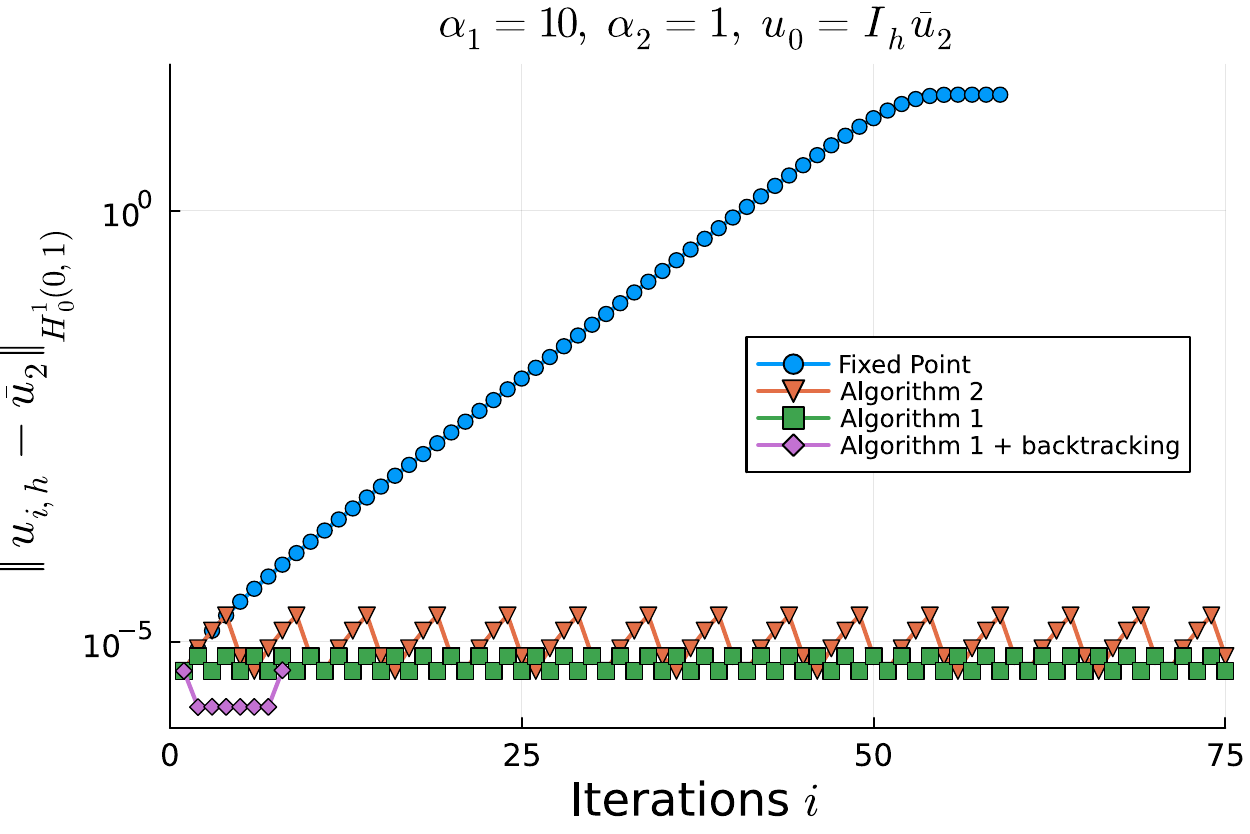}
\caption{(Test 2) Divergence behavior of the fixed-point method \labelcref{item:C1}, \cref{algo:semiNewtonAbstract}, and \cref{algo:semiNewtonAbstractGeneral} with and without a backtracking linesearch \labelcref{item:C4} in the situation of 
the QVI \eqref{eq:ModelQVINum} with setup \labelcref{eq:ex_2_setup} and 
 $(\alpha_1,\alpha_2) = (10, 1)$. No projection is used and the initial guess is
 (the Lagrange interpolant of) $\bar u_2$. The mesh size is 
 \smash{$h=5 \times 10^{-4}$}.
 It can be checked that 
 the pure fixed-point method actually converges to 
 $\bar u_1$ although it is 
 initialized as close to $\bar u_2$ as possible.
 \Cref{algo:semiNewtonAbstract} and \Cref{algo:semiNewtonAbstractGeneral} both with and without a backtracking linesearch
 stagnate/enter a cycle 
 and are unable to reduce the residue below 
 $10^{-6}$ in this test 
 case---a bound much larger than 
 observed in the other experiments;
 cf.\ \cref{fig:tf-solutions}.
 This indicates that, 
 on the discrete level, 
 there is no solution 
 corresponding to $\bar u_2$.
 }
\label{fig:test2-I}
\end{figure}

Note that this observation is 
also of independent interest as it shows that, 
for $\alpha_2 = 1$ and the data in \eqref{eq:ex_2_setup},
the QVI \eqref{eq:ModelQVINum} is ill posed; 
cf.\ \cite{AHR,WachsmuthQVIs,Alphonse2020,AHRW,Alphonse2022-2,Alphonse2022}.
In the second regime (II), the solution $(\bar u_2, \bar T_2)$ satisfies 
$(-\Delta)^{-1} f = \bar u_2   < \Phi_0 + \varphi \bar T_2$ in $(0,1)$ and
the active set of $\bar u_2$ is empty.
Here, 
the numerical identification 
of $\bar u_2$ works without problems; see below.

For our numerical experiments in regime (II), 
we considered the parameters 
$(\alpha_1,\alpha_2) = (10, 3/2)$
and the initial guess $u_0(x) = 100(1-x)x$. 
With this choice of $\alpha_1$ and $\alpha_2$, we have $R_{\alpha_1,\alpha_2} \approx 0.3136$ and $\| \bar u_2\|_{H^1_0(0,1)} = \sqrt{50}\pi \approx 22.21$.
In particular,
the $q$-superlinear convergence 
of \cref{algo:semiNewtonAbstract}
to $\bar u_1$ is guaranteed by our analysis for all $R<0.3136$. 
Note that,
due to \eqref{eq:ex:2:contra}, 
we also immediately obtain that the
fixed-point method 
\ref{item:C1} converges to $\bar u_1$
for all $R<0.3136$
when applied to  \eqref{eq:fp_problem_proj}.

When running \cref{algo:semiNewtonAbstract}
and the fixed-point method \ref{item:C1}
in this configuration, one observes that 
both algorithms converge in one iteration 
to $(\bar u_2, \bar T_2)$ for all 
$R \geq \sqrt{50}\pi$. This effect is caused 
by the inactivity of $\bar u_2$ in regime (II). 
The behavior for $1/100 \leq R < \sqrt{50}\pi$ is tabulated in \cref{tab:test2-convergence}.
\vspace{0.1cm}

\begin{table}[ht!]
\small
\centering
\begin{tabular}{c |c |c |c |c |c |c |c |c} 
\hline\noalign{\smallskip}
$R$& 0.01& 2.477& 4.944& 7.411& 9.879& 12.346& 14.813& 17.28\\
\noalign{\smallskip}
\hline\noalign{\smallskip}\begin{tabular}{@{}c@{}} \ref{item:C1} \end{tabular}  
& \begin{tabular}{@{}c@{}} 4 \\ (1.91)  \end{tabular}   
& \begin{tabular}{@{}c@{}} 7 \\ (2.01)  \end{tabular}   
& \begin{tabular}{@{}c@{}} 9 \\ (2.0)  \end{tabular}   
& \begin{tabular}{@{}c@{}} 12 \\ (2.01)  \end{tabular}   
& \begin{tabular}{@{}c@{}} -  \end{tabular}   
& \begin{tabular}{@{}c@{}} -  \end{tabular}   
& \begin{tabular}{@{}c@{}} -  \end{tabular}   
& \begin{tabular}{@{}c@{}} -  \end{tabular}
\\
\noalign{\smallskip}
\hline\noalign{\smallskip}\begin{tabular}{@{}c@{}} \cref{algo:semiNewtonAbstract} \end{tabular}  
& \begin{tabular}{@{}c@{}} 2 \\ (1.56)  \end{tabular}   
& \begin{tabular}{@{}c@{}} 6 \\ (2.01)  \end{tabular}   
& \begin{tabular}{@{}c@{}} 8 \\ (2.0)  \end{tabular}   
& \begin{tabular}{@{}c@{}} 11 \\ (2.01)  \end{tabular}   
& \begin{tabular}{@{}c@{}} -  \end{tabular}   
& \begin{tabular}{@{}c@{}} -  \end{tabular}   
& \begin{tabular}{@{}c@{}} -  \end{tabular}   
& \begin{tabular}{@{}c@{}} -  \end{tabular}
\\
\noalign{\smallskip}
\hline\noalign{\smallskip}\begin{tabular}{@{}c@{}}\cref{algo:semiNewtonAbstractGeneral}\end{tabular}  
& \begin{tabular}{@{}c@{}} 2 \\ (1.56)  \end{tabular}   
& \begin{tabular}{@{}c@{}} 3 \\ (6.91)  \end{tabular}   
& \begin{tabular}{@{}c@{}} 3 \\ (9.16)  \end{tabular}   
& \begin{tabular}{@{}c@{}} 3 \\ (11.29)  \end{tabular}   
& \begin{tabular}{@{}c@{}} 3 \\ (13.59)  \end{tabular}   
& \begin{tabular}{@{}c@{}} 3 \\ (16.42)  \end{tabular}   
& \begin{tabular}{@{}c@{}} 3 \\ (20.26)  \end{tabular}   
& \begin{tabular}{@{}c@{}} -  \end{tabular}   
\\
\noalign{\smallskip}
\hline\noalign{\smallskip}\begin{tabular}{@{}c@{}}\ref{item:C4}\end{tabular}  
& \begin{tabular}{@{}c@{}} 2 \\ (1.56)  \end{tabular}   
& \begin{tabular}{@{}c@{}} 3 \\ (6.91)  \end{tabular}   
& \begin{tabular}{@{}c@{}} 3 \\ (9.16)  \end{tabular}   
& \begin{tabular}{@{}c@{}} 3 \\ (11.29)  \end{tabular}   
& \begin{tabular}{@{}c@{}} 3 \\ (13.59)  \end{tabular}   
& \begin{tabular}{@{}c@{}} 3 \\ (16.42)  \end{tabular}   
& \begin{tabular}{@{}c@{}} 3 \\ (20.26)  \end{tabular}   
& \begin{tabular}{@{}c@{}} -  \end{tabular}  
\\
\noalign{\smallskip}\hline
\end{tabular}
\medskip
\caption{(Test 2) Number of iterations 
needed by the pure fixed-point method \ref{item:C1}, \cref{algo:semiNewtonAbstract,algo:semiNewtonAbstractGeneral},
and \ref{item:C4} (\cref{algo:semiNewtonAbstractGeneral} with a backtracking linesearch)
to converge to $\bar u_1$ for the QVI-problem 
 \eqref{eq:ModelQVINum} with setup \labelcref{eq:ex_2_setup} and 
 $(\alpha_1,\alpha_2) = (10, 3/2)$ 
 in dependence of the projection radius $R$. The bracketed numbers are the largest EOC-values 
 measured for the respective regime. We chose a mesh size of $h=5 \times 10^{-4}$ and terminated the algorithm when $\|u_i\|_{\smash{H^1_0(0,1)}} \leq 10^{-13}$. 
 A dash implies that the algorithm stagnated without convergence. 
 All of the considered algorithms 
 failed to converge for $17 \lesssim R < \sqrt{50}\pi$.} 
\label{tab:test2-convergence}
\end{table}

As can be seen,
the number of iterations 
that \cref{algo:semiNewtonAbstract}
and the fixed-point method 
\ref{item:C1} need to approximate $\bar u_1$
in $H_0^1(\Omega)$ up to the tolerance $10^{-13}$
grows as the projection radius increases. Interestingly, both methods still converge to the zero solution for $R \lesssim 9$ despite $R$ being greater than the critical radius $R_{\alpha_1,\alpha_2}$. 
For larger $R$, both methods stagnate without convergence.
This behavior is caused by the loss 
of contractivity on $B_R(0)$ for large
projection radii $R$;
cf.\ the behavior in \cref{subsec:7.2}. 

As can be seen in \cref{tab:test2-convergence}, \cref{algo:semiNewtonAbstractGeneral} with and without a backtracking linesearch 
converges to 
$\bar u_1$ for all $R \lesssim 15$.
Similarly to \cref{subsec:7.2}, they both
handle the lack of contractivity 
for large $R$
far better than \cref{algo:semiNewtonAbstract}
and the fixed-point method \ref{item:C1}
and both converge in a maximum of three iterations when
$R \lesssim 15$.
To allow for a comparison of convergence speeds, 
\cref{tab:test2-convergence}
also shows the largest EOC-values that were
measured for the four
algorithms during each run
(defined as in \eqref{eq:EOC}).
As can be seen, \cref{algo:semiNewtonAbstract} and \cref{algo:semiNewtonAbstractGeneral} with and without a backtracking linesearch exhibit superlinear convergence speed. However, in contrast 
to the example in \cref{subsec:7.2},
this time we also observe  $q$-superlinear 
convergence for the fixed-point method \ref{item:C1}.
The reason for this effect is that the 
Lipschitz constant $\mathrm{Lip}(g,[-t, t]) = 8t/\alpha_1$ of the nonlinearity in \eqref{eq:ex_2_setup} 
goes to zero for $0 < t \to 0$.
(Note that, for  
\cref{algo:semiNewtonAbstractGeneral} and \ref{item:C4}, the number of iterations is so small that the EOC-values 
in \cref{tab:test2-convergence} should be taken with a grain of salt.)

\subsection{Test 3: the thermoforming QVI in two
dimensions}
\label{subsec:7.4}
As a third test case, we consider 
the QVI \eqref{eq:ModelQVINum} in the following situation:
\begin{equation}
\label{eq:ex_3_setup}
\begin{gathered}
\Omega  = (0,1)^2,
\qquad
\Phi_0(x_1, x_2)
=
1 - 2 \max(|x_1 - 0.5|,|x_2 - 0.5|),
\qquad
f \equiv 25, 
\\
\varphi(x_1, x_2)
=
\sin(\pi x_1)
\sin(\pi x_2)
,\qquad
k=1,
\qquad 
\Psi_0 \equiv \Phi_0,
\qquad 
\psi \equiv \varphi,
\\
g(s) = \begin{cases}
1/5 & \text{if} \; s\leq 0,\\
(1-s)/5& \text{if} \; 0< s < 1,\\
0 & \text{otherwise}.
\end{cases}
\end{gathered}
\end{equation}
Note that the above setting 
corresponds to 
an example of the thermoforming problem described in \cref{rem:thermoforming}.
From the application point of view, 
it models the situation that a hot thin square
plastic sheet is pushed by a 
constant pressure into a pyramidal metal 
mould. For the data in \eqref{eq:ex_3_setup},
one can check that the 
conditions in 
\cref{ass:standing_obstacle_map,ass:SemilinearObstacleMap} are satisfied with $p=2$.
From a straightforward calculation and 
\cref{rem:Poincare}, we further obtain that 
\[
\smash{C_P(\Omega)\,\Lip{g} 
\left ( \|\varphi \|_{L^\infty(\Omega)}  k^{-1/2} 
+
\| |\nabla \varphi| \|_{L^\infty(\Omega)} k^{-1}
\right )
\leq 
\frac{1+\pi}{5}
\approx 0.8283 < 1}.
\]
In combination with \cref{cor:gamma_prop,th:obst_QVI,th_T_probs},
this shows that the setting \eqref{eq:ex_3_setup}
is covered by the analysis of \cref{sec:framework_applied_to_SPhi}
and that our global convergence result
for \cref{algo:semiNewtonAbstract} applies. 
In particular, \eqref{eq:ModelQVINum} 
possesses a unique solution 
$(\bar u, \bar T)$ in the case of \eqref{eq:ex_3_setup}
by \cref{lem:uniqueF_H}.

The results that we have  obtained in the situation
\eqref{eq:ex_3_setup} for the QVI \eqref{eq:ModelQVINum} can be seen 
in \cref{fig:tf-solutions}
and \cref{tab:tf-mesh-independence}.
\Cref{fig:tf-solutions} shows surface plots of the membrane $\bar u$ and the mould $\Phi_0 + \varphi \bar T$ as well as a slice of the membrane, mould, temperature $\bar T$, and mould deformation 
$\varphi \bar T$ at $x_2=1/2$. It also depicts the convergence behavior of
\cref{algo:semiNewtonAbstract}, the fixed-point method \ref{item:C1}, 
and the Moreau--Yosida-based regularization method
\ref{item:C2} for various choices of $\rho$. We see that \cref{algo:semiNewtonAbstract} converges the fastest
and that the fixed-point method \ref{item:C1} is the slowest, exhibiting linear convergence speed (as expected). For the
regularization method
\ref{item:C2}, the convergence degrades for $\rho \to 0$. 
Recall that 
$\rho$ must be driven to zero in \ref{item:C2}
in order to get an approximate solution close to the true QVI-solution. 
This, however, causes ill-conditioning effects that reduce the convergence speed.

\Cref{tab:tf-mesh-independence}
shows the number of 
outer semismooth
Newton steps as well 
as the overall number of inner 
semismooth Newton iterations, 
PFMY-iterations, and PDAS-feasibility 
restoration steps 
that 
\cref{algo:semiNewtonAbstract}
requires 
in the situation of \eqref{eq:ex_3_setup}
for various mesh widths $h$
to drive the residue 
$\|R(u_{i})\|_{H^1(\Omega)}$
below the tolerance $10^{-12}$.
As can be seen, 
the number of 
semismooth Newton steps 
that \cref{algo:semiNewtonAbstract} needs
is four or fewer for all considered mesh
widths $h$. This shows that---on the level of the solver for the fixed-point equation 
\eqref{eq:SPhi_FP}---we observe
\emph{mesh-independence} for our semi\-smooth 
Newton method. This very desirable property 
is characteristic for solution algorithms 
whose convergence can be established 
not only for the discrete problems 
obtained from a discretization but also on the function space level;
see \cref{sec:framework_applied_to_SPhi}. Note that,
for the inner semismooth-Newton solver 
used for the evaluation of $\Phi$
(i.e., the solution of the 
nonsmooth semilinear PDE in \eqref{eq:ModelQVINum}), we also observe 
a mesh-independent convergence behavior. 
For the 
hybrid PFMY-PDAS-algorithm used
for the evaluation of the solution map 
$S$ of the obstacle problem, 
a mild form of mesh-dependence is present 
that, however, is manageable. 
Note that the mesh-dependence 
of solution algorithms for the obstacle problem 
is a known problem. Nevertheless other experimentally mesh-independent solution
methods, for the evaluation of $S$, exist than just the one considered in this paper \cite{keith2023, graser2009}. The fact that \cref{algo:semiNewtonAbstractGeneral,algo:semiNewtonAbstract} reduce the number of outer iterations and, thus, 
required evaluations of $S$ to a minimum
is a main advantage of our
semi\-smooth Newton approach 
in comparison with fixed-point-based methods that have to evaluate $S$ far more often 
due to their slow convergence speed;
cf.\ \cref{fig:tf-solutions}(d).

\begin{figure}[ht!]
\centering
\subfigure[The membrane $\bar u$]{\includegraphics[width =0.49 \textwidth, keepaspectratio]{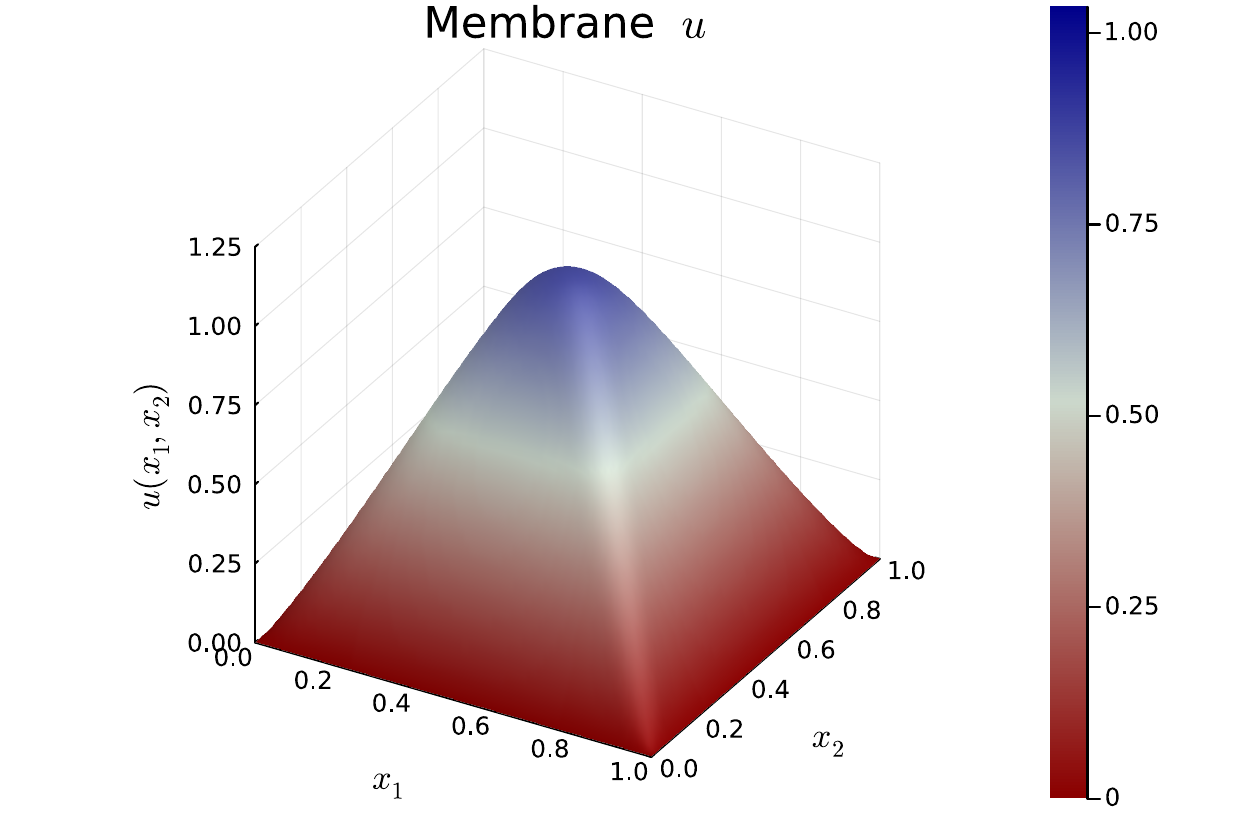}}
\subfigure[The mould $\Phi_0 + \varphi\bar T$]{\includegraphics[width =0.49 \textwidth, keepaspectratio]{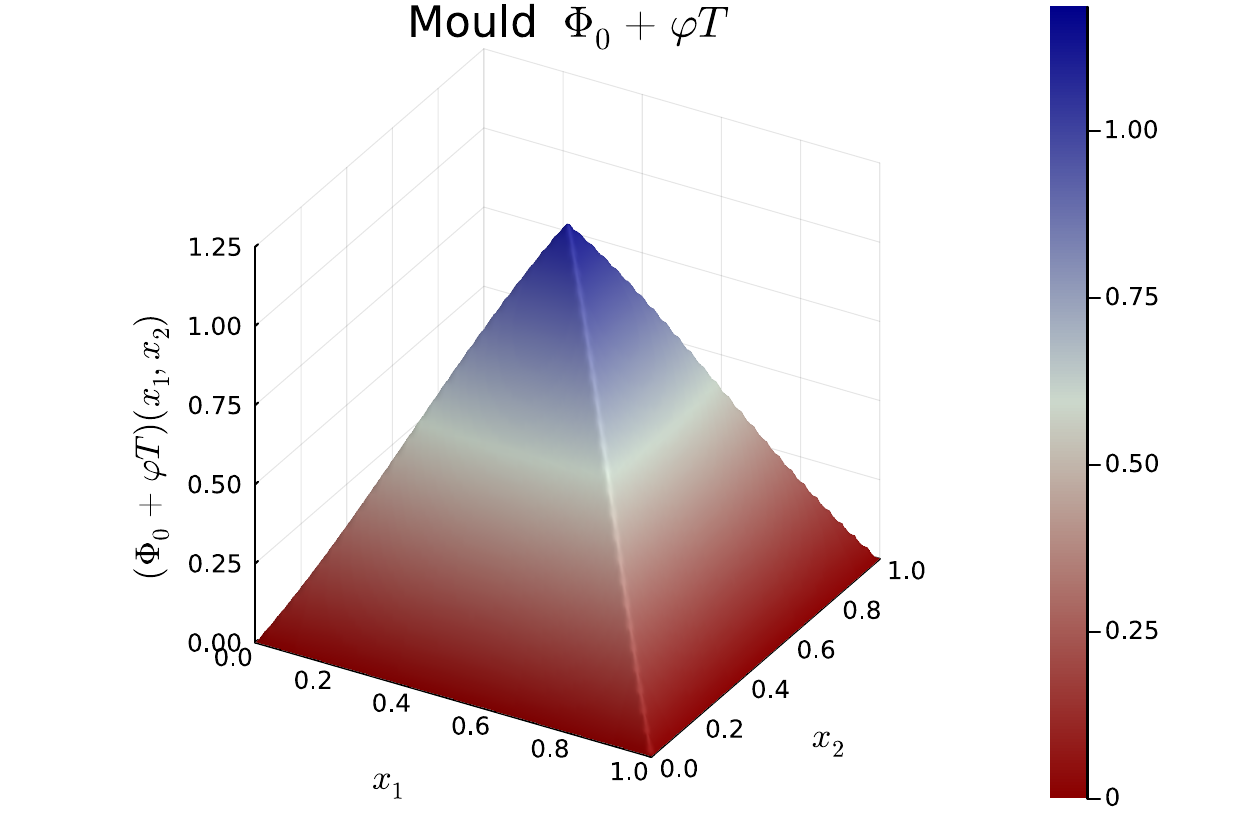}}
\\[0.4cm]
\subfigure[Slice at $x_2 = 1/2$]{\includegraphics[width =0.49 \textwidth]{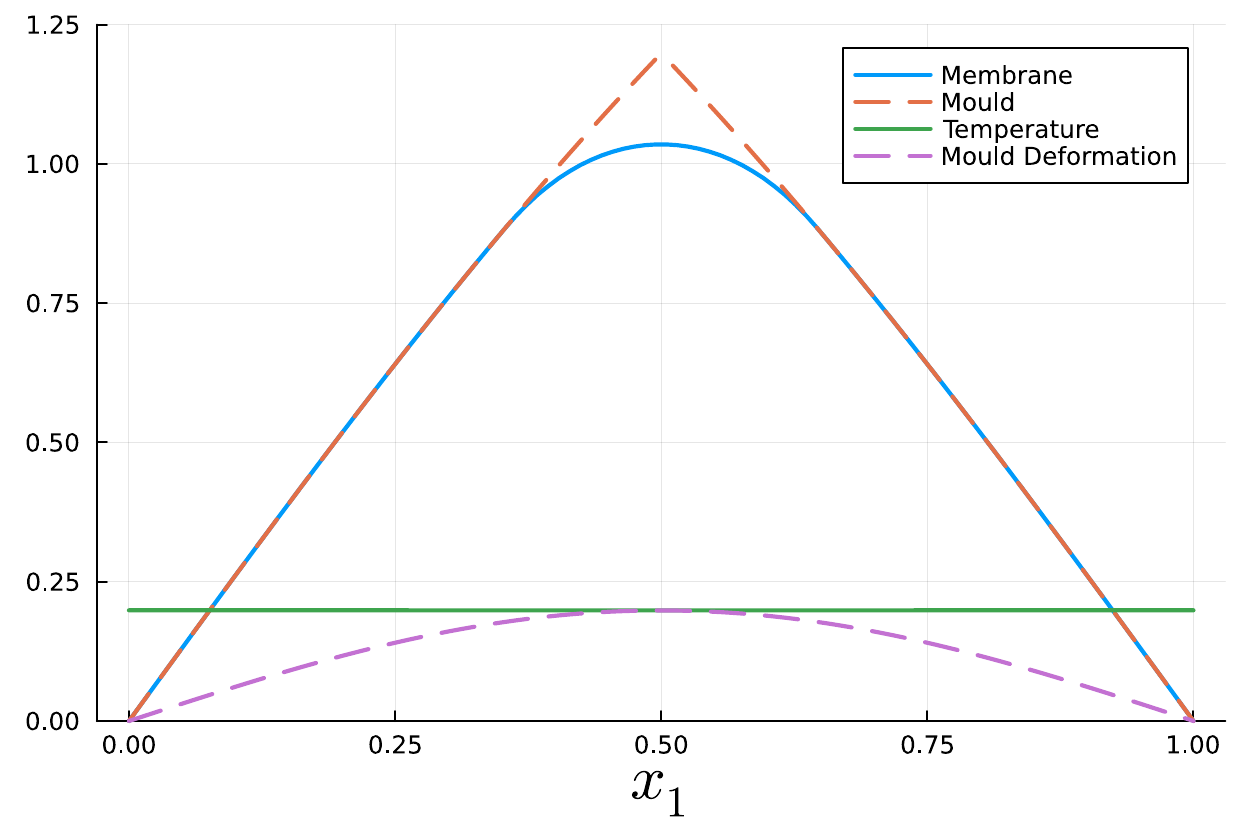}}
\subfigure[Convergence behavior]{\includegraphics[width =0.49 \textwidth]{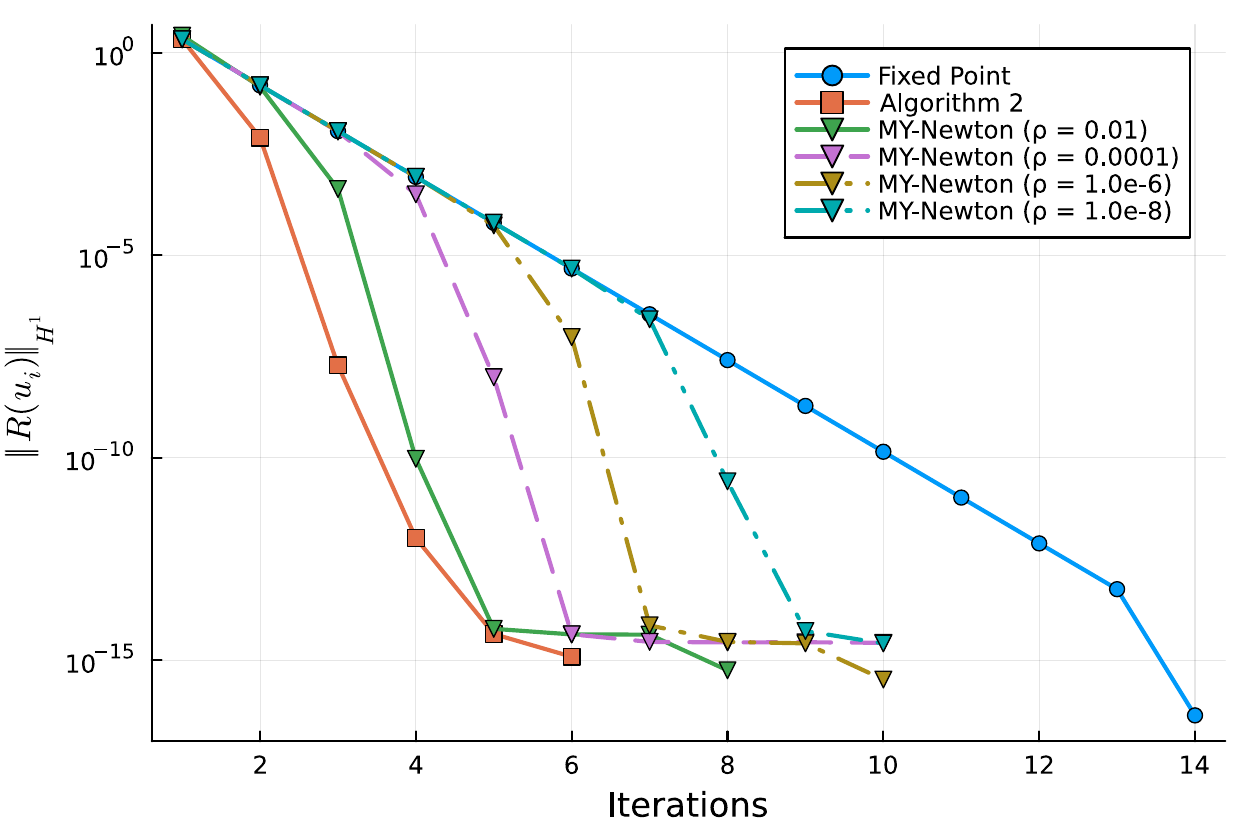}}

\caption{(Test 3) Surface plots of the membrane (a) and corresponding mould (b) together with a slice plot at $x_2=1/2$ (c) for the thermoforming setting \eqref{eq:ex_3_setup}. 
Figure (d) depicts the outer loop convergence of \cref{algo:semiNewtonAbstract}, 
the fixed-point method 
\ref{item:C1}, and the 
regularization method \ref{item:C2}
for $h=0.02$ and various $\rho$. 
The residue depicted for  \ref{item:C2}
is that of the smoothed system.
It can be seen that \cref{algo:semiNewtonAbstract} converges the fastest and that 
the convergence speed of 
the Moreau--Yosida-based method
\ref{item:C2} degrades for 
$\rho \to 0$ due to ill-conditioning effects.}
\label{fig:tf-solutions}
\vspace{0.4cm}
\end{figure}
\begin{table}[ht]
\small
\centering
\begin{tabular}{l|l|l|ll}
\toprule
&Outer loop & Inner solver to evaluate $\Phi$ & \multicolumn{2}{|c}{Inner VI-solver to evaluate $S$}  \\
\midrule
$h$    &  Semismooth Newton &  Semismooth Newton & PFMY  & PDAS  \\ \midrule
0.04  &4 (4) &13 (9) &293 (159) &17 (10)\\
0.02  &4 (4) &13 (9) &340 (185) &31 (17)\\
0.01  &3 (3) &12 (8) &277 (150) &20 (11)\\
0.00667  &3 (3) &12 (8) &283 (158) &17 (11)\\
0.005  &3 (3) &12 (8) &285 (158) &29 (17)\\
0.004  &3 (4) &11 (8) &289 (199) &29 (21)\\
0.00333  &3 (4) &10 (7) &262 (184) &29 (21)\\
\bottomrule
\end{tabular}
\medskip
\caption{(Test 3) Number of iterations of the outer loop and cumulative number of iterations for the inner loops when \cref{algo:semiNewtonAbstract} and \cref{algo:semiNewtonAbstractGeneral} (in brackets)  are applied to \eqref{eq:ModelQVINum}
in the situation of  \eqref{eq:ex_3_setup}
for various mesh widths $h$. 
The algorithm is terminated once $\|R(u_{i})\|_{H^1(\Omega)} \leq 10^{-12}$. We observe that the number of outer loop iterations 
and the number of inner semismooth Newton steps 
needed for the evaluation of $\Phi$ are mesh-independent
and that the number of PFMY- and PDAS-iterations 
does not grow in an uncontrollable manner.} 
\label{tab:tf-mesh-independence}
\end{table}

\subsection{Test 4: a nonlinear VI}
\label{subsec:6.5}
Finally, we briefly consider the nonlinear VI-example \eqref{eq:general_FP_semi} of \cref{sec:application_nonlinearVI}. We omit conducting 
detailed 
numerical experiments 
 along the lines of \cref{subsec:7.1,subsec:7.2,subsec:7.3,subsec:7.4} here
to avoid overloading the paper
and simply present a short feasibility study
for a particular instance of \eqref{eq:general_FP_semi};
see \cref{tab:test4-mesh-independence}
and \cref{fig:test4-solution}. 
It can be observed that 
\cref{algo:semiNewtonAbstract} behaves 
similar for the semilinear VI 
\eqref{eq:general_FP_semi} as for the 
QVI-examples in \cref{subsec:7.2,subsec:7.3,subsec:7.4}.

\begin{table}[ht]
\small
\centering
\begin{tabular}{l|l|ll}
\toprule
&Outer loop  & \multicolumn{2}{|c}{Inner VI-solver to evaluate $S$}  \\
\midrule
$h$    &  Semismooth Newton & PFMY  & PDAS  \\ \midrule
0.02  &4 (5) &205 (136) &10 (7)\\
0.01  &4 (4) &248 (142) &17 (9)\\
0.005  &3 (4) &210 (152) &7 (5)\\
0.0025  &4 (4) &264 (149) &9 (5)\\
0.00125  &4 (4) &296 (166) &17 (9)\\
0.00062  &4 (4) &314 (176) &9 (5)\\
0.00031  &3 (4) &262 (188) &7 (5)\\
0.00016  &3 (4) &269 (194) &7 (5)\\
\bottomrule
\end{tabular}
\medskip
\caption{(Test 4) Number of iterations of the outer loop and cumulative numbers of iterations for the inner loops when \cref{algo:semiNewtonAbstract} and \cref{algo:semiNewtonAbstractGeneral} (in brackets) are applied to \eqref{eq:semilinear_VI}
for various mesh widths $h$. The data was chosen as 
$\Omega = (0,1)$,
$f(x) = 50\sin(2\pi x)$, $b_1(s) = \max(0,s)$, $b_2(s) = s+\cos s$, and $\Phi_0 \equiv 1$.
The 
initial guess was $u_0(x) = 0$.
The algorithm was terminated once $\|R(u_{i})\|_{H^1(\Omega)} \leq 10^{-10}$. 
It can be seen that the algorithms converge and
that the number of outer loop semismooth Newton steps
and inner PFMY-/PDAS-iterations does not grow in an uncontrollable manner; 
similarly to the results in \cref{tab:tf-mesh-independence}.} 
\label{tab:test4-mesh-independence}
\end{table}
 
\begin{figure}[ht!]
\centering
\includegraphics[width =0.49 \textwidth]{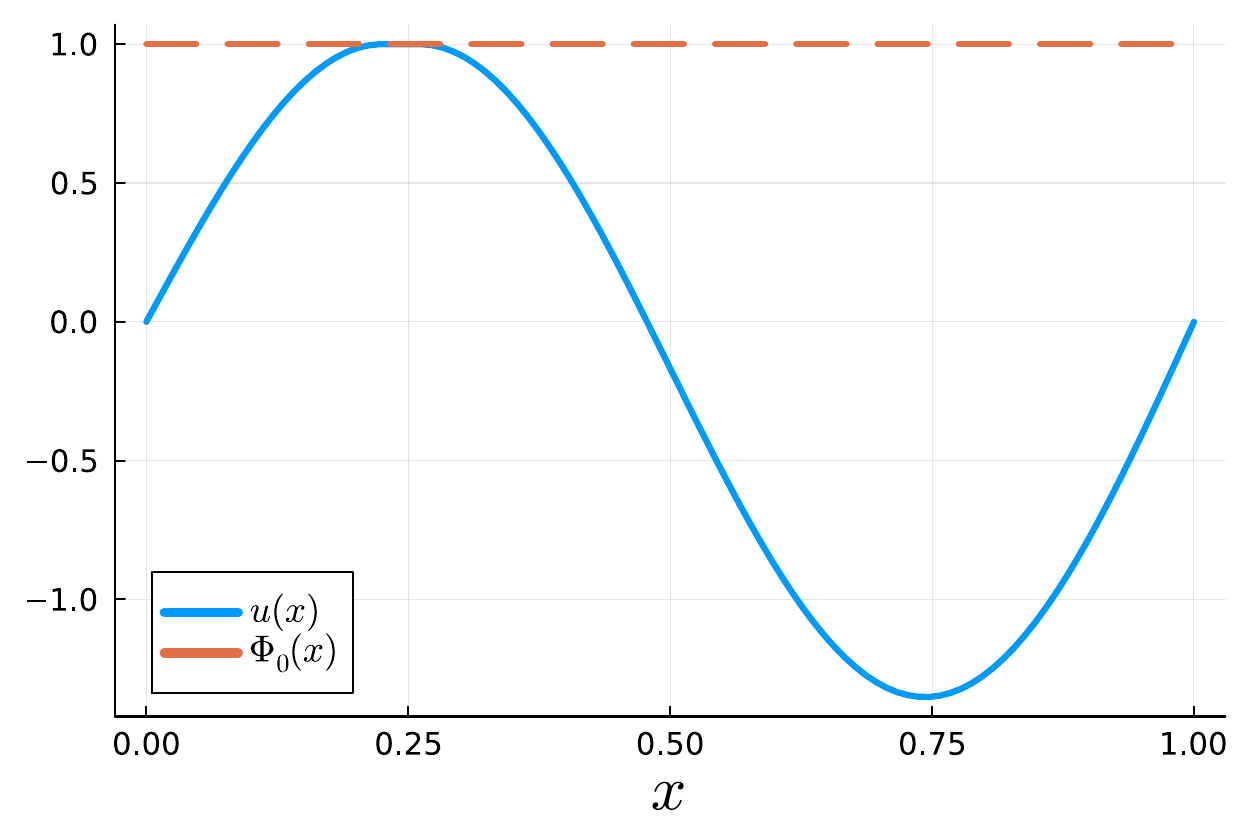} 
\caption{(Test 4) Solution of the semilinear VI \eqref{eq:semilinear_VI}
in the situation of \cref{tab:test4-mesh-independence}.
The mesh size is $h=1.56 \times 10^{-4}$.}
\label{fig:test4-solution}
\end{figure}

We remark that, analogously to the approach presented in \cref{sec:application_nonlinearVI},
one can also establish that the analysis of 
\cref{sec:framework_applied_to_SPhi} covers nonsmooth semilinear and 
quasilinear partial differential equations
(for which, as mentioned, 
contraction assumptions 
of the type used in \cref{sec:framework_applied_to_SPhi} are 
standard tools in existence proofs).
A further interesting application area for the framework 
of \cref{sec:framework_applied_to_SPhi} is the study of fixed-point problems 
\eqref{eq:SPhi_FP} in which both $S$ and $\Phi$ arise 
from a variational inequality. We leave both these topics for
future research.

\section*{Funding}
IP acknowledges support by the Deutsche Forschungsgemeinschaft (DFG, German Research Foundation) under Germany's Excellence Strategy -- The Berlin Mathematics Research Center MATH+ (EXC-2046/1, project ID: 390685689).

\begin{appendices}
\crefalias{section}{appendix}
\section{Calculus rules for Newton derivatives}\label{sec:appendix}

The following calculus rules are well known and can be found in various variants
in the literature; see, e.g., \cite[\S 3.3.7]{Ulbrich2011}
for a version of the chain rule in Lebesgue spaces. 

\begin{lemma}[Chain rule for Newton derivatives]\label{lem:chain_rule}
Let $(U, \norm{\cdot}{U})$, 
$(V, \norm{\cdot}{V})$, and $(W, \norm{\cdot}{W})$ be real normed spaces 
and let $E \subset V$ be nonempty.
Suppose that $\Psi\colon U \to E$
and $T\colon E \to W$ are Newton differentiable 
functions with Newton derivatives 
$G_\Psi \colon U \to \LL(U,V)$
and
$G_T\colon E \to \LL(V, W)$,
respectively. 
Suppose that, for every $u \in U$, there exist constants  $ C, \varepsilon > 0$ such that
\begin{equation}
\label{eq:Psi_Lip_gen}
\|  \Psi(u + h)  - \Psi(u) \|_V
\leq
C\|h\|_U
\quad
\forall h \in B_\varepsilon^U(0),
\end{equation}
and that, for every $v \in E$,  there exist constants  $ C, \varepsilon > 0$ such that 
\begin{equation}
\label{eq:GT_loc_uniform_bound}
	\sup_{w \in E \cap B_\varepsilon^V(v)} \left \| G_T(w) \right \|_{\LL(V, W)} \leq C.
\end{equation}
Define $K\colon U \to W$ by 
$K := T \circ \Psi$. Then the function 
$K$ is Newton differentiable with Newton derivative $G_K\colon U \to \LL(U,W)$,
$G_K(u) := G_T (\Psi(u))G_\Psi(u)$.
\end{lemma}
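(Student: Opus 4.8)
The plan is to estimate the Newton difference quotient for $K = T \circ \Psi$ at a fixed point $u \in U$ directly, by inserting and subtracting the term $G_T(\Psi(u+h))\,(\Psi(u+h) - \Psi(u))$. Writing $v := \Psi(u)$ and $v_h := \Psi(u+h)$, I split
\begin{equation*}
K(u+h) - K(u) - G_K(u+h)h = \bigl[T(v_h) - T(v) - G_T(v_h)(v_h - v)\bigr] + G_T(v_h)\bigl[(v_h - v) - G_\Psi(u+h)h\bigr].
\end{equation*}
The first bracket is controlled by the Newton differentiability of $T$, and the second by the Newton differentiability of $\Psi$ together with a local bound on $G_T$.

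First I would fix $u$ and invoke the local Lipschitz hypothesis \eqref{eq:Psi_Lip_gen} on $\Psi$: there are $C_1, \varepsilon_1 > 0$ with $\|v_h - v\|_V \le C_1 \|h\|_U$ for $\|h\|_U \le \varepsilon_1$. In particular $v_h \to v$ as $h \to 0$, so for $\|h\|_U$ small enough $v_h$ lies in the ball $B_\varepsilon^V(v)$ from \eqref{eq:GT_loc_uniform_bound}, giving a uniform bound $\|G_T(v_h)\|_{\LL(V,W)} \le C_2$ for all such $h$. For the second term I then estimate
\begin{equation*}
\frac{\bigl\|G_T(v_h)\bigl[(v_h - v) - G_\Psi(u+h)h\bigr]\bigr\|_W}{\|h\|_U} \le C_2 \,\frac{\bigl\|\Psi(u+h) - \Psi(u) - G_\Psi(u+h)h\bigr\|_U \cdot \text{(nothing)}}{\|h\|_U},
\end{equation*}
wait — more carefully, $\le C_2 \,\|(v_h - v) - G_\Psi(u+h)h\|_V / \|h\|_U$, which tends to $0$ as $\|h\|_U \to 0$ precisely by the Newton differentiability of $\Psi$ at $u$ (equation \eqref{eq:NewtonDiffDef}).

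For the first term, the idea is to rewrite it as a difference quotient for $T$ in the variable $\tilde h := v_h - v \in V$. If $\tilde h \neq 0$, then
\begin{equation*}
\frac{\bigl\|T(v + \tilde h) - T(v) - G_T(v + \tilde h)\tilde h\bigr\|_W}{\|h\|_U} = \frac{\bigl\|T(v + \tilde h) - T(v) - G_T(v + \tilde h)\tilde h\bigr\|_W}{\|\tilde h\|_V} \cdot \frac{\|\tilde h\|_V}{\|h\|_U},
\end{equation*}
and the last factor is bounded by $C_1$ while the first factor tends to $0$ as $\tilde h \to 0$ by Newton differentiability of $T$ at $v$; since $\tilde h = v_h - v \to 0$ whenever $h \to 0$, the product vanishes in the limit. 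The one subtlety to handle — and the main technical obstacle — is the case $\tilde h = 0$, i.e.\ $\Psi(u+h) = \Psi(u)$ for a sequence of nonzero $h$: then $T(v_h) - T(v) = 0$ and $G_T(v_h)(v_h - v) = 0$, so the first bracket is exactly zero and contributes nothing, while the second bracket reduces to $-G_T(v)G_\Psi(u+h)h$, which is handled by the same $\Psi$-estimate as above (with $v_h - v = 0$). Combining the two bounds via the triangle inequality and passing to the $\limsup$ as $0 < \|h\|_U \to 0$ yields
\begin{equation*}
\limsup_{0 < \|h\|_U \to 0} \frac{\|K(u+h) - K(u) - G_K(u+h)h\|_W}{\|h\|_U} = 0,
\end{equation*}
which is exactly \eqref{eq:NewtonDiffDef} for $K$ with derivative $G_K(u) = G_T(\Psi(u))G_\Psi(u)$. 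I should also note in passing that $G_K$ indeed maps into $\LL(U,W)$, since it is a composition of a bounded linear operator $U \to V$ with one $V \to W$.
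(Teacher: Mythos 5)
Your proof is correct and follows essentially the same strategy as the paper's: insert and subtract $G_T(\Psi(u+h))(\Psi(u+h)-\Psi(u))$ to split the Newton quotient into a $T$-term and a $\Psi$-term, bound the former via the local Lipschitz estimate \eqref{eq:Psi_Lip_gen} and Newton differentiability of $T$, bound the latter via the local uniform bound \eqref{eq:GT_loc_uniform_bound} and Newton differentiability of $\Psi$, and treat the degenerate case $\Psi(u+h)=\Psi(u)$ separately. The only cosmetic difference is that the paper phrases the argument over sequences $\{h_n\}$ while you work directly with a $\limsup$; the mathematical content is the same.
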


\begin{proof}
If $u \in U$ is fixed and $\{h_n\} \subset U$ is an arbitrary sequence 
satisfying $\Psi(u+h_n) - \Psi(u) \neq 0$ for all $n \in \mathbb{N}$ and 
$0 <\|h_n\|_U \to 0$ for $n \to \infty$, then, for all large enough $n$,
we have 
\begin{equation}
\label{eq:randomeq_11}
\begin{aligned}
0 &\leq\frac{\|K(u + h_n) - K(u) - G_K(u+h_n)h_n\|_{W}}{\|h_n\|_{U}}
\\
&=
\frac{\|  T(\Psi(u + h_n))  - T(\Psi(u))  
 - G_T (\Psi(u + h_n))G_\Psi(u + h_n)h_n\|_{W}}{\|h_n\|_{U}}
 \\
 &\leq
 \frac{\|  T(\Psi(u + h_n))  - T(\Psi(u))  
 - G_T (\Psi(u + h_n))(\Psi(u + h_n) - \Psi(u))\|_{W}}{\|\Psi(u + h_n) - \Psi(u)\|_{V}}
 \frac{\|\Psi(u + h_n) - \Psi(u)\|_{V}}{\|h_n\|_{U}}
 \\
 &\qquad 
 +
 \|G_T (\Psi(u + h_n))\|_{\LL(V,W)}
 \frac{\|  
 \Psi(u + h_n) - \Psi(u) - G_\Psi(u + h_n)h_n\|_{V}}{\|h_n\|_{U}} 
 \\
 &\leq
 C \frac{\|  T(\Psi(u + h_n))  - T(\Psi(u))  
 - G_T (\Psi(u + h_n))(\Psi(u + h_n) - \Psi(u))\|_{W}}{\|\Psi(u + h_n) - \Psi(u)\|_{V}}
 \\
 &\qquad+
 C \frac{\|  
 \Psi(u + h_n) - \Psi(u) - G_\Psi(u + h_n)h_n\|_{V}}{\|h_n\|_{U}}. 
\end{aligned}
\end{equation}
Here, we have used 
\eqref{eq:Psi_Lip_gen} and \eqref{eq:GT_loc_uniform_bound}
in the last step.
Due to the Newton differentiability of $T$ and $\Psi$,
the right-hand side of \eqref{eq:randomeq_11} 
goes to zero for $n \to \infty$. 
By adjusting the estimates in \eqref{eq:randomeq_11} slightly,
we also obtain this convergence for sequences 
satisfying $\Psi(u+h_n) - \Psi(u) = 0$ for some/all $n$. 
In combination with the arbitrariness of 
 $\{h_n\}$ and $u \in U$, this proves the lemma.
\end{proof}

\begin{lemma}[Product rule for Newton derivatives]\label{lem:product_rule}
Let
$(U, \norm{\cdot}{U})$, 
$(V, \norm{\cdot}{V})$,
$(W, \norm{\cdot}{W})$,
and 
$(Z, \norm{\cdot}{Z})$
be normed spaces and let 
$a\colon V \times W \to Z$
be a bilinear and continuous mapping,
i.e., it holds
$
\norm{a(v,w)}{Z} \leq C_a \|v\|_V \|w\|_W$
for all
$(v,w) \in V \times W$
with a constant $C_a > 0$.
Assume that 
$P\colon U \to V$ and 
$Q\colon U \to W$ are Newton differentiable 
with Newton derivatives 
$G_P \colon U \to \LL(U,V)$
and
$G_Q\colon U \to \LL(U, W)$,
respectively. 
Suppose that $P$ 
and  $Q$ are continuous with one of them 
being locally Lipschitz.
Then the function
$K\colon U \to Z$, 
$K(u) := a(P(u), Q(u))$,
is Newton differentiable with
derivative 
\[
G_K\colon U \to \LL(U,Z),
\qquad 
G_K(u)h := a(P(u), G_Q(u)h ) + a(G_P(u)h, Q(u))
\quad \forall u,h \in U.
\]
\end{lemma}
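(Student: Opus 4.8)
The plan is to reduce the product rule to the chain rule of \cref{lem:chain_rule}, exactly in the spirit of how \cref{lem:semismooth_balls} was proven. First I would observe that the map $K = a(P, Q)$ factors as
\[
K = a \circ (P, Q),
\]
where $(P,Q)\colon U \to V \times W$ is the map $u \mapsto (P(u), Q(u))$ and $a\colon V\times W \to Z$ is bilinear and continuous. The first task is to show that $(P,Q)$ is Newton differentiable with the obvious Newton derivative $G_{(P,Q)}(u)h := (G_P(u)h, G_Q(u)h)$; this follows immediately from the defining limit \eqref{eq:NewtonDiffDef} applied componentwise, since the norm on $V\times W$ can be taken as $\|(v,w)\| := \|v\|_V + \|w\|_W$ and the numerator splits into the sum of the two component numerators. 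The second task is to show that the bilinear map $a$ is Newton differentiable as a map on $V\times W$, with Newton derivative $G_a(v,w)(\delta v, \delta w) := a(v, \delta w) + a(\delta v, w)$. For this, note that by bilinearity
\[
a(v + \delta v, w + \delta w) - a(v,w) - G_a(v+\delta v, w+\delta w)(\delta v, \delta w)
= -\, a(\delta v, \delta w),
\]
so, using the continuity bound $\|a(\delta v, \delta w)\|_Z \leq C_a \|\delta v\|_V \|\delta w\|_W$, the Newton difference quotient is bounded by $C_a \min(\|\delta v\|_V, \|\delta w\|_W) \leq C_a \|(\delta v, \delta w)\|$, which tends to $0$; in fact $a$ is even Newton differentiable in this strong, dimension-free sense, and $a$ is also locally Lipschitz on bounded sets.

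Next I would verify that the hypotheses of \cref{lem:chain_rule} are met for the composition $K = a \circ (P,Q)$, with the identifications $\Psi := (P,Q)$, $T := a$, and the intermediate space $E \subset V\times W$. The local Lipschitz-type condition \eqref{eq:Psi_Lip_gen} on $\Psi$ holds because both $P$ and $Q$ are at least continuous and one of them is locally Lipschitz: writing $\Psi(u + h) - \Psi(u) = (P(u+h) - P(u), Q(u+h) - Q(u))$, the local boundedness of one component combined with the local Lipschitz estimate of the other gives a local bound of the form $\|\Psi(u+h) - \Psi(u)\| \leq C\|h\|_U$ for small $h$. Here I would be careful: if, say, $P$ is the locally Lipschitz one, then $\|P(u+h) - P(u)\|_V \leq L_u \|h\|_U$ on a ball, while continuity of $Q$ only gives $\|Q(u+h) - Q(u)\|_W \to 0$, not a Lipschitz bound; however \eqref{eq:Psi_Lip_gen} is only invoked as a bound on the numerator $\|\Psi(u+h_n) - \Psi(u)\|/\|h_n\|_U$ after division by the other factor tending to zero, so in fact what \cref{lem:chain_rule} really needs is the local Lipschitz property of $\Psi$, which does hold because the pair of a locally Lipschitz map and a locally Lipschitz map is locally Lipschitz — and one checks that actually \cref{lem:chain_rule} is applied in its stated form where only one side needs the Lipschitz bound, so I would instead observe that $a\colon V\times W\to Z$ restricted to a bounded neighborhood is Lipschitz (hence plays the role of $T$), and the required local-boundedness condition \eqref{eq:GT_loc_uniform_bound} on $G_a$ holds because $\|G_a(v,w)\|_{\LL(V\times W, Z)} \leq C_a(\|v\|_V + \|w\|_W)$, which is locally bounded. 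Condition \eqref{eq:Psi_Lip_gen} for $\Psi = (P,Q)$ then genuinely requires a local Lipschitz bound on $\Psi$; this is the place where the hypothesis "one of $P$, $Q$ is locally Lipschitz" must be used together with continuity of the other. I expect this bookkeeping about which map is treated as $T$ and which as $\Psi$ — and matching the one-sided Lipschitz requirement in \cref{lem:chain_rule} to the available regularity of $P$, $Q$, and $a$ — to be the main (minor) obstacle; everything else is a direct quotation.

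Finally, I would apply \cref{lem:chain_rule} to conclude that $K = a\circ(P,Q)$ is Newton differentiable with Newton derivative
\[
G_K(u) = G_a\big((P,Q)(u)\big)\,G_{(P,Q)}(u),
\]
and unwind the definitions: $G_{(P,Q)}(u)h = (G_P(u)h, G_Q(u)h)$ and $G_a(P(u),Q(u))(\delta v,\delta w) = a(P(u),\delta w) + a(\delta v, Q(u))$, so that
\[
G_K(u)h = a\big(P(u), G_Q(u)h\big) + a\big(G_P(u)h, Q(u)\big)\qquad \forall u,h\in U,
\]
which is exactly the claimed formula. This completes the proof.
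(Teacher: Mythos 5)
Your strategy is to factor $K = a \circ (P,Q)$ and invoke \cref{lem:chain_rule} with $\Psi := (P,Q)$ and $T := a$. The preliminary steps are fine: $(P,Q)$ is Newton differentiable componentwise, $a$ is Newton differentiable with $G_a(v,w)(\delta v,\delta w) = a(v,\delta w)+a(\delta v,w)$ and exact remainder $-a(\delta v,\delta w)$, and the bound $\|G_a(v,w)\|_{\LL(V\times W,Z)} \leq C_a(\|v\|_V+\|w\|_W)$ gives \eqref{eq:GT_loc_uniform_bound} locally. The gap you flag as a ``minor obstacle'' is, however, fatal to the plan as stated. Condition \eqref{eq:Psi_Lip_gen} for $\Psi=(P,Q)$ is a local Lipschitz estimate on the \emph{pair}, i.e. on \emph{both} components simultaneously. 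The lemma's hypothesis only grants that \emph{one} of $P,Q$ is locally Lipschitz while the other is merely continuous. If, say, $Q$ is continuous but not locally Lipschitz, then $\|\Psi(u+h)-\Psi(u)\|_{V\times W}/\|h\|_U$ can be unbounded as $h\to 0$, so \eqref{eq:Psi_Lip_gen} genuinely fails and \cref{lem:chain_rule} cannot be applied as written. Asserting that the hypothesis ``must be used together with continuity of the other'' does not produce the required estimate; the approach would be sound only under the stronger assumption that both $P$ and $Q$ are locally Lipschitz, which the lemma does not assume.

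The paper's proof avoids this entirely by computing the remainder directly. Expanding by bilinearity,
\begin{equation*}
K(u+h)-K(u)-G_K(u+h)h = a\bigl(P(u+h)-P(u)-G_P(u+h)h,\,Q(u+h)\bigr)
+ a\bigl(P(u+h),\,Q(u+h)-Q(u)-G_Q(u+h)h\bigr)
+ a\bigl(P(u)-P(u+h),\,Q(u+h)-Q(u)\bigr),
\end{equation*}
and the only term that needs the regularity hypothesis is the last one, which after dividing by $\|h\|_U$ is bounded by $C_a\,\frac{\|P(u+h)-P(u)\|_V}{\|h\|_U}\,\|Q(u+h)-Q(u)\|_W$. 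If $P$ is locally Lipschitz, the first factor stays bounded and the second tends to zero by continuity of $Q$ (and symmetrically if $Q$ is the Lipschitz one). This exploits that for a bilinear $a$ the error is exactly $-a(\delta v,\delta w)$, so one needs a Lipschitz bound on only one argument. A generic chain rule cannot see this cancellation structure, which is exactly why it asks for a Lipschitz bound on the full inner map. If you want to keep a chain-rule flavour, you would need to prove a tailored variant of \cref{lem:chain_rule} whose hypothesis is the anisotropic bound above rather than \eqref{eq:Psi_Lip_gen}; at that point the direct computation is shorter.
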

\begin{proof}
For all $u\in U$ and $h \in U \setminus \{0\}$,
we have
\begin{equation*}
\begin{aligned}
&K(u+h) - K(u) - G_K(u+h)h
\\
&
\begin{aligned}
= a(P(u+h), Q(u+h)) 
- a(P(u), Q(u))
- a(P(u+h), G_Q(u+h)h ) - a(G_P(u+h)h, Q(u+h))
\end{aligned}
\\
&\begin{aligned}
=
a(P(u+h) - P(u) - G_P(u+h)h, Q(u+h)) 
&+ a(P(u+h), Q(u+h) - Q(u) - G_Q(u+h)h )
\\
&+ a(P(u)-P(u+h), Q(u+h) - Q(u)),
\end{aligned}
\end{aligned}
\end{equation*}
and, thus, 
\begin{equation}
\label{eq:randomeq2636}
\begin{aligned}
\frac{\norm{K(u+h)-K(u)-G_K(u+h)h}{Z}}{\norm{h}{U}} 
&\leq 
C_a \norm{Q(u+h)}{W}\frac{\norm{P(u+h) - P(u) - G_P(u+h)h}{V}}{\norm{h}{U}}
\\
&\quad +
C_a \norm{P(u+h)}{V}\frac{\norm{Q(u+h) - Q(u) - G_Q(u+h)h}{W}}{\norm{h}{U}}
\\
&\quad +
C_a \frac{\norm{P(u+h) - P(u)}{V}\norm{Q(u+h) - Q(u)}{W}}{\norm{h}{U}}.
\end{aligned}
\end{equation}
Due to the Newton differentiability and continuity 
properties of $P$ and $Q$, the right-hand side 
of \eqref{eq:randomeq2636} tends to zero 
for $0 < \|h\|_U \to 0$. This proves the assertion;
see \eqref{eq:NewtonDiffDef}.
\end{proof} 

\section{Convergence of semismooth Newton methods}\label{sec:proofVanillaSSN}
 
\begin{proof}[Proof of \cref{th:convergenceGeneral}]
The proof is standard; see, e.g., \cite[Theorem 3]{MARTINEZ1995127} but we give it here for
the sake of completeness. The openness of $B$ and the 
 Newton differentiability of $R$ on $B$
imply that, for every $\epsilon > 0$, there exists $r > 0$ such that 
$B^X_r(\bar x) \subset B$ and 
\begin{equation}
\label{eq:rnd_implication}
x \in B^X_r(\bar x) \implies \norm{R(x)-R(\bar x) - G_R(x)(x-\bar x)}{X} \leq \epsilon\norm{x-\bar x}{X}.
\end{equation}
Choose $\epsilon > 0$ such that 
$\alpha := M\epsilon+ M L\rho^* < 1$  holds 
and let $r>0$ be such that \eqref{eq:rnd_implication} is satisfied for this $\epsilon$.
Let $x_i \in B^X_r(\bar x)$ be arbitrary
and let $x_{i+1}$ and $z_i$ be as in steps \ref{algo1:x_N_general} and \ref{algo1:update_formula} of \cref{algo:semiNewtonAbstractGeneral}. Then
\begin{equation}
\label{eq:gen_est}
\begin{aligned}
\norm{x_{i+1}-\bar x}{X} 
&= \norm{x_i - \bar x - G_R(x_i)^{-1}R(x_i) + G_R(x_i)^{-1}(G_R(x_i)z_i + R(x_i))}{X}\\
&= \norm{G_R(x_i)^{-1}G_R(x_i)(x_i - \bar x) - G_R(x_i)^{-1} R(x_i) + G_R(x_i)^{-1}(G_R(x_i)z_i + R(x_i))}{X}\\
&\leq \norm{G_R(x_i)^{-1}}{\LL(X,X)}\big(\norm{R(x_i)-R(\bar x) - G_R(x_i)(x_i-\bar x)}{X} + \norm{G_R(x_i)z_i + R(x_i)}{X}\big)\\
&\leq M\left(\epsilon\norm{x_i-\bar x}{X} + \rho_i\norm{R(x_i)}{X}\right)\\
&\leq (M\epsilon+ M L\rho_i )\norm{x_i-\bar x}{X}
\\
&\leq  \alpha \norm{x_i-\bar x}{X}.
 \end{aligned}
 \end{equation}
This shows that $x_{i+1} \in B_r^X(\bar x)$ holds and, after a trivial induction, 
that the iterates produced by \cref{algo:semiNewtonAbstractGeneral}
satisfy $\norm{x_i-\bar x}{X} \leq \alpha^i \norm{x_0-\bar x}{X}$
for all $x_0 \in B_r^X(\bar x)$ and all $i$. The assertions in 
\ref{vanilla:assertion:i} and \ref{vanilla:assertion:ii}
follow immediately from this estimate and \eqref{eq:Phi_L_Lip_inexact}.
The $q$-superlinear convergence in \ref{vanilla:assertion:iii}
is obtained by revisiting the estimates in \eqref{eq:gen_est}
with the knowledge that $x_i \to \bar x$. 
\end{proof} 

Next, we wish to prove \cref{th:convergence} concerning the convergence of \cref{algo:semiNewtonAbstract}. We first 
require the following lemma on the properties of 
the residue function $R$ of \eqref{eq:general_FP_H_intro}. 

\begin{lemma}[Properties of $R$]
\label{lem:R:prop}Suppose that \cref{ass:standing_general_H} holds. Then
the function $R\colon X \to X$ 
satisfies the following:
\begin{enumerate}[label=\roman*)]
\item\label{lem:R:prop:i} $R$ is bijective and its inverse $R^{-1}\colon X \to X$ satisfies 
\begin{equation}
\label{eq:R_estimate_1}
\| R^{-1}(x_1) - R^{-1}(x_2)\|_X \leq \frac{1}{1-\gamma} \|x_1 - x_2\|_X
\quad \forall x_1, x_2 \in X
\end{equation}
and
\begin{equation}
\label{eq:R_estimate_2}
\|x_1 - x_2\|_X
\leq
(1 + \gamma)
\| R^{-1}(x_1) - R^{-1}(x_2)\|_X  \quad \forall x_1, x_2 \in X.\smallskip
\end{equation}
\item\label{lem:R:prop:ii} $R$ is Newton differentiable 
on $X$ with Newton derivative 
$G_R(x) := \mathrm{Id} - G_H(x)$.
\item\label{lem:R:prop:iii} For every $x \in X$, the inverse $G_R(x)^{-1}$ exists and it holds 
$\left \| G_R(x)^{-1} \right \|_{\LL(X,X)} \leq (1-\gamma)^{-1}$.
\end{enumerate}
\end{lemma}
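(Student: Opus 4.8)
The plan is to treat the three assertions in order, since \ref{lem:R:prop:ii} and \ref{lem:R:prop:iii} are largely immediate from \cref{ass:standing_general_H} and the Banach-algebra Neumann-series argument, while \ref{lem:R:prop:i} is where the actual work lies. For \ref{lem:R:prop:ii}, I would simply invoke \cref{ass:standing_general_H}\ref{ass:standing_general:Hii}, which already \emph{defines} $G_R := \Id - G_H$ as a Newton derivative for $R$, together with the sum rule for Newton derivatives (\cref{lem:chain_rule} applied to the linear, hence trivially Newton differentiable, map $\Id$, or an explicit one-line check from \eqref{eq:NewtonDiffDef}). For \ref{lem:R:prop:iii}, fix $x \in X$; by \eqref{eq:GH_bound-2} we have $\|G_H(x)\|_{\LL(X,X)} \leq \gamma < 1$, so the Neumann series $\sum_{k\geq 0} G_H(x)^k$ converges absolutely in $\LL(X,X)$ and provides the inverse of $G_R(x) = \Id - G_H(x)$, with the standard bound $\|G_R(x)^{-1}\|_{\LL(X,X)} \leq (1-\|G_H(x)\|_{\LL(X,X)})^{-1} \leq (1-\gamma)^{-1}$.

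The substantive part is \ref{lem:R:prop:i}. First I would establish bijectivity of $R = \Id - H$: given $y \in X$, the equation $R(x) = y$ is equivalent to the fixed-point equation $x = H(x) + y$, and the map $x \mapsto H(x)+y$ is a $\gamma$-contraction on the Banach space $X$ by \eqref{eq:H_gamma_Lip}, so the Banach fixed-point theorem yields a unique solution $x$; hence $R$ is a bijection. (This is essentially \cref{lem:uniqueF_H} with a shifted right-hand side, so it can be cited.) Next, for the Lipschitz bound on $R^{-1}$: let $y_1, y_2 \in X$ and set $x_j := R^{-1}(y_j)$, so $x_j = H(x_j) + y_j$. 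Subtracting gives $x_1 - x_2 = H(x_1) - H(x_2) + (y_1 - y_2)$, whence by the triangle inequality and \eqref{eq:H_gamma_Lip},
\[
\|x_1 - x_2\|_X \leq \gamma \|x_1 - x_2\|_X + \|y_1 - y_2\|_X,
\]
and rearranging (using $\gamma < 1$) yields $\|x_1 - x_2\|_X \leq (1-\gamma)^{-1}\|y_1 - y_2\|_X$, which is exactly \eqref{eq:R_estimate_1}. For the reverse inequality \eqref{eq:R_estimate_2}, start from the same identity $x_1 - x_2 = H(x_1) - H(x_2) + (y_1 - y_2)$, solve for $y_1 - y_2 = (x_1 - x_2) - (H(x_1) - H(x_2))$, and estimate $\|y_1 - y_2\|_X \leq \|x_1 - x_2\|_X + \gamma\|x_1 - x_2\|_X = (1+\gamma)\|x_1 - x_2\|_X$; substituting $x_j = R^{-1}(y_j)$ gives \eqref{eq:R_estimate_2}.

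I do not anticipate a genuine obstacle here; the only mild subtlety is to make sure the bijectivity claim is stated over \emph{all} of $X$ (not just locally), which is why the global contraction hypothesis \eqref{eq:H_gamma_Lip} — rather than a local one — is precisely what is needed, and to be careful that the Neumann-series inverse of $G_R(x)$ is a genuine two-sided inverse in $\LL(X,X)$ (standard). The whole proof is short and self-contained modulo the Banach fixed-point theorem and the Neumann-series lemma.
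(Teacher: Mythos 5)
Your proposal is correct and follows essentially the same route as the paper's proof: bijectivity via the Banach fixed-point theorem applied to $x = H(x) + y$, the two Lipschitz estimates via the identity $x = R^{-1}(y) - H(R^{-1}(y))$ and the triangle inequality, assertion (ii) from the sum rule, and assertion (iii) from the Neumann series. The only difference is notational (you name $x_j := R^{-1}(y_j)$ before estimating, while the paper carries $R^{-1}(x_j)$ through directly), and the substance is identical.
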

\begin{proof} 
To prove \ref{lem:R:prop:i}, suppose that $y \in X$ is given. 
Then $y = R(x)$ is equivalent to the fixed-point equation $x = y + H(x)$,
which possesses a unique solution $x := R^{-1}(y)$ by the Banach fixed-point theorem. 
Thus, $R$ is bijective and $R^{-1}\colon X \to X$ exists. 
Consider now some $x_1, x_2 \in X$. Then it holds 
$x_j = R(R^{-1}(x_j)) = R^{-1}(x_j) - H(R^{-1}(x_j))$, $j=1,2$,
by the definition of $R$. Thus, by  the 
triangle inequality and \eqref{eq:H_gamma_Lip},
\begin{equation*}
\begin{aligned}
\| R^{-1}(x_1) - R^{-1}(x_2)\|_X
&=
\| x_1 - x_2 + H(R^{-1}(x_1)) - H(R^{-1}(x_2)) \|_X
\\
&\leq
\| x_1 - x_2\|_X 
+
\gamma \| R^{-1}(x_1) - R^{-1}(x_2)\|_X.
\end{aligned}
\end{equation*}
This establishes \eqref{eq:R_estimate_1}.
The second estimate \eqref{eq:R_estimate_2} follows from 
\[
\| x_1 - x_2\|_X 
=
\| R(R^{-1}(x_1)) - R(R^{-1}(x_2))\|_X
\leq
(1+\gamma)
 \| R^{-1}(x_1) - R^{-1}(x_2)\|_X. 
\]
This proves \ref{lem:R:prop:i}. 
The assertion of \ref{lem:R:prop:ii} follows 
from the sum rule for Newton differentiable functions and 
the Newton differentiability of $H$.
To finally establish \ref{lem:R:prop:iii}, it suffices to note that 
the $\LL(X,X)$-norm of the operator $G_H (x)$ appearing in the definition of
$G_R(x)$ is bounded by $\gamma \in [0,1)$
due to \cref{ass:standing_general_H}\ref{ass:standing_general:Hiii} 
and to use Neumann's series. 
This completes the proof.
\end{proof}

Using the properties in \cref{lem:R:prop}, we can now prove \cref{th:convergence}.
\smallskip

\begin{proof}[Proof of \cref{th:convergence}]
Let $x_i \in X$ be given 
and let $x_B, x_N \in X$ be chosen 
such that \eqref{eq:xB_update} and \eqref{eq:xN_update} hold.
Then the $\gamma$-Lipschitz continuity of $H$ in \eqref{eq:H_gamma_Lip},
\eqref{eq:xB_update}, the triangle inequality,
and the definition of $R$
imply that 
\begin{equation*}
\begin{aligned}
\min \left (
\|R(x_B)\|_X,
\|R(x_N)\|_X 
\right )
&\leq 
\|R(x_B)\|_X
\\
&= 
\| x_B - H(x_i) + H(x_i) - H(x_B)\|_X
\\
&\leq
\tau_i
\|R(x_i)\|_X
+
\gamma 
\| x_i - H(x_i) + H(x_i) - x_B\|_X
\\
&\leq
\left (
\tau_i
+ \gamma + \gamma \tau_i
\right )
\|R(x_i)\|_X
\\
&= \beta_i \|R(x_i)\|_X
\end{aligned}
\end{equation*}
holds, 
where $\beta_i := \tau_i + \gamma + \gamma\tau_i$.
Due to the acceptance criterion in \cref{algo:semiNewtonAbstract},
the above yields that, if 
\cref{algo:semiNewtonAbstract}
does not terminate in the iterations $i=0,1,\ldots,n-1$, $n \in \N$, then 
$x_n$ satisfies 
\begin{equation}
\label{eq:Rn_42}
\|R(x_n)\|_X \leq \theta_{n-1} \|R(x_0)\|_X 
\end{equation}
with $\theta_{n-1} :=  \beta_0\beta_1\cdots \beta_{n-1}$.

Let us now first consider the situation in \ref{th:convergence:i}, 
i.e., the case $\Ntol > 0$  
and $\tau^* \leq (\lambda-\gamma)/(1+\gamma)$
for some $\lambda \in (\gamma, 1)$. 
Then we have 
\[
\beta_i =  \tau_i + \gamma + \gamma\tau_i
\leq
\tau^* + \gamma + \gamma \tau^*
\leq
\frac{\lambda-\gamma }{1+\gamma}
 + \gamma + \gamma\frac{\lambda-\gamma }{1+\gamma}
 = \lambda \in (\gamma, 1)\quad \forall i \in \N_0,
\]
and we obtain from \eqref{eq:Rn_42} that 
$\|R(x_n)\|_X \leq \lambda^n \|R(x_0)\|_X$ holds when 
the termination criterion $\|R(x_i)\|_{X} \leq \Ntol$ in  step \ref{algo1:term}
is not triggered for $i=0,1,\ldots,n-1$, $n \in \N$. 
That \cref{algo:semiNewtonAbstract} 
has to terminate after the number of iterations in \eqref{eq:iter_bound} 
follows immediately from this estimate. 
From \eqref{eq:R_estimate_1}, we further obtain 
that the iterate $x^*$ that \cref{algo:semiNewtonAbstract} 
returns in this situation satisfies 
\begin{equation}
\label{eq:randomeq_2626}
\begin{aligned}
\|x^*- \bar x\|_X
&=
\|R^{-1}(R(x^*)) - R^{-1}(R(\bar x))\|_X
\\
&\leq
\frac{1}{1-\gamma}
\| R(x^*) -  R(\bar x)\|_X
\\
&=
\frac{1}{1-\gamma}
\| R(x^*)\|_X
\\
&\leq
\frac{\Ntol}{1-\gamma}.
\end{aligned}
\end{equation}
This completes the proof of \ref{th:convergence:i}.

Let us now assume that $\Ntol = 0$ holds and that 
 \cref{algo:semiNewtonAbstract} does not terminate 
 after finitely many iterations. Then
 it follows from the inequality 
 $\|R(x_i)\|_X \leq \theta_{i-1}\|R(x_0)\|_X$
 for all $i \in \N$, the definitions 
 $\theta_{i} :=  \beta_0\beta_1\cdots \beta_{i}$
 and
 $\beta_i := \tau_i + \gamma + \gamma\tau_i$,
 the convergence $\tau_i \to 0$,
 and $\gamma \in (0,1)$,
 that 
  $\beta_i$ converges to $\gamma$, that $\theta_{i}$ goes  to zero, 
  and that $\| R(x_i)\|_X \to 0$ holds for $i \to \infty$.
  In combination with the estimate $\|x_i - \bar x\|_X
 \leq (1-\gamma)^{-1}\| R(x_i)\|_X$, that is obtained 
 along the exact same lines as \eqref{eq:randomeq_2626}, 
 this yields that $0 < \|x_i - \bar x\|_X \to 0$ for $i \to \infty$. 
 From \eqref{eq:xN_update},
 the properties in \cref{lem:R:prop},
 the acceptance criterion 
 in \cref{algo:semiNewtonAbstract}, 
 and the convergence $\rho_i \to 0$,
 we further obtain that
 \begin{equation*}
 \begin{aligned}
 \|R(x_{i+1})\|_X
 &\leq
 \|R(x_{N})\|_X
 \\
&= 
 \|R(x_{N}) - R(\bar x)\|_X
 \\
 &\leq
 (1 + \gamma)
 \|x_N -  \bar x\|_X
 \\
&\leq
\frac{1+\gamma}{1-\gamma}
\|G_R(x_i)(x_N -  \bar x)\|_X
\\
&\leq
\frac{1+\gamma}{1-\gamma}
\|R(\bar x) -  R(x_i) +  G_R(x_i)(x_i -\bar x)\|_X
+
\frac{1+\gamma}{1-\gamma} \|R(x_i) + G_R(x_i)(x_N -x_i)\|_X
\\
&\leq
\frac{1+\gamma}{1-\gamma}
 \frac{\|R(\bar x) -  R(x_i) +  G_R(x_i)(x_i -\bar x)\|_X}{\|x_i - \bar x\|_X} \|x_i - \bar x\|_X
 +
\frac{1+\gamma}{1-\gamma}  \rho_i 
 \|R(x_i)  \|_X  
 \\
 &\leq
   \frac{1+\gamma}{(1-\gamma)^2}
 \frac{\|R(\bar x) -  R(x_i) +  G_R(x_i)(x_i -\bar x)\|_X}{\|x_i - \bar x\|_X} 
 \|R(x_i) - R(\bar x)\|_X
 +
\frac{1+\gamma}{1-\gamma} \rho_i \|R(x_i)\|_X
  \\
 &=
   o(1)
 \|R(x_i)\|_X\qquad \forall i \in \N_0, 
 \end{aligned}
 \end{equation*}  
 where the Landau notation $o(1)$ refers to the limit $i \to \infty$. 
 This proves that $\| R(x_i)\|_X$ 
 converges
 $q$-superlinearly to zero. To
 obtain that the convergence $\|x_i - \bar x\|_X \to 0$
 is $q$-superlinear too, it suffices to note that 
 the same arguments as in \eqref{eq:randomeq_2626} 
 and the estimates \eqref{eq:R_estimate_1}
 and \eqref{eq:R_estimate_2} imply
 \[
    \|x_{i+1} - \bar x\|_X
    \leq
    \frac{1}{1-\gamma}
\| R(x_{i+1})\|_X
\leq
    o(1) \| R(x_i)\|_X
    =
    o(1) \| R(x_i) - R(\bar x)\|_X
    \leq
    o(1) \|x_i - \bar x\|_X
 \]
 for  $i \to \infty$. 
 This establishes \ref{th:convergence:ii} and completes the proof
 of \cref{th:convergence}. 
\end{proof}
\end{appendices}

\phantomsection
%\addcontentsline{toc}{section}{References}
%\bibliographystyle{plain}
\bibliography{QVISSN-bibliography}% common bib file
%% if required, the content of .bbl file can be included here once bbl is generated
%%\input sn-article.bbl
\end{document}